\newenvironment{\thesection}{\section}{Appendix}
\newtheorem{theorem}{Theorem}[section]
\newtheorem{corollary}[theorem]{Corollary}
\newtheorem{definition}[theorem]{Definition}
\newtheorem{example}[theorem]{Example}
\newtheorem{question}[theorem]{Question}
\newtheorem{lemma}[theorem]{Lemma}
\newtheorem{proposition}[theorem]{Proposition}
\newtheorem{remark}[theorem]{Remark}
\newcommand{\R}{\mathbb{R}}
\newcommand{\Sym}{\mathrm{Sym}}
\newcommand{\metric}{\langle \, , \, \rangle}
\newcommand{\inte}{\mathrm{Int}}
\newcommand{\disp}{\displaystyle}
\newcommand{\red}{\mathrm{red}}
\newcommand{\ra}{\rightarrow}
\newcommand{\str}{\mathrm{str}}
\newcommand{\GG}{\mathbb{G}}
\newcommand{\eps}{\varepsilon}
\newcommand{\II}{\mathrm{II}}
\newcommand{\Sph}{\mathbb{S}}
\newcommand{\di}{\mathrm{d}}
\newcommand{\weak}{\mathrm{w}}
\newcommand{\CC}{\mathbb{C}}
\newcommand{\EE}{\mathscr{E}}
\newcommand{\HH}{\mathscr{H}}
\newcommand{\Ricc}{\mathrm{Ric}}
\newcommand{\cut}{\mathrm{cut}}
\newcommand{\vol}{\mathrm{vol}}
\newcommand{\lip}{\mathrm{Lip}}
\newcommand{\cal}{\mathcal}
\newcommand{\USC}{\mathrm{USC}}
\DeclareMathOperator{\dist}{dist}
\DeclareMathOperator{\tr}{\mathrm{Tr}}
\DeclareMathOperator{\dif}{\mathrm{d}}
\begin{document}

\title[Ahlfors-Liouville and Khas'minskii properties for subequations]{Duality between Ahlfors-Liouville and Khas'minskii properties for non-linear equations}  
{\author{Luciano Mari}
\address{Scuola Normale Superiore\\56126 Pisa-Italy}
\email{luciano.mari@sns.it, mari@mat.ufc.br} \thanks{The first author was partially supported by CNPq-Brazil and by research funds of the Scuola Normale Superiore}
{\author{Leandro F. Pessoa}
\address{Departamento de Matem\'{a}tica\\Universidade Federal do Piau\'{i}-UFPI\\
64049-550, Teresina-Brazil} \email{leandropessoa@ufpi.edu.br} \thanks{The second author was partially supported by Capes and CNPq-Brazil}}
\keywords{Potential theory, Liouville theorem, Omori-Yau, maximum principles, stochastic completeness, martingale, completeness, Ekeland, Brownian motion.}
\footnote{{\it 2010 Mathematics Subject Classification:} Primary 31C12, 35B50; Secondary 35B53, 58J65, 58J05, 53C42.}

\begin{abstract}
In recent years, the study of the interplay between (fully) non-linear potential theory and geometry received important new impulse. The purpose of this work is to move a step further in this direction by investigating appropriate versions of parabolicity and maximum principles at infinity for large classes of non-linear (sub)equations $F$ on manifolds. The main goal is to show a unifying duality between such properties and the existence of suitable $F$-subharmonic exhaustions, called Khas'minskii potentials, which is new even for most of the ``standard" operators arising from geometry, and improves on partial results in the literature. Applications include new characterizations of the classical maximum principles at infinity (Ekeland, Omori-Yau and their weak versions by Pigola-Rigoli-Setti) and of conservation properties for stochastic processes (martingale completeness). Applications to the theory of submanifolds and Riemannian submersions are also discussed.
\end{abstract}

\maketitle
\tableofcontents

\section{Introduction}\label{sec_intro}

The purpose of the present paper is to describe a duality principle between some function-theoretic properties on non-compact manifolds. We are mainly interested in fully non-linear equations coming from geometry, and to introduce the problems under investigation we first need to fix some terminology. Let $X$ be a Riemannian manifold, and let  $J^2(X) \ra X$ denote the 2-jet bundle. Via the splitting 
$$
J^2(X) = \mathbf{R} \oplus T^*X \oplus \Sym^2(T^*X)
$$
induced by the Levi-Civita connection, the $2$-jet $J^2_xu$ of a function $u$ at a point $x$ can be identified with the $4$-ple 
$$
\big(x,u(x), \di u(x), \nabla^2 u(x)\big),
$$
with $\nabla^2 u$ the Hessian of $u$. Points in the $2$-jet bundle will henceforth be denoted with 
$$
(x,r,p,A), \qquad \text{with} \quad x \in X, \ \ r \in \R, \ \ p \in T^*_xX, \ \  A \in \Sym^2(T^*_xX). 
$$
According to the elegant approach pioneered by N.V. Krylov \cite{krylov} and systematically developed by R. Harvey and B. Lawson Jr. in recent years (\cite{HL_primo, HL_dir, HL_existence}), a differential inequality for a function $u$ can be viewed as a constraint on the two-jets of $u$ to lie in a region $F \subset J^2(X)$. For instance, $u \in C^ 2(X)$ enjoys the inequality $\Delta u \ge f(x,u,  \di u)$ provided that, at each point $x$, the two-jet $J^2_xu$ belongs to the subset
\begin{equation*}\label{esempidoidi}
F = \Big\{ (x,r,p,A) \in J^2(X) \ : \ \tr(A) \ge f(x,r,p)\Big\}.
\end{equation*}
The class of differential inequalities we are interested in are modelled by well-behaved subsets $F$ called \emph{subequations}. The conditions placed on $F$ to be a subequation are mild and correspond to the ellipticity and properness in the standard terminology (see for instance \cite{CIL, caffacabre} and the appendix of \cite{HL_existence}), plus a topological requirement. A function $u \in C^2(X)$ is called $F$-subharmonic if $J^2_xu \in F_x$ ($F_x$ the fiber over $x$) for each $x \in X$, and in this case we write $u \in F(X)$. Via the use of test functions in the viscosity sense, $F$-subharmonicity can be extended to USC (upper-semicontinuous), $[-\infty,+\infty)$-valued functions $u$. For $K \subset X$ closed, we will write $u \in F(K)$ if $u \in \USC(K)$ and $u \in F(\inte K)$. While $F$-subharmonic functions describe subsolutions, supersolutions are taken into account by considering the \emph{Dirichlet dual} of $F$,
$$
\widetilde F \doteq -(\sim \inte F),
$$
that is, roughly speaking, $u$ is a supersolution if $-u \in \widetilde{F}(X)$; $u$ is then called $F$-harmonic provided that $u \in F(X)$, $-u \in \widetilde F(X)$. In particular $u \in C(X)$, and $J^ 2_xu \in \partial F_x$ when $u$ is $C^2$ around $x$. For more details, we refer the reader to the next section, and also to the original sources \cite{krylov, HL_primo, HL_dir, HL_equivalence}.

Relevant subequations to which our results apply include the following examples, where $f \in C(\R)$ is non-decreasing  and $\{\lambda_j(A)\}$ denotes the increasing sequence of eigenvalues of $A \in \Sym^2(T^*X)$:\label{esempi_intro}
\begin{itemize}
\item[$(\EE 1)$] the eikonal $E = \big\{|p| \le 1\big\}$;
\item[$(\EE 2)$] $F= \big\{\tr(A) \ge f(r)\big\}$; 
\item[$(\EE 3)$] $F= \big\{\lambda_k(A) \ge f(r)\big\}$, for $k \in \{1,\ldots, \dim X\}$;
\item[$(\EE 4)$] $F= \big\{\mu_j^{(k)}(A) \ge f(r)\big\}$, where $\mu_j^{(k)}(A)$ is the $j$-th eigenvalue of the $k$-th elementary symmetric function $\sigma_k$ of $\{\lambda_j(A)\}$. Analogous examples can be given for each Dirichlet-G\"arding polynomial, see \cite{HL_garding, HL_gardingesub};
\item[$(\EE 5)$] $F= \big\{\lambda_1(A)+ \cdots + \lambda_k(A) \ge f(r)\big\}$, $k \le m$, whose $F$-subharmonics  (when $f =0$) are called $k$-plurisubharmonic functions; these subequations, which naturally appear in the theory of submanifolds, have been investigated for instance in \cite{wu,sha, HL_plurisub,HL_existence};
\item[$(\EE 6)$] the complex analogues of the last three examples;
\item[$(\EE 7)$] the quasilinear subequation describing viscosity solutions of 
\begin{equation}\label{eq_quasilinear}
\Delta_a u \doteq \mathrm{div}\big(a(|\nabla u|)\nabla u\big) \ge f(u),
\end{equation}
for $a \in C^1(\R^+)$ satisfying 
\begin{equation}\label{ipo_a}
\lambda_1(t) \doteq a(t) + ta'(t) \ge 0, \qquad \lambda_2(t) \doteq a(t)>0.
\end{equation}
Indeed, expanding the divergence we can set 
\begin{equation*}
F = \overline{\left\{ p \neq 0, \ \tr \big( T(p)A\big) > f(r) \right\}}, 
\end{equation*}
where 
\begin{equation*}
T(p) \doteq a(|p|)\metric + \frac{a'(|p|)}{|p|} p \otimes p = \lambda_1(|p|) \Pi_p + \lambda_2(|p|) \Pi_{p^\perp},
\end{equation*}
and $\Pi_p, \Pi_{p^\perp}$ are, respectively, the $(2,0)$-versions of the orthogonal projections onto the spaces $\langle p\rangle$ and $p^\perp$. This family includes the $k$-Laplacian for $1 \le k < +\infty$ and the mean curvature operator;\vspace{0.2cm}
\item[$(\EE 8)$] the normalized $\infty$-Laplacian $F= \overline{\big\{p \neq 0, \ |p|^{-2}A(p,p) > f(r)\big\}}$. 
\end{itemize}
\vspace{0.1cm}
One of the purposes behind Harvey-Lawson's approach is to develop a non-linear potential theory that could be applied to many different geometrical settings. In this respect, the following Liouville  property naturally appears in the investigation of non-compact manifolds.
\begin{definition}\label{def_Liouville} A subequation $F\subset J^2(X)$ is said to satisfy the \emph{Liouville  property} if bounded, non-negative $F$-subharmonic functions on $X$ are constant.
\end{definition}
The prototype example is the subequation $\{\tr(A) \ge 0\}$, for which the Liouville  property is one of the equivalent characterizations of the \emph{parabolicity} of $X$, a notion that is also introduced in terms of zero capacity of compact sets, non-existence of positive Green kernels on $X$, or recurrency of the Brownian motion on $X$ (see \cite[Thm. 5.1]{grigoryan} and the historical discussion therein\footnote{Here and in what follows, we refer to the Brownian motion as the diffusion process whose transition probability is given by the \emph{minimal} positive heat kernel.}). Another relevant example is the Liouville  property for the subequation $\{\tr(A) \ge \lambda r\}$, $\lambda >0$, which by \cite[Thm. 6.2]{grigoryan} is equivalent to the \emph{stochastic completeness} of $X$, that is, to the property that the Brownian motion on $X$ has infinite lifetime almost surely. As first observed by L.V. Ahlfors (see \cite[Thm. 6C]{ahlforssario}) for $\{\tr(A) \ge 0\}$ on Riemann surfaces, the Liouville property is tightly related to a maximum principle on unbounded subsets which has been recently investigated in depth (also for general linear and quasilinear subequations) in \cite{aliasmastroliarigoli,aliasmirandarigoli} under the name of the \emph{open form of the maximum principle}. We suggest the book \cite{aliasmastroliarigoli} for a thorough discussion, and also \cite{imperapigolasetti, pigolasettitroyanov}. Their approach motivates the next
  
\begin{definition}\label{def_ahlfors}
A subequation $F\subset J^2(X)$ is said to satisfy the \emph{Ahlfors property} if, having set $H=F \cup \{r \le 0\}$, for each $U \subset X$ open with non-empty boundary and for each $u \in H(\overline{U})$ bounded from above and positive somewhere, it holds
$$
\sup_{\partial U} u^+ \equiv \sup_{\overline{U}} u.
$$
\end{definition}

\begin{remark}
\emph{Note that $u \in H(\overline{U})$ means, loosely speaking, that $u$ is $F$-subharmonic on the set $\{u>0\}$. The use of $H$ is demanded since $\{u>0\}$ is not granted to be open when $u \in \USC(\overline{U})$. We will see that this generality is important for our results to hold.
}
\end{remark} 
\begin{remark}
\emph{A related type of Ahlfors property also appeared in the very recent \cite{berestyckirossi}, and we refer to \cite{mmr_berestycki} for its relationship with results in \cite{aliasmastroliarigoli}.
}
\end{remark}
It is easy to see that the Ahlfors property implies the Liouville  one, and that the two are equivalent whenever $u \equiv 0 \in \widetilde{F}(X)$ (Proposition \ref{teo_main_A_L} below). Their equivalence has been shown in \cite{ahlforssario} for $\{\tr(A) \ge 0\}$, and later extended to $\{\tr(A) \ge \lambda r\}$ in \cite{grigoryan}\footnote{In \cite[Thms. 5.1 and 6.2]{grigoryan},  the Ahlfors property for $F=\{\tr(A) \ge \lambda r\}$ ($\lambda \ge 0$) is stated in a slightly different way as the absence of $\lambda$-massive sets, but this is equivalent to the definition here because $u \equiv 0$ is $F$-harmonic.}, and for more general subequations in \cite{aliasmastroliarigoli, aliasmirandarigoli, imperapigolasetti}. However, we stress that the two properties might be non-equivalent for the classes of equations considered here. 

Conditions to guarantee the Ahlfors (Liouville) property for $\{\tr(A) \ge \lambda r\}$ have been investigated by various authors (see \cite{grigoryan}), and in particular we focus on the next criterion due to R.Z. Khas'minskii \cite{khasminski, prsmemoirs}, hereafter called the Khas'minskii test: for the properties to hold for $C^2$ functions, it is sufficient that $X$ supports an exhaustion $w$ outside a compact set $K$ that satisfies
\begin{equation}\label{khasmi_original}
\begin{aligned}
&0 < w \in C^2(X \backslash K), \qquad w(x) \ra +\infty \ \text{ as $x$ diverges,} \\
&\Delta w \le \lambda w \ \text{ on } \, X \backslash K.
\end{aligned}
\end{equation}
Here, as usual, with ``$w(x) \ra +\infty$ as $x$ diverges" we mean that the sublevels of $w$ have compact closure in $X$. As a consequence of works by Z. Kuramochi and M. Nakai \cite{kuramochi, nakai}, in the parabolic case $\lambda=0$ the Khas'minskii test is actually \emph{equivalent} to the Liouville  property. However, the proof in \cite{nakai} uses in a crucial way a number of tools from potential theory that are specific for the subequation $\{ \tr(A) \ge 0\}$, and only recently a new approach allowed to extend the equivalence to the case $\lambda>0$ and to other classes of operators (\cite{valto_reverse, marivaltorta}).\par
In this paper, we prove a duality between Ahlfors and Khas'minskii properties that applies to a broad class of subequations $F$, including those listed at page \pageref{esempi_intro}. Our motivation comes from the fact that partial forms of the duality already  appeared not only in potential theory and stochastic processes, but also when investigating maximum principles at infinity (or ``almost maximum principles"), a fundamental tool in Riemannian Geometry. We will see how the duality here gives new insights on such principles, and answers some open questions. Before stating an ``informal version" of our main Theorems \ref{teo_main} and \ref{teo_main_withgradient}, we define the Khas'minskii property: hereafter, a pair $(K,h)$ consists of 
\begin{itemize}
\item[-] a smooth, relatively compact open set $K \subset X$;
\vspace{0.1cm} 
\item[-] a function $h \in C(X\backslash K)$ satisfying $h < 0$ on $X\backslash K$ and $h(x) \ra -\infty$ as $x$ diverges. 
\end{itemize}
\begin{definition}\label{def_khasmi} A subequation $F\subset J^2(X)$ satisfies the \emph{Khas'minskii property} if, for each pair $(K,h)$, there exists a function $w$ satisfying:
\begin{equation}
\begin{array}{l}
\disp w \in F(X\backslash K), \qquad h \le w \leq 0 \quad \text{ on } \, X \backslash 
K; \\[0.2cm]
w(x) \ra -\infty \quad \text{ as $x$ diverges.}
\end{array}
\end{equation}
Such a function $w$ is called a \emph{Khas'minskii potential} for $(K,h)$.
\end{definition}
Loosely speaking, the Khas'minskii property is the possibility of constructing negative, $F$-subharmonic exhaustions that decay to $-\infty$ as slow as we wish. Note that the sign of $w$ in \eqref{def_khasmi} is opposite to that in \eqref{khasmi_original}, and that $w$ is, a priori, just upper-semicontinuous. In applications, the Khas'minskii property is quite a difficult condition to check and, often, one can just ensure a relaxed version of it. This motivates the next 
\begin{definition}\label{def_weakkhasmi}
A subequation $F\subset J^2(X)$ satisfies the \emph{weak Khas'minskii property} if there exist a relatively compact, smooth open set $K$ and a constant $C \in \R \cup \{+\infty\}$ such that, for each $x_0 \not \in \overline{K}$ and each $\eps>0$, there exists $w$ satisfying 
\begin{equation}\label{eq_WK}
\begin{aligned}
&\disp w \in F(X\backslash K), \qquad w \leq 0 \quad \text{ on } \, X \backslash K, \\
&w(x_0) \ge -\eps, \qquad \disp \limsup_{x \ra \infty} w(x) \le -C. 
\end{aligned}
\end{equation}
We call such a $w$ a \emph{weak Khas'minskii potential} for the triple $(\varepsilon,K,\{x_0\})$.
\end{definition} 
\begin{remark}
\emph{If $C= +\infty$ and $F$ is fiber-wise a (possibly truncated) cone, that is, $tJ_x \in F_x$ whenever $J_x \in F_x$ and $t \in (0,1]$, then there is no need of the condition with $\eps$ in \eqref{eq_WK}: indeed, if $w$ satisfies all the other conditions in \eqref{eq_WK}, its rescaling $\delta w \in F(X\backslash K)$, for $\delta \in (0,1]$ small enough, satisfies $\delta w(x_0) \ge -\eps$. This is the case, for instance, of \eqref{khasmi_original}, which is therefore a weak Khas'minkii property. The weak Khas'minskii property was first considered, for quasilinear operators, in \cite{prsnonlinear, prs_milan, marivaltorta} (when $C=+\infty$) while the idea of considering potentials with finite $C$ first appeared in the recent \cite{mmr_berestycki}.
}
\end{remark} 
 
Informally speaking, our main result can be stated as follows. For a precise formulation, we refer the reader to Theorems \ref{teo_main} and \ref{teo_main_withgradient}.

\begin{theorem}\label{teo_informal}
Let $F \subset J^ 2(X)$ be a ``good" subequation. Then, 
\begin{equation*}
\begin{array}{c}
\text{$F$ has the} \\[0.1cm]
\text{Khas'minskii property} 
\end{array}
 \Longleftrightarrow \begin{array}{c}
\text{$F$ has the weak} \\[0.1cm]
\text{Khas'minskii property} 
\end{array}
 \Longleftrightarrow \begin{array}{c}
\text{$\widetilde F$ has the} \\[0.1cm]
\text{Ahlfors property} 
 \end{array}.
\end{equation*}
\end{theorem}
Hereafter, when the above equivalences are satisfied, we will shortly say that \emph{AK-duality holds for $F$}.\par
An important feature is that Theorems \ref{teo_main} and \ref{teo_main_withgradient} apply to subequations $F$ which are just \emph{locally jet-equivalent} to most of the examples listed at page \pageref{esempi_intro}. Roughly speaking, a jet-equivalence can be thought as a way to plug non-constant coefficients in the subequation $F$ without, in general, affecting its properties, and considerably enlarges the range of applicability of our results (see the next sections for definitions). Moreover, Theorem \ref{teo_informal} is flexible enough to apply to good subequations which are unions-intersections of other good ones. As a relevant example for our applications, we can intersect eikonal subequations with those in $(\EE 2), \ldots, (\EE 8)$. To state the result, we first consider $f,\xi$ satisfying
\begin{equation}\label{def_f1xi1}
\begin{array}{llll}
(f1) & \ f \in C(\R), & \ f \ \text{is non-decreasing,} & \ f(0)=0, \ f(r)<0 \quad \text{ for } \, r<0; \\[0.2cm]
(\xi 1) & \ \xi \in C(\R), & \ \xi \, \text{ is non-increasing,} & \ \xi(0)=0, \ \xi(r) > 0 \quad \text{ for } \, r<0.
\end{array}
\end{equation}

We write $E_\xi$ to indicate the eikonal type subequation 
\begin{equation}\label{def_Exi}
E_\xi = \overline{\big\{|p| < \xi(r)\big\}}, \qquad \text{whose dual is} \qquad \widetilde{E_\xi} = \overline{\big\{|p| > \xi(-r)\big\}}.
\end{equation}
Then, we have\footnote{Observe that, because of $\xi(0)=0$ in $(\xi 1)$, no jets with $r>0$ belong to $E_\xi$. This is not a problem, since $E_\xi$ appears related to Khas'minskii potentials, which are non-positive, while $\widetilde{E_\xi}$ is linked to the Ahlfors property, which considers upper-level sets above zero.}

\begin{theorem}\label{cor_bonito}
Fix $f$ satisfying $(f1)$ and $\xi$ satisfying $(\xi 1)$. \vspace{0.1cm}
\begin{itemize}
\item[$i)$] If $F_f \subset J^2(X)$ is locally jet-equivalent to one of the examples in $(\EE 2), \ldots, (\EE 6)$ via locally Lipschitz bundle maps, then AK-duality holds for $F_f$ and for $F_f \cap E_\xi$. \vspace{0.1cm}
\item[$ii)$] If $F_f$ is the universal, quasilinear subequation in $(\EE 7)$, with eigenvalues $\{\lambda_j(t)\}$ in \eqref{ipo_a}, then AK-duality holds \vspace{0.1cm}
\begin{itemize}
\item[-] for $F_f$, provided that $\lambda_j(t) \in L^\infty(\R_0^+)$ for $j \in \{1,2\}$; \vspace{0.1cm}
\item[-] for $F_f \cap E_\xi$, provided that $\lambda_j(t) \in L^\infty_{\mathrm{loc}}(\R_0^+)$ for $j \in \{1,2\}$. \vspace{0.1cm}
\end{itemize}
\item[$iii)$] If $F_f$ is the universal subequation in $(\EE 8)$, then AK-duality holds for $F_f$ and for $F_f \cap E_\xi$. \vspace{0.1cm}
\end{itemize}
Furthermore, in each of $i), \ldots, iii)$, the Ahlfors property holds for $\widetilde{F_f}$ (respectively, for $\widetilde{F_f} \cup \widetilde{E_\xi}$) for some $f$ enjoying $(f1)$ (respectively, $(f,\xi)$ enjoying $(f1+\xi 1)$) if and only if it holds for all such $f$ (resp. $(f,\xi)$).
\end{theorem}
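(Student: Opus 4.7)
The plan is to derive the theorem by verifying, case by case, that the listed subequations satisfy the structural hypotheses of the main duality Theorems \ref{teo_main} and \ref{teo_main_withgradient}, and then simply invoking them. Theorem \ref{teo_main} will apply to $F_f$ on its own, while Theorem \ref{teo_main_withgradient} will handle the intersections $F_f \cap E_\xi$; accordingly, for each of $(\EE 2),\ldots,(\EE 8)$ one needs to check properness, fiberegularity, comparison and compatibility of the jet-equivalence for the universal constant-coefficient model, and then transport these properties to $F_f$ and to $F_f \cap E_\xi$.

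For $(i)$, the pure second-order and complex universal models in $(\EE 2),\ldots,(\EE 6)$ are all of Dirichlet--G\aa rding type as in \cite{HL_garding, HL_gardingesub}, and the properties demanded by Theorems \ref{teo_main}--\ref{teo_main_withgradient} are already known in that setting: comparison and fiberegularity reduce to the G\aa rding structure, while the $r$-dependence is handled by $(f1)$. Since the jet-equivalence is assumed locally Lipschitz, the structural axioms and the relevant moduli of continuity transport faithfully from the universal model to $F_f$. Intersecting with $E_\xi$ preserves the framework, since under $(\xi 1)$ the eikonal $E_\xi$ is a well-behaved first-order subequation compatible with the second-order part, so Theorem \ref{teo_main_withgradient} applies. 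For $(ii)$, the quasilinear subequation is governed by the tensor $T(p)$ with eigenvalues $\lambda_1(|p|)$ and $\lambda_2(|p|)$; Theorem \ref{teo_main} demands uniform control of the ellipticity across the entire gradient range, which corresponds exactly to $\lambda_j \in L^\infty(\R_0^+)$. When one intersects with $E_\xi$, any potential $w$ satisfies $|\nabla w| \le \xi(-w)$ wherever $w<0$, so the admissible values of $|p|$ stay in a locally finite range and the local bound $\lambda_j \in L^\infty_{\mathrm{loc}}(\R_0^+)$ suffices to feed Theorem \ref{teo_main_withgradient}. Part $(iii)$ is immediate: the normalized $\infty$-Laplacian is homogeneous of degree zero in $p$, so its fiber structure is uniformly bounded automatically, with no integrability condition needed.

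For the final equivalence claim the strategy is to pass through the duality itself. By what has just been established, for each $F_f$ in $(i)$--$(iii)$ the Ahlfors property for $\widetilde{F_f}$ (resp.\ $\widetilde{F_f}\cup\widetilde{E_\xi}$) is equivalent, via Theorem \ref{teo_informal}, to the (weak) Khas'minskii property for $F_f$ (resp.\ $F_f \cap E_\xi$). Given a (weak) Khas'minskii potential $w$ associated to one admissible $f$, one produces one for any other $f'$ by composition $\varphi\circ w$ with a smooth, strictly increasing, suitably concave $\varphi:(-\infty,0]\to(-\infty,0]$, chosen so that the new function still diverges to $-\infty$ at infinity while satisfying the $F_{f'}$-subharmonicity inequality, exploiting the fact that $f'(w)\le 0$ for $w\le 0$ thanks to $(f1)$. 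The same flexibility replaces $\xi$ by any $\xi'$ satisfying $(\xi 1)$, and a further application of the duality yields the Ahlfors property for $\widetilde{F_{f'}}$ (resp.\ for the union with $\widetilde{E_{\xi'}}$).

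The main obstacle is the quasilinear case $(\EE 7)$: the tensor $T(p)$ degenerates as $p \to 0$ and can be unbounded as $|p|\to\infty$, so verifying the structural axioms of Theorems \ref{teo_main}--\ref{teo_main_withgradient} and the transport of fiberegularity through the locally Lipschitz jet-equivalence requires a delicate uniform analysis; it is precisely to control these phenomena that the $L^\infty$ (resp.\ $L^\infty_{\mathrm{loc}}$) hypotheses on the $\lambda_j$ enter, and checking that they are both necessary and sufficient is the principal verification in the whole argument.
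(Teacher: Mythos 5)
Your overall architecture (check the hypotheses of Theorems \ref{teo_main} and \ref{teo_main_withgradient} for each of $(\EE 2),\ldots,(\EE 8)$ and then invoke them, with the $L^\infty$ versus $L^\infty_{\mathrm{loc}}$ conditions on $\lambda_j$ entering through the quasilinear comparison) is the same as the paper's. But there is a genuine gap in how you treat the $r$-dependence: the hypothesis $(\HH 3)$ (bounded strict approximation, needed for comparison in the implication $(K_\weak)\Rightarrow(A)$) is \emph{not} available for a general non-decreasing $f$ satisfying $(f1)$ — Theorem \ref{thm_comparison_examples} produces the strict approximation by the translates $u-\eta$ only when $f$ is \emph{strictly} increasing (and similarly $\xi$ strictly decreasing for $F_f\cap E_\xi$), and the paper explicitly warns (citing Example 12.8 of \cite{HL_dir}) that $(\HH 3)$ can fail otherwise. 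So the plan "verify the hypotheses and apply the main theorems", as stated, does not close the argument for arbitrary $f$ of type $(f1)$. The paper's proof handles this by a two-step reduction: first establish duality for a strictly increasing minorant $\bar f\le f$ with $\bar f(0)=0$ (here Proposition \ref{comparison_examples} supplies comparison for $F_{\bar f}^0$, resp. $F_{\bar f}^0\cap E_\xi$); then observe that the weak Khas'minskii property for $F_f$ trivially implies it for $F_{\bar f}$, deduce the Ahlfors property for $\widetilde{F_{\bar f}}$, and finally transfer it to $\widetilde{F_f}$ via the independence-in-$f$ Propositions \ref{prop_equivalenceahlfors} and \ref{prop_equivalenceahlfors_withgradient}. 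In other words, the "Furthermore" clause is not an afterthought but an essential ingredient in proving duality for general $(f,\xi)$; your proposal never uses it in that role.

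Your route to the "Furthermore" clause itself is also different from the paper's and, as sketched, problematic. The paper proves the independence directly on the Ahlfors side: given a bounded $u$ violating the property for one $f$, one truncates near the supremum, $w=\max\{u-c,0\}$ with $c$ chosen so that $\min_{[c,u_\infty]}g\ge\max_{[0,u_\infty-c]}\bar g$, and contradicts the property for the other $f$; no manipulation of potentials is needed. Your proposal instead composes a Khas'minskii potential with a "suitably concave" increasing $\varphi$. For subsolutions this goes the wrong way: $\nabla\di(\varphi\circ w)=\varphi'(w)\nabla\di w+\varphi''(w)\,\di w\otimes\di w$, and a concave $\varphi$ adds a negative semidefinite term which, in the absence of a gradient bound (precisely the case of $F_f$ alone), cannot be absorbed; moreover matching the vanishing rates of $f$ and $f'$ at $0^-$ forces a shift $\varphi(0)<0$, and the composed function no longer dominates the prescribed $h$, so one would in any case have to pass through the weak Khas'minskii property and duality. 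Finally, two smaller points: in $(ii)$ the restriction $\lambda_j\in L^\infty$ (resp. $L^\infty_{\mathrm{loc}}$) enters through the doubled-variable comparison argument — the theorem-on-sums gradient $|p_\alpha|=\alpha\varrho(x_\alpha,y_\alpha)$ may blow up even though $\alpha\varrho^2\to 0$, and only the dual eikonal alternative caps it — not through a gradient bound on the potentials as you suggest; and the paper does not claim these conditions are necessary (it regards them as removable technical restrictions), so "necessary and sufficient" overstates what needs to be checked.
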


The independence on $(f,\xi)$ in the second part of the theorem will appear a number of times in what follows, and will be proved in Section \ref{sec_dependenceonf}. For instance, a prototype case of $f$ satisfying $(f1)$ is $f(r)= \lambda r$ with $\lambda>0$, as in the subequation $F = \widetilde{F} = \{\tr(A)\ge \lambda r\}$ related to the stochastic completeness of $X$, for which the value $\lambda$ plays no role.\par

\begin{remark}
\emph{We emphasize that $ii)$ includes the case of the mean curvature operator. More generally, the technical restrictions appearing in $(\EE 7)$ are necessary to apply the comparison results for viscosity solutions on manifolds that are currently available in the literature, and should be removable. This is the case, for instance, if $X$ has non-negative sectional curvature, for which the assumptions on $\lambda_j(t)$ in $ii)$ are not needed (notably, this  includes the $k$-Laplacian for $1 \le k < +\infty$). Other cases when AK-duality applies for quasilinear equations are discussed in Section \ref{sec_main_examples} and Appendix \ref{appendix_1}.
}
\end{remark}

We warn the reader that AK-duality for the case when $f$ satisfies the complementary assumption 
\begin{equation*}
\begin{array}{ll}
	(f1')  \quad &f \in C(\R), \quad f \ \text{is non-decreasing,} \\
& f=0 \ \text{ on some interval } (-\mu,0), \ \mu>0,
\end{array}
\end{equation*}
(typically, when $f\equiv 0$), depends in a more delicate way on $F$ and its properties, such as the validity of strong maximum principles in finite form. Note that $(f1')$ includes the example $\{\tr(A) \ge 0\}$ related to the parabolicity of $X$. We will address this case in Theorem \ref{teo_main_semH1}.\par
\vspace{0.2cm}

The equivalence between the Ahlfors-Liouville and the Khas'minskii property was previously known just for few special operators. In particular, 
\begin{itemize}
\item[-] Kuramochi-Nakai \cite{kuramochi,nakai} proved a similar duality for solutions of $\Delta u \ge 0$, where the Khas'minskii potentials are harmonic but do not necessarily satisfy $w \ge h$;
\item[-] Valtorta \cite{valto_reverse}, with a different approach, showed AK-duality for the $k$-Laplacian $\Delta_k u \ge 0$, $k \in (1, +\infty)$; 
\item[-] The first author and Valtorta \cite{marivaltorta}, with still another method (though inspired by \cite{valto_reverse}), settled the case of operators of the type
$$
\mathrm{div}\big(\mathcal{A}(x,u,\nabla u)\big) \ge \mathcal{B}(x,u),  
$$
where $\mathcal{A}$ behaves like a $k$-Laplacian, $k \in (1,+\infty)$.
\end{itemize}
%

\vspace{0.2cm}

As a first application of AK-duality, in Section \ref{sec_immesub} we investigate the following problem: let $\sigma : X^m \ra Y^n$ be either an isometric immersion or a Riemannian submersion, and let $F$ be a universal subequation. Suppose that $\widetilde{F}$, or $\widetilde{F} \cup \widetilde{E}$, has the Ahlfors property on $Y$. Under which condition $X$ inherits the Ahlfors property for $\widetilde{F}$, $\widetilde{F} \cup \widetilde{E}$ or some related subequations? We decide to focus on $F$ as in Examples $(\EE 3)$, $(\EE 5)$ or their complex counterparts, because of their relevance in the study of the geometry of submanifolds. Our outcome are Theorems \ref{teo_immersions} and \ref{teo_submersions} below, which answer the question under some reasonable conditions on the map $\sigma$.\par
Other applications to illustrate the usefulness of duality come from the following three examples. We begin with the classical maximum principle, stating that a function $u \in C^2(X)$ satisfies, at a maximum point $x_0$,  
\begin{equation}\label{finitemaximum}
u(x_0) = \max_X u, \quad |\nabla u(x_0)| =0, \quad \nabla^2 u(x_0) \le 0 \quad \text{ as a quadratic form.}
\end{equation}
When $X$ is non-compact and $u$ is bounded from above, the impossibility to guarantee the existence of such $x_0$ forced mathematicians to consider weakened versions of \eqref{finitemaximum}. In particular, for applications it is important to find a sequence $\{x_k\} \subset X$ for which $u(x_k) \ra \sup_X u$ and some (if possible, all) of the conditions in \eqref{finitemaximum} hold in a limit sense. The search for conditions to guarantee such maximum principles at infinity has stimulated an intense research in the past fifty years.

\subsection*{The Ekeland maximum principle, $\infty$-parabolicity and\\ geodesic completeness}
In a celebrated paper \cite{ekeland_2, ekeland}, I. Ekeland proved that any complete metric space satisfies the following property.
\begin{definition}\label{def_ekeland_classico}
A metric space $(X,\di)$ satisfies the \emph{Ekeland maximum principle} if, for each $u \in \USC(X)$ bounded from above, there exists a sequence $\{x_k\} \subset X$ with the following properties:
$$
u(x_k) > \sup_X u - \frac{1}{k}, \qquad u(y) \le u(x_k) + \frac{1}{k}\di(x_k,y) \ \text{ for each } \, y \in X.
$$
\end{definition}
Ekeland principle has a broad spectrum of applications (just to quote some of them, see \cite{ekeland_2}). It has been realized by J.D. Weston \cite{weston} and F. Sullivan \cite{sullivan} that the principle is, indeed, \emph{equivalent} to $X$ being complete. If we consider the dual eikonal equation $\widetilde{E} = \{|p| \ge 1\}$, and we compare Definitions \ref{def_ahlfors} and  \ref{def_ekeland_classico}, we can view the Ahlfors property for $\widetilde{E}$ as a sort of \emph{viscosity version of Ekeland principle.} A priori, its equivalence with the Ekeland principle in Definition \ref{def_ekeland_classico} doesn't seem obvious to us, and indeed it is an application of the above duality. Furthermore, another property turns out to be equivalent to $X$ being complete: the Ahlfors property for the $\infty$-Laplacian. Summarizing, we have the following theorem.

\begin{theorem}\label{teo_ekeland_intro}
Let $X$ be a Riemannian manifold. Then, the following statements are equivalent:
\begin{itemize}
\item[(1)] $X$ is complete.
\item[(2)] the dual eikonal $\widetilde E= \{|p|\ge 1\}$ has the Ahlfors property (viscosity, Ekeland principle).
\item[(3)] the eikonal $E=\{|p| \le 1\}$ has the Khas'minskii property.
\item[(4)] the infinity Laplacian $F_\infty \doteq \overline{\{A(p,p)>0\}}$ has the Ahlfors property.
\item[(5)] $F_\infty$ has the Liouville property.
\item[(6)] $F_\infty$ has the Khas'minskii property.
\item[(7)] $F_\infty$ has the next strengthened Liouville  property:\vspace{1ex}
\begin{quote}
Any $F_\infty$-subharmonic function $u \ge 0$ such that $|u(x)| = o\big(\varrho(x)\big)$ as $x$ diverges ($\varrho(x)$ the distance from a fixed origin) is constant.
\end{quote}
\end{itemize}
\end{theorem}

Regarding previous results on the Liouville property in $(5)$, we quote \cite{lindqvistmanfredi} where the authors proved that positive, $F_\infty$-harmonic functions $u$ on $\R^m$ are constant via a Harnack inequality. Observe that, up to reflecting, translating and taking the positive part of $u$, this is a weaker version of $(5)$. The result has been extended in \cite{CEG} to the full Liouville property for $F_\infty$ on the Euclidean space\footnote{The result in \cite{CEG} is a corollary of a more general Bernstein-type theorem, stating that any $u \in F_\infty(\R^m)$ below some affine function on $\R^m$ is necessarily affine.}. However, to the best of our knowledge the first result that relates the completeness of $X$ to the $\infty$-Laplacian is the following, recent one in \cite{pigolasetti_ensaio}; there, the authors proved that $X$ is complete if and only if, for each compact set $K$,
$$
\mathrm{cap}_\infty(K) \doteq \inf\Big\{ \|\di u\|_\infty, \ \ : \ \ u \in \lip_c(X), \ u=1 \ \text{ on } \, K\Big\}
$$
is zero. This last property is named $\infty$-parabolicity, because of its direct link with the standard definition of $k$-parabolicity via capacity (see \cite{holopainen, troyanov2}). As in the $k<+\infty$ case, the interplay with the Liouville property for $F_\infty$ depends on the characterization of solutions of
$$
\begin{cases}
\text{$u$ is $F_\infty$-harmonic on $\Omega \backslash K$,} \\
u=1 \ \text{ on } \, \partial K,  \qquad u=0 \quad \text{on } \, \partial \Omega,
\end{cases}
$$
(here, $\Omega$ open, $K \subset \Omega$ compact) as suitable minimizers for the variational problem
\begin{equation*}\label{inftylapla}
\inf_{\mathcal{L}(K,\Omega)} \|\di u\|_{L^\infty}, \quad \mathcal{L}(K,\Omega) \doteq \big\{ u \in \lip(\Omega) \ : \ u=1 \ \text{ on } \, K, \ u=0 \ \text{ on } \, \partial \Omega\big\},
\end{equation*}
a parallel first considered by G. Aronsson \cite{aronsson} and proved by R. Jensen \cite{jensen} when $X=\R^m$.

\subsection*{The Omori-Yau maximum principles}

The Omori-Yau maximum principles have been introduced by H. Omori \cite{omori} and S.T. Yau \cite{chengyau, yau}: they stated, respectively, the next criteria for the Hessian and the Laplacian.

\begin{definition}\label{def_hessianLaplacian_classico}
Let $(X, \metric)$ be a Riemannian manifold. We say that $X$ satisfies the \emph{strong Hessian (respectively, Laplacian) maximum principle} if, for each $u \in C^2(X)$ bounded from above, there exists a sequence $\{x_k\} \subset X$ with the following properties:
\begin{equation}\label{strong_HessianLaplacian}
\begin{array}{ll}
(\text{Hessian}) & \quad u(x_k) > \sup_X u - k^{-1}, \quad |\nabla u(x_k)| < k^{-1}, \quad \nabla^2 u(x_k) \le k^{-1} \metric ;\\[0.2cm]
(\text{Laplacian}) & \quad u(x_k) > \sup_X u - k^{-1}, \quad |\nabla u(x_k)| < k^{-1}, \quad \Delta u(x_k) \le k^{-1}.
\end{array}
\end{equation}
\end{definition}
%
Here, the word ``strong" refers to the presence of the gradient condition $|\nabla u(x_k)|< k^{-1}$. Note that the first two in \eqref{strong_HessianLaplacian} hold, for suitable $\{x_k\}$, on each complete manifolds in view of Ekeland principle. However, differently from Ekeland's one, the Hessian and Laplacian principles depend in a subtle way on the geometry of a complete $X$, cf. \cite{omori, yau, chenxin, rattorigolisetti}. A systematic study has been undertaken in \cite{prsmemoirs} and \cite{aliasmastroliarigoli}, where the authors described a general function-theoretic criterion (\cite[Thm. 2.4]{aliasmastroliarigoli}, see also \cite{borbely, bessalimapessoa}): the strong Laplacian maximum principle holds provided that $X$ supports a function $w$ with the following properties:
\begin{equation}\label{strongKhasminskii}
\begin{aligned}
&0 < w \in C^2(X\backslash K) \ \text{ for some compact $K$}, \quad w \ra + \infty \ \text{ as } \, x \ \text{ diverges}, \\[0.2cm]
&|\nabla w| \le G(w), \qquad \Delta w \le G(w),
\end{aligned}\hspace{-1em}
\end{equation} 
for some $G$ satisfying
\begin{equation}\label{ipo_G_SMP}
0< G \in C^1(\R^+), \qquad G' \ge 0, \qquad G^{-1} \not \in L^1(+\infty). 
\end{equation}
For the Hessian principle, the last condition in \eqref{strongKhasminskii} has to be replaced by $\nabla^2 w \le G(w) \metric$. The criterion is effective, since the function in \eqref{strongKhasminskii} can be explicitly found in a number of geometrically relevant applications, see \cite{prsmemoirs, aliasmastroliarigoli}: for instance, letting $\varrho(x)$ denote the distance from a fixed origin $o$, when $X$ is complete \eqref{strongKhasminskii} holds if the Ricci curvature satisfies
\begin{equation}\label{condi_ricciPRS}
\Ricc_x(\nabla \varrho, \nabla \varrho) \ge - G^2\big(\varrho(x)\big) \quad \text{ outside of } \, \cut(o).
\end{equation}
However, $w$ might be independent of the curvatures of $X$, and actually there are cases when \eqref{strongKhasminskii} is met but the sectional curvature of $X$ goes very fast to $-\infty$ along some sequence \cite[p. 13]{prsmemoirs}. For the Hessian principle, \eqref{condi_ricciPRS} has to be replaced by an analogous decay for all of the sectional curvatures of $X$:
\begin{equation}\label{condi_sectPRS}
 \mathrm{Sect}(\pi_x) \ge - G^2\big(\varrho(x)\big) \quad \begin{array}{l}
\forall \, x \not\in \cut(o), \\[0.1cm]
\forall \, \pi_x \le T_x X \ \text{ $2$-plane containing $\nabla \rho$.}
\end{array}
\end{equation}
Note that \eqref{ipo_G_SMP} includes the case when $G(t)$ is constant, considered in \cite{omori, yau}. With simple manipulation, we can extract from \eqref{strongKhasminskii} a weak Khas'minskii property analogous to \eqref{khasmi_original}: first, we observe (as in \cite{kimlee}) that up to replacing $w$ with 
$$
\int_0^w \frac{\di s}{G(s)},
$$
without loss of generality we can choose $G(w) \equiv 1$ in \eqref{strongKhasminskii}; next, since $w$ is an exhaustion, for $\lambda>0$ fixed it holds $\Delta w \le 1 \le \lambda w$ on $X\backslash K$, if $K$ is large enough. In other words, from \eqref{strongKhasminskii} we can produce a new $w$ satisfying $|\nabla w| \le 1$, $\Delta w \le \lambda w$ outside some compact set. Reflecting and rescaling $w$, we obtain weak Khas'minskii potentials as in Definition \ref{def_khasmi} for the subequation 
\begin{equation}\label{strongkhasm}
\big\{ \tr(A) \ge \lambda r\big\} \cap \big\{|p| \le 1\big\}, \qquad \text{with} \ \lambda>0.
\end{equation}
Observe that the presence of a divergent $w$ with bounded gradient guarantees that $X$ is complete, in other words, \eqref{strongKhasminskii} implies both the completeness of $X$ and the strong Laplacian maximum principle. However, whether or not the Hessian-Laplacian principles imply a corresponding Khas'minskii conditions, or at least the geodesic completeness of $X$, was unknown. To investigate the interplay between the three properties, we need to define an appropriate viscosity version of the Omori-Yau principles. First, we rephrase Definition~\ref{def_hessianLaplacian_classico} as follow, say in the Hessian case: $X$ satisfies the Hessian principle if, for each $\eps>0$, there exists no $u \in C^2(X)$ bounded above and such that, for some $\gamma < \sup_X u$, at all points $x \in \{u> \gamma\}$ the $2$-jet $J^2_xu$ belongs to either
\begin{align}\label{class_stronglapla}
\big\{ |p| \ge \eps \big\}, \qquad \text{or} \qquad \big\{ \lambda_m(A) \ge \eps \big\}.\ \text{\footnotemark}
\end{align}
\footnotetext{As before, $\{\lambda_j(A)\}$ is the sequence of eigenvalues of $A$ in increasing order.}
Because of the translation and rescaling properties of the subequations in \eqref{class_stronglapla}, to check the principle we can take $\eps=1$ and $\gamma=0$ without loss of generality. In view of Definition \ref{def_ahlfors}, we propose the following

\begin{definition}\label{def_OmoriYau_visco} Let $X$ be a Riemannian manifold.\vspace{0.1cm} 
\item[$\qquad$] Consider the subequation $F \!=\! \{\lambda_1(A) \!\ge\! -1\}$, whose dual is $\widetilde{F}\!=\! \{\lambda_m(A) \!\ge\! 1\}$. We say that $X$ satisfies the \emph{viscosity, strong Hessian principle} if the Ahlfors property holds for $\widetilde{F} \cup \widetilde{E}$.\vspace{0.1cm}
\item[$\qquad$] Consider the subequation $F= \{\tr(A) \!\ge\! -1\}$, whose dual is $\widetilde{F}= \{\tr(A) \ge 1\}$. 
We say that $X$ satisfies the \emph{viscosity, strong Laplacian principle} if the Ahlfors property holds for $\widetilde{F} \cup \widetilde{E}$.
\end{definition}
%

Clearly, the viscosity, Hessian (Laplacian) principle imply the corresponding $C^2$ one in \eqref{strong_HessianLaplacian}. However, it is worth to stress the following point:
\begin{quote}
\emph{the Omori-Yau principles for $C^2$-functions are granted, in all the examples that we are aware of, under \eqref{strongKhasminskii} or its Hessian counterpart, that also imply the validity of the viscosity ones (see Theorems \ref{teo_Laplacian_intro}, \ref{teo_hessianmax_intro} and Proposition \ref{cor_sufficientiSMP} below); therefore, we can safely replace Definition \ref{def_hessianLaplacian_classico} with the stronger Definition~\ref{def_OmoriYau_visco} without affecting the range of applicability of the principles.} 
\end{quote}
\begin{remark}
\emph{The possible equivalence between the classical and viscosity versions of the Omori-Yau principles is discussed in the subsection ``open problems" below. There, we also comment on an alternative definition of a viscosity, strong Hessian and Laplacian principles recently appearing in \cite{pengzhou}. 
}
\end{remark}
Passing to viscosity solutions allows us to clarify the interplay between Hessian-Laplacian principle, geodesic completeness and the Khas'minskii property. Firstly, since the Ahlfors property for $\widetilde{F}\cup \widetilde{E}$ implies that for $\widetilde{E}$, Theorem \ref{teo_ekeland_intro} has the next
\begin{corollary}\label{cor_SMPecomplete}
Any manifold $X$ satisfying the viscosity, strong Laplacian principle must be complete.
\end{corollary}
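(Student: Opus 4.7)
The argument is short and exploits the monotonicity of the Ahlfors property with respect to inclusions. My plan is as follows.

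First I would record the elementary observation that if $G_1 \subset G_2$ are two subsets of $J^2(X)$, then every $G_1$-subharmonic function is automatically $G_2$-subharmonic, because the $2$-jet constraint is weaker. Consequently, if the Ahlfors property holds for $G_2$ (with associated set $H_2 = G_2 \cup \{r \le 0\}$), it holds for $G_1$ as well: indeed, given any $u \in H_1(\overline{U}) \subset H_2(\overline{U})$ bounded above and positive somewhere, the required equality $\sup_{\partial U} u^+ = \sup_{\overline{U}} u$ follows from the hypothesis on $G_2$.

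Next I would apply this monotonicity to the pair $\widetilde{E} \subset \widetilde{F}\cup \widetilde{E}$, where $\widetilde{F} = \{\tr(A) \ge 1\}$ and $\widetilde{E} = \{|p| \ge 1\}$. By assumption, $X$ satisfies the viscosity, strong Laplacian principle, i.e.\ the Ahlfors property holds for $\widetilde{F}\cup \widetilde{E}$. The inclusion above then forces the Ahlfors property to hold for the dual eikonal $\widetilde{E}$ alone.

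Finally I would invoke the equivalence $(1)\Leftrightarrow (2)$ in Theorem \ref{teo_ekeland_intro}, which identifies the Ahlfors property for $\widetilde{E}$ with the completeness of the Riemannian manifold $X$. Combining the two steps yields that $X$ is complete, as claimed. There is essentially no obstacle here: all the depth is concentrated in Theorem \ref{teo_ekeland_intro}, and the corollary follows by nothing more than unpacking the definitions and observing the monotonicity of the Ahlfors property under enlargement of the subequation.
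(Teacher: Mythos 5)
Your argument is correct and coincides with the paper's own (implicit) proof: the paper derives the corollary precisely by observing that the Ahlfors property for $\widetilde{F}\cup\widetilde{E}$ implies that for $\widetilde{E}$ (the monotonicity you spell out), and then invoking the equivalence $(1)\Leftrightarrow(2)$ of Theorem \ref{teo_ekeland_intro}.
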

Secondly, as a particular case of Theorem \ref{cor_bonito}, there is equivalence with the Khas'minskii property. We recall that properties $(f1),(\xi 1)$ are defined in \eqref{def_f1xi1}, and that 
$$
\begin{array}{ll}
E_\xi \doteq \overline{\big\{ |p| < \xi(r)\big\}}, & \qquad \text{has dual} \qquad \widetilde{E_\xi} = \overline{\big\{ |p| > \xi(-r)\big\}}; \\[0.2cm]
F_f \doteq \big\{ \tr(A) \ge f(r)\big\}, & \qquad \text{has dual} \qquad \widetilde{F_f} = \big\{ \tr(A) \ge -f(-r)\big\}.
\end{array}
$$

\begin{theorem}\label{teo_Laplacian_intro}
The following properties are equivalent:
\begin{itemize}
\item[(1)] $X$ has the viscosity, strong Laplacian principle;\vspace{0.1cm}
\item[(2)] $\widetilde{F_f}\cup \widetilde{E_\xi}$ has the Ahlfors property for some (each) $(f,\xi)$ satisfying  $(f1+\xi 1)$;\vspace{0.1cm}
\item[(3)] $F_f \cap E_\xi$ has the Khas'minskii property for some (each) $(f,\xi)$ satisfying  $(f1+\xi 1)$;\vspace{0.1cm}
\item[(4)] $F_f \cap E_\xi$ has the weak Khas'minskii property for some (each) $(f,\xi)$ satisfying  $(f1+\xi 1)$.
\end{itemize}
\end{theorem}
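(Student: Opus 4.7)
The strategy is to split the equivalence into two parts: $(2)\iff(3)$, which follows immediately from the duality theorem already proved, and $(1)\iff(2)$, which is established by a direct rescaling argument in the spirit of ``reducing the constant subequation to the monotone one and vice versa''.

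Specifically, Theorem~\ref{cor_bonito} part $i)$ applied to $F_f\cap E_\xi$ (with $F_f$ the trace subequation of $(\EE 2)$) gives duality, hence $(2)\iff(3)$, and in addition both properties are independent of the particular pair $(f,\xi)$ satisfying $(f1+\xi1)$. It therefore suffices to prove $(1)\iff(2)$. Throughout, write $F=\{\tr(A)\ge -1\}$, $E=\{|p|\le 1\}$, $\widetilde{F}=\{\tr(A)\ge 1\}$, $\widetilde{E}=\overline{\{|p|>1\}}$ for the constant subequations entering Definition~\ref{def_OmoriYau_visco}.

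For $(2)\Rightarrow(1)$, using the independence just noted we may choose the bounded pair $f(r)=\max\{r,-1\}$ and $\xi(r)=\min\{|r|,1\}$ for $r\le 0$ (extended by $0$ on $\{r\ge 0\}$); both satisfy $(f1+\xi1)$ and enjoy the additional feature $-f(-r)\le 1$ and $\xi(-r)\le 1$ for every $r>0$. If $u\in H(\overline U)$ with $H=(\widetilde{F}\cup\widetilde{E})\cup\{r\le 0\}$ is bounded above and positive somewhere, then at any test function $\psi$ touching $u$ from above at a point $x_0\in\{u>0\}$ the $2$-jet lies in $\widetilde{F}\cup\widetilde{E}$, hence $\tr(D^2\psi)\ge 1\ge-f(-u(x_0))$ or $|D\psi|\ge 1\ge\xi(-u(x_0))$. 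Therefore $u\in(\widetilde{F_f}\cup\widetilde{E_\xi})\cup\{r\le 0\}$, and (2) applied to this $(f,\xi)$ gives $\sup_{\partial U}u^+=\sup_{\overline U}u$, i.e.\ (1).

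For $(1)\Rightarrow(2)$, fix any $(f,\xi)$ satisfying $(f1+\xi1)$ and any $u\in\widetilde H(\overline U)$, with $\widetilde H=(\widetilde{F_f}\cup\widetilde{E_\xi})\cup\{r\le 0\}$, bounded above and positive somewhere; set $M=\sup_{\overline U}u>0$. For $\epsilon\in(0,M)$ define
\begin{equation*}
c_\epsilon=\min\{-f(-\epsilon),\,\xi(-\epsilon)\}>0, \qquad w=\frac{u-\epsilon}{c_\epsilon}\in USC(\overline U).
\end{equation*}
A test function $\psi$ touching $w$ from above at $x_0\in\{w>0\}=\{u>\epsilon\}$ corresponds to the test function $c_\epsilon\psi+\epsilon$ touching $u$ from above at $x_0$. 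Since $u(x_0)>0$, the jet $(c_\epsilon D\psi(x_0),c_\epsilon D^2\psi(x_0))$ of $u$ lies in $\widetilde{F_f}\cup\widetilde{E_\xi}$, and using $u(x_0)>\epsilon$ with the monotonicity in $(f1+\xi1)$ we obtain $-f(-u(x_0))\ge-f(-\epsilon)\ge c_\epsilon$ and $\xi(-u(x_0))\ge\xi(-\epsilon)\ge c_\epsilon$. Hence $\tr(D^2\psi(x_0))\ge 1$ or $|D\psi(x_0)|\ge 1$, showing $w\in H(\overline U)$. Applying (1) to $w$ gives $\sup_{\partial U}w^+=\sup_{\overline U}w$, which rewrites as $\sup_{\partial U}(u-\epsilon)^+=M-\epsilon>0$; this forces $\sup_{\partial U}u=M$ and therefore $\sup_{\partial U}u^+=\sup_{\overline U}u$, proving (2).

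The main subtlety is the viscosity-theoretic bookkeeping when pushing $\widetilde H$-subharmonicity of $u$ to $H$-subharmonicity of $w$ through the affine rescaling: the covariance of test functions together with the monotonicity of $(f,\xi)$ is precisely what is needed to absorb the non-trivial $r$-dependence of the subequation and to recognize the constant constraint at the endpoint. Everything else reduces to Theorem~\ref{cor_bonito}.
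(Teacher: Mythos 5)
Your proof is correct and follows essentially the same route as the paper: the equivalence $(2)\Leftrightarrow(3)$ is exactly the duality of Theorem \ref{cor_bonito}, while your direct rescaling argument for $(1)\Leftrightarrow(2)$ is in substance a re-derivation of Proposition \ref{prop_equivalenceahlfors_withgradient} (whose item $(iii)$, combined with the rescaling remark after \eqref{class_stronglapla}, is how the paper passes between the constant pair $f\equiv -1$, $\xi\equiv 1$ of Definition \ref{def_OmoriYau_visco} and pairs satisfying $(f1+\xi 1)$). The jet computations with $c_\epsilon=\min\{-f(-\epsilon),\xi(-\epsilon)\}$ and with the bounded pair $f(r)=\max\{r,-1\}$, $\xi(r)=\min\{|r|,1\}$ are sound, so there is no gap.
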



\begin{remark}
\emph{Via jet-equivalence, Theorem \ref{teo_Laplacian_intro} also holds if we replace the prototype subequation $F = \{ \tr(A) \ge -1\}$ with the more general
$$
F_L = \big\{ (x,r,p,A) \ : \  \tr\big(T(x)A\big) + \langle W(x), p \rangle \ge -b(x)\big\}, 
$$
describing solutions of a general, linear elliptic inequality $Lu \ge -b(x)$ in non-divergence form with locally Lipschitz sections
$$
0 < T \ : \ X \ra \Sym^2(T^*X), \qquad W \ : \ X \ra T^*X, \qquad 0 < b \in C(X).
$$
See Example \ref{ex_linear} for details. In this way, we complement and extend recent results in \cite{albanesealiasrigoli, aliasmastroliarigoli, bessapessoa}, where the implication Khas'minskii $\Rightarrow$ strong Laplacian principle is investigated.
}
\end{remark}

As a direct corollary of Theorem \ref{teo_Laplacian_intro}, and of the corresponding Theorem~\ref{teo_hessianmax_intro} for the strong Hessian principle that will be discussed later, the mild sufficient conditions for the validity of the Omori-Yau principles described in Examples 1.13 and 1.14 of \cite{prsmemoirs} are still enough to guarantee their viscosity versions. The following result, a refined  version of these examples, will be proved in Section \ref{sec_immesub}. We premit the following 

\begin{definition}\label{def_medioricci}
Let $X^m$ be a complete $m$-dimensional manifold, and let $k \in \{1,\ldots, m-1\}$. The $k$-th (normalized) Ricci curvature is the function
$$
\begin{array}{ccccl}
\Ricc^{(k)} & : &  TX &  \longrightarrow  &  \R \\
&& v & \longmapsto & \disp \inf_{\begin{array}{c}
\mathcal{W}_k \le v^\perp \\
\dim \mathcal{W}_k = k
\end{array}} \left( \frac{1}{k} \sum_{j=1}^k \mathrm{Sect}(v \wedge e_j)\right), 
\end{array}
$$
where $\{e_j\}$ is an orthonormal basis of $\mathcal W_k$.
\end{definition}

Bounding the $k$-th Ricci curvature from below is an intermediate condition between requiring a lower bound on the sectional and on the Ricci curvatures, and indeed
$$
\begin{array}{l}
\mathrm{Sect} \ge - G^2(\rho) \quad  \Longrightarrow \quad \Ricc^{(k-1)} \ge - G^2(\rho)\|\cdot \| \quad \Longrightarrow  \\[0.2cm]
\disp \Longrightarrow \quad \Ricc^{(k)} \ge - G^2(\rho)\|\cdot \| \quad  \Longrightarrow \quad \Ricc \ge - G^2(\rho)\metric.
\end{array}
$$
The first and last implications are equivalences provided that, respectively, $k = 2$ and $k=m-1$ (here, $\Ricc$ is the normalized Ricci tensor).

\begin{proposition}\label{cor_sufficientiSMP}
Let $X^m$ be a complete manifold, $m \ge 2$. Fix $o \in X$ and let $\rho(x) = \mathrm{dist}(x,o)$. Then, 
\begin{itemize}
\item[(i)] The viscosity, strong Hessian principle holds provided that the sectional curvature of $X$ enjoys \eqref{condi_sectPRS}, for some $G$ satisfying \eqref{ipo_G_SMP}.
\item[(ii)] The viscosity, strong Laplacian principle holds provided that the Ricci curvature of $X$ enjoys \eqref{condi_ricciPRS}, for some $G$ satisfying \eqref{ipo_G_SMP}.
\item[(iii)] The viscosity, strong Laplacian principle holds on $X$ if $X$ is isometrically immersed via $\sigma : X \ra Y$ into a complete manifold $Y$ satisfying 
\begin{equation}\label{assu_medioricci}
\Ricc_x^{(m-1)}(\nabla \rho) \ge -G^2\big( \bar \rho(x)\big) \qquad \text{for each } \, x \not \in \cut(\bar o),
\end{equation}
where $\bar \rho$ is the distance in $Y$ from some fixed origin $\bar o$, $G$ satisfies \eqref{ipo_G_SMP}, and the mean curvature vector $H$ is bounded by
$$
|H(x)| \le C G\big( \bar \rho(\sigma(x)\big),
$$ 
for some constant $C>0$. 
\end{itemize}
\end{proposition}

Proposition \ref{cor_sufficientiSMP} is indeed a special case of Theorem \ref{prop_sufficientiFk}, that considers the subequation $F$ in $(\EE 5)$ for each $k \le m$. We refer the reader to Section \ref{sec_immesub} for further insight.

\subsection*{Weak maximum principles, stochastic and\\ martingale completeness}
The weak Hessian and Laplacian principles have been introduced by S.~Pigola, M. Rigoli and A.G. Setti in \cite{prs_proceeding, prsmemoirs}, starting from the observation that the gradient condition in \eqref{strong_HessianLaplacian} is unnecessary in many geometric applications. By definition, $X$ is said to satisfy the \emph{weak Hessian (respectively, Laplacian) principle} if, for each $u \in C^2(X)$ bounded above, there exists $\{x_k\}$ such that 
\begin{equation}\label{weak_HessianLaplacian}
\begin{array}{ll}
\text{(Hessian)} & \quad u(x_k) > \sup_X u - k^{-1}, \qquad \nabla^2 u(x_k) \le k^{-1} \metric ;\\[0.2cm]
\text{(Laplacian)} & \quad u(x_k) > \sup_X u - k^{-1}, \qquad \Delta u(x_k) \le k^{-1}.
\end{array}
\end{equation}
In the same works the authors prove, in a $C^2$ setting, that the weak Laplacian principle is equivalent to the Ahlfors property for $\big\{ \mathrm{Tr}(A) \ge \lambda r\big\}$ for some (each) $\lambda>0$, and therefore its validity is guaranteed under condition \eqref{khasmi_original}. Our first remark is that the weak and strong Laplacian principles are \emph{not} equivalent, as the following examples show.

\begin{example}[\textbf{weak Laplacian $\neq$ strong Laplacian}]\label{laplacian_different}
It is easy to produce an example of \emph{incomplete} manifold $X$ satisfying the weak Laplacian principle but not the strong one: for example, $X=\R^m\backslash \{0\}$ has the weak Laplacian principle because the function $w = -|x|^2 - |x|^{2-m}$ (for $m \ge 3$) or $-|x|^2+\log|x|$ (for $m=2$) is a weak Khas'minskii potential satisfying \eqref{khasmi_original}; on the other hand, $X$ does not satisfy the strong Laplacian principle, since any sequence tending to the supremum of $u(x) = e^{-|x|}$ cannot satisfy the gradient condition in \eqref{strong_HessianLaplacian}.
A very nice example of a \emph{complete}, radially symmetric surface satisfying the weak Laplacian principle but not the strong one has recently been found in \cite{borbely_counter}.
\end{example}
Passing to weak principles has some advantages. A first point is that the weak Laplacian principle is equivalent to the stochastic completeness of $X$, that is, to the fact that paths of the Brownian motion on $X$ have infinite lifetime almost surely (\cite{prs_proceeding, prsmemoirs}). Consequently, one can avail of heat equation techniques to give a sharp criterion for its validity that does not depend on curvatures: by \cite[Thm. 9.1]{grigoryan}, a complete $X$ satisfies the weak Laplacian principle whenever
\begin{equation}\label{condi_volgrigor}
\frac{r}{\log\vol(B_r)} \not \in L^1(+\infty),
\end{equation}
$B_r$ being the geodesic ball of radius $r$ centered at some fixed origin. Secondly, the absence of a gradient condition allows a natural extension of Definition \ref{weak_HessianLaplacian} to distributional solutions and to general quasilinear operators $\Delta_a$ in place of $\Delta$, including most of those considered in Example $(\EE 7)$: by \cite{prs_gafa, prsmemoirs}, a quasilinear operator $\Delta_a$ is said to satisfy the weak maximum principle if, for each $u \in C^0(X)$ bounded above and in a suitable Sobolev class (typically, $u \in W^{1,\infty}_\mathrm{loc}(X)$), and for each $\gamma < \sup_X u$,  
\begin{equation}\label{def_weakmax}
\inf_{\{u>\gamma\}} \Delta_a u \le 0 \qquad \text{in a weak sense.}
\end{equation}
The key point here is that, via refined integral estimates, property \eqref{def_weakmax} holds under mild volume growth conditions of the type in \eqref{condi_volgrigor}, see \cite{karp, prs_gafa, prsmemoirs, rigolisalvatorivignati_3, maririgolisetti}. The equivalence between distributional and viscosity solutions for $\Delta_a u \ge f(u)$ has been investigated for some families of $a(t)$ in \cite{julinjuutinen, fangzhou} (see also \cite{HL_equivalence}). 
\begin{definition}
We say that $\Delta_a$ has the viscosity, weak (respectively, strong) maximum principle if the Ahlfors property holds for  
\begin{equation}
\begin{array}{lll}
& \quad \disp \overline{\big\{ |p|>0, \ \tr( T(p)A) > \eps\big\}} & \quad \text{for the weak principle in } \eqref{def_weakmax}, \\[0.2cm]
 & \disp \overline{\big\{ |p|>0, \ \tr( T(p)A) > \eps\big\}} \cup \big\{ |p| \ge \eps\big\}
 & \quad \text{for the strong principle,} 
\end{array}\label{quasilinear_wmp}
\end{equation}
for each $\eps>0$. 
\end{definition}
Since $\Delta_a$ is generally not homogeneous, we cannot consider just $\eps=1$. However, as in Theorem \ref{teo_Laplacian_intro}, by Propositions \ref{prop_equivalenceahlfors} and \ref{prop_equivalenceahlfors_withgradient} below the Ahlfors property for \eqref{quasilinear_wmp} can be checked just on a single subequation. Again, as a particular case of Theorem \ref{cor_bonito}, we have
\begin{proposition}
Consider $F_f$ in $(\EE 7)$, and $E_\xi$ in \eqref{def_Exi}. Then, $\Delta_a$ has the viscosity, weak (strong) maximum principle if and only if the Ahlfors property holds for $\widetilde{F_f}$ (resp. $\widetilde{F_f} \cup \widetilde{E_\xi}$), for some (equivalently, each) pair $(f,\xi)$ satisfying $(f1+\xi 1)$. Furthermore, AK-duality holds under the assumptions in $ii)$ of Theorem \ref{cor_bonito}.
\end{proposition}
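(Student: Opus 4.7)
The plan is to treat this proposition as a direct corollary of Theorem \ref{cor_bonito}$ii)$ together with the equivalence results cited just before the statement (Propositions \ref{prop_equivalenceahlfors} and \ref{prop_equivalenceahlfors_withgradient}). The heart of the argument is to translate the family of Ahlfors conditions indexed by $\eps>0$ appearing in the definition \eqref{quasilinear_wmp} into a single Ahlfors statement for the dual subequation $\widetilde{F_f}$, where the parameter $\eps$ is replaced by the nonlinear dependence on $r$ coming from $-f(-r)$.

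First, I would handle the weak case. Unwinding \eqref{quasilinear_wmp}, the viscosity weak maximum principle for $\Delta_a$ is the statement that, for each $\eps>0$, the Ahlfors property holds for the subequation $G_\eps := \overline{\{|p|>0,\, \tr(T(p)A) > \eps\}}$. Now fix any $f$ satisfying $(f1)$ and observe that $\widetilde{F_f}$ has the form (roughly) $\overline{\{|p|>0,\, \tr(T(p)A) \geq -f(-r)\}}$. For $u \in \widetilde{F_f}(\overline{U})$ bounded above and positive somewhere, the Ahlfors definition in Definition \ref{def_ahlfors} concerns $H = \widetilde{F_f} \cup \{r \leq 0\}$, so the substantive constraint only activates on $\{u>0\}$; there, since $f$ is non-decreasing with $f(0)=0$ and $f(s)<0$ for $s<0$, the dual constraint localizes, for each $\gamma>0$, to $\tr(T(\nabla u)\Hess u) \geq -f(-\gamma)>0$ on the set $\{u>\gamma\}$. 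Taking $\eps = -f(-\gamma)$ shows that any Ahlfors-counterexample for $\widetilde{F_f}$ yields one for $G_\eps$. Conversely, if some $\eps$-Ahlfors fails, a rescaling/translation trick based on $f$ (any $f$ with $(f1)$ hits every positive $\eps$-level) produces a counterexample for $\widetilde{F_f}$. This is precisely the content of Propositions \ref{prop_equivalenceahlfors} and \ref{prop_equivalenceahlfors_withgradient}; I would invoke them rather than redo the viscosity bookkeeping. The strong case is identical, with the gradient subequation $\widetilde{E_\xi}$ playing the same role relative to $\{|p| \geq \eps\}$: by $(\xi 1)$, $\xi(-r)$ sweeps out all positive thresholds as $r>0$ varies, and the union with $\widetilde{E_\xi}$ matches the union with the eikonal bound in \eqref{quasilinear_wmp}.

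The clause "for some (equivalently, each) pair $(f,\xi)$ satisfying $(f1+\xi1)$" is now automatic: since every $(f,\xi)$ interpolates every positive $\eps$-level as $r$ moves, the Ahlfors condition for $\widetilde{F_f}$ (respectively $\widetilde{F_f}\cup\widetilde{E_\xi}$) is logically equivalent to the family of $\eps$-Ahlfors conditions regardless of the specific $(f,\xi)$. This is also consistent with, and a particular instance of, the independence statement appearing at the end of Theorem \ref{cor_bonito}, which I would cite directly. Finally, the "Furthermore" sentence is immediate from Theorem \ref{cor_bonito}$ii)$: under $\lambda_j(t)\in L^\infty(\R^+_0)$ duality holds for $F_f$, while under $\lambda_j(t)\in L^\infty_{\mathrm{loc}}(\R^+_0)$ duality holds for $F_f\cap E_\xi$, yielding the equivalence between the viscosity weak (respectively strong) principle and the corresponding Khas'minskii condition.

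The only genuinely non-routine step is the equivalence in the second paragraph, and that is precisely where Propositions \ref{prop_equivalenceahlfors} and \ref{prop_equivalenceahlfors_withgradient} do the work; the subtlety is that the Ahlfors definition uses the enlarged set $H = F\cup\{r\leq 0\}$, so one must be careful that a would-be counterexample $u$ is required to be $F$-subharmonic only on $\{u>0\}$ and not on all of $\overline{U}$, which is exactly what makes the threshold $\eps = -f(-\gamma)$ usable on the super-level set $\{u>\gamma\}$ for any $\gamma \in (0,\sup u)$. Once this is set up cleanly, the whole proposition collapses to a citation of Theorem \ref{cor_bonito}.
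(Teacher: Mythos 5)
Your proposal is correct and follows essentially the same route as the paper, which states this proposition without a separate proof precisely because it is a direct consequence of Propositions \ref{prop_equivalenceahlfors} and \ref{prop_equivalenceahlfors_withgradient} (whose item $(iii)$, with $f\equiv-\eps$ and $\xi\equiv\eps$, is exactly the family of subequations in \eqref{quasilinear_wmp}) together with Theorem \ref{cor_bonito}$ii)$ for the duality statement. Your sketch of the level-set truncation $w=\max\{u-\gamma,0\}$ and the care about $H=F\cup\{r\le 0\}$ match the mechanism inside those propositions, and since you defer the remaining bookkeeping to them, the argument is complete as stated.
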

We conclude by considering the weak Hessian principle. In analogy with Definition \ref{def_OmoriYau_visco}, we set
\begin{definition}
$X$ is said to satisfy the viscosity, weak Hessian principle if the Ahlfors property holds for $\widetilde{F} = \{ \lambda_m(A) \ge 1\}$. 
\end{definition}
Similarly to the case of the Laplacian, there seems to be a tight relation between the Hessian principles and the theory of stochastic processes. As suggested in \cite{prs_overview, prs_milan}, a good candidate to be a probabilistic counterpart of a Hessian principle is the \emph{martingale completeness of $X$}, that is, the property that each martingale on $X$ has infinite lifetime (see Section V in \cite{emery}). However, few is known about their interplay, and the picture seems different here. For instance, in striking contrast with the case of stochastic completeness, a martingale complete manifold must be geodesically complete (\cite[Prop. 5.36]{emery}). In \cite[Prop. 5.37]{emery}, by using probabilistic tools M. Emery proved that $X$ is martingale complete provided that there exists $w \in C^2(X)$ satisfying
\begin{equation}\label{khasmi_Hessian_original}
\begin{array}{l}
\disp 0 < w \in C^2(X), \qquad w(x) \ra +\infty \ \text{ as $x$ diverges,} \\[0.2cm]
|\nabla w| \le C, \qquad \nabla^2 w \le C \metric \ \text{ on } \, X,
\end{array}
\end{equation}
for some $C>0$. By rescaling and reflecting $w$, this is again a weak Khas'minskii property. Because of  \eqref{khasmi_Hessian_original}, one might guess that the martingale completeness of $X$ is likely to be related to the strong Hessian principle. However, in view of the equivalence in the case of the Laplacian, in \cite[Question 37]{prs_overview} and \cite{prs_milan} the authors ask whether the \emph{weak} Hessian principle implies the martingale completeness of $X$, or at least its geodesic completeness, and give some partial results. Our last contribution is an answer to the above questions: perhaps surprisingly, the viscosity, weak and strong Hessian principles are equivalent, and because of AK-duality (plus extra arguments) they imply that $X$ is martingale complete. Moreover, to check the viscosity, Hessian principle it is sufficient to consider semiconcave functions. We recall
\begin{definition}
A function $u : X \ra \R$ is semiconcave if, for each $x_0 \in X$, there exist a neighborhood $U$ of $x_0$ and $v\in C^2(U)$ such that $u+v$ is concave on $U$ (i.e. $u+v$ is a concave function when restricted to geodesics).
\end{definition}
Since semiconcave functions are locally Lipschitz and $2$-times differentiable a.e., $(2)$ in Theorem \ref{teo_hessianmax_intro} below is very close to the classical, $C^2$ weak Hessian principle in \eqref{weak_HessianLaplacian}. Summarizing, we have

\begin{theorem}\label{teo_hessianmax_intro}
Consider the subequation 
$$
F_f = \{ \lambda_1(A) \ge f(r)\}, 
$$
for some $f \in C(\R)$ non-decreasing. Then, 
\begin{itemize}
\item[-] AK-duality holds both for $F_f$ and $F_f\cap E_\xi$, for some (equivalently, each) $(f,\xi)$ satisfying $(f1+\xi 1)$. 
\end{itemize}
Moreover, the following properties are equivalent:
\begin{itemize}
\item[(1)] $X$ satisfies the viscosity, weak Hessian principle;
\item[(2)] $X$ satisfies the viscosity, weak Hessian principle for semiconcave functions;
\item[(3)] $\widetilde{F_f}$ has the Ahlfors property for some (each) $f$ of type $(f1)$;
\item[(4)] $X$ satisfies the viscosity, strong Hessian principle;
\item[(5)] $\widetilde{F_f} \cup \widetilde{E_\xi}$ has the Ahlfors property, for some (each) $(f,\xi)$ satisfying $(f1+\xi 1)$;
\item[(6)] $F_f \cap E_\xi$ has the Khas'minskii property with $C^\infty$ potentials, for some (each) $(f,\xi)$ satisfying $(f1 + \xi 1)$.
\end{itemize}
In particular, each of $(1), \ldots, (6)$ implies that $X$ is geodesically complete and martingale complete.
\end{theorem}

\subsection*{Classical vs. viscosity principles, and open problems}
We conclude this introduction by proposing some questions. First, because of Theorem \ref{teo_hessianmax_intro} we know that the viscosity, Hessian principle (weak or, equivalently, strong) implies the martingale completeness of $X$. It is therefore natural to investigate the reverse implication.
\begin{question}
Is the viscosity, Hessian principle equivalent to the martingale completeness of $X$?
\end{question}
The second issue is the relationship between the viscosity versions of the maximum principles and their classical counterparts for $C^2$ functions. We saw that relaxing the regularity of the class of functions in the definition of the principles makes them powerful enough to force more rigidity and deduce new implications, as in Theorems \ref{teo_ekeland_intro}, \ref{teo_Laplacian_intro}, \ref{teo_hessianmax_intro} and Corollary \ref{cor_SMPecomplete}. However, it is natural to ask whether the viscosity and classical definitions are, indeed, equivalent. This is the case for the weak Laplacian principle, see Remark \ref{prop_weak_classicalvisco}. However, the situation for the strong Laplacian and for the Hessian cases is more delicate.
\begin{question}
Can equivalence $(1) \Leftrightarrow (2)$ in Theorem \ref{teo_hessianmax_intro} be improved to show that the viscosity, Hessian principle is equivalent to the classical, $C^2$ one? 
\end{question}
\begin{question}
Is the viscosity, strong Laplacian principle equivalent to the classical, Yau's version of it for $C^2$ functions?  
\end{question}
Both the equivalence $(1) \Leftrightarrow (2)$ in Theorem \ref{teo_hessianmax_intro} and the one in Remark~\ref{prop_weak_classicalvisco} depend on the possibility to approximate USC functions contradicting the Ahlfors property with more regular ones. In the Hessian case, we exploit the semiconcavity of $\widetilde{F}$-harmonic functions, for $\widetilde{F}= \{ \lambda_m(A) \geq 1\}$, and a natural strategy to  find $C^2$ functions contradicting the Ahlfors property arguably leads to investigate how Riemannian convolution (see \cite{greene_wu}) behaves on $\widetilde{F}$-harmonics. To apply analogous ideas for the strong Laplacian principle, one also needs to study the regularity of $(\widetilde{F}\cup \widetilde{E})$-harmonics, for $\widetilde{F}= \{\tr(A) \geq 1\}$, which might be of independent interest. It could be possible that, in this case, an approximation via the heat flow be helpful.\par
Another problem that seems to be worth investigating is the relation between our viscosity versions of the strong (Hessian, Laplacian) principles and the approach in \cite{pengzhou}. In Theorems 1.3 and 1.4 therein, a viscosity strong maximum principle is seen as a ``theorem on sums at infinity" for a pair of functions $u,v \in \USC(X)$ with $\sup_X(u+v) < +\infty$. In the particular case when $u$ is constant, Theorem 1.3 can be rephrased as follows (we remark that, given $v \in \USC(X)$, $\overline{J}^{2,+}_xv$ denotes the closure of the set of $2$-jets of test functions for $v$ at $x$).

\begin{theorem}\cite{pengzhou}\label{teo_pengzhou} 
Let $X$ be a complete manifold with sectional curvature $\mathrm{Sect} \ge -\kappa^2$, for some constant $\kappa>0$. Then, for each $v \in \USC(X)$ bounded from above, there exists a sequence $\{x_\eps\}$ and jets $J_\eps = (v(x_\eps),p_\eps, A_\eps) \in \overline{J}^{2,+}_{x_\eps} v$ such that 
$$
v(x_\eps) > \sup_X v-\eps, \qquad |p_\eps| < \eps, \qquad A_\eps \le \eps I.
$$
\end{theorem}
Taking into account the definition of $\bar J^{2,+}_x v$, Theorem \ref{teo_pengzhou} is \emph{equivalent} to say that a complete manifold with $\mathrm{Sect} \ge -\kappa^2$ has the Ahlfors property for $\{\lambda_m(A) \ge 1\} \cup \widetilde{E}$, i.e., the viscosity, strong Hessian principle. This result is therefore a particular case of (i) in Proposition \ref{cor_sufficientiSMP}. On the other hand, a version of the principle for pairs of functions $u,v$ might be useful in view of possible applications, and suggests the following
\begin{question}
Does there exist a workable version of the Ahlfors property in the form of a ``theorem on sums" at infinity? Could it be useful to prove, for instance, comparison principles at infinity? 
\end{question}

Eventually, another issue concerns the removability of the conditions on $\{\lambda_j(t)\}$ in $ii)$ of Theorem \ref{cor_bonito}. It is very likely that AK-duality holds for any subequation locally jet-equivalent to $(\EE 7)$, but the technical restrictions depend on the lack of a suitable comparison theorem in a manifold setting, as stressed in the Appendix. For this reason, it seems to us very interesting to investigate the next 

\begin{question}
Does there exist a more general theorem on sums on manifolds, or a different comparison technique for viscosity solutions, that apply to quasilinear subequations on each manifold?
\end{question}

In this respect, the beautiful result in \cite{kawohlkutev} could be helpful.

\section{Preliminaries}\label{sec_prelim}
In this section, we will review some basics of Harvey-Lawson's approach to fully non-linear equations.

\subsection{Subequations and $F$-subharmonics}

Hereafter, $X$ will be a Riemannian manifold, possibly incomplete, of dimension $m \ge 1$. Given $x_0 \in X$, the function $\varrho_{x_0}$ will denote the distance function from $x_0$, and $B_R(x_0)$ the geodesic ball of radius $R$ centered at $x_0$. In some instances, we implicitly use the musical isomorphism between $TX$ and $T^*X$ to raise and lower indices of tensor fields, when this does not cause confusion: for example, we compute $\tr(\cal T \cdot \nabla^2 u)$ for a $(2,0)$-tensor $\cal T$, without specifying that we are considering the $(1,1)$-versions of $\cal T$ and $\nabla^2 u$, and so on.

Let $J^2(X) \ra X$ be the two jet-bundle over $X$:
$$ 
J^{2}(X) \cong \mathbf{R} \oplus T^{\ast}X \oplus \Sym^{2}(T^{\ast}X),
$$ 
and denotes its points with the $4$-ples
$$
(x,r,p,A) \in X \times \R \times T^\ast_xX \times \Sym^2(T_x^\ast X).
$$
A fully non-linear equation is identified as a constraint on the $2$-jets of functions to lie in prescribed well-behaved subsets $F \subset J^2(X)$, called \emph{subequations}. The basic properties required on $F$ involve the following subsets of $J^2(X)$:
$$
\begin{array}{lcll}
N & : & \disp N_x \doteq \big\{(r,0,0) :  r \le 0\big\} & (\text{the jets of non-positive constants});\\
P & : & \disp P_x \doteq \big\{(0,0,A) : A \ge 0\big\} & (\text{the positive cone}).
\end{array} 
$$
\begin{definition}
Let $F \subset J^2(X)$. Then $F$ is called a \emph{subequation} if it satisfies
\begin{itemize}
\item[-] the \emph{positivity} condition $(P):$  $\quad F+P \subset F$;
\item[-] the \emph{negativity} condition $(N):$  $\quad F+N \subset F$;
\item[-] the \emph{topological condition} $(T):$
$$
(i) \ \  F = \overline{\inte F}, \qquad (ii) \ \ F_x = \overline{\inte F_x}, \qquad (iii) \ \ (\inte F)_x = \inte F_x.
$$
\end{itemize}
\end{definition}
In particular, a subequation is a closed subset. Condition $(P)$ is a mild ellipticity requirement, while $(N)$ parallels the properness condition, as stated in \cite{CIL}.

A function $u \in C^2(X)$ is called $F$-subharmonic (respectively, strictly $F$-subharmonic) if, for each $x \in X$, $J^2_x u \in F_x$ (respectively, in $(\inte F)_x$). The definition can be extended to the space $\USC(X)$ of upper-semicontinuous, $[-\infty, +\infty)$-valued functions via the use of test functions.
\begin{definition}
Let $u \in \USC(X)$ and $x_0 \in X$. A function $\phi$ is called a test for $u$ at $x_0$ if $\phi \in C^2$ in a neighborhood of $x_0$, $\phi \ge u$ around $x_0$ and $\phi(x_0)=u(x_0)$.
\end{definition}

\begin{definition}\label{def_Fsubarm}
$u \in \USC(X)$ is said to be $F$-subharmonic if
$$
\forall \, x \in X, \ \ \ \forall \, \phi \text{ test for $u$ at x } \quad \Longrightarrow  \quad J^2_x\phi \in F_x .
$$
The set of $F$-subharmonic functions on $X$ is denoted by $F(X)$.
\end{definition}


To define strictly $F$-subharmonics, it is convenient to fix the Sasaki metric on $J^2(X)$, that is, a metric on $J^2(X)$ which is flat on the fibers $J^2_x(X)$. The metric generates a distance, $\dist$, and the induced distance $\dist_x$ on each fiber $J^2_x(X)$ satisfies 
\begin{equation}\label{sasaki}
\dist_x(J_1,J_2) = \|J_1-J_2\| \qquad \text{for } \, J_1,J_2 \in J^2_x(X).
\end{equation}
Balls in $J^2(X)$ will always be considered with respect to $\dist$. To extend the concept of strictly $F$-subharmonicity to the USC setting, for $F \subset J^2 (X)$ and a constant $c>0$ define 
$$
F_x^c = \big\{J \in F_x \, : \, \dist_x(J, \partial F_x) \ge c\big\}. 
$$
If $F$ is a subequation, by \eqref{sasaki} the set $F^c$ satisfies $(P),(N)$ and $F^c \subset \inte F$, but $F^c$ does not necessarily satisfy $(T)$. 
\begin{definition}\label{defsubharmc2}
Let $F \subset J^{2}(X)$. A function $u \in \USC(X) $ is said to be \emph{strictly $F$-subharmonic} if, for each $x_0 \in X$, there exist a neighborhood $B \subset X$ of $x_0$ and $c>0$ such that $u \in F^c(B)$. The set of strictly $F$-subharmonic functions on $X$ will be denoted by $F^\str(X)$.
\end{definition}

%
  
Hereafter, given a function $u$ on $X$, we define $u^*,u_*$ to be the USC and LSC regularizations of $u$:
\begin{equation}\label{def_USCLSC}
u^*(x) \doteq \limsup_{y \ra x}u(y), \qquad  u_*(x) \doteq \liminf_{y \ra x}u(y) \qquad \forall \, x \in X.
\end{equation}
$F$-subharmonics and strictly $F$-subharmonics enjoy the following properties. 

\begin{proposition}[\cite{HL_dir}, Thm. 2.6 and Lemma 7.5]\label{prop_basicheF} Let $F \subset J^2(X)$ be closed and satisfying $(P)$. Then, \vspace{0.1cm}
%
\begin{itemize}
\item[(1)] if $u, v \!\in\! F(X)$ $(\text{resp, $F^\str(X)$})$, then $\max\{u,v\} \!\in\! F(X)$ $(\text{resp, $F^\str(X)$})$; 
\item[(2)] if $\{u_j\} \subset F(X)$ is a decreasing sequence, then $u \doteq \lim_j u_j \in F(X)$; 
\item[(3)] if $\{u_j\} \subset F(X)$ converges uniformly to $u$, then $u \in F(X)$; 
\item[(4)] if $\{u_\alpha\}_{\alpha \in A} \subset F(X)$ is a family of functions, locally uniformly bounded above, then the USC regularization $v^*$ of $v(x) = \sup_\alpha u_\alpha(x)$ satisfies $v^* \in F(X)$; 
\item[(5)] (stability) if $u \in F^\str(X)$ and $\psi \in C^2_c(X)$ (i.e. $\psi$ has compact support), then there exists $\delta>0$ such that $u + \delta \psi \in F^\str(X)$.
\end{itemize}
\end{proposition}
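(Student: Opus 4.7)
The plan is to give the standard viscosity-theoretic proofs. The common scheme is: given a test function $\phi$ for the candidate USC function at a point $x_0$, I would perturb it to $\phi_\eps := \phi + \eps\varrho_{x_0}^2$ so as to strictly separate $\phi_\eps$ from the target off $x_0$; then produce test functions for the hypothesis $F$-subharmonics $u_j$ (or $u_\alpha$) at nearby points $x_j \to x_0$; and finally pass to the limit using that $F$ is closed and that the $2\eps g$-correction of the 2-jet of $\phi_\eps$ can be driven to zero as $\eps\downarrow 0$, again by closedness of $F$. The positivity $(P)$ enters implicitly through the absorption of such small positive-definite Hessian shifts.

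Items (1), (3), (5) I expect to be direct. For (1), if $\phi$ tests $w=\max\{u,v\}$ at $x_0$, then up to relabeling $u(x_0)=w(x_0)$, so $\phi\ge w\ge u$ with equality at $x_0$; hence $\phi$ already tests $u$ and $J^2_{x_0}\phi\in F_{x_0}$. The strict version follows by localizing and taking the common threshold $\min\{c_1,c_2\}$ of the local $F^c$-constants of $u$ and $v$. For (3), uniform convergence of $u_j\to u$ together with the perturbation $\phi_\eps$ allows one to show that $\phi_\eps-u_j$ attains an interior minimum on $\bar B_r(x_0)$ at some $x_j\to x_0$, providing the touching test for $u_j$ almost for free. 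For (5), I would note that if $\phi$ tests $u+\delta\psi$ at $x_0$ then $\phi-\delta\psi$ tests $u$ at $x_0$, so $J^2_{x_0}(\phi-\delta\psi)\in F^c_{x_0}$ for the local threshold $c>0$; since $\psi\in C^2_c$ has bounded 2-jet in the Sasaki norm, choosing $\delta$ so small that $\delta\|J^2\psi\|_\infty<c/2$ keeps $J^2_{x_0}\phi$ in $F^{c/2}_{x_0}$.

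Item (2) is the heart of the matter. Let $\phi$ test $u=\lim_j u_j$ at $x_0$ and set $\phi_\eps:=\phi+\eps\varrho_{x_0}^2$. By upper-semicontinuity of $u$, the function $\phi_\eps-u$ is strictly positive on $\bar B_r(x_0)\setminus\{x_0\}$ for small $r>0$. The LSC functions $v_j:=\phi_\eps-u_j$ increase pointwise to $\phi_\eps-u$, and $v_j(x_0)\to 0$; hence their minima $m_j=\min_{\bar B_r}v_j$, attained at points $x_j$, satisfy $m_j\le v_j(x_0)\to 0$, and a Dini-type argument exploiting the LSC of each $v_k$ and the monotonicity $v_j\ge v_k$ ($j\ge k$) forces $x_j\to x_0$. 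Then $\phi_\eps-m_j$ tests $u_j$ at $x_j$, so $J^2_{x_j}\phi_\eps\in F_{x_j}$; closedness of $F$ gives $J^2_{x_0}\phi_\eps\in F_{x_0}$, and sending $\eps\downarrow 0$ yields $J^2_{x_0}\phi\in F_{x_0}$. Item (4) I would then deduce from (1) and (2) via a Choquet-type reduction: one selects a countable subfamily $\{u_{\alpha_n}\}$ whose pointwise supremum has USC regularization equal to $v^*$, replaces it by the increasing sequence $w_n:=\max\{u_{\alpha_1},\ldots,u_{\alpha_n}\}\in F(X)$ (using (1) inductively), and then identifies $v^*$ as the USC regularization of $\sup_n w_n$, to which (2) applies after passing to an appropriate decreasing envelope.

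The step I expect to be the main obstacle is (4): the Choquet-type countable-subfamily step and the delicate interplay between the pointwise supremum, its USC regularization, and the viscosity definition via tests require careful bookkeeping — in particular, one must not confuse $F(X)$ with $F^\str(X)$, since strict $F$-subharmonicity is \emph{not} preserved under such envelope constructions, which is precisely why (5) is stated separately and restricted to $C^2_c$-perturbations.
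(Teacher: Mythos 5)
The paper itself offers no proof of this proposition (it is quoted verbatim from \cite{HL_dir}, Thm.~2.6), so your proposal has to stand on its own. Items (1), (3) and (5) are fine as sketched, and your argument for (2) is essentially the correct one: the only point you gloss over is that, besides $x_j\to x_0$, you also need $m_j\to 0$ in order for the $2$-jets of the actual test functions $\phi_\eps-m_j$ at $x_j$ (whose $r$-entry is $\phi_\eps(x_j)-m_j$) to converge to $J^2_{x_0}\phi_\eps$; this follows from the same Dini-type estimate, since $\liminf_j m_j\ge \liminf_j v_k(x_j)\ge v_k(x_0)$ for each fixed $k$ and $v_k(x_0)\to 0$, while $m_j\le v_j(x_0)\le 0$. (Recall that with only $(P)$, and no $(N)$, you cannot simply discard the constant $-m_j\ge 0$.)

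The genuine gap is in (4). The Choquet reduction to a countable subfamily and the increasing maxima $w_n=\max\{u_{\alpha_1},\dots,u_{\alpha_n}\}\in F(X)$ are legitimate, but the final step, invoking (2) ``after passing to an appropriate decreasing envelope'', does not correspond to any actual argument: (2) concerns decreasing limits, which are automatically USC and are handled by monotone touching from above, whereas here $\sup_n w_n$ need not be USC and its USC regularization $v^*$ is not a decreasing limit of $F$-subharmonic functions in any natural sense (the USC or continuous functions dominating $v$ from above are not $F$-subharmonic, and no decreasing $F$-subharmonic envelope exists in general). Passing from an increasing sequence to its upper-semicontinuous regularization is precisely the nontrivial content of (4), and it requires its own viscosity argument, parallel to but not reducible to (2): given a test $\phi$ for $v^*$ at $x_0$, set $\phi_\eps=\phi+\eps\varrho_{x_0}^2$, pick $y_k\to x_0$ with $v(y_k)\to v^*(x_0)$ and indices $\alpha_k$ with $u_{\alpha_k}(y_k)\ge v(y_k)-k^{-1}$; since $\phi_\eps-u_{\alpha_k}\ge\phi_\eps-v^*\ge\eps\varrho_{x_0}^2\ge 0$ on a small closed ball while its value at $y_k$ tends to $0$, the minimum value $m_k\ge 0$ tends to $0$, the minimum points $x_k$ satisfy $\eps\varrho_{x_0}(x_k)^2\le m_k$ and hence converge to $x_0$, and $\phi_\eps-m_k$ is a test for $u_{\alpha_k}$ at $x_k$; closedness of $F$ then gives $J^2_{x_0}\phi_\eps\in F_{x_0}$, and letting $\eps\downarrow 0$ concludes. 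With this direct argument the Choquet step becomes superfluous; as written, however, your proof of (4) is incomplete.
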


\subsection{Examples: universal Riemannian subequations}
These are subequations constructed by transplanting on $X$ an Euclidean model via the action of the orthogonal group. First, consider the two jet bundle 
$$
J^ 2( \R^m) = \R^m \times \R \times \R^m \times \Sym^2(\R^m) \doteq \R^m \times \bf{J^2}, 
$$ 
with 
$$
\bf{J^2} \doteq \R \times \R^m \times \Sym^2(\R^m),
$$
and a subset $\bf{F} \subset \bf{J^2}$ (called a model). Then, $\R^m \times \bf{F}$ is a subequation on $\R^m$ whenever $\bf{F} = \overline{\inte \bf{F}}$ and the positivity and negativity conditions are satisfied. These are called \emph{universal (Riemannian) subequations}. Now, for each $x \in X^m$, we can choose a chart $(U, \varphi)$ around $x$ and a local orthonormal frame $e = (e_1,\ldots, e_m)$ on $U$, which induces a bundle chart
\begin{equation*}\label{localbundlechart}
\begin{array}{ccc}
\disp J^2(U) = \mathbf{R} \oplus T^*U \oplus \Sym^2(T^*U) & \disp \stackrel{\Phi^e}{\longrightarrow} & \disp \varphi(U) \times \bf{J^2} \\[0.2cm]
\big(x, u(x), \di u(x), \nabla^2 u(x)\big) & \longmapsto & \big(\varphi(x), u(x), [u_j(x)], [u_{ij}(x)]\big),
\end{array}
\end{equation*}
where $[u_j(x)]=\, ^t[u_1(x),\ldots, u_m(x)]$, $u_j=\di u(e_j)$, and $[u_{ij}(x)]$ is the matrix with entries $u_{ij}= \nabla^2 u (e_i,e_j)$. If $\{\bar e_j\}$ is another local orthonormal frame on $U$, $\bar{e}_j = h_j^i e_i$ for some smooth $h : U \ra O(m)$. The bundle chart $\Phi^{\bar e}$ is thus related to $\Phi^e$ via
\begin{equation}\label{changemap}
\Phi^{\bar e} \circ (\Phi^e)^{-1} \quad : \quad \big(\varphi(x),r,p,A\big) \longmapsto \big(\varphi(x),r, h(x)p, h(x)Ah^{t}(x)\big).
\end{equation}
Now, suppose that a model $\bf{F} \subset \bf{J^2}$ is invariant by the action of $h \in O(m)$ given by $(r,p,A) \mapsto (r,hp, hAh^t)$. Then, we can define $\mathbb{F} \subset J^2(X)$ by requiring 
$$
(r,p,A) \in \mathbb{F}_x \Longleftrightarrow      \Phi^e(x,r,p,A) \in \R^m \times \bf{F}
$$
for some choice of the orthonormal frame, and the prescription is well defined (i.e. independent of $e$) because of \eqref{changemap} and the invariance of $\bf{F}$. It is easy to check that $\mathbb{F}$ is a subequation if and only if $\R^m \times \mathbf{F}$ is a subequation in $\R^m$. 
\begin{remark}
\emph{The above method can be applied in a more general setting: suppose that $X$ is endowed with a topological $G$-structure, that is, a family of local trivializations $(U,e)$ of $TX$ such that the change of frames maps $h$ are valued in a Lie group $G$. Then, all models $\bf{F} \subset \bf{J^2}$ which are invariant with respect to the action of $G$ can be transplanted to $X$ as above. For instance, if $G = U(m)$ then $\bf{F}$ can be transplanted on each almost complex, Hermitian manifold of real dimension $2m$. 
}
\end{remark}

\noindent \textbf{Examples of universal Riemannian subequations}.

\begin{itemize}
\item[$(\EE 1)$] (The eikonal). The prototype eikonal subequation is $E = \big\{|p| \le 1\big\}$. More generally, we consider the generalized eikonal $E_\xi = \overline{\big\{|p| < \xi(r)\big\}}$ where $\xi \in C(\R)$ is required to satisfy either $(\xi 1)$ in \eqref{def_f1xi1}, or 
\begin{equation}\label{def_xi0}
\begin{array}{ll}
(\xi 0) & \qquad  0 < \xi \in C(\R), \quad \xi \, \text{ is non-increasing.}
\end{array}
\end{equation}
\item[$(\EE 2)$] (The weak Laplacian principle subequation). Set $\mathbb{F} = \big\{\tr A \ge f(r)\big\}$, for $f \in C(\R)$ non-decreasing. As we saw, $\mathbb{F}$ is related to the stochastic completeness, the parabolicity of $X$ and the weak maximum principle at infinity. For $f \equiv 0$, $\mathbb{F}$ characterizes subharmonic functions.
\item[$(\EE 3)$] (The weak Hessian principle subequations). For $f \in C(\R)$ non-decreasing, and denoting with $\{\lambda_j(A)\}$ the increasing sequence of eigenvalues, consider $\mathbb{F} = \big\{\lambda_k(A) \ge f(r)\big\}$. By the monotonicity of $f$ and of the eigenvalues of $A$ ($\lambda_k(A+P) \ge \lambda_k(A)$ when $P \ge 0$), $\mathbb{F}$ satisfies $(P)$ and $(N)$, and clearly also $(T)$. Thus, $\mathbb{F}$ is a subequation, which for $j=1$ and $j=m$ is related to the martingale completeness and the weak Hessian  principle. If $f \equiv 0$, note that $\{\lambda_k(A) \ge 0\}$ is the $k$-th branch of the Monge-Amp\`ere equation $\det(A) = 0$.

\item[$(\EE 4)$] (The branches of the $k$-Hessian subequation). For $\lambda \doteq (\lambda_1,\ldots, \lambda_m) \in \R^m$ and $k \in \{1,\ldots, m\}$, consider the elementary symmetric function
$$
\sigma_k(\lambda) = \sum_{1 \le i_1<\cdots < i_k \le m} \lambda_{i_1}\lambda_{i_2}\cdots \lambda_{i_k}.
$$
Since $\sigma_k$ is invariant by permutation of coordinates of $\lambda$, we can define $\sigma_k(A)$ as $\sigma_k$ being applied to the ordered eigenvalues $\{\lambda_j(A)\}$. According to G\"arding's theory in \cite{G}, $\sigma_k(\lambda)$ is hyperbolic with respect to the vector $v = (1,\ldots, 1) \in \R^m$. Denote with 
$$
\mu_1^{(k)}(\lambda) \le \cdots \le \mu_k^{(k)}(\lambda) 
$$
the ordered eigenvalues\footnote{That is, the opposite of the roots of $\mathscr{P}(t) \doteq \sigma_k(\lambda + tv)=0$.} of $\sigma_k$. Clearly, $\mu_j^{(k)}$ is permutation invariant, thus the expression $\mu_j^{(k)}(A)$ is meaningful. A deep monotonicity result (\cite[Thm. 6.2 and Cor. 6.4]{HL_garding}, see also \cite[Thm. 5.4]{HL_gardingesub}), together with the fact that the set of eigenvalues of $P \ge 0$ is contained in the closure of the G\"arding cone of $\sigma_k$, guarantees the inequality 
\begin{equation}\label{fundamental_mono}
\mu_j^{(k)}(A+P) \ge \mu_j^{(k)}(A) \qquad \text{for each } \, P \ge 0.
\end{equation}
Consequently, for each $f \in C(\R)$ non-decreasing,
$$
\mathbb{F} = \big\{ \mu_j^{(k)}(A) \ge f(r)\big\}
$$
satisfies $(P)+(N)$. Condition $(T)$ is a consequence of \cite[Prop. 3.4]{HL_garding}, hence $\mathbb{F}$ is a subequation. Many more examples of this kind arise from hyperbolic polynomials $q(\lambda)$, with the only condition that the positive octant $\R^m_+ \doteq \{\lambda_i \ge 0, \  \forall \, i\}$ lies in the closure of the G\"arding cone of $q$ (see \cite{HL_gardingesub} for details). 
%
%

\item[$(\EE 5)$] (The $k$-plurisubharmonicity subequation). Let $\GG = \mathrm{Gr}_k(\R^m)$ be the Grassmannian of unoriented $k$-planes in $\R^m$ passing through the origin. Then, for $f \in C(\R)$ non-decreasing, the model
\begin{equation}\label{model_plurisub}
\mathbf{F}_\GG  \doteq \Big\{(r,p,A) \in \mathbf{J}^2: \tr(A_{|\xi}) \ge f(r) \ \text{ for each } \, \xi \in \GG\Big\}
\end{equation}
can be transplanted on each Riemannian manifold and, via the min-max characterization, the resulting subequation can equivalently be described as
$$
\mathbb{F} = \big\{\lambda_1(A) + \cdots + \lambda_k(A) \ge f(r) \big\}.
$$
These operators naturally arise in the study of submanifolds of Riemannian or complex ambient spaces, and we refer to \cite{HL_dir, HL_plurisub} and the references therein for a thorough discussion.

\item[$(\EE 6)$] (Complex subequations). Examples analogous to $(\EE 3),(\EE 4)$ and $(\EE 5)$ can be considered on almost complex hermitian manifolds, using the eigenvalues of the hermitian symmetric part $A^{(1,1)}$ of $A$. In this way, in examples $(\EE 3),(\EE 4)$ we recover the subequation $\{ \lambda_1(A^{(1,1)}) \ge 0\}$ describing plurisubharmonic functions, and more generally all the branches of the complex Monge-Amp\`ere equation. The complex analogue of $(\EE 5)$ gives rise to the $k$-plurisubharmonics: it corresponds to transplanting, on an Hermitian manifold of complex dimension $m$, the model $\mathbf{F}_\GG$ in \eqref{model_plurisub} with $\GG= \mathrm{Gr}_k(\CC^m)$ the Grassmannian of unoriented complex $k$-planes. In the same way, analogous examples can be treated on almost quaternionic Hermitian manifolds (see \cite{HL_dir} for details).
\item[$(\EE 7)$] (Quasilinear operators). As said in the introduction, for $f \in C(\R)$ non-decreasing the subequation reads 
\begin{equation*}\label{def_aLapla}
\mathbb{F} = \overline{\left\{p \neq 0, \ \tr \big( T(p)A\big) > f(r) \right\}}, 
\end{equation*}
where 
\begin{equation}\label{def_Tp}
T(p) \doteq \lambda_1(|p|) \Pi_p + \lambda_2(|p|) \Pi_{p^\perp}, \qquad  \begin{cases}
\lambda_1(t) = a(t) + ta'(t) \ge 0, \\
\lambda_2(t) = a(t) > 0,
\end{cases}
\end{equation}
and $\Pi_p, \Pi_{p^\perp}$ are, respectively, the $(2,0)$-version of the orthogonal projection onto the spaces $\langle p\rangle$ and $p^\perp$. Observe that \eqref{def_Tp} implies $T(p) \ge 0$ as a quadratic form, hence $\mathbb{F}$ satisfies $(P)$, and $(N),(T)$ are immediate. Hence, $\mathbb{F}$ is a subequation. Relevant examples include:
\begin{itemize}
\item[-] The $k$-Laplace operators, where $a(t)=t^{k-2}$, for $k \in [1,+\infty)$;
\item[-] The mean curvature operator, where $a(t) = (1+t^2)^{-1/2}$;
\item[-] The operator of exponentially harmonic functions, where $a(t) = \exp\{t^2\}$.
\end{itemize}
\item[$(\EE 8)$] (The normalized $\infty$-Laplacian). This operator is given by the subequation
$$
\mathbb{F} = \overline{\big\{p \neq 0, \ \ |p|^{-2}A(p,p)> f(r) \big\}},
$$
where $f \in C(\R)$ is non-decreasing.
\end{itemize}

\subsection{Plugging non-constant coefficients: affine jet-equivalence}
The next procedure allows to extend the class of subequations to which Harvey-Lawson's theory can be applied, including variable coefficient subequations, and it is based on the following

\begin{definition}
A \emph{jet-equivalence} $\Psi : J^2(X) \ra J^2(X)$ is a continuous bundle automorphism (i.e. it preserves the fibers of $J^2(X) \ra X$) that has the following form:
\begin{equation}\label{structurejetequi}
\Psi (x,r,p,A) = \big(x, r, gp, h A h^t + L(p)\big),
\end{equation}
where 
$$
\begin{array}{ccll}
g,h & : & T^*X \ra T^*X  & \qquad \text{are bundle isomorphism,} \\[0.2cm]
L & : & T^*X \ra \Sym^2(T^*X) & \qquad \text{is a bundle homomorphism.}
\end{array}
$$
An \emph{affine jet-equivalence} is a continuous bundle map $\Phi : J^2(X) \ra J^2(X)$ that can be written as $\Phi = \Psi + J$, for some jet-equivalence $\Psi : J^2(X) \ra J^2(X)$ and a section $J : X \ra J^2(X)$.
\end{definition}

In view of Example \ref{ex_2} below, the definition does not depend on the connection used to split $J^2(X)$. The set of (affine) jet-equivalences form a group, and $F, F' \subset J^2(X)$ are said to be (affine) jet-equivalent if there exists a (affine) jet-equivalence $\Psi$ with $\Psi(F)=F'$. 

\begin{remark}\label{rem_piustupido}
\emph{It is easy to check that $F$ is a subequation if and only if so is $\Psi(F)$. Suppose that $F,G$ are two subequations on $X$ which are affine jet-equivalent: $\Psi(F) = G$ for some $\Psi$. We claim that, fixing $c>0$ and a compact set $K \subset X$, there exists $\bar c>0$ such that $\Psi(F_x^c) \subset G_x^{\bar c}$ for each $x \in K$. Indeed, the claim follows because $\Psi$ is affine on fibers (hence, the stretching factor of each map $\Psi : F_x \ra G_x$ is constant), and using a compactness argument in the variable $x$. 
}
\end{remark}

\begin{example}[Change of frames]\label{ex_1}
Consider a local chart $(U, \varphi)$ on $X$, and two frames $e,\bar e$ on $U$ (possibly not orthonormal). Then, the change of frame map \eqref{changemap} can be rewritten as
\begin{equation}\label{changeofframe_2}
\begin{array}{lcccc}
\disp \Phi^{\bar e} \circ (\Phi^e)^{-1} & : & \varphi(U) \times \mathbf{J^2} & \longrightarrow & \varphi(U) \times \mathbf{J^2} \\[0.2cm]
& & (y,r,p,A) & \longmapsto & \disp \big(y,r, \bar h(y)p, \bar{h}(y)A\bar{h}^{t}(y)\big)
\end{array}
\end{equation}
with $\bar h \doteq h \circ \varphi^{-1} : \varphi(U) \ra \mathrm{GL}_m(\R)$. Clearly, \eqref{changeofframe_2} has the structure in \eqref{structurejetequi}, hence $\Phi^{\bar e} \circ (\Phi^e)^{-1}$ is a jet-equivalence of $J^2(\varphi(U))$.
\end{example}

\begin{example}[Linear subequations]\label{ex_linear}
A prototype example is that of linear subequations with continuous coefficients, possibly in non-divergence form. Let $W : X \ra T^*X$ and $\cal T : X \ra \Sym^2(T^*X)$ be continuous tensor fields, and suppose that $\cal T$ is positive definite at each point. Fix $f \in C^ 0(\R)$ non-decreasing and $b,B \in C(X)$, $b>0$ on $X$, and consider the linear operator 
$$
Lu = \tr(\cal T\cdot \nabla^2 u) + \langle W, \dif u \rangle + B.
$$
We claim that the subequation $F_L$ characterizing solutions of $Lu \ge b(x)f(u)$ is affinely jet-equivalent to the universal Riemannian subequation $F_\Delta = \{\tr(A) \ge f(r)\}$ (and jet-equivalent to it whenever $B \equiv 0$). Indeed, rewrite $F_L$ as follows
\begin{equation*}\label{def_FL}
F_L = \Bigg\{ (x,r,p,A)  :  \tr \left(b(x)^{-1}\left[\cal T(x)A \!+\! W(x) \odot p \!+\! \frac{B(x)}{m} \metric_x\right] \right) \!\ge\! f(r) \Bigg\},
\end{equation*}
where $a \odot b = \frac{1}{2}(a\otimes b + b\otimes a)$ is the symmetric product. Since $\cal T$ is positive definite, it admits a continuous, positive definite square root $H : X \ra \Sym^2(T^*X)$ (see \cite[p. 131]{stroockvaradhan}), and $H$ is Lipschitz whenever so is $\cal T$. From $\tr(\cal T A) = \tr(HAH^t)$ we deduce that the map
$$
\Psi(x,r,p,A) = \left(x,r,p, \frac{1}{b(x)}\left[H(x)AH^t(x) + W(x) \odot p + \frac{B(x)}{m}\metric_x\right] \right) 
$$ 
is an affine jet-equivalence satisfying $\Psi(F_L) = F_\Delta$. 
\end{example}

%

\begin{definition}
$F \subset J^2(X)$ is said to be \emph{locally affine jet-equivalent to a universal subequation} if, for each $x \in X$, there exist a local chart $(U,\varphi)$ around $x$ and coordinate frame $e$ on $U$ such that the decription of $F$ in the frame $e$, $\Phi^e(F)$, is affine jet-equivalent to some universal subequation $\varphi(U) \times \mathbf{F}$. Such a chart will be called a distinguished chart.
\end{definition}

By Example \ref{ex_1}, the above definition is independent of the chosen frame $e$. Moreover, the Euclidean model $\mathbf{F}\subset \mathbf{J^2}$ is uniquely defined, independently of the chart $(U, \varphi)$ (see \cite[Lemma 6.10]{HL_dir}).

\begin{example}[Universal Riemannian subequations]
By construction, universal Riemannian subequations with model $\mathbf{F}$ are locally jet-equivalent to $\R^m \times \mathbf{F}$.
\end{example}

\begin{example}[Change of connection]\label{ex_2} 
Consider a local trivialization $\Phi^e$ of $J^2(X)$ in a chart $(U, \varphi)$ with coordinate frame $e=\{\partial_j\}$:
\begin{equation}\label{Phie_22}
\Phi^e \  :  \ J^2_x u \in J^2(U) \ \longmapsto \ \big(\varphi(x),u(x), u_j(x), u_{ij}(x)\big) \in \varphi(U) \times \mathbf{J^2},
\end{equation}
where
$$ 
u_j = \partial_ju, \qquad u_{ij} = \nabla^2 u(\partial_i,\partial_j) = \partial^2_{ij} u - \Gamma^k_{ij} \partial_ku,
$$
and define the map 
\begin{equation}\label{localbundlechart_ateEuclidiano}
\begin{array}{cccc}
\Psi_e \ \ : \ \ \disp  & \disp \varphi(U) \times \bf{J^2} & \longrightarrow & \disp \varphi(U) \times \bf{J^2} \\[0.2cm]
& \disp \big(\varphi(x), u, \partial_ju, u_{ij}\big) & \longmapsto & \disp \big(\varphi(x),u,\partial_ju,\partial^2_{ij}u\big).
\end{array}
\end{equation}
Clearly, $\Psi_e$ is a jet-equivalence, and $\Psi_e\circ \Phi^e$ is the frame representation of $J^2(U)$ with respect to the flat connection. 
\end{example}

The last example shows that the definition of jet-equivalence is also independent of the connection $\nabla$ chosen to split $J^2(U)$, which makes it particularly effective. Throughout the paper, given a subequation $F \subset J^2(X)$ and a chart $(U, \varphi)$, we will say that 
$$
\mathcal{F} \doteq \Psi_e(\Phi^e(F)) \ \subset \ \varphi(U) \times \mathbf{J^2} 
$$
with $\Phi^e,\Psi_e$ as in \eqref{Phie_22} and \eqref{localbundlechart_ateEuclidiano}, is an \emph{Euclidean representation} of $F$ in the chart $(U,\varphi)$. Observe that, by construction, if $w \in C^2(U)$ and $\bar w \doteq w \circ \varphi^{-1}$,
$$
J^2_x w \in F \quad \Longleftrightarrow \quad J^2_{\varphi(x)} \bar w = \big( \varphi(x), \bar w, \partial_j \bar w, \partial^2_{ij} \bar w\big) \in \mathcal{F}. 
$$
The use of local Euclidean representations with respect to the standard ``Euclidean" coordinates on $J^2(\R^m)$ allows a direct applications of some important results like the theorem on sums (\cite{CIL}, see also \cite[Thm. C.1]{HL_dir}), without the necessity to use its more involved Riemannian counterpart. We will come back to this point later.


\subsection{Dirichlet duality}

Given $F \subset J^{2}(X)$, the \emph{Dirichlet dual of} $F$ is
$$ 
\widetilde{F} = \  \sim (-\inte F) = - (\sim \inte F). 
$$
Note that $\widetilde F$ is always closed, and that $\partial F \equiv F \cap (-\widetilde{F})$. The term ``duality" is justified by the following properties that can be readily verified.
\begin{proposition}\label{prop_basichetilde}
Let $F,F_1,F_2 \subset J^2(X)$. Then,
$$
\begin{array}{l}
F_1 \subset F_2 \Rightarrow \widetilde{F_2} \subset \widetilde{F_1}; \qquad \widetilde{F_1 \cap F_2} = \widetilde{F_1} \cup \widetilde{F_2}; \qquad \widetilde{\widetilde F} = F  \Longleftrightarrow  F = \overline{\inte F}; \\[0.2cm]
\text{$F$ is a subequation} \quad \Longleftrightarrow \quad \text{$\widetilde F$ is a subequation}.
\end{array}
$$
\end{proposition}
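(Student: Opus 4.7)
The four items are set-theoretic consequences of the definition $\widetilde F = \ \sim(-\inte F)$, using only three elementary facts: negation $J \mapsto -J$ is a fiberwise homeomorphism of $J^2(X)$; interior commutes with negation ($\inte(-S) = -\inte S$) and with finite intersections ($\inte(A\cap B) = \inte A \cap \inte B$); and $\sim$ swaps unions and intersections.

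For $(1)$, the inclusion $F_1 \subset F_2$ yields $\inte F_1 \subset \inte F_2$, hence $-\inte F_1 \subset -\inte F_2$, and complementation reverses this, giving $\widetilde{F_2} \subset \widetilde{F_1}$. For $(2)$, combining $\inte(F_1\cap F_2) = \inte F_1 \cap \inte F_2$ with $-(A\cap B) = (-A)\cap(-B)$ and De Morgan's law, one computes
\[
\widetilde{F_1\cap F_2} = \ \sim\big(-\inte F_1 \cap -\inte F_2\big) = \widetilde{F_1} \cup \widetilde{F_2}.
\]
For $(3)$, since $\widetilde F$ is closed (a complement of an open set), one first shows $\inte \widetilde F = \ \sim \overline{-\inte F} = \ \sim(-\overline{\inte F})$, so that
\[
\widetilde{\widetilde F} = \ \sim\bigl(-\inte \widetilde F\bigr) = \ \sim\bigl(\sim \overline{\inte F}\bigr) = \overline{\inte F}.
\]
Thus $\widetilde{\widetilde F}=F$ iff $F=\overline{\inte F}$, which is property $(T)(i)$.

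Item $(4)$ is the only nontrivial step: one must verify that $(P)$, $(N)$ and $(T)$ transfer from $F$ to $\widetilde F$. For $(P)$, note that $F+P\subset F$ and the openness of translation give $\inte F + P\subset \inte F$; then if $J\in\widetilde F$ and $P'\in P$, an assumption $-(J+P')=-J-P'\in\inte F$ would yield $-J = (-J-P')+P'\in \inte F$, contradicting $J\in\widetilde F$. The argument for $(N)$ is identical, using $\inte F + N\subset\inte F$ (note here $N'\in N$ itself, not $-N'$, is what one adds to $-J-N'$ to recover $-J$). For $(T)(i)$ one iterates item $(3)$: applying the formula $\widetilde{\widetilde G}=\overline{\inte G}$ with $G=\widetilde F$ gives $\overline{\inte \widetilde F} = \widetilde{\widetilde{\widetilde F}} = \widetilde F$ once $\widetilde{\widetilde F}=F$ is known. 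Conditions $(T)(ii)$ and $(T)(iii)$ are exactly the same argument carried out fiberwise, since the operations $-$, $\sim$ and $\inte$ all respect the fibration $J^2(X)\to X$. Reversing the role of $F$ and $\widetilde F$, together with $\widetilde{\widetilde F}=F$, gives the converse.

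No step presents a genuine obstacle; the main point to be careful about is checking that $\inte F$ itself is stable under addition of $P$ and $N$ (as opposed to only $F$ being so), which follows from the open-set/translation observation. Aside from this, the proof is a direct verification, consistent with the paper's assertion that the properties "can be readily verified."
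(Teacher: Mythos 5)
The paper gives no proof of this proposition (it is dismissed as "readily verified"), so your write-up is precisely the verification the authors leave to the reader, and items (1), (2), (3) and the forward half of (4) are correct as you present them. Two small points of care: the inclusion $\inte F + P \subset \inte F$ requires extending the fiber element $P'\in P_x$ to a continuous local section of $P$ (e.g.\ constant in a trivialization), so that fiberwise translation is a homeomorphism of $J^2(U)$ — you gesture at this and it is fine; and for conditions $(T)(ii)$--$(iii)$ of $\widetilde F$ the "same argument fiberwise" is not purely formal, because $\inte$ does not commute with passing to fibers for general sets (that is exactly what $(T)(iii)$ asserts). One must use $(T)$ of $F$ to identify $(\widetilde F)_x$ with the fiberwise dual of $F_x$ and $(\overline{\inte F})_x$ with $\overline{\inte F_x}$; with those identifications your computation closes.

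The genuine gap is in the converse of (4). Your justification — "reverse the roles of $F$ and $\widetilde F$ together with $\widetilde{\widetilde F}=F$" — is circular: by your own item (3), $\widetilde{\widetilde F}=F$ is equivalent to $F=\overline{\inte F}$, i.e.\ to $(T)(i)$ for $F$, which is part of what must be proved in that direction. Moreover the reverse implication genuinely fails without this hypothesis: for the constant-coefficient model over a one-dimensional base (so $A$ is a scalar), take $F=\{A\ge 0\}\cup\{A\ge -1,\ p=0,\ r\le 0\}$. This set is closed and satisfies $(P)$ and $(N)$ but not $(T)$, while $\inte F=\{A>0\}$, so $\widetilde F=\{A\ge 0\}$ is a subequation. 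Hence the stated biconditional should be read — as it is in Harvey--Lawson and as it is actually used in this paper, where duality is only ever invoked for sets already known to satisfy $F=\overline{\inte F}$ — with that proviso, under which your argument is complete; as a proof of the literal statement, the implication from $\widetilde F$ to $F$ is not (and cannot be) established.
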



\begin{remark}\label{remlocalaffineequieduality}
\emph{If $F$ is locally affine jet-equivalent to a universal subequation $\mathbf{F}$, then $\widetilde F$ is locally affine jet-equivalent to $\widetilde{\mathbf{F}}$. 
}
\end{remark}
\noindent\textbf{Examples of universal subequations: duality.}\vspace{0.3cm}
$$
\begin{array}{rlcl}
(\EE 1) & \disp \quad E = \overline{\big\{ |p| < \xi(r)\big\}} & \Longrightarrow & \ \disp\widetilde E = \overline{\big\{|p| > \xi(-r)\big\}}; \\[0.2cm] 
(\EE 3) & \disp \quad F = \big\{ \lambda_k(A) \ge f(r) \big\} & \Longrightarrow & \ \disp \widetilde F = \big\{ \lambda_{m-k+1}(A) \ge -f(-r)\big\}; \\[0.2cm] 
(\EE 4) & \quad \disp F = \big\{ \mu_j^{(k)}(A) \ge f(r)\big\} & \Longrightarrow & \ \disp \widetilde{F} = \big\{ \mu^{(k)}_{k-j+1}(A) \ge -f(-r)\big\}; \\[0.2cm]
(\EE 2),(\EE 5) & \quad F = \big\{\sum_{j=1}^k \lambda_j(A) \ge f(r)\big\} & \Longrightarrow & \ \widetilde{F} = \big\{ \sum_{j=m-k+1}^m \lambda_{j}(A) \ge -f(-r)\big\}; \\[0.2cm]
(\EE 7) & \quad \disp F = \overline{\big\{\vert p\vert > 0, \tr(T(p)A) > f(r) \big\}} & \Longrightarrow & \ \disp \widetilde F = \overline{\big\{\vert p\vert > 0, \tr(T(p)A) > -f(-r) \big\}}; \\[0.2cm] 
(\EE 8) & \quad \disp F = \overline{\big\{|p|>0, \, |p|^{-2}A(p,p)> f(r)\big\}} & \Longrightarrow & \ \disp \widetilde F = \overline{\big\{|p|>0, \, |p|^{-2}A(p,p)> -f(-r)\big\}}.
\end{array}
$$

Example $(\EE 4)$ follows from the relation 
$$
\mu_j^{(k)}(-A) = - \mu_{k-j+1}^{(k)}(A),
$$
which is proved in \cite[Sec. 3]{HL_garding}. The other examples up to $(\EE 5)$ are trivial, as well as their complex analogues. To quickly show the duality in $(\EE 7)$, set 
$$
V \doteq \big\{\vert p\vert > 0, \ \tr(T(p)A) > f(r) \big\}, \quad W \doteq \big\{\vert p\vert > 0, \ \tr(T(p)A) > -f(-r) \big\}, 
$$
and note that $F = \overline{V}$. By duality, $$ \widetilde F \subset \widetilde V = \big\{\vert p\vert > 0, \  \tr(T(p)A) \geq -f(-r) \big\} \cup \big\{p=0\big\} \  \subset  \ \overline{W} \cup \big\{p=0\big\}.$$ We claim that $ \overline{W} \subset \widetilde F$. In fact, it is sufficient to show that $ W \subset \widetilde F$, i.e., $-W \subset \, \sim \inte F$. Given $(r,p,A) \in -W$, from $ \vert p\vert > 0 $ and $ \tr(T(p)A) < f(r)$ we deduce, by continuity, the existence of a neighborhood $ U'\subset -W$ containing $(r,p,A)$ with $ U' \cap F = \emptyset$. In particular, $(r,p,A) \notin \inte F$, which proves our claim. Summarizing, $ \overline{W} \subset \widetilde F \subset \overline{W}\cup \big\{p=0\big\}$. However, since $ \widetilde F $ is a subequation and $ \big\{p=0\big\} $ has empty interior in $J^{2}(X)$, we can conclude that $ \widetilde F = \overline{W}$. The same approach yields the duality in $(\EE 8)$. 

%

We are ready to define $F$-harmonics.

\begin{definition}
Let $F \subset J^2(X)$. A function $u$ is said to be $F$-harmonic if
\begin{equation*}
u \in F(X) \ \ \mbox{and} \ \ -u \in \widetilde{F}(X) .
\end{equation*}
\end{definition}
By Proposition \ref{prop_basichetilde}, if $F$ is a subequation then $u$ is $F$-harmonic if and only if $-u$ is $\widetilde F$-harmonic.

\subsection{Comparison theory} For $K \Subset X$ precompact, we define $F(K) \doteq \USC(K) \cap F(\inte K)$. The sets $F^c(K), F^\str(K)$ are defined in the same way, simply replacing $F$ with $F^c, F^\str$.
\begin{definition}
Let $F \subset J^2(X)$. We will say that $F$ on $X$ satisfies
\begin{itemize}
\item[(i)] the \emph{comparison} if for every compact subset $K\Subset X$ and $u\in F(K)$, $v\in \widetilde{F}(K)$, the zero maximum principle holds on $K$, that is,
$$
u+v \leq 0 \ \ \mbox{on} \ \ \partial K \quad \Longrightarrow \quad u+v\leq 0 \ \ \mbox{on} \ \ K;
$$
\item[(ii)] the \emph{weak comparison} if for every compact subset $K\Subset X$, $c>0$, and $u\in F^c(K)$, $v\in \widetilde{F}(K)$, the zero maximum principle holds on $K$.
\item[(iii)] the \emph{local weak comparison} if each $x\in X$ has a neighborhood $U$ such that $F$ on $U$ satisfies the weak comparison.
\end{itemize}
We also say that $F$ on $X$ satisfies the \emph{bounded} comparison (respectively, weak comparison and local weak comparison) if the statements hold when restricted to bounded functions $u$ and $v$.  
\end{definition}

\begin{remark}
\emph{Note that, while the full comparison property is symmetric in $F$ and $\widetilde F$, the weak comparison is not.
}
\end{remark}
 
The next important result is a consequence of the stability of strict $F$-subharmonics in Proposition \ref{prop_basicheF}.

\begin{theorem}\label{lwcimplieswc}\cite[Thm. 8.3]{HL_dir}
Let $F \subset J^2(X)$ satisfy $(P)$ and $(N)$. Then, 
$$
\begin{array}{c}
\text{$F$ satisfies the} \\
\text{local weak comparison} \end{array}
\quad \Longleftrightarrow \quad \begin{array}{c}
\text{$F$ satisfies the} \\
\text{weak comparison} \end{array}.
$$
\end{theorem}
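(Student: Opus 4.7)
The forward implication is immediate: restricting weak comparison to any small neighbourhood gives local weak comparison. The content of the theorem lies in the reverse implication, and my plan is to argue by contradiction via a propagation-of-maximum argument.

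Assume local weak comparison. Fix a compact $K \Subset X$, take $u \in F^c(K)$ and $v \in \widetilde F(K)$ with $u+v \le 0$ on $\partial K$, and suppose for contradiction that $M \doteq \max_K(u+v) > 0$. The maximum set $\Omega \doteq \{x \in K : (u+v)(x) = M\}$ is then non-empty and compact (as $u+v$ is USC on the compact $K$), and the boundary hypothesis forces $\Omega \subset \inte K$. Covering $\Omega$ by weak-comparison neighbourhoods and extracting a finite subcover, I obtain a uniform radius $r_0 > 0$ such that for every $x \in \Omega$ the closed ball $\overline{B_{r_0}(x)}$ is contained in $\inte K$ and $F$ satisfies weak comparison on $B_{r_0}(x)$.

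The technical core is the following propagation claim: for every $x_0 \in \Omega$ and every $r \in (0,r_0]$, the sphere $\partial B_r(x_0)$ meets $\Omega$. The proof rests on the closure property $F^c + N \subset F^c$, itself a consequence of the elementary fact $\inte F + N \subset \inte F$ (given $J' \in \inte F$, an open $J^2$-neighbourhood of $J'$ lies in $F$ and translates by $N$ to a neighbourhood of $J'+N$ still contained in $F + N \subset F$ by $(N)$). With this in hand, set $s \doteq \max_{\partial B_r(x_0)}(u+v)$; then the shifted function $u - s$ belongs to $F^c(B_r(x_0))$ (using $s \ge 0$, which holds since $s \ge (u+v)(x_0)/1$-type considerations; for $s < 0$ one argues directly using $(P)$ and the original $u$), and $(u-s) + v \le 0$ on $\partial B_r(x_0)$. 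Weak comparison on $B_r(x_0)$ then gives $u + v \le s$ throughout $B_r(x_0)$; evaluating at $x_0$ forces $s \ge M$, hence $s = M$, so the maximum is attained on $\partial B_r(x_0)$ and $\Omega \cap \partial B_r(x_0) \neq \emptyset$.

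The remaining step, which I expect to be the main obstacle, is to convert this propagation into a contradiction with $\Omega \subset \inte K$. The naive idea of selecting a point of $\Omega$ maximizing (or minimizing) the distance to $\partial K$ and then applying the claim does not immediately close the argument, because the propagation lemma only guarantees the existence of some point of $\Omega$ on each sphere without letting us control its position. The resolution is a more refined iteration exploiting the uniformity of $r_0$: one shows that the $r_0$-saturation $\Omega_{r_0} \doteq \{x \in K : \dist(x,\Omega) \le r_0\}$ inherits an analogous propagation property, and by iterating $n \mapsto \Omega_{nr_0}$ one exhausts the connected component of $\inte K$ containing $\Omega$, eventually forcing $\Omega_{nr_0}$ to meet $\partial K$. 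Combined with USC-continuity of $u+v$ this propagates the value $M$ to a neighbourhood of $\partial K$, contradicting $u+v \le 0$ there. The delicate point is ensuring that each iteration step preserves both the weak-comparison property on balls of radius $r_0$ and the condition that $u - s \in F^c$ on the relevant balls, which is precisely where the uniformity extracted from the compactness of $\Omega$ via local weak comparison is essential.
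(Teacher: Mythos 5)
The easy direction and your propagation lemma are essentially fine (modulo taking $r<r_0$ so that $\overline{B_r(x_0)}$ is compactly contained in a ball on which weak comparison holds, and noting that the case $s<0$ already yields the contradiction outright, so the garbled justification of $s\ge 0$ is harmless). The real problem is the final step, which you flag as ``delicate'': it is not delicate, it is unreachable by the route you sketch. The propagation property --- every sphere of radius at most $r_0$ centred at a point of the maximum set $\Omega$ meets $\Omega$ --- carries no information forcing $\Omega$ towards $\partial K$: it is already satisfied by any compact set of diameter at least $r_0$ sitting deep inside $\inte K$ (for instance a metric sphere of radius $\ge r_0/2$ about an interior point), so no contradiction with $\Omega\Subset\inte K$ can follow from it. The proposed fix makes this worse rather than better: the saturations $\Omega_{nr_0}$ are just metric neighbourhoods, their points are not maximum points of $u+v$, so there is no value $M$ to ``propagate'' to them, and upper semicontinuity does not transfer values outward. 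A telling symptom is that your argument uses the strictness constant $c$ only to match the hypothesis of weak comparison; the constant shift $u-s$ needs nothing beyond $(N)$. The whole point of working with $F^c$ is the room it leaves for non-constant perturbations, and that is exactly the ingredient your scheme never exploits.

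That perturbation is the actual proof (the paper does not reprove the result; it cites \cite[Thm. 8.3]{HL_dir} and remarks that it is a consequence of the stability of strict subharmonics, i.e.\ of Proposition \ref{prop_basicheF}). Take a single point $x_0\in\inte K$ with $(u+v)(x_0)=M>0$ and a small ball $B=B_\rho(x_0)$ with $\overline B\subset\inte K$ on which weak comparison holds. Set $u_\delta\doteq u-\delta\varrho_{x_0}^2$; since the $2$-jet of $\delta\varrho_{x_0}^2$ is uniformly small on $\overline B$ for small $\delta$, the very definition of $F^c$ (equivalently, the stability property) gives $u_\delta\in F^{c/2}(\overline B)$. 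Now $(u_\delta+v)(x_0)=M$ while $u_\delta+v\le M-\delta\rho^2$ on $\partial B$, so with $s\doteq\max_{\partial B}(u_\delta+v)$ either $s\le 0$, and weak comparison on $\overline B$ contradicts $M>0$ directly, or $0<s\le M-\delta\rho^2$, and comparing $u_\delta-s\in F^{c/2}(\overline B)$ (by $(N)$, as in your shift) with $v$ gives $u_\delta+v\le s<M$ on $\overline B$, contradicting the value at $x_0$. This single-ball argument with a quadratic bump replaces the covering and propagation machinery entirely, and it is precisely where the hypothesis $u\in F^c$, rather than merely $u\in F$, is used.
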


The advantage of this theorem is that the local weak comparison can be checked in a local chart, for instance with the aid of the theorem on sums. The next result guarantees the weak comparison for a large class of subequations. 

\begin{theorem}\cite[Thm. 10.1]{HL_dir}\label{teo_importante!!} Let $F \subset J^2(X)$ be a subequation which is locally affinely jet-equivalent to a universal subequation, where the continuous sections $g,h,L$ in \eqref{structurejetequi} are locally Lipschitz. Then, $F$ and $\widetilde F$ satisfy the weak comparison. In particular, each universal Riemannian subequation satisfy the weak comparison\footnote{The conclusion of Theorem 10.1 in \cite{HL_dir} just states that $F$ satisfies the weak comparison, but the same theorem can be applied to $\widetilde F$ in view of Remark \ref{remlocalaffineequieduality}.}.
\end{theorem}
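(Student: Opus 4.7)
The plan is to reduce to a purely local question via Theorem~\ref{lwcimplieswc} and then run the classical viscosity comparison argument in a Euclidean chart, using the Lipschitz regularity of the jet-equivalence to transfer strict subharmonicity between nearby points. Fix $x_0 \in X$ and a distinguished chart $(U,\varphi)$ around it. In the associated Euclidean representation, $F$ is affine jet-equivalent to a universal model $\varphi(U)\times \mathbf{F}$ via a map of the form
$$
\Psi_x(r,p,A) = \bigl(r,\, g(x)p,\, h(x) A h(x)^t + L(x)p\bigr) + J(x),
$$
with $g,h,L,J$ locally Lipschitz; by Remark~\ref{remlocalaffineequieduality} the same $\Psi$ sends $\widetilde F$ to $\varphi(U)\times \widetilde{\mathbf{F}}$. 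It therefore suffices to prove, for precompact $K \subset \varphi(U)$ and each $c>0$, that bounded $u \in F^c(K)$ and $v \in \widetilde F(K)$ with $u+v \le 0$ on $\partial K$ satisfy $u + v \le 0$ on $K$.

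Assume by contradiction that $u + v$ attains a strictly positive maximum at an interior point. I would apply the doubling-of-variables trick, setting $w_\alpha(x,y) = u(x) + v(y) - \tfrac{\alpha}{2}|x-y|^2$ on $K \times K$. Its maximizer $(x_\alpha, y_\alpha)$ is eventually interior, with $|x_\alpha - y_\alpha| \to 0$ and $\alpha|x_\alpha - y_\alpha|^2 \to 0$. Setting $p_\alpha = \alpha(x_\alpha - y_\alpha)$, the Crandall--Ishii--Lions theorem on sums produces symmetric matrices $X_\alpha, Y_\alpha$ with $Y_\alpha - X_\alpha \ge 0$ such that, in the limit of approximate jets,
$$
J_\alpha^+ \doteq \bigl(u(x_\alpha), p_\alpha, X_\alpha\bigr) \in F^c_{x_\alpha}, \qquad J_\alpha^- \doteq \bigl(-v(y_\alpha), p_\alpha, Y_\alpha\bigr) \notin \inte F_{y_\alpha}.
$$
By a uniform version of Remark~\ref{rem_piustupido}, there exists $c'>0$, depending only on $c$ and $K$, with $\Psi_{x_\alpha}(J_\alpha^+) \in \mathbf{F}^{c'}$.

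The main obstacle is transferring this strict membership from $x_\alpha$ to $y_\alpha$, since $F$ has variable coefficients. The Lipschitz hypothesis yields
$$
\bigl\|\Psi_{y_\alpha}(r,p,A) - \Psi_{x_\alpha}(r,p,A)\bigr\| \le C|x_\alpha - y_\alpha|\bigl(1 + |p| + \|A\|\bigr).
$$
The contribution $|p_\alpha|\,|x_\alpha - y_\alpha| = \alpha|x_\alpha-y_\alpha|^2$ vanishes in the limit; the delicate term is $\|X_\alpha\|\,|x_\alpha - y_\alpha|$, which need not vanish since the standard matrix estimate only gives $\|X_\alpha\| = O(\alpha)$. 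This is the technical crux, handled by exploiting the specific conjugation structure of $\Psi$: since $A \mapsto h(x) A h(x)^t$ preserves the positive cone, one decomposes $X_\alpha$ spectrally and absorbs the growing part via $(P)$ with a cone-inclusion argument, while the remaining part is controlled by the sharper off-diagonal estimate from the theorem on sums. Granting this, $\Psi_{y_\alpha}(J_\alpha^+) \in \mathbf{F}^{c'/2} \subset \inte \mathbf{F}$ for $\alpha$ large. A direct computation then yields
$$
\Psi_{y_\alpha}(J_\alpha^-) - \Psi_{y_\alpha}(J_\alpha^+) = \bigl(-(u(x_\alpha)+v(y_\alpha)),\,0,\,h(y_\alpha)(Y_\alpha - X_\alpha) h(y_\alpha)^t\bigr),
$$
whose $r$-entry is strictly negative (by the contradiction assumption) and whose matrix entry is positive semi-definite (invertible conjugation preserves positivity). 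Since $\inte \mathbf{F} + N + P \subset \inte \mathbf{F}$ by $(N)$, $(P)$ and openness, one concludes $\Psi_{y_\alpha}(J_\alpha^-) \in \inte \mathbf{F}$, i.e. $J_\alpha^- \in \inte F_{y_\alpha}$, contradicting the displayed exclusion. The ``In particular'' clause is immediate, since universal Riemannian subequations are tautologically locally jet-equivalent to their models with smooth orthogonal-valued $g,h$ and $L \equiv 0$, $J \equiv 0$, which are trivially Lipschitz.
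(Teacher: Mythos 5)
The statement you are proving is not proved in the paper at all: it is quoted verbatim from Harvey--Lawson \cite[Thm. 10.1]{HL_dir}, with only the footnote observation that the same theorem applies to $\widetilde F$ via Remark \ref{remlocalaffineequieduality}. So your proposal is a reconstruction of Harvey--Lawson's argument, and its skeleton (reduction to local weak comparison via Theorem \ref{lwcimplieswc}, doubling of variables in a Euclidean representation, theorem on sums, then $(P)$, $(N)$ and the uniform version of Remark \ref{rem_piustupido}) is indeed the right one. However, there is a genuine gap exactly at the point you flag and then dismiss with ``Granting this''. Your contradiction requires $\Psi_{y_\alpha}(J^+_\alpha)\in\inte\mathbf{F}$, i.e. you move the second-order part $X_\alpha$ from the point $x_\alpha$ to the point $y_\alpha$. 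The error this creates, $\|h(x_\alpha)X_\alpha h(x_\alpha)^t-h(y_\alpha)X_\alpha h(y_\alpha)^t\|\lesssim \|X_\alpha\|\,|x_\alpha-y_\alpha|$, is only $O(\alpha|x_\alpha-y_\alpha|)$, which does not tend to zero (only $\alpha|x_\alpha-y_\alpha|^2\to 0$ is available), and the proposed remedy --- splitting $X_\alpha$ spectrally and absorbing the growing part via $(P)$ --- does not work: the difference of the two conjugations of $X_\alpha$ has no sign, so neither its positive nor its negative part can simply be absorbed, and the ``sharper off-diagonal estimate'' is invoked but never actually brought to bear on this term. Since this is precisely the step where the Lipschitz hypothesis on $h$ enters, the entire content of the theorem is left unproven.

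The correct mechanism (which is what Harvey--Lawson, and the classical structure condition (3.14) of \cite{CIL}, actually use) is never to transport $X_\alpha$ at all. One keeps $\Psi_{x_\alpha}(J^+_\alpha)\in\mathbf{F}^{c'}$ at $x_\alpha$ and compares it directly with $\Psi_{y_\alpha}(J^-_\alpha)$, exploiting the full two-point block inequality of the theorem on sums: testing it on vectors of the form $\big(h(x_\alpha)^t\xi,\,h(y_\alpha)^t\xi\big)$ yields an estimate of the conjugated matrices of the type
\begin{equation*}
h(x_\alpha)X_\alpha h(x_\alpha)^t \;\le\; h(y_\alpha)Y_\alpha h(y_\alpha)^t \;+\; 3\alpha\,\|h(x_\alpha)-h(y_\alpha)\|^2\, I ,
\end{equation*}
(in your sign convention), and the local Lipschitz bound $\|h(x_\alpha)-h(y_\alpha)\|\le C|x_\alpha-y_\alpha|$ makes the error term $O(\alpha|x_\alpha-y_\alpha|^2)\to 0$. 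Together with the vanishing errors coming from $g$, $L$, $J$ (these are indeed $O(\alpha|x_\alpha-y_\alpha|^2)$ or $o(1)$, as you note) and with $(P)$, $(N)$ applied inside the universal model, this puts $\Psi_{y_\alpha}(J^-_\alpha)$ into $\inte\mathbf{F}$ for large $\alpha$, which is the desired contradiction. If you replace your ``transfer of strict membership from $x_\alpha$ to $y_\alpha$'' by this direct two-point comparison, the proof closes; as written, the crucial estimate is asserted rather than proved.
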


If the weak comparison holds, the validity of the full comparison property is granted when we can approximate functions in $F(X)$ with functions in $F^\str(X)$.

\begin{definition}
Let $F \subset J^ 2(X)$. We say that $F$ satisfies the (bounded) \emph{strict approximation} on $X$ if for each compact $K \Subset X$, each (bounded) $u \in F(K)$  can be uniformly approximated by functions in $F^\str(K)$.
\end{definition}

\begin{theorem}\cite[Thm. 9.2]{HL_dir}\label{localwcestrict}
Let $F \subset J^ 2(X)$ satisfy $(P),(N)$. Then
$$
\begin{array}{c}
\text{$F$ has the local weak comparison} \\
 + \ \ \text{(bounded) strict approximation} \end{array}
 \quad \Longrightarrow \quad \begin{array}{c}
\text{$F$ has the} \\
\text{(bounded) comparison} \end{array}.
$$
\end{theorem}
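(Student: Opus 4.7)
The plan is to combine the two hypotheses to reduce full comparison to the case of strict subsolutions, where weak comparison already suffices. First, by Theorem \ref{lwcimplieswc} the local weak comparison hypothesis upgrades to (bounded) weak comparison on $X$: for every compact $K \Subset X$, every $c > 0$, every $w \in F^c(K)$ and $v \in \widetilde F(K)$, $w + v \le 0$ on $\partial K$ implies $w + v \le 0$ on $K$.

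Given $u \in F(K)$ and $v \in \widetilde F(K)$ with $u + v \le 0$ on $\partial K$ (and $u, v$ bounded in the bounded variant), fix $\eta > 0$; it suffices to prove $u + v \le \eta$ on $K$, since sending $\eta \to 0$ then gives the claim. Because $u + v$ is USC on $K$ and $\le 0$ on $\partial K$, there is a neighbourhood $U$ of $\partial K$ in $K$ where $u + v < \eta/3$. Pick a compact $K' \Subset \inte K$ with $K \setminus K' \subset U$, and by (bounded) strict approximation select $w \in F^\str(K)$ with $\|u - w\|_{C^0(K)} < \eta/3$. Then $w + v < 2\eta/3$ on $\partial K'$.

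Since $K'$ is a \emph{compact} subset of $\inte K$ and $w \in F^\str(\inte K)$, for each $x_0 \in K'$ Definition \ref{defsubharmc2} provides an open ball $B_{x_0} \ni x_0$ and $c_{x_0} > 0$ with $w \in F^{c_{x_0}}(B_{x_0})$. Extracting a finite subcover $\{B_{x_1}, \ldots, B_{x_N}\}$ of $K'$ and setting $c := \min_i c_{x_i} > 0$ yields $w \in F^c(K')$, because $F^c$-subharmonicity is a local property. The function $\widehat v := v - 2\eta/3$ lies in $\widetilde F(K')$ since $\widetilde F$ is a subequation (Proposition \ref{prop_basichetilde}) and hence satisfies $(N)$; by construction $w + \widehat v \le 0$ on $\partial K'$. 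Weak comparison now gives $w + v \le 2\eta/3$ on $K'$, so $u + v \le w + \eta/3 + v \le \eta$ on $K'$, while $u + v < \eta/3 \le \eta$ on $K \setminus K' \subset U$. This yields $u + v \le \eta$ on $K$, as required.

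The main technical obstacle is that strict $F$-subharmonicity is defined locally, so one cannot directly invoke weak comparison on $K$ itself: weak comparison demands a uniform strictness constant $c > 0$, whereas $F^\str(K)$ only provides $c$ varying with the point. The remedy is to step slightly inside $\inte K$, working on a compact $K' \Subset \inte K$ where a finite cover extracts a uniform $c$; the resulting boundary gap between $\partial K'$ and $\partial K$ is then absorbed into the margin $\eta/3$ using the USC-ness of $u + v$ up to $\partial K$. The bounded variant is identical, noting that bounded strict approximation keeps $w$ bounded.
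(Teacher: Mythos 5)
The paper does not reprove this statement (it is quoted from \cite[Thm. 9.2]{HL_dir}); your argument is correct and is essentially the standard Harvey--Lawson proof: approximate $u$ by a strict subsolution, trade the sup-norm error for a boundary margin, and invoke weak comparison (upgraded from local weak comparison via Theorem \ref{lwcimplieswc}), the only real wrinkle being the passage from the locally defined strictness of $F^{\str}$ to a uniform constant $c$, which you handle correctly by a finite cover of a slightly smaller compact $K'$. Two small points should be tightened. First, since $\partial K' \subset K'$, the requirement $K \setminus K' \subset U$ does not by itself place $\partial K'$ inside the set $U$ where $u+v<\eta/3$ (upper semicontinuity gives no upper bound at limit points of $U$), so you should instead require $K \setminus \inte K' \subset U$, e.g. take $K' = \{x \in K : \dist(x,\partial K) \ge \delta\}$ with $\delta$ small; then $\partial K' \subset K\setminus \inte K' \subset U$ and your estimate $w+v<2\eta/3$ on $\partial K'$ is justified. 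Second, the theorem only assumes $(P)$ and $(N)$, not $(T)$, so you cannot literally quote Proposition \ref{prop_basichetilde} to conclude that $\widetilde F$ is a subequation; however, the property you actually use, namely $(N)$ for $\widetilde F$, follows directly from $(N)$ for $F$, since $F+N\subset F$ implies $\inte F + N \subset \inte F$ and hence $\widetilde F + N \subset \widetilde F$. With these trivial adjustments the proof is complete, including the bounded variant.
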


The strict approximation property is a delicate issue. A case when the strict approximation holds on a set $\Omega$ is when $\overline{\Omega}$ supports a $C^2$ function which is strictly $M$-subharmonics, where $M$ is a monotonicity cone for $F$ (see \cite{HL_dir, HL_existence}). However, in examples $(\EE 2), \ldots, (\EE 6)$ this gives topological restrictions on $\Omega$ that we would like to avoid. As we shall see, to prove Theorem \ref{cor_bonito} we will just need to show the comparison property for $F$ locally jet-equivalent to $(\EE 2), \ldots, (\EE 6)$ just when $f$ is strictly increasing. In this case, the strict approximation of $u \in F(X)$ is achieved via the functions $u_\eta \doteq u-\eta$ for positive constants $\eta>0$, provided that the dependence of $F$ on $p$ is mild enough, as quantified by the next definition which shall be compared to the classical  condition (3.14) in \cite{CIL}. For a closely related condition, in the case of subequations independent of the gradient, we refer to \cite[Section 4]{cirantpayne}.


\begin{definition}\label{def_unifcontinuous}
A function $\mathscr{F} : \R^m \times \Sym^2(\R^m) \ra \R$ is said to be \emph{uniformly continuous} if for each $\eps >0$ there exists $\delta>0$ such that, whenever
$$
(p,A), (q,B) \in \R^m \times \Sym^2(\R^m), \qquad \text{and} \qquad 
|p-q| + \|(A-B)_{+}\| < \delta,
$$
then $\mathscr{F}(q,B) \ge \mathscr{F}(p,A) -\eps$.
\end{definition}

\begin{proposition}\label{prop_unifcontinuous}
Let $\mathbf{F}= \big\{\mathscr{F}(p,A) \ge f(r)\big\}$ be one of the models in $(\EE 2), \ldots , (\EE 6)$. Then, $\mathscr{F}$ is uniformly continuous. 
\end{proposition}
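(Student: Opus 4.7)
\emph{Proof proposal.} The key observation is that for each of $(\EE 2), \ldots, (\EE 6)$ the defining function $\mathscr{F}$ depends only on $A$ (not on $p$), and moreover $\mathscr{F}$ is both monotone (inherited from condition $(P)$) and Lipschitz in $A$ with respect to the operator norm. So my plan is to reduce the uniform continuity to these two facts, via the standard decomposition $A - B = (A-B)_+ - (A-B)_-$ into positive and negative parts of a symmetric matrix.

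The main algebraic step is the identity $B = A - (A-B)_+ + (A-B)_-$, which gives the matrix inequality $B \ge A - (A-B)_+$ because $(A-B)_- \ge 0$. Applying monotonicity of $\mathscr{F}$ (i.e.\ $\mathscr{F}(X+P) \ge \mathscr{F}(X)$ whenever $P \ge 0$, which follows from $(P)$ together with the fact that $\mathscr{F}$ is independent of $p$) yields
\[
\mathscr{F}(B) \,\ge\, \mathscr{F}\bigl(A - (A-B)_+\bigr).
\]
Since $\mathscr{F}$ does not depend on $p$, the $|p-q|$ term in Definition \ref{def_unifcontinuous} is irrelevant; it remains only to show that for some constant $c = c(m)$ we have the Lipschitz estimate $\mathscr{F}(A) - \mathscr{F}(A - C) \le c \|C\|$ for every $C \ge 0$. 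Picking $\delta < \eps/c$ will then immediately close the argument.

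The Lipschitz bound is verified case-by-case, but in each case it follows from Weyl's perturbation inequality (or its Gårding generalization):
\begin{itemize}
\item $(\EE 2)$: $\tr(A) - \tr(A-C) = \tr(C) \le m\|C\|$.
\item $(\EE 3)$: $|\lambda_k(A) - \lambda_k(A-C)| \le \|C\|$ is classical Weyl.
\item $(\EE 5)$ (and $(\EE 2)$ again as the $k=m$ case): the sums $\lambda_1(A) + \ldots + \lambda_k(A)$ are Lipschitz in $A$ by the Ky~Fan / Lidskii inequality.
\item $(\EE 4)$: the Gårding eigenvalues $\mu_j^{(k)}$ of a hyperbolic polynomial are Lipschitz with respect to perturbations in the hyperbolicity direction, by the general Gårding perturbation results recalled in \cite{HL_garding, HL_gardingesub}, and (as in \eqref{fundamental_mono}) increase when $C \ge 0$ is added. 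A Weyl-type bound $|\mu_j^{(k)}(A) - \mu_j^{(k)}(A-C)| \le c_{m,k}\|C\|$ follows.
\item $(\EE 6)$: the complex versions reduce to the real ones applied to the hermitian symmetric part $A^{(1,1)}$, which is a linear (hence $1$-Lipschitz) operation on $A$.
\end{itemize}

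The only subtle point, and where I expect to spend most care, is $(\EE 4)$: we cannot directly invoke Weyl for $\mu_j^{(k)}$ because these are not eigenvalues of a matrix but roots of the hyperbolic polynomial $\sigma_k(\lambda + tv)$. However, the continuity and monotonicity of Gårding eigenvalues with respect to the underlying matrix argument are established precisely in \cite{HL_garding}, and combining those with \eqref{fundamental_mono} gives a Lipschitz bound in operator norm on any bounded set of matrices; since both sides of the uniform continuity inequality are translation-invariant under $A \mapsto A + tI$, a uniform (global) constant $c$ suffices. With these Lipschitz bounds in hand, the proof is finished by the two-line chain
\[
\mathscr{F}(B) \,\ge\, \mathscr{F}(A - (A-B)_+) \,\ge\, \mathscr{F}(A) - c\,\|(A-B)_+\| \,\ge\, \mathscr{F}(A) - c\delta \,>\, \mathscr{F}(A) - \eps.
\]
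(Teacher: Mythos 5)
Your proposal is correct and follows essentially the same route as the paper: everything is reduced to the one-sided bound $\mathscr{F}(A) \le \mathscr{F}(B) + c\|(A-B)_+\|$, which is immediate for $(\EE 2),(\EE 3),(\EE 5),(\EE 6)$ (Weyl/Ky Fan, plus linearity of $A \mapsto A^{(1,1)}$ in the complex case), and which for $(\EE 4)$ comes from the G\aa rding monotonicity \eqref{fundamental_mono} combined with the behaviour of $\mu_j^{(k)}$ under shifts in the hyperbolicity direction $(1,\ldots,1)$; your detour through $B \ge A-(A-B)_+$ and a Lipschitz bound for positive perturbations is only a cosmetic rearrangement of this.

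One caveat on your $(\EE 4)$ discussion: the remark ``Lipschitz on any bounded set of matrices, plus translation invariance under $A \mapsto A+tI$, gives a uniform constant'' is not a valid reduction as stated, because symmetric matrices are not brought into a fixed bounded set by subtracting multiples of $I$ (consider $\mathrm{diag}(R,-R)$ with $R$ large). Fortunately the global bound, with $c=1$, follows directly from exactly the two ingredients you cite: for $C \ge 0$ one has $C \le \|C\|\,I$, hence by \eqref{fundamental_mono} and the identity $\mu_j^{(k)}(A+tI)=\mu_j^{(k)}(A)+t$ (property $(2)$ in \cite{HL_garding}), $\mu_j^{(k)}(A-C) \ge \mu_j^{(k)}(A-\|C\|I)=\mu_j^{(k)}(A)-\|C\|$. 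This is precisely the paper's computation $\mu_j^{(k)}(A) \le \mu_j^{(k)}\big(B+\|(A-B)_+\|I\big)=\mu_j^{(k)}(B)+\|(A-B)_+\|$, so the gap is easily repaired without any appeal to local Lipschitz estimates.
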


\begin{proof}
In all the examples but $(\EE 4)$ and its complex analogue, the statement is immediate because there exists an absolute constant $c>0$ such that 
\begin{equation*}
\mathscr{F}(A) \le \mathscr{F}(B) + c\|(A-B)_+\|. 
\end{equation*}
Indeed, this also holds for $(\EE 4)$, as it follows using the elementary property $(2)$ in \cite[p. 1106]{HL_garding} together with the monotonicity \eqref{fundamental_mono}, and taking into account that the eigenvalues $\mu_j^{(k)}$ are evaluated with respect to the direction $(1,\ldots,1)$: 
\begin{align*}
	\mu_j^{(k)}(A) &= \mu_j^{(k)}\big(B + (A-B)\big) \\
	&\le \mu_j^{(k)}\big(B + \|(A-B)_+\|I\big) = \mu_j^{(k)}(B) + \|(A-B)_+\|.\\[-4em]
\end{align*}
\end{proof}

\begin{theorem}\label{thm_comparison_examples}
Let $F$ be locally affine jet-equivalent to a universal subequation with model $\mathbf{F} = \big\{\mathscr{F}(p,A) \ge f(r)\big\}$ via continuous bundle maps. Suppose that $\mathscr{F}$ is uniformly continuous and that $f$ is strictly increasing. Then, $F$ has the bounded, strict approximation property.
\end{theorem}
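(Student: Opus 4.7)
The plan is to show that, for every bounded $u \in F(K)$ and every $\eta>0$, the shifted function $u_\eta := u - \eta$ lies in $F^{\str}(K)$. Since $u_\eta \to u$ uniformly as $\eta \downarrow 0$, this yields the bounded strict approximation property. As strict subharmonicity is a local notion, it suffices to show that for each $x_0 \in K$ there is a neighborhood $B \ni x_0$ and $c>0$ with $u_\eta \in F^c(B)$.

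Fix $x_0$ and choose a distinguished chart $(U,\varphi)$ around it, so that an affine jet-equivalence $\Psi$, given by continuous bundle maps $g,h,L$ and a section $J$, carries the Euclidean representation of $F$ onto $\varphi(U)\times \mathbf{F}$. Shrink to a compact $\overline{B}\subset U$. If $\phi$ is any viscosity test function for $u_\eta$ at $x\in B$, then $\phi+\eta$ is a test for $u$ at $x$, so $J^2_x(\phi+\eta)\in F_x$. Unfolding via $\Psi$, this gives
\[
\mathscr{F}\bigl(\tilde p(x),\tilde A(x)\bigr)\;\ge\; f\bigl(u(x)+r_J(x)\bigr),
\]
where $(\tilde p,\tilde A)$ is the image of $(d\phi,\nabla d\phi)$ and $r_J$ is the $r$-component of the affine shift. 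Since $u$ is bounded on $K$ and $r_J$ is continuous on $\overline{B}$, the numbers $s(x):=u(x)+r_J(x)$ all lie in a compact interval $I\subset\R$. Strict monotonicity of $f$ together with continuity then yields the uniform gap
\[
\epsilon := \min_{s\in I}\bigl(f(s)-f(s-\eta)\bigr)>0.
\]
Because $J^2_x\phi$ differs from $J^2_x(\phi+\eta)$ only by $-\eta$ in the $r$-slot (and $\Psi$ is affine in the fibers, so this difference persists after applying $\Psi$), we deduce $\mathscr{F}(\tilde p,\tilde A)\ge f(r')+\epsilon$ with $r':=u(x)-\eta+r_J(x)$.

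Now the uniform continuity of $\mathscr{F}$ (Proposition \ref{prop_unifcontinuous}), combined with the local Lipschitz continuity of $f$ on a slight enlargement of $I$, converts this scalar slack $\epsilon$ into a uniform distance $\rho=\rho(\eta,K)>0$ from $\partial \mathbf{F}$ in the Sasaki metric on fibers: any perturbation of size $<\rho$ of the triple $(r',\tilde p,\tilde A)$ still satisfies the defining inequality of $\mathbf{F}$. Hence $\Psi(J^2_x\phi)\in \mathbf{F}^{\rho}$ for every $x\in B$. By Remark \ref{rem_piustupido}, the affine-on-fibers, continuous map $\Psi$ has stretching factor bounded above and below on the compact $\overline{B}$, so there is $c=c(\eta,B)>0$ with $F^c_x \subset \Psi^{-1}(\mathbf{F}^{\rho})$ for every $x\in B$; equivalently $J^2_x\phi\in F^c_x$ for every such test $\phi$, giving $u_\eta \in F^c(B)$ as required.

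The main obstacle is the bookkeeping in the final step: one must track the constants $(\epsilon,\rho,c)$ so that they are uniform in $x\in B$, and this is exactly what the uniform continuity of $\mathscr{F}$ is designed to accomplish. The rest is structural: strict monotonicity of $f$, together with the boundedness of $u$ on the compact $K$, is what produces the scalar slack $\epsilon$, and the continuity of the bundle maps defining $\Psi$ ensures that this slack is not destroyed when returning from $\mathbf{F}$ to $F$.
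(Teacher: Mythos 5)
Your proposal follows the same route as the paper's proof: replace $u$ by $u_\eta=u-\eta$, use strict monotonicity of $f$ together with the boundedness of $u$ (and continuity of the affine shift on a compact chart) to get a uniform scalar slack $\epsilon$, convert that slack into a uniform jet-slack via the uniform continuity of $\mathscr{F}$, and use boundedness of the bundle maps on compacts to turn this into a uniform fiber-distance, exactly as in the paper's Theorem \ref{thm_comparison_examples}. Two points in your closing step need repair, though neither is fatal. First, $f$ is only assumed continuous and strictly increasing, so you cannot invoke its ``local Lipschitz continuity''; you do not need it: either use uniform continuity of $f$ on the compact interval containing all relevant $r$-values, or, as the paper does, cap the radius of the $r$-perturbation by $\eta$ so that the slack $\epsilon=\inf\{f(t+\eta)-f(t)\}$ absorbs the variation of $f$ in the $r$-slot.

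Second, your final inclusion points the wrong way. From $\Psi(J^2_x\phi)\in\mathbf{F}^{\rho}$ you may conclude $J^2_x\phi\in F^c_x$ only if $\Psi^{-1}(\mathbf{F}^{\rho}_x)\subset F^{c}_x$ (equivalently $\mathbf{F}^{\rho}_x\subset\Psi(F^c_x)$), whereas you asserted $F^c_x\subset\Psi^{-1}(\mathbf{F}^{\rho}_x)$, which yields nothing; the word ``equivalently'' is where the argument as written breaks. The fix is immediate and already implicit in your phrase ``bounded above and below'': affine jet-equivalences form a group, so Remark \ref{rem_piustupido} applied to $\Psi^{-1}$ (or a direct two-sided bound on the fiberwise stretching over the compact $\overline{B}$) gives precisely $\Psi^{-1}(\mathbf{F}^{\rho}_x)\subset F^{c}_x$ for a uniform $c>0$ on $\overline{B}$. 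With these two adjustments your argument coincides with the one in the paper.
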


\begin{remark}
\emph{In the special case when $f \in C^1(\R)$ with $\inf_{\R}f' >0$, the boundedness assumption on $u$ is not needed.
}
\end{remark}
\begin{proof}
We consider the case of a local jet-equivalence, since adaptations to affine jet-equivalences are straightforward. Fix a compact $K\Subset X$ and let $u \in F(K)$ be bounded. We shall prove that, for each $\eta>0$, there exists $c>0$ such that $u_\eta \doteq u-\eta \in F^c(K)$. By covering $K$ with a finite number of charts $(U, \varphi)$ ensured by the local jet-equivalence condition,  it is enough to prove that $u_\eta \in F^c(\overline U)$ for some $c>0$. To this aim, take a local representation $\mathcal{F}$ of $F$ in the chart $(U, \varphi)$. By hypothesis, there exists a jet-equivalence $\Psi : \varphi(U) \times \mathbf{J^2} \ra \varphi(U) \times \mathbf{J^2}$ with
\begin{equation}\label{def_psi_novamente}
(y,r,p,A) \longmapsto  \big(y,r, g(y)p, h(y)Ah(y)^t + L_y(p)\big),
\end{equation}
satisfying 
\begin{equation*}
\Psi(\mathcal{F}) = \varphi(U) \times \mathbf{F},
\end{equation*}
that is, 
\begin{equation}\label{def_FFF}
\mathcal{F} = \Big\{ (y,r,p,A) \  :  \ \mathscr{F}\Big( g(y) p, h(y) A h(y)^t + L_y(p) \Big) \ge f(r) \Big\}.
\end{equation}
Define $\bar u \doteq u \circ \varphi^{-1}$ on $V \doteq \varphi(U)$ and note that, by construction, $\bar u \in \mathcal{F}(\overline{V})$ (we assume $\Psi$ and $\varphi$ defined on $\overline{U}$). Reasoning as in Remark \ref{rem_piustupido}, it is sufficient to prove that $\bar u_\eta \doteq \bar u -\eta \in \mathcal{F}^c(V)$.\\
%
%
%
%
Let $R$ be such that $|u| \le R$. Being $f$ strictly increasing, we can define  
$$
\varepsilon \doteq \inf\, \big\{f(r+\eta)-f(r)\  : \ r \in [-2R,2R]\big\} > 0,
$$
and since $\mathscr{F}$ is uniformly continuous, there exists $\delta>0$ such that
\begin{equation}\label{ineq_unif_con_F}
\mathscr{F}(\bar p,\bar A) > \mathscr{F}(\bar q,\bar B) -\varepsilon \quad \text{whenever} \quad \vert \bar p-\bar q\vert + \vert\vert (\bar A - \bar B)_{+}\vert\vert < \delta.
\end{equation} 
Set $M \doteq \vert \vert g \vert \vert_\infty + \vert\vert L\vert\vert_{\infty} + \vert \vert h \vert \vert^2_{\infty}$. Fix $y \in \overline{V}$ and a test function $\phi$ for $\bar u_\eta$ at $y$, and set $J^2_y\phi = (r,p,A)$. Since $\phi+\eta$ is a test for $\bar u$ at $y$, $(r+\eta,p,A) \in \mathcal{F}_y$. Thus, for $(s,q,B) \in J^2_y(V)$ satisfying
$$
\vert s- r\vert^2 + \vert p-q\vert^2 + \vert\vert (A - B)_{+}\vert\vert^2 < \min\left\{ \eta^2, \frac{\delta^2}{4M}\right\} \doteq c^2, 
$$
using \eqref{def_FFF}, \eqref{ineq_unif_con_F} and the definition of $M$ the following inequalities hold:
\begin{align*}
\mathscr{F}\big(g(y)q, h(y)Bh(y)^t + L_{y}(q)\big) &> \mathscr{F}\big(g(y)p, h(y)Ah(y)^t + L_{y}(p)\big) - \varepsilon \\
&\geq  f\big(r + \eta\big) - \varepsilon > f(s),
\end{align*} 
where the last follows since $|s-r| < \eta$. This means that the Euclidean ball $B_{y}\big(J^{2}_{y}\phi, c\big) \subset \mathcal{F}_y$, which proves that $\bar u_\eta \in \mathcal{F}^c(V)$. 
\end{proof}
%
For $F$ locally jet-equivalent to example $(\EE 7)$ or $(\EE 8)$, things are more difficult because of a worse dependence on the gradient term $p$. It is well-known that if $F$ is the universal example in $(\EE 7)$ or $(\EE 8)$ on $X = \R^m$, and $f$ is strictly increasing, comparison follows by a direct application of the theorem on sums. The same happens on any manifold with non-negative sectional curvature because of the Riemannian version of the theorem on sums in \cite{azagraferrerasanz}. However, surprisingly enough, when the curvature of $X$ is negative somewhere the Riemannian theorem on sums is not powerful enough to yield a sharp result, and needs a further uniform continuity requirement (condition $(2\flat)$ in \cite[Cor. 4.10]{azagraferrerasanz}). This accounts for the restrictions in $ii)$ and $iii)$ of Theorem \ref{cor_bonito}, proved in Section \ref{sec_main_examples}, and will be examined in Appendix \ref{appendix_1}.

When $F$ is reduced (that is, independent of the $r$-variable), comparison is a particularly subtle issue, and we refer to \cite{barlesbusca} for further insight. We here investigate just the very interesting case of the $\infty$-Laplacian (see \cite{jensen, crandall_visit}), for which we have the following result.

\begin{theorem}\label{compa_inftyLaplacian}
The comparison holds for the $\infty$-Laplace subequation $F_\infty = \overline{\{A(p,p)>0\}}$.
\end{theorem}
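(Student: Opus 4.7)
The plan is to adapt Jensen's classical viscosity comparison argument for the Euclidean $\infty$-Laplacian \cite{jensen, crandall_visit} to the Riemannian setting. Since $F_\infty$ is universal Riemannian, Theorem \ref{teo_importante!!} already provides weak comparison, but the reduced nature of $F_\infty$ (no $r$-dependence, and a cubic dependence on jets that fails the uniform continuity of Definition \ref{def_unifcontinuous}) precludes the $u-\eta$ approximation of Theorem \ref{thm_comparison_examples}. I will argue by contradiction, doubling variables with the squared geodesic distance, and handle the canonical degeneracy of the $\infty$-Laplacian at $p=0$ via a Jensen-type linear perturbation performed in a normal chart.

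Suppose comparison fails, so there exist $u \in F_\infty(K)$ and $v \in \widetilde{F_\infty}(K)$ with $u+v \le 0$ on $\partial K$ but $M \doteq \sup_K(u+v) > 0$. After a standard localization I may assume $K$ is contained in a small geodesically convex normal ball around a point $x_0$, inside which $d(\cdot,\cdot)^2$ is smooth, parallel transport along minimizing geodesics is well-defined, and the Hessian of $w \doteq \tfrac12 d(\cdot,x_0)^2$ satisfies $\Hess w \ge \tfrac12 \metric$ (by Hessian comparison, the ball being chosen small with respect to the curvature of $X$ on $K$). I then double variables: for $\alpha > 0$ large and $\eta > 0$ small, maximize
$$
\Phi_{\alpha,\eta}(x,y) = u(x) + v(y) - \tfrac{\alpha}{2} d(x,y)^2 - \eta w(y)
$$
on $\overline K \times \overline K$ at $(x_\alpha, y_\alpha)$. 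Standard estimates yield $\alpha\, d(x_\alpha,y_\alpha)^2 \to 0$ and convergence of $(x_\alpha, y_\alpha)$ to an interior point at which $u + v - \eta w$ is positive (for $\eta$ small). The Riemannian theorem on sums of \cite{azagraferrerasanz} then furnishes $p_\alpha = \alpha\, \exp_{x_\alpha}^{-1}(y_\alpha)$, its parallel transport $\tilde p_\alpha$ at $y_\alpha$, and symmetric endomorphisms $A_\alpha, B_\alpha$ such that $(p_\alpha, A_\alpha)$ is a second-order superjet of $u$ at $x_\alpha$, $(\tilde p_\alpha + \eta\, \nabla w(y_\alpha),\, B_\alpha + \eta\, \Hess w(y_\alpha))$ is a second-order subjet of $-v$ at $y_\alpha$, and the matrix inequality $A_\alpha + \tau_\alpha^* B_\alpha \tau_\alpha \le \omega(\alpha^{-1}) I$ holds with $\omega(s) \to 0$.

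From here the $F_\infty$-subharmonicity of $u$ gives $A_\alpha(p_\alpha, p_\alpha) \ge 0$, while the $\widetilde{F_\infty}$-property of $v$ gives $B_\alpha(q_\alpha, q_\alpha) \le 0$ for $q_\alpha = \tilde p_\alpha + \eta \nabla w(y_\alpha)$. Combining these with the matrix inequality, using $|\tilde p_\alpha| = |p_\alpha|$, and expanding in $\eta$ yields
$$
0 \le A_\alpha(p_\alpha, p_\alpha) \le -\eta\, \Hess w(p_\alpha, p_\alpha) + O(\eta)\, |p_\alpha| + \omega(\alpha^{-1})|p_\alpha|^2,
$$
so $\Hess w \ge \tfrac12 \metric$ gives a strict contradiction for $\alpha$ large, \emph{provided} $|p_\alpha|$ stays bounded away from zero along a subsequence.

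The residual degenerate case $p_\alpha \to 0$ is handled by Jensen's auxiliary-function trick: replace $u$ with $u^\lambda(x) \doteq u(x) + \lambda\, \langle e, \exp_{x_0}^{-1}(x)\rangle$ in the normal chart, for a fixed unit direction $e$. This preserves $F_\infty$-subharmonicity (the perturbation adds a term whose gradient is close to $\lambda e$ and whose Hessian is uniformly small on the normal ball), while shifting every extracted covector so that $p_\alpha$ is bounded below uniformly in $\alpha$; the non-degenerate argument above then applies to $u^\lambda$, and sending $\lambda \downarrow 0$ after $\eta \downarrow 0$ recovers the contradiction for $(u,v)$. The principal technical obstacle is precisely in this last step, since $\langle e, \exp_{x_0}^{-1}(\cdot)\rangle$ is not affine but only $C^2$, and its Hessian is governed by the curvature tensor; the initial reduction to a sufficiently small normal ball is what makes this spurious Hessian contribution uniformly negligible compared with the strict margin produced by the $\eta w$ perturbation.
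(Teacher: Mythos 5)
Your doubling-of-variables scheme founders on exactly the obstruction that makes Jensen's theorem nontrivial even on $\R^m$, and neither of the two steps designed to overcome it works as stated. The central display, $0 \le A_\alpha(p_\alpha,p_\alpha) \le -\eta\,\Hess w(p_\alpha,p_\alpha) + O(\eta)|p_\alpha| + \omega(\alpha^{-1})|p_\alpha|^2$, does not follow from the theorem on sums: the $\widetilde{F_\infty}$-information on $v$ (equivalently $F_\infty$, by self-duality) constrains $B_\alpha$ only in the single direction of the \emph{perturbed} gradient $q_\alpha$, which differs, up to sign and parallel transport, from $p_\alpha$ by $\eta\nabla w(y_\alpha)$; the matrix inequality $A_\alpha + \cal L_\alpha^* B_\alpha \le \omega(\alpha^{-1})\,I$, on the other hand, must be evaluated at $p_\alpha$ in order to be coupled with $A_\alpha(p_\alpha,p_\alpha)\ge 0$. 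Transferring the one-directional information on $B_\alpha$ from $q_\alpha$ to $\cal L_\alpha p_\alpha$ produces cross terms of the type $\eta\, B_\alpha(q_\alpha,\nabla w)$ and $\eta^2 B_\alpha(\nabla w,\nabla w)$, and the theorem on sums only gives $\|B_\alpha\| \le C\alpha$; so what you absorb into ``$O(\eta)|p_\alpha|$'' is in fact of size $\eta\alpha$, and no ordering of the limits $\alpha\to\infty$, $\eta\downarrow 0$ closes the contradiction. This is the degeneracy-follows-the-gradient feature of $\Delta_\infty$: with $f\equiv 0$ there is no zeroth-order strict monotonicity to exploit (contrast the Appendix, where for $(\EE 8)$ the contradiction comes from $f(r_\alpha)-f(r_\alpha-c_\alpha)>0$), and strictness injected through $\Hess w \ge \tfrac12\metric$ cannot be pushed through the operator because its null direction moves with the gradient.

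The cure proposed for the degenerate regime $p_\alpha \to 0$ is also unsound: adding $\lambda\langle e, \exp_{x_0}^{-1}(\cdot)\rangle$ does \emph{not} preserve $F_\infty$-subharmonicity, even in the Euclidean case where the perturbation is exactly affine. Since the operator is $A(p,p)$, shifting the gradient produces the terms $2\lambda A(p,e) + \lambda^2 A(e,e)$, which have no sign; for instance, adding a suitable linear function to Aronsson's $\infty$-harmonic function $x^{4/3}-y^{4/3}$ destroys $\infty$-subharmonicity. (Jensen's own treatment of the degenerate set proceeds via sup/inf-convolutions and the approximating equations built from $\min\{|p|-\eps,\ A(p,p)\}$ and $\max\{\eps-|p|,\ -A(p,p)\}$, not a linear shift.) So the argument breaks in both the nondegenerate and the degenerate regimes. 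Note that the paper's proof bypasses the theorem on sums altogether: on small balls where $\varrho_x^2$ is smooth with $\nabla\di\varrho_x^2 \ge \metric$ one establishes comparison with cones as in \cite{CEG}, deduces local Lipschitz regularity, and then invokes the purely metric comparison argument of \cite{armstrongsmart}, which transfers verbatim to manifolds; a doubling-type proof would instead require reproducing Jensen's full approximation scheme \cite{jensen}.
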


The result has been proved by \cite{jensen} in the Euclidean space, and a simpler, beautiful alternative argument has then been given in \cite{armstrongsmart}. Theorem \ref{compa_inftyLaplacian} follows by putting together minor modifications of results appearing in the literature. Since we have not found a reference covering the theorem in full, we sketch the proof for the convenience of the reader. 

\begin{proof}[Proof: Sketch]
For a fixed open set $\Omega \!\Subset\! X$, consider a finite covering $\{B_{\eps}(x_j)\}$ of $\overline{\Omega}$ such that, for each $j$ and each $y \in B_{4\eps}(x_j)$, the squared distance function $\varrho^2_y$ from $y$ is smooth on $B_{2\eps}(y)$ with $\nabla^2 \varrho^2_y \ge \metric$. Then, the result follows from the following steps.
\begin{itemize}
\item[-] On $B_j \doteq B_{2\eps}(x_j)$, $u \in F_\infty(\overline{B_j \cap \Omega})$ implies that $u$ satisfies the comparison with metric cones: whenever $U \Subset B_j \cap \Omega$ and, for fixed $x \in U$, there exist $a,b \in \R$ with
$$
u \le a+ b \varrho_x \qquad \text{on } \, \partial(U \backslash \{x\}),
$$
then $u \le a+b \varrho_x$ on all of $U$. This follows verbatim from the proof in \cite{CEG}, observing that $\varrho_{x}^2$ on $B_j$ has the right concavity and smoothness assumptions. See also \cite[Sec. 2]{crandall_visit}.
\item[-] If $u$ has the comparison with metric cones on $B \!\cap\! \Omega$, then $u \!\in\! \lip_{\mathrm{loc}}(B\!\cap\! \Omega)$ and, by covering, $u \in \lip_{\mathrm{loc}}(\Omega)$. This follows from Lemma 2.5 in \cite{CEG}, see also \cite{jensen, lindqvistmanfredi_2}.
\item[-] The comparison theorem in \cite{armstrongsmart}: the result is stated for $u \in F_\infty(\overline{\Omega})$, $v \in F_\infty(\overline{\Omega})$ ($F_\infty$ is self-dual) which are continuous on $\Omega$. Although stated on $\R^m$, the proof is purely metric, and comparison with cones is just used in small enough balls. Hence, everything translates effortless to manifolds.
\end{itemize}\vspace{-2em}
\end{proof}

\subsection{Boundary barriers}

Let $F \subset J^2(X)$ and consider $\Omega \subset X$ with non-empty boundary. In this section, we describe the necessary assumptions on $\partial \Omega$ to possess barriers. 

\begin{definition}
Let $x_0 \in \partial \Omega$. Given $\lambda \in \R$, a \emph{$F$-barrier at height $\lambda$} is the collection of the following data:
\begin{itemize}
\item[-] A sequence of smooth, open neighborhoods $\{U_{j}\}$ shrinking to $x_0$; 
\item[-] For each $c \in \R$, a function $\beta_{j,c} \in C^2(\overline{U}_j)\cap F^\str(U_j\cap \Omega)$ satisfying 
$$
\beta_{j,c}(x_0) = \lambda, \qquad \beta_{j,c} \le \lambda \  \text{ on }  \ U_j \cap \overline{\Omega}, \qquad \beta_{j,c} < c \ \text{ on } \ \partial U_j.
$$
\end{itemize}
A point $x_0 \in \partial \Omega$ is called \emph{$F$-regular at height $\lambda$} if it admits $F$-barriers at $\lambda$, and $F$-regular if it is $F$-regular at each height. The boundary $\partial \Omega$ is said to be $F$-regular (at height $\lambda$) if it is so for each of its points.
\end{definition}

\begin{remark}
\emph{By $(N)$, if $x_0$ is $F$-regular at height $\lambda$, then it is $F$-regular at each height $\lambda'<\lambda$.
}
\end{remark}

A sufficient condition on $\partial \Omega$ to be $F$-regular is its $F$-convexity, which is defined as follows: we first consider the reduced bundle 
$$
J^ 2_\red(X) \ra X, \qquad J^2_\red(X) \doteq T^*X \oplus \Sym^2(T^*X), 
$$
so that $J^2 (X) = \mathbf{R} \oplus J^2_\red(X)$. A subequation $F$ is called \emph{reduced} (i.e. independent of $r$) if $F = \mathbf{R} \oplus F'$ for some $F'\subset J^2_\red(X)$. Hereafter, a reduced subequation will directly be identified with $F \subset J^2_\red(X)$. 

\begin{definition}
Let $F\subset J^{2}_\red(X)$ be a reduced subequation. The \emph{asymptotic interior} $\overrightarrow{F}$ of $F$ is the set of all $J\in J^{2}_\red(X)$ for which there exist a neighborhood $\mathcal{N}(J) \subset J^{2}_\red(X)$ and a number $t_{0}>0$ such that
\begin{equation*}
t\cdot \mathcal{N}(J) \subset F \quad \forall \, t\geq t_0 .
\end{equation*}
A $C^2$-function $u$ with $J^{2}_{x}u \in \overrightarrow{F}$ for all $x$ will be called $\overrightarrow{F}$-subharmonic.
\end{definition}

By its very definition, $\overrightarrow{F}$ is an open cone satisfying $(P),(N)$.
%
%
The $F$-convexity of $\partial\Omega$ is defined in terms of possessing defining functions which are $\overrightarrow{F}$-subharmonic. We recall that a defining function for $\partial \Omega$ is a $C^2$ function $\rho$ defined in a neighborhood of $\partial\Omega$ such that 
$$ 
\partial\Omega = \{x : \rho(x) = 0\}, \qquad \dif\rho \not= 0 \ \ \mbox{on} \ \ \partial\Omega, \qquad \rho < 0 \ \ \mbox{on} \ \ \Omega . 
$$

\begin{definition}
Let $F \subset J_\red^2(X)$ be a reduced subequation, and let $\Omega \subset X$ be a smooth open set. Then $\partial\Omega$ is named \emph{$F$-convex at $x_0\in \partial\Omega$} if there exists a defining function $\rho$ with $J^2_{x_0}\rho \in \overrightarrow{F_{x_0}}$. The boundary $\partial\Omega$ is said to be \emph{$F$-convex} if this holds at every $x_0\in \partial\Omega$.
\end{definition}

\begin{remark}\label{rem_equiconve}
\emph{Let $\rho$ be a defining function for $\partial \Omega$ satisfying $J^2_{x_0}\rho \in \overrightarrow{F_{x_0}}$. By definition, using the continuity of $J^2\rho$ and the topological condition $\inte F_{x_0} = (\inte F)_{x_0}$, $t\rho$ is $F$-subharmonic in a neighborhood of $x_0$ for each $t\ge t_{x_0}$ large enough. Defining functions for $\partial \Omega$ which satisfy $J^2_{x_0}\rho \in \overrightarrow{F_{x_0}}$ can be found starting from any defining function $\bar \rho$. Indeed, by \cite[Prop. 11.6]{HL_dir}, $\partial \Omega$ is $F$-convex at $x_0$ if and only if, for each defining function $\bar \rho$ and setting $\rho_s \doteq \bar \rho + s \bar \rho^2$, $J^2_{x_0}\rho_s \in \overrightarrow{F_{x_0}}$ for sufficiently large $s$. 
}
\end{remark}

%
%

Geometrically, the $F$-convexity can be described as follows: if we choose an outward pointing normal vector $\nu$ to $\partial \Omega$ and a Fermi chart
$$
\varphi \ \ : \ \ \partial \Omega \times (-\eps,\eps) \ra B_\eps(\partial \Omega), \qquad (x,\rho) \mapsto \exp_x\big( \rho \nu(x)\big),
$$
then the coordinate $\rho$ gives a defining function for $\partial \Omega$. If we denote with $\II_{\partial \Omega}$ the second fundamental form of $\partial \Omega$ in the direction of $-\nu$:
$$
\II_{\partial \Omega}(X,Y) \doteq \langle \nabla_X \nu,Y \rangle \qquad \text{for } \, X,Y \in T\partial \Omega.
$$
With respect to the splitting $T_{x_0}X =  \langle \nu \rangle \oplus T_{x_0} \partial \Omega$ at $x_0 \in \partial \Omega$, in view of Remark \ref{rem_equiconve} the $F$-convexity condition rewrites as 
\begin{equation}\label{convegeometric}
\left( 0, (1,0), \left( \begin{array}{cc}s & 0 \\ 0 & \II_{\partial \Omega}\end{array}\right) \right) \in \overrightarrow{F_{x_0}} \quad \text{for $s$ large enough.}
\end{equation}

In order to extend the previous definition from reduced subequations to the general case we are going to associate for every subequation $F \subset J^{2}(X)$ a family of reduced subequations $F_{\lambda} \subset J^{2}_\red(X), \lambda \in \mathbb{R}$ obtained by freezing the $r$-coordinate, that is, 
$$ 
\{\lambda\}\oplus F_{\lambda} = F \cap \Big\{\{\lambda\}\oplus J^{2}_{\red}(X)\Big\}. 
$$
Note that, by $(N)$, $F_\lambda \subset F_{\lambda'}$ whenever $\lambda' \le \lambda$.

\begin{definition}
Let $F \subset J^{2}(X)$ be a subequation, and let $\Omega \subset X$ be a domain with smooth boundary. We will say that $\partial\Omega$ is \emph{$F$-convex at height $\lambda$} at a point $x_0\in \partial\Omega$ if $\partial\Omega$ is $F_{\lambda}$-convex at $x_0$. If this holds for each $\lambda \in \R$, then $\partial \Omega$ is called $F$-convex at $x_0$. The boundary $\partial\Omega$ is called \emph{$F$-convex} (at height $\lambda$) if it is so at each $x_0 \in \partial \Omega$.
\end{definition}

\begin{example}
We consider the quasilinear subequation in $(\EE 7)$. Barriers in this case are well known, and the reader is referred to the comprehensive \cite{serrin_barriers}. However, by a way of example, we can apply the definitions above to conclude that $\partial \Omega$ if $F$-convex at height $\lambda$ at $x_0$ if and only if there exists $s$ large such that, for each $t \ge t_0(s)$,
\begin{equation}\label{condiquasil}
\lambda_1(t)s + \lambda_2(t)\tr\big(\II_{\partial \Omega}\big) > f(\lambda).
\end{equation}
For instance, \eqref{condiquasil} is always satisfied when $f(\lambda) \le 0$ and $\partial \Omega$ is strictly mean convex at $x_0$.
\end{example}

\begin{example}
For the $\infty$-Laplacian $F_\infty = \overline{\{A(p,p)>0\}}$, each smooth boundary is $F_\infty$-convex.
\end{example}

For our examples, we need the following simple result.

\begin{proposition}\label{prop_condition_F_convex}
Let $\mathbb{F}$ be one of the examples in $(\EE 2), \ldots, (\EE 8)$, and suppose that $f(r) \le 0$ for $r \le 0$. Then, each Euclidean ball of $\R^ m$ is $\mathbb{F}$-convex at non-positive heights. 
\end{proposition}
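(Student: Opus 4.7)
The plan is to apply the geometric characterization in Remark \ref{rem_equiconve}: it suffices to exhibit, for each boundary point $x_0\in\partial B_R$ and each $\lambda\le 0$, a defining function whose $2$-jet at $x_0$ lies in $\overrightarrow{(F_\lambda)_{x_0}}$. I would take the signed distance $\bar\rho(x)=|x|-R$ and set $\rho_s=\bar\rho+s\bar\rho^2$. A direct computation in the orthogonal splitting $T_{x_0}\R^m=\langle\nu\rangle\oplus T_{x_0}\partial B_R$, using $\bar\rho(x_0)=0$, $\mathrm{d}\bar\rho(x_0)=\nu^\flat$ and $\nabla \mathrm{d}\bar\rho(x_0)=R^{-1}\Pi_{\nu^\perp}$, gives
$$
J^2_{x_0}\rho_s=\bigl(0,\ (1,0),\ A^*\bigr),\qquad A^* = \begin{pmatrix} 2s & 0 \\ 0 & R^{-1} I_{m-1}\end{pmatrix},
$$
which is positive definite for every $s>0$. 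This positivity of $A^*$ is the structural fact underlying all cases below.

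For $(\EE 2),(\EE 3),(\EE 5)$ and their complex counterparts in $(\EE 6)$, the model has the form $\{\mathscr F(A)\ge f(r)\}$ with $\mathscr F$ positively $1$-homogeneous in $A$ and strictly positive on the positive cone; from the scaling $\mathscr F(tA')=t\mathscr F(A')$ a short argument shows $\overrightarrow{(F_\lambda)_{x_0}}=\{(p,A):\mathscr F(A)>0\}$, independently of $\lambda$. Since $A^*>0$, the jet belongs to this set immediately (using that the eigenvalues of a real positive definite matrix induce a positive Hermitian $(1,1)$-form in the complex setting). For $(\EE 4)$, the same conclusion follows from \eqref{fundamental_mono} together with the fact that the positive cone is contained in the closure of the G{\aa}rding cone of $\sigma_k$, whence $\mu_j^{(k)}(A^*)>0$ for all $j,k$.

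The delicate cases are $(\EE 7)$ and $(\EE 8)$, where the presence of the gradient breaks $1$-homogeneity in $A$ alone and the assumption $f(\lambda)\le 0$ for $\lambda\le 0$ enters in an essential way. I would pick a small neighbourhood $\mathcal N$ of $J=((1,0),A^*)$ in which $|p'|\ge 1/2$, $A'(p',p')/|p'|^2\ge s$ and $\tr(A'|_{p'^\perp})\ge (m-1)/(2R)$ hold for all $(p',A')\in\mathcal N$. For $(\EE 7)$, using $\lambda_1\ge 0$ and $\lambda_2>0$ from \eqref{def_Tp}, the identity
$$
\tr\bigl(T(tp')\cdot tA'\bigr)= t\lambda_1(t|p'|)\frac{A'(p',p')}{|p'|^2}+t\lambda_2(t|p'|)\tr(A'|_{p'^\perp})
$$
is a sum of non-negative terms, with the second one strictly positive; hence it is $>0\ge f(\lambda)$ for every $t>0$, which places $(tp',tA')$ in $F_\lambda$ and proves $J\in\overrightarrow{(F_\lambda)_{x_0}}$. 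The analogous scaling $|tp'|^{-2}(tA')(tp',tp')=t|p'|^{-2}A'(p',p')\ge ts>0\ge f(\lambda)$ handles $(\EE 8)$.

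The main obstacle I anticipate is $(\EE 7)$ in the regime where $\lambda_1,\lambda_2$ vanish at infinity — as in the mean curvature operator, where $t\lambda_i(t)$ only tends to finite positive limits. In that regime $\overrightarrow{F_\lambda}$ genuinely depends on $\lambda$, so no abstract characterization as in $(\EE 2)$--$(\EE 6)$ is available. The way around this is precisely the pointwise positivity argument above, which bypasses the computation of $\overrightarrow{F_\lambda}$ altogether and exploits the sign hypothesis $f(\lambda)\le 0$ to cover every scaling parameter $t>0$ at once.
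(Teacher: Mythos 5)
Your argument is correct and takes essentially the same route as the paper: a defining function for the ball whose boundary Hessian is positive definite, combined with the sign condition $f(\lambda)\le 0$, so that all positive rescalings of nearby jets land in the subequation. The paper merely compresses your case-by-case verification of $(\EE 2),\ldots,(\EE 8)$ into the single observation that $\{\lambda_1(A)\ge 0\}$ is contained in each model at non-positive heights, whence the self-asymptotic open cone $\{\lambda_1(A)>0\}$ lies in $\overrightarrow{F_0}$, and then takes $\rho= r_o^2-R^2$ as the defining function.
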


\begin{proof}
It is easy to check that $\{\lambda_1(A) \ge 0\}$ is contained in each of the subequations above. Hence, 
$$
\big\{ \lambda_1(A)>0 \big\} = \overrightarrow{\big\{ \lambda_1(A)>0 \big\}} \subset \overrightarrow{F_0}.
$$
Taking an Euclidean ball $B_R(o)$, an $\overrightarrow{F_0}$-subharmonic defining function at each point of $\partial B_R(o)$ can be chosen to be $\rho \doteq r_o^2 - R^2$.
\end{proof}
%
%
The next result guarantees the existence of barriers when $\partial \Omega$ is $F$-convex.

\begin{theorem}\cite[Thm. 11.12]{HL_dir}\label{teo_exbarriera}
If $\partial \Omega$ is $F$-convex at height $\lambda$ at some $x_0 \in \partial\Omega$, then $x_0$ is $F$-regular at each height $\lambda'\leq\lambda$.
\end{theorem}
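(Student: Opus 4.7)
The strategy is to build the barrier by scaling a defining function that witnesses $F$-convexity, using the strict gap $\lambda-\lambda'>0$ to absorb the $r$-component into the asymptotic interior.

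First, by hypothesis there exists a $C^2$ defining function $\rho$ for $\partial\Omega$ in a neighborhood of $x_0$ with $J^2_{x_0}\rho\in\overrightarrow{F_\lambda}$. Unpacking the definition of the asymptotic interior gives an open neighborhood $\mathcal{N}\subset J^2_{\red}(X)$ of $J^2_{x_0}\rho$ and a threshold $t_0>0$ such that $t\,\mathcal{N}\subset F_\lambda$ for every $t\ge t_0$; since $\mathcal{N}$ is open, $t\,\mathcal{N}$ is an open subset of $F_\lambda$, hence $t\,\mathcal{N}\subset\inte F_\lambda$. Continuity of the map $x\mapsto J^2_x\rho$ then yields a neighborhood $U_0\ni x_0$ on which $J^2_x\rho\in\mathcal{N}$, so that $t\,J^2_x\rho\in\inte F_\lambda$ for every $x\in U_0$ and every $t\ge t_0$.

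Next I would take the candidate $\beta_t(x)\doteq\lambda'+t\rho(x)$ on $U_0$, with $t\ge t_0$. Immediately $\beta_t(x_0)=\lambda'$, and $\rho\le 0$ on $\overline{\Omega}$ yields $\beta_t\le\lambda'$ on $U_0\cap\overline{\Omega}$. The reduced part of $J^2_x\beta_t$ equals $t\,J^2_x\rho\in\inte F_\lambda$, while the $r$-component is $\lambda'+t\rho(x)\le\lambda'<\lambda$ on $\overline{\Omega}$. By the negativity condition $(N)$, $\inte F_\lambda\subset\inte F_{\lambda'+t\rho(x)}$; combined with the strict inequality $\lambda'+t\rho(x)<\lambda$, an elementary argument shows that small $(r,p,A)$-perturbations of $(\lambda'+t\rho,\,td\rho,\,t\nabla d\rho)$ stay in $F$, so $J^2_x\beta_t\in\inte F$. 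Hence $\beta_t\in F^{\str}(U_0\cap\Omega)$.

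Finally, I would take $U_j\doteq B_{\epsilon_j}(x_0)\subset U_0$ with $\epsilon_j\downarrow 0$ and, for each $c\in\R$, choose $t_{j,c}\ge t_0$, setting $\beta_{j,c}\doteq\lambda'+t_{j,c}\rho$, possibly corrected by a $C^2$ perturbation supported on $U_j\setminus\overline{\Omega}$ where no $F$-condition is imposed. The bound $|\rho|\le M\epsilon_j$ on $\partial U_j$, with $M=\sup_{U_0}|d\rho|$, then delivers $\beta_{j,c}<c$ on $\partial U_j$ once $\epsilon_j$ is sufficiently small relative to $(c-\lambda')/(t_{j,c}M)$. The subtle step is this boundary-matching: strict $F$-subharmonicity forbids $\beta$ from decreasing too rapidly away from $x_0$, while the boundary requirement pushes in the opposite direction, and the reconciliation is exactly what the slack $\lambda-\lambda'>0$ provides.
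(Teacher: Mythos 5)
Your scaling step is sound: for $x$ near $x_0$ and $t\ge t_0$ the reduced jet $t\,J^2_x\rho$ lies in $\inte F_\lambda$, and since the $r$-component $\lambda'+t\rho\le\lambda'<\lambda$ on $\overline\Omega$, condition $(N)$ together with the gap $\lambda-\lambda'>0$ does give $\lambda'+t\rho\in F^{\str}(U_0\cap\Omega)$. The genuine gap is in the third barrier requirement, which demands $\beta_{j,c}<c$ on $\partial U_j$ for \emph{every} $c\in\R$, in particular for $c<\lambda'$. Since $x_0\in\partial\Omega$ and $\di\rho(x_0)\neq 0$, every small neighbourhood $U_j$ of $x_0$ has $\partial U_j\cap\partial\Omega\neq\emptyset$, and at those points $\rho=0$, so your candidate takes the value $\beta_t=\lambda'$ there for every $t$: the condition fails whenever $c\le\lambda'$. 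The proposed repair, a $C^2$ perturbation supported in $U_j\setminus\overline\Omega$, cannot reach these points, because it vanishes on $\overline\Omega$, hence on $\partial U_j\cap\partial\Omega$, and is small nearby by continuity; and the estimate involving $(c-\lambda')/(t_{j,c}M)$ is only meaningful when $c>\lambda'$. Note the failure persists even under the weaker reading in which only $\partial U_j\cap\overline\Omega$ matters (this is what is actually used in Lemma \ref{lem_FetildeF}), since the bad points lie in $\overline\Omega$. The slack $\lambda-\lambda'$ lives in the $r$-variable and buys strict subharmonicity; it cannot produce arbitrarily negative boundary values at spatial points where $\rho$ vanishes.

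The missing idea, which is how the cited Harvey--Lawson argument proceeds (and which resurfaces in a global form in Proposition \ref{prop_F_convex_global} and Remark \ref{rem_equiconve}), is to perturb the defining function \emph{before} scaling: since the asymptotic interior $\overrightarrow{F_\lambda}$ is open, $\psi\doteq\rho-\eps\,\varrho_{x_0}^2$ still satisfies $J^2_{x_0}\psi\in\overrightarrow{F_{\lambda}}$ for $\eps>0$ small, so exactly as in your first step $t\psi$ is strictly $F_\lambda$-subharmonic near $x_0$ for $t\ge t_0$, and $\beta_{j,c}\doteq\lambda'+t\psi$ lies in $F^{\str}(U_j\cap\Omega)$, equals $\lambda'$ at $x_0$ and is $\le\lambda'$ on $U_j\cap\overline\Omega$. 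The crucial gain is that now $\psi\le-\eps\,\delta_j^2<0$ on $\partial U_j\cap\overline\Omega$ when $U_j=B_{\delta_j}(x_0)$, so choosing $t=t(j,c)$ large drives $\beta_{j,c}$ below any prescribed $c$ on the portion of $\partial U_j$ meeting $\overline\Omega$; the remaining portion of $\partial U_j$, contained in $\{\rho>0\}$ and hence at positive distance from $\overline\Omega\cap\overline{U_j}$, carries no subharmonicity or height constraint and can be lowered by a cut-off correction supported there. Without this quadratic correction, no choice of $t$ and no modification off $\overline\Omega$ can satisfy the definition for $c\le\lambda'$.
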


%

In what follows, we also need the following modification of Corollary~11.8 in \cite{HL_dir} to guarantee a global barrier for all of $\partial \Omega$. 

\begin{proposition}\label{prop_F_convex_global}
Let $\partial \Omega$ be $F$-convex at height $0$ and fix a global defining function $\rho$ for $\partial \Omega$. Then, if $s$ is large enough, $\rho_s \doteq \rho + s \rho^2$ is negative on a neighborhood $U$ of $\partial \Omega$ and there it satisfies the following property: $t \rho_s$ is strictly $F$-subharmonic in $U\cap \Omega$ for each $t \ge t_0$, $t_0$ large enough. Consequently, the functions $\{\beta_t\}$, $\beta_t \doteq t\rho_s$ give rise to an $F$-barrier at height $0$ for $\partial \Omega$.
\end{proposition}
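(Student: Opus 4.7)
First, I would control the sign of $\rho_s$: the factorisation $\rho_s = \rho(1+s\rho)$ makes it immediate that on the two-sided tubular neighbourhood $\{|\rho|<1/(2s)\}$ the factor $1+s\rho$ is strictly positive, so $\rho_s$ has the same sign as $\rho$. In particular $\rho_s \le 0$ on $U\cap\overline{\Omega}$, with equality exactly on $\partial\Omega$, which matches the first assertion of the proposition once one reads ``$\rho_s$ negative on a neighbourhood of $\partial\Omega$'' as a statement on the $\Omega$-side of the collar.

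Next, I would leverage the $F_0$-convexity of $\partial\Omega$ through Remark \ref{rem_equiconve}, which, for $s$ sufficiently large, gives $J^2_{x_0}\rho_s \in \overrightarrow{F_{0,x_0}}$ at every $x_0 \in \partial\Omega$. Since $\overrightarrow{F_0}$ is an open subset of $J^2_\red(X)$ and the map $x\mapsto J^2_x\rho_s$ is continuous, this inclusion automatically persists on a full open neighbourhood of $\partial\Omega$, which after shrinking I may identify with $U$. On any relatively compact piece $K\subset U$, the very definition of the asymptotic interior together with a continuity/compactness argument yields a threshold $t_0=t_0(K)$ such that
\[
t\cdot\bigl(d\rho_s(x),\,\nabla d\rho_s(x)\bigr) \in \inte F_{0,x} \qquad \forall\, t\ge t_0,\ \forall\, x\in K.
\]

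To pass from this reduced-jet information to the strict $F$-subharmonicity of $t\rho_s$ on $U\cap\Omega$, I would expand
\[
J^2_x(t\rho_s) = \bigl(t\rho_s(x),\, t\,d\rho_s(x),\, t\,\nabla d\rho_s(x)\bigr),
\]
note that the reduced part already sits in $\inte F_{0,x}$ by the previous step, and invoke condition $(N)$: since $\inte F + N \subset \inte F$ (the inclusion $F+N\subset F$ being open on the left when restricted to $\inte F$) and the $r$-component $t\rho_s(x)\le 0$ on $U\cap\overline{\Omega}$, the full jet lies in $\inte F_x$. This gives $t\rho_s \in F^{\str}(U\cap\Omega)$ for every $t\ge t_0$.

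Finally, to read off the $F$-barrier at height $0$, at each $x_0\in\partial\Omega$ I would pick a shrinking family $\{U_j\}$ of smooth open neighbourhoods of $x_0$ with $\overline{U_j}\subset U$. The family $\beta_t = t\rho_s$ then satisfies $\beta_t(x_0)=0$, $\beta_t\le 0$ on $U_j\cap\overline{\Omega}$, and is strictly $F$-subharmonic on $U_j\cap\Omega$; for any prescribed $c\in\R$, the bound $\beta_t<c$ on the relevant portion of $\partial U_j$ follows by choosing $t$ large, since $\rho_s$ is uniformly bounded away from $0$ on the piece of $\partial U_j$ that sits in $\Omega$. The main technical concern, in my view, is the uniformity in $x$ of the parameters $s$ and $t_0$ when $\partial\Omega$ fails to be compact; this is resolved by performing the construction pointwise, as the barrier definition is purely local at each $x_0\in\partial\Omega$.
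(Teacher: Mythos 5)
Your route is essentially the paper's: Remark \ref{rem_equiconve} together with the openness of $\overrightarrow{F_0}$ and the continuity of $x\mapsto J^2_x\rho_s$ give strict $F_0$-subharmonicity of $t\rho_s$ near $\partial\Omega$ for large $t$ (the paper obtains the single threshold $t_0$ on a full neighbourhood $U$ by a finite subcover of the compact boundary), and condition $(N)$ then upgrades the reduced-jet information to strict $F$-subharmonicity on the $\Omega$-side. One inference, however, is not valid as written: from $\bar J^2_x(t\rho_s)\in\inte F_{0,x}$ and $t\rho_s(x)\le 0$ you cannot conclude $J^2_x(t\rho_s)\in\inte F_x$, because $\{0\}\oplus\inte F_0$ need not lie in $\inte F$ (take $F=G^0=\{r\le 0\}$: then $F_0$ is all of $J^2_\red$, while $\inte F=\{r<0\}$ contains no jet with $r=0$); moreover ``$\inte F+N\subset\inte F$'' presupposes a full jet already in $\inte F$, which is exactly what is to be proved. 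What $(N)$ does give is that every jet $(r,p,A)$ with $r<0$ and $(p,A)\in\inte F_{0}$ lies in $\inte F$, so you need $t\rho_s(x)<0$ \emph{strictly}; this is precisely why the conclusion is claimed only on $U\cap\Omega$, and why the paper works on a small ball $B\Subset U\cap\Omega$ with $\rho_s<\rho_s(x_0)/2<0$ and fattens a product neighbourhood $W\subset F_0$ of the reduced jet to $\{r<t\rho_s(x_0)/2\}\times W\subset F$, which also yields the locally uniform distance from $\partial F$ required by Definition \ref{defsubharmc2}. Restricted to $U\cap\Omega$, your argument is correct after this repair, and coincides with the paper's.

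Two further remarks. The uniform $s$ and $t_0$ are the actual content of the statement: later one needs a single global function $\beta_c=c\rho_s$ on a collar of the whole boundary, sitting below the Perron solutions, and this uniformity comes from compactness of $\partial\Omega$ (the paper's finite subcover), implicitly assumed and satisfied in the application to $\partial\mathcal{V}$; ``performing the construction pointwise'' in the non-compact case would forfeit exactly the global barrier the proposition is designed to produce. Finally, your justification of the condition $\beta_t<c$ is incorrect as stated: if $U_j$ is a genuine neighbourhood of $x_0\in\partial\Omega$, then $\rho_s$ is \emph{not} uniformly bounded away from $0$ on $\partial U_j\cap\Omega$, since that set contains points arbitrarily close to $\partial U_j\cap\partial\Omega$, where $\rho_s\to 0$. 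The paper itself leaves the final ``consequently'' unargued, so you are only matching its level of detail, but the uniform bound you invoke holds only on portions of the collar's boundary kept at positive distance from $\partial\Omega$, which is how the barrier is in fact employed in Step 3 of the proof of Theorem \ref{teo_main}.
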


\begin{proof}
As $\partial \Omega$ is $F_0$-convex by definition, by Remark \ref{rem_equiconve} there exist, for each $x \in \partial \Omega$, a neighborhood $U_x$ of $x$ and $t_x>0$ such that $t\rho_s \in F_0^\str(U_x)$ whenever $t \ge t_x$. Extracting from $\{U_x\}$ a finite subcollection covering $\partial \Omega$, we deduce the existence of a uniform $t_0$ such that $t \rho_s$ is strictly $F_0$-subharmonic in a neighborhood $U$ of $\partial \Omega$ when $t \ge t_0$. We shall just prove that $t \rho_s \in F^\str(U \cap \Omega)$. To this aim, fix $x_0 \in U \cap \Omega$ and denote with $\bar J^2\rho_s$ the reduced $2$-jet of $\rho_s$. By the strict $F_0$-subharmonicity, there exists a neighborhood $W \subset F_0$ of $\bar J^2(t\rho_s)$, which we can take of the form $W = \big\{ y \in B\big\} \times V$. Moreover, from the definition of $F_0$, $\{0\} \oplus W \subset F$. Up to reducing $B$, we can suppose that $B \Subset U \cap \Omega$ and $\rho_s < \rho_s(x_0)/2 <0$ on $\overline{B}$. Hence, by $(N)$, the open set $\{r < \rho(x_0)/2\} \times W$ is a neighborhood of $J^2(t \rho_s)$ contained in $F$, showing that $t\rho_s \in F^\str(U\cap \Omega)$.
\end{proof}

\section{The obstacle problem}\label{sec_obstacle}

Let $F \subset J^{2}(X)$ be a subequation, fix a relatively compact open set $\Omega$ and choose $g \in C(\overline \Omega)$. We consider the subequation $G^g = \{(x,r,p,A) : r \le g(x)\}$, and define the subset
$$
F^g \doteq F \cap G^g.
$$
It is easy to see that $F^g$ is a subequation. For fixed $\varphi \in C(\partial \Omega)$, a function $u \in \USC(\overline{\Omega})$ is said to solve the \emph{obstacle problem} on $\Omega$ with obstacle function $g$ and boundary data $\varphi$ if:
\begin{itemize}
\item[-] $u$ is $F^g$-harmonic on $\Omega$, and
\item[-] $u=\varphi$ on $\partial \Omega$.
\end{itemize}

As for the Dirichlet problem in \cite{HL_dir}, the solvability of the obstacle problem depends on the $F$ and $\widetilde{F}$-convexity of $\partial \Omega$, and on the validity of the comparison for $F^g$. The particular case of reduced subequations $F = \mathbf{R} \oplus F_0$, with $F_0 \subset J^{2}_{\red}(X)$, was handled in \cite{HL_existence} via jet-equivalence (see \cite{HL_P} for more details). However, to treat the general case we need to proceed a bit differently.\par

We first address the validity of weak comparison and strict approximation properties for $F^g$. 

\begin{lemma}\label{wcobstacle}
Let $F \subset J^{2}(X)$ be a subequation, fix $g \in C(X)$ and denote with $F^{g}$ the associated obstacle subequation. Fix a relatively compact open set $\Omega \subset X$. Then
$$
\begin{array}{ll}
(1) &  \disp \text{$F$ satisfies the strict approximation} \ \Longrightarrow \ \text{$F^{g}$ satisfies the strict approximation.} \\[0.2cm]
(2) &  \disp \text{$F$ satisfies the weak comparison} \ \Longrightarrow \ \text{$F^{g}$ satisfies the weak comparison.}
\end{array}
$$
\end{lemma}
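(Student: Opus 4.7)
The key structural fact driving both parts is the fibrewise identity
$$
(F^g)^c_x \;=\; F^c_x \cap (G^g)^c_x \qquad \forall\, x \in X,\ c>0,
$$
proved by observing that $J \in (F^g)^c_x$ iff the Sasaki ball $B_c(J)$ lies in the closed set $F^g_x = F_x \cap G^g_x$, which in the linear fibre splits into $B_c(J) \subset F_x$ and $B_c(J) \subset G^g_x$. Since $G^g_x = \{r \le g(x)\}$, one reads off $(G^g)^c_x = \{r \le g(x)-c\}$.

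For (1), start from bounded $u \in F^g(K)$, so $u \in F(K)$ and $u \le g$ on $K$. Strict approximation for $F$ yields $w_n \in F^{\str}(K)$ with $\|w_n-u\|_\infty \le 1/n$; set $v_n \doteq w_n - 2/n$. Then $v_n \to u$ uniformly and $v_n \le g - 1/n$ on $K$, so at every test point $\phi(y)=v_n(y)\le g(y)-1/n$, giving $v_n \in (G^g)^{1/n}(K)$ globally. For the $F^{\str}$ part, the negativity axiom $(N)$ implies that $F^c_x$ is stable under downward $r$-shifts: if $B_c(J) \subset F_x$ and $\eta\ge 0$, every point of $B_c(J-(\eta,0,0))$, shifted up by $\eta$, lies in $B_c(J)\subset F_x$, and $(N)$ then brings it back down. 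Transferring this to test functions (noting that $\phi+2/n$ is a test for $w_n$ whenever $\phi$ is one for $v_n$) gives $v_n \in F^{\str}(K)$. Combining with the fibre identity yields $v_n \in (F^g)^{\str}(K)$.

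For (2), by Theorem~\ref{lwcimplieswc} it suffices to prove local weak comparison for $F^g$. Fix $x_0 \in X$ and a small coordinate neighbourhood $U$ on which $F$ has weak comparison. Assume by contradiction that $u \in (F^g)^c(\overline U)$, $v \in \widetilde{F^g}(\overline U) = (\widetilde F \cup \widetilde{G^g})(\overline U)$, $u+v \le 0$ on $\partial U$, and $M \doteq \sup_{\overline U}(u+v) > 0$. The pointwise key observation is: at any $y$ with $v(y) > -g(y)$, any $C^2$ test $\phi$ for $v$ at $y$ has $\phi(y) = v(y) > -g(y)$, which rules out $J^2_y\phi \in \widetilde{G^g}_y = \{r \le -g(y)\}$ and hence forces $J^2_y\phi \in \widetilde F_y$.

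Run the standard doubling of variables: for $\alpha$ large, maximise $\Phi_\alpha(x,y) \doteq u(x) + v(y) - \tfrac{\alpha}{2}d(x,y)^2$ on $\overline U \times \overline U$ at some $(x_\alpha,y_\alpha)$, with $d(x_\alpha,y_\alpha)\to 0$, $x_\alpha,y_\alpha \to \overline x \in \inte U$ and $u(x_\alpha)+v(y_\alpha)\ge M$; apply Ishii's theorem on sums to produce matched superjets $(u(x_\alpha),p_\alpha,A_\alpha) \in F^c_{x_\alpha}$ (which via the fibre identity yields $u(x_\alpha) \le g(x_\alpha)-c$) and $(v(y_\alpha),q_\alpha,B_\alpha) \in \widetilde{F^g}_{y_\alpha}$. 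If the latter lies in $\widetilde{G^g}_{y_\alpha}$, then $v(y_\alpha) \le -g(y_\alpha)$, and combining with the two estimates above one gets $g(x_\alpha)-g(y_\alpha) \ge M+c > 0$, contradicting continuity of $g$ as $x_\alpha,y_\alpha \to \overline x$. Otherwise the second jet sits in $\widetilde F_{y_\alpha}$, and the matched $F^c$/$\widetilde F$ pair at $(x_\alpha,y_\alpha)$ is precisely the configuration excluded by the Ishii-based proof of local weak comparison for $F$ on $U$, again a contradiction. The main obstacle is this second case: since $v$ is not globally $\widetilde F$-subharmonic one cannot invoke $F$'s weak comparison as a black box on $(u,v)$, and must reuse the internal jet-incompatibility step of its proof, with the theorem on sums applied in a Euclidean chart of $U$ (or a Riemannian analogue).
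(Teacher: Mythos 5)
Your part (1) is correct and is essentially the paper's argument: shift the strict approximants of $u$ down by $2/n$, and use the fibrewise splitting $(F^g)^c_x=F^c_x\cap (G^g)^c_x$ together with the $(N)$-stability of $F^c$ under downward $r$-shifts. Nothing to object to there.

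Part (2) has a genuine gap, exactly at the point you flag yourself. The hypothesis of the lemma is only that $F$ satisfies weak comparison as a \emph{property of functions}; no structural information on $F$ (local jet-equivalence to a universal model, Lipschitz coefficients, a condition of type (3.14) in \cite{CIL}, etc.) is available. In your second case, after doubling variables you are left with a matched pair of jets $(u(x_\alpha),p_\alpha,A_\alpha)\in F^c_{x_\alpha}$ and $(v(y_\alpha),q_\alpha,B_\alpha)\in \widetilde F_{y_\alpha}$, and you propose to ``reuse the internal jet-incompatibility step'' of the proof of weak comparison for $F$. But no such step is part of the hypothesis: for an abstract subequation, weak comparison does not imply any pointwise incompatibility between an $F^c$-jet at $x_\alpha$ and a matched $\widetilde F$-jet at a nearby but distinct point $y_\alpha$; that incompatibility is precisely the extra quantitative structure one must verify when proving weak comparison via the theorem on sums (as in Theorem \ref{teo_importante!!} or the Appendix), and it is not a consequence of the function-level statement. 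So the contradiction in your second case cannot be closed under the stated hypotheses, and the argument silently strengthens the assumption on $F$.

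The paper's proof avoids this by staying at the level of functions. The difficulty is the one you identified — $\{v>-g\}$ need not be open, so $v$ need not be $\widetilde F$-subharmonic on any ball around the maximum point — and it is resolved by a replacement trick rather than by jets: since $F^g_{x_0}\neq G^g_{x_0}$, condition $(T)$ gives a $C^2$ function $f$ near $x_0$ with $J^2_{x_0}(-f)\in \inte G^g_{x_0}\setminus F_{x_0}$, hence on a small ball $B$ one has $f\in\widetilde F(B)$ and $f>-g$. A suitable vertical translate $\bar f$ then sits just above $-g$, and $\hat v\doteq\max\{v,\bar f\}$ is honestly $\widetilde F$-subharmonic on all of $B$ (the max covers the set where test jets of $v$ might only lie in $\widetilde{G^g}$), while $\bar u\doteq u-\eta\varrho_{x_0}^2\in F^{c/2}(B)$ keeps the maximum interior. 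One then applies the weak comparison for $F$ on $B$ as a black box to $\bar u$ and a translate of $\hat v$ and reaches the contradiction. If you want to repair your write-up with minimal changes, replace the doubling-of-variables step by this localization-and-replacement argument; your first case (the $\widetilde{G^g}$ alternative) is then absorbed automatically, because $\hat v$ eliminates it.
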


\begin{proof}
The first statement is obvious: take a relatively compact set $\Omega \subset X$ and a function $u \in F^g(\overline \Omega) \subset F(\overline\Omega)$. By the strict approximation, choose $\{u_k\} \subset F^\str(\Omega)$ that approximates uniformly $u$ on $\Omega$, say with $\|u_k-u\|_\infty \le 1/k$. Then, the sequence $\{\bar u_k\}$, $\bar u_k \doteq u_k - 2/k$ satisfies 
$$
\bar u_k \in (F^g)^\str(\overline{\Omega}), \qquad \bar u_k \ra u \quad \text{uniformly on $\Omega$},
$$
showing the strict approximation for $F^g$.\par
To prove $(2)$ suppose by contradiction that $F^g$ does not satisfy the weak comparison. Then, by Theorem \ref{lwcimplieswc}, we can take a domain $U \subset X$, $c>0$, and functions $u \in (F^{g})^{c}(U)$, $v\in \widetilde{F^{g}}(U)$ with $u+v \leq 0$ on $\partial U$ but $(u+v)(x_{0}) = \max_{U}(u+v) \doteq \delta > 0$. Note that 
\begin{equation}\label{propriedades_uv}
\widetilde{F^{g}} = \widetilde{F}\cup \{r \leq - g(x)\} \qquad \text{and} \qquad v(x_0)> - u(x_0) > - g(x_0), 
\end{equation}
hence $2$-jets of test functions $\phi$ for $v$ at $x_0$ satisfy $J^ 2_{x_0} \phi \in \widetilde F_{x_0}$. However, since $\{v>-g\}$ is not necessarily open, we cannot conclude by directly applying the weak comparison for $F$. To overcome the problem, we first observe that \eqref{propriedades_uv} implies $F^g_{x_0} \not\equiv G^g_{x_0}$, and thus $(\inte G^g_{x_0}) \backslash F^g_{x_0} \neq \emptyset$ by $(T)$. Pick a $C^2$ function $f$ around $x_0$ such that
$$
J^2_{x_0}(-f) \in \inte G^g_{x_0} \backslash F_{x_0} = (\inte G^g \backslash F)_{x_0}. 
$$
By continuity, on some small ball $B = B_R(x_0) \subset U$, 
\begin{align*}
	f \in C^2(\overline B), \quad &J^2_x(-f) \in \inte G^g_x = \big \{r<g(x)\big\}, \\
	&J^2_x(-f) \not \in \inte F_x \quad \text{for each } \, x \in B. 
\end{align*}
In other words, $f \in \widetilde F(B) \subset \widetilde{F^g}(B)$ and $f>-g$ on $B$. Now, fix $\varepsilon>0$ satisfying 
\begin{equation}\label{defepsilon111}
0 < 2\varepsilon < \min\left\{\frac{\delta}{4}, v(x_{0})+g(x_{0}), f(x_{0})+g(x_{0})\right\},
\end{equation}
and observe that this choice implies $\bar{f}(x) := f(x) - (f(x_{0})+g(x_{0}) - \varepsilon) \in \widetilde{F}(B)$. Moreover, being $u \in F^g$, 
\begin{equation}\label{vebarf}
-g(x_0) < -g(x_0) + \varepsilon = \bar{f}(x_0) < v(x_0) - \varepsilon .
\end{equation}
Shrinking $B$, if necessary, from \eqref{defepsilon111} we can suppose
\begin{equation}\label{eqfbar}
0 < \bar{f}+g < 2\varepsilon \ \ \mbox{on} \ \ \overline{B}.
\end{equation} 
By the stability property in Proposition \ref{prop_basicheF}, fix $\eta > 0$ small enough that
\begin{equation*}
\bar{u} := u - \eta \varrho_{x_0}^2 \in (F^{g})^{\frac{c}{2}}(\overline{B}) \subset F^{\frac{c}{2}}(\overline{B}),
\end{equation*}
and note that $x_0$ is the unique global maximum of $\bar u + v$ on $\overline{B}$. Set $\hat{v}(x) \doteq \max \{v,\bar{f}\}$, and observe that $\hat v \in \widetilde{F^g}(B)$ because of (1) in Proposition \ref{prop_basicheF}. In fact, because of \eqref{eqfbar} the stronger $\hat{v} \in \widetilde{F}(B)$ holds, and \eqref{vebarf} implies  $$
(\bar{u}+\hat{v})(x_0) = (u+v)(x_0) = \delta > 0. 
$$
However, on $\partial B$,
\begin{align*}
\disp \bar{u}+\hat{v} & =  \max\{\bar u +v,\bar{u}+\bar{f}\} & \\[0.1cm]
&\leq  \disp \max\{u+v -\eta R^2 , u - \eta R^2 + 2\varepsilon - g\} & \quad (\mbox{by \eqref{eqfbar}}) \\[0.1cm]
&\leq  \disp \max\{\delta -\eta R^2 ,u - \eta R^2 + 2\varepsilon - g\} & \\[0.1cm]
&\leq  \disp \max\{\delta -\eta R^2 ,2\varepsilon -\eta R^2\} & \quad (\mbox{since} \ \ u \leq g) \\[0.1cm]
&= \disp \delta - \eta R^2 & \quad (\mbox{since} \ \ \varepsilon < \delta/4).
\end{align*}
Concluding, the functions $\bar{u} \in F^{c/2}(B)$ and $\bar v \doteq  \hat{v}- (\delta - \eta R^2)_+ \in \widetilde{F}(B)$ contradict the validity of the weak comparison for $F$ on $B$, as we have
$$
\begin{aligned}
&(\bar u + \bar v)(x_0) = \delta - (\delta -\eta R^2)_+ > 0, \\
&\bar u + \bar v \le \delta - \eta R^2 - (\delta -\eta R^2)_+ \le 0 \ \ \text{on } \, \partial B.
\end{aligned}\vspace{-2em}
$$\end{proof}

We are ready to solve the obstacle problem for $F^g$. Our treatment closely follows the lines in \cite{HL_dir} (see also \cite{cirantpayne}), and relies on Perron's method. Given a boundary function $\varphi \in C(\partial\Omega)$, we consider the Perron's family
\begin{equation*}\label{perronclass} 
\mathcal{F}^{g}(\varphi) = \big\{v\in \USC(\overline{\Omega}) \  : \  v_{|_{\Omega}} \in F^{g}(\Omega) \ \ \mbox{and} \ \ v_{|_{\partial\Omega}}\leq \varphi\big\} ,
\end{equation*}
and the Perron's function
$$ 
u(x) = \sup\big\{v(x): v\in \mathcal{F}^{g}(\varphi)\big\}. 
$$
To ensure that the Perron's class is non-empty, the next basic requirement is sufficient:
\begin{equation}\label{ipo_basicaperron}
\text{there exist $\psi \in F(\overline \Omega)$, $\widetilde \psi \in \widetilde{F}^\str(\overline{\Omega})$ which are bounded from below.}
\end{equation}

\begin{remark}
\emph{Observe the asymmetric role of $\psi$ and $\widetilde \psi$ in \eqref{ipo_basicaperron}, which is due to the asymmetric role of $F, \widetilde F$ in Theorem \ref{obstaclethm} below. 
}
\end{remark}

Given $u : \overline\Omega \ra \R$, we recall that the definition of USC (respectively, LSC) regularization $u^*$ (resp. $u_*$) is given in \eqref{def_USCLSC}.
 

\begin{theorem}\label{obstaclethm}
Let $F \subset J^2(X)$ be a subequation, fix $g \in C(X)$ and a smooth, relatively compact open set $\Omega \subset X$. Suppose that \eqref{ipo_basicaperron} is met, and that $\partial\Omega$ is both $F$ and $\widetilde{F}$ convex. Let $\varphi \in C(\partial \Omega)$ satisfying $\varphi \le g$ on $\partial \Omega$. Then, if $\widetilde F$ satisfies the weak comparison on $\Omega$, the Perron's function $u$ has the following properties:
\begin{enumerate}
\item[1)] $u_* = u^* = \varphi$ on $\partial\Omega$,
\item[2)] $u \equiv u^* \in F^g(\Omega)$,
\item[3)] $(-u)^* \in \widetilde{F^{g}}(\Omega)$.
\end{enumerate}
Moreover, 
\begin{itemize}
\item[4)] if $F$ satisfies both the weak comparison and the strict approximation, in \eqref{ipo_basicaperron}  we can just assume $\psi \in F(\overline \Omega), \widetilde \psi \in \widetilde F(\overline{\Omega})$, and $u$ is the unique solution of the obstacle problem for $F^g$ on $\Omega$.
\end{itemize}
\end{theorem}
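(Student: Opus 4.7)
The plan is to run Perron's method on $\mathcal{F}^{g}(\varphi)$ following the scheme of \cite{HL_dir}, while propagating the obstacle constraint $r\le g$ through every perturbation and gluing step. First, non-emptiness of $\mathcal{F}^{g}(\varphi)$ is immediate: subtracting a large enough constant from $\psi\in F(\overline\Omega)$ yields a family member, since translations by non-positive constants preserve $F$-subharmonicity and eventually pin the function below both $g$ on $\overline\Omega$ and $\varphi$ on $\partial\Omega$. A uniform upper bound $M$ on every $v\in\mathcal{F}^{g}(\varphi)$ follows from the weak comparison of $\widetilde F$ applied with $\widetilde\psi\in\widetilde F^{\str}$, giving $v\le -\widetilde\psi+\max_{\partial\Omega}(\varphi+\widetilde\psi)\doteq M$ on $\overline\Omega$.

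For the boundary values in (1), fix $x_0\in\partial\Omega$ and $\varepsilon>0$. The $F$-convexity of $\partial\Omega$ at height $\varphi(x_0)-\varepsilon$ produces $F$-barriers $\beta_{j,c}\in F^{\str}(U_j\cap\Omega)$ at $x_0$; translating them so as to attain $\varphi(x_0)-\varepsilon$ at $x_0$ and choosing $c$ small enough that the barrier stays below $\psi-C$ on $\partial U_j$, gluing via $\max$ (which preserves $F$-subharmonicity by Proposition \ref{prop_basicheF}(1)) yields Perron elements whose value at $x_0$ tends to $\varphi(x_0)-\varepsilon$; the inequality $\varphi\le g$ and the continuity of $g$ keep the glued function $\le g$ on a suitably small $U_j$. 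Letting $\varepsilon\to 0$ gives $u_*(x_0)\ge\varphi(x_0)$. For the reverse inequality $u^{*}(x_0)\le\varphi(x_0)$, the $\widetilde F$-convexity at height $-\varphi(x_0)-\varepsilon$ yields barriers $\widetilde\beta_{j,c}\in\widetilde F^{\str}(U_j\cap\Omega)$; choose $c$ so large that $c+M<0$ and $U_j$ so small that $\varphi<\varphi(x_0)+\varepsilon/2$ on $\partial\Omega\cap U_j$. Then $v+\widetilde\beta_{j,c}\le 0$ on $\partial(U_j\cap\Omega)$ for every $v\in\mathcal{F}^{g}(\varphi)$, and the weak comparison of $\widetilde F$ propagates the inequality inward, yielding $v\le -\widetilde\beta_{j,c}$ on $U_j\cap\Omega$ and hence $u^{*}(x_0)\le\varphi(x_0)+\varepsilon$.

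For the interior properties (2) and (3), Proposition \ref{prop_basicheF}(4) gives $u^{*}\in F(\Omega)$, and $u^{*}\le g$ follows pointwise from $v\le g$ for every $v\in\mathcal{F}^{g}(\varphi)$ together with the continuity of $g$; since $u^{*}$ is then itself an element of $\mathcal{F}^{g}(\varphi)$ in view of (1), the identity $u\equiv u^{*}$ holds on $\Omega$. For (3), take a test $\phi$ from above for $(-u)^{*}=-u_{*}$ at $x_0\in\Omega$. If $u_{*}(x_0)=g(x_0)$, then $\phi(x_0)=-g(x_0)$ places $J^{2}_{x_0}\phi$ in $\{r\le -g\}\subset\widetilde{F^{g}}$ and we are done. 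If $u_{*}(x_0)<g(x_0)$ and, by contradiction, $J^{2}_{x_0}\phi\notin\widetilde F$, then $-J^{2}_{x_0}\phi\in\inte F$, so $-\phi$ is strictly $F$-subharmonic in a neighborhood of $x_0$; the perturbation $\psi_\eta(x)\doteq -\phi(x)+\eta-\eta\,\varrho_{x_0}(x)^2/R^2$ is strictly $F$-subharmonic on $B_R(x_0)$ for small $\eta>0$ (by the stability in Proposition \ref{prop_basicheF}(5)), equals $-\phi\le u_{*}\le u$ on $\partial B_R(x_0)$, stays $\le g$ on $B_R(x_0)$ by continuity (since $-\phi(x_0)=u_{*}(x_0)<g(x_0)$), and satisfies $\psi_\eta(x_k)>u(x_k)$ along a sequence $x_k\to x_0$ with $u(x_k)\to u_{*}(x_0)$. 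Gluing $\psi_\eta$ with $u$ via $\max$ produces a Perron element strictly exceeding $u$ at some $x_k$, contradicting the maximality of $u$.

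Finally, uniqueness in (4): Lemma \ref{wcobstacle} transfers both weak comparison and strict approximation from $F$ to $F^{g}$, so Theorem \ref{localwcestrict} upgrades these to the full comparison for $F^{g}$ on $\overline\Omega$, which directly gives uniqueness of the obstacle solution and permits the relaxation $\widetilde\psi\in\widetilde F$ in \eqref{ipo_basicaperron} (full comparison is symmetric in $F$ and $\widetilde F$, removing the need for a strict supersolution in the boundary step). The main obstacle I anticipate is the case-split at $\{u_{*}=g\}$ in (3): a naive bumping perturbation would not preserve the obstacle, and the dichotomy succeeds precisely because saturation of the obstacle is absorbed automatically by the $\{r\le -g\}$ branch of $\widetilde{F^{g}}$, while a strict inequality $u_{*}<g$ leaves the continuity room needed to perform the bump inside $F^{g}$.
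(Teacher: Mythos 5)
Your proposal is correct and follows the paper's own Perron scheme: non-emptiness via a downward translate of $\psi$, boundary control through the barriers supplied by $F$- and $\widetilde F$-convexity, $u\equiv u^*\in F^g(\Omega)$ by Proposition \ref{prop_basicheF}(4), the bump argument for $(-u)^*\in\widetilde{F^g}(\Omega)$, and transfer of weak comparison plus strict approximation to $F^g$ (Lemma \ref{wcobstacle} and Theorem \ref{localwcestrict}) for uniqueness in $4)$. The one genuine divergence is the upper boundary estimate $u^*(x_0)\le\varphi(x_0)$: the paper glues the local $\widetilde F$-barrier with the \emph{global} strict supersolution $\widetilde\psi\in\widetilde F^{\str}(\overline\Omega)$ to produce $\overline u$ and compares on all of $\overline\Omega$, whereas you compare only on $U_j\cap\Omega$ against the barrier itself, using a uniform upper bound on the Perron family (which in the obstacle setting is free, since $v\le g\le\max_{\overline\Omega}g$, so your detour through $\widetilde\psi$ to get $M$ is unnecessary). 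This localization is a legitimate simplification, but it makes your comment in $4)$ slightly misaligned with your own argument: in the paper, relaxing $\widetilde\psi\in\widetilde F^{\str}$ to $\widetilde\psi\in\widetilde F(\overline\Omega)$ is precisely where full comparison must replace weak comparison (to compare against the no-longer-strict $\overline u$), while in your route $\widetilde\psi$ plays almost no role in $1)$--$3)$, so the relaxation costs nothing and "removing the need for a strict supersolution in the boundary step" is not the right explanation. Two small details to tighten in $3)$: you should shrink $R$ so that $J^2_x(-\phi)$ stays uniformly inside $\inte F$ on $B_R(x_0)$ before invoking stability for the $\eta$-perturbation, and ensure the test inequality $-\phi\le u_*$ holds on a neighbourhood of $\overline{B_R(x_0)}$ so that $\max\{u,\psi_\eta\}=u$ near the seam (the paper builds this margin in from the start by choosing $\phi\ge -u_*+\varepsilon\varrho_{x_0}^2$); your explicit case split at $\{u_*=g\}$ is fine but is subsumed in the paper by the observation that $J^2_{x_0}\phi\notin\widetilde{F^g}_{x_0}$ already forces $-\phi(x_0)<g(x_0)$.
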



\begin{proof}
First, we consider boundary barriers:

\begin{lemma}\label{lem_FetildeF}
Let $x_0 \in \partial \Omega$.
\begin{itemize}
\item[$(LF)$] If $\partial \Omega$ is $F$-convex at $x_0$, for each $\delta>0$ small enough there exists 
\begin{equation}\label{propertyunideru}
\underline{u} \in \mathcal{F}^g(\varphi), \qquad \underline{u} \ \text{continuous at } \, x_0, \qquad \underline{u}(x_0) = \varphi(x_0)-\delta.
\end{equation}
\item[$(L\widetilde F)$] If $\partial \Omega$ is $\widetilde F$-convex at $x_0$, for each $\delta>0$ small enough there exists 
\begin{equation*}\label{propertyoveru}
\overline{u} \in \widetilde{\mathcal{F}}(-\varphi) \cap \widetilde F^\str(\overline\Omega), \qquad \overline{u} \ \text{continuous at } \, x_0, \qquad \overline{u}(x_0) = -\varphi(x_0)-\delta.
\end{equation*}
\end{itemize}
\end{lemma}


\begin{proof}
We first prove $(LF)$. From $\varphi \le g$, having fixed $\delta>0$ we can choose a small ball $B$ around $x_0$ such that $\varphi(x_0) -\delta < \varphi \le g$ on $\overline{B}$. By Theorem~\ref{teo_exbarriera}, $x_0$ is regular at height $\varphi(x_0)-\delta$, so we pick a barrier $\{\beta_c\}$ defined in a neighborhood $U \subset B$ of $x_0$. Note that, by construction, $\beta_c \le \varphi(x_0)-\delta < \varphi \le g$, hence $\beta_c \in (F^g)^ \str(U \cap \Omega)$. Since $\psi \in \USC(\overline{\Omega})$ has a finite maximum, up to translating $\psi$ downwards we can assume that 
\begin{equation*}\label{propfg2}
\psi < g \ \text{ on $\overline{\Omega}$,} \ \ \psi < \varphi - \delta \ \text{ on $\ \partial\Omega$,} \ \text{ i.e.,} \ \  \psi \in \mathcal{F}^g(\varphi).
\end{equation*}
Using that $\psi$ is bounded from below, we can choose $c$ large enough that $\beta_c < \psi$ in a neighborhood of $\partial U$. Then, the function 
$$
\underline{u} \doteq  \begin{cases}
\max\{ \beta_c, \psi\} & \quad \text{on } \, U \cap \overline{\Omega}, \\
\psi & \quad \text{on } \, \Omega \backslash \overline{U},
\end{cases}
$$
satisfies all the properties in \eqref{propertyunideru} (note that the open set $\{\psi < \beta_c\}$ contains $x_0$, thus $\underline{u} \equiv \beta$ around $x_0$). The proof of $(L\widetilde{F})$ is analogous, and indeed it is exactly Proposition $\widetilde{F}$ in \cite{HL_dir}. We remark that property $\overline{u} \in \widetilde F^\str(\overline{\Omega})$ follows because $\widetilde{\Psi} \in \widetilde{F}^\str(\Omega)$ in \eqref{ipo_basicaperron}.
\end{proof}

\begin{corollary}\label{cor_FetildeF}
Let $x_0 \in \partial \Omega$.
\begin{itemize}
\item[-] If $\partial \Omega$ is $F$-convex at $x_0$, then $u_*(x_0) \ge \varphi(x_0)$.
\item[-] If $\partial \Omega$ is $\widetilde{F}$-convex at $x_0$, then $u^*(x_0) \le \varphi(x_0)$.
\end{itemize}
\end{corollary}

\begin{proof}
To prove the first claim, $(LF)$ in Lemma \ref{lem_FetildeF} and the definition of Perron's envelope give
$$
u_*(x_0) = \liminf_{x \ra x_0} u(x) \ge \liminf_{x \ra x_0} \underline{u}(x_0) = \varphi(x_0)-\delta,
$$
and the sought follows by letting $\delta \downarrow 0$. Regarding the second claim, we first compare any $v \in \mathcal{F}^g(\varphi)$ with the function $\overline{u}$ in Lemma \ref{lem_FetildeF}: 
\begin{equation}\label{impopoint}
\overline{u} \in \widetilde{F}^\str(\overline{\Omega}), \qquad v \in F^g(\overline{\Omega}) \subset F(\overline \Omega), \qquad \overline{u} + v \le 0 \ \  \text{ on } \, \partial\Omega. 
\end{equation}
Since $\widetilde F$ satisfies the weak comparison, $v + \overline{u} \le 0$ on $\overline{\Omega}$. Taking supremum over $v$, $u \le - \overline{u}$ on $\Omega$, and because of the continuity of $\overline{u}$ at $x_0$ we deduce
$$
u^*(x_0) \doteq \limsup_{x \ra x_0} u(x) \le - \limsup_{x \ra x_0} \overline{u}(x) = \varphi(x_0)+ \delta.
$$
The desired estimate follows again by letting $\delta \downarrow 0$.
\end{proof}

\noindent \textbf{Proof of $1)$ of Theorem \ref{obstaclethm}}. It follows directly by Corollary \ref{cor_FetildeF}: $u^*(x_0) \le \varphi(x_0) \le u_*(x_0)$. \\[0.2cm]
\noindent \textbf{Proof of $2)$}. First, since $v \leq g \in C(\Omega)$ for all $v \in \mathcal{F}^{g}(\varphi)$, the family $\mathcal{F}^{g}(\varphi)$ is locally uniformly bounded above, and hence $u^* \in F^g(\overline{\Omega})$ by (4) in Proposition \ref{prop_basicheF}. Coupling with $1)$, $u^* \in \mathcal{F}^g(\varphi)$ and thus $u^* \le u$ by definition. Therefore, $u \equiv u^*$.\\[0.2cm]
\noindent \textbf{Proof of $3)$}. Suppose, by contradiction, that $(-u)^* = -u_*\notin \widetilde{F^{g}}(\Omega)$, and choose $x_{0} \in \Omega$, $\varepsilon >0$ and a test function $\phi \in C^{2}$ near $x_0$ such that
\begin{equation}\label{eqFg1}
\begin{cases}
\disp \phi \geq -u_* + \varepsilon \varrho_{x_0}^2 \quad \text{on } \, B = B_R(x_0) \Subset \Omega, \\
\phi(x_0) = -u_*(x_0),
\end{cases} \quad \text{but } \quad J^{2}_{x_{0}}\phi \notin \widetilde{F^{g}}_{x_{0}}.
\end{equation}
This means that $-J^{2}_{x_{0}}\phi \in \inte {F^g}_{x_{0}}$. Using $(T)$ and the continuity of $J_x^2\phi$, up to reducing $R$ there exists $\delta_0>0$ small enough that $-\phi + \delta \in (F^{g})^\str(B)$ for each $\delta<\delta_0$. Choosing $\delta < \varepsilon R^2$, \eqref{eqFg1} implies that 
\begin{equation*}
(-\phi + \delta)(x_0) = u_*(x_0) + \delta \quad \text{ and } \ -\phi + \delta < u_* \quad \text{near } \, \partial B. 
\end{equation*}
Let $x_k \ra x_0$ satisfying $u(x_k) \ra u_*(x_0)$. Then, for $k$ large, 
\begin{equation*}
\qquad (-\phi + \delta)(x_k) > u(x_k) + \frac{\delta}{2}, \qquad \text{while } \ -\phi + \delta < u \quad \text{near } \, \partial B. 
\end{equation*}
On the other hand, from $1)$ and $2)$ we deduce that the function
\begin{equation*}
u' \doteq 
\begin{cases}
	u & \mbox{on} \ \ \overline{\Omega}\backslash B, \\
	\max\big\{u,-\phi + \delta\big\}  &\mbox{on} \ \  \overline B ,
\end{cases}
\end{equation*}
satisfies $u' \in F^g(\overline\Omega)$ and $u' = \varphi$ on $\partial \Omega$. Hence, $u' \in \mathcal{F}^g(\varphi)$ and thus $u' \le u$. This contradicts the inequality $u'(x_k)= -\phi(x_k)+\delta > u(x_k)$.\\[0.2cm]
\noindent \textbf{Proof of $4)$}. If comparison holds for $F$, it holds for $\widetilde F$. The only point where we used the weak comparison for $\widetilde F$ in items $1),2),3)$ is to conclude $u + \overline{u} \le 0$ from \eqref{impopoint}. Under the validity of the full comparison for $\widetilde F$, the same conclusion can be reached even when $\overline{u} \in \widetilde F(\overline \Omega)$, which is granted under the weaker requirement $\widetilde \psi \in \widetilde F(\overline \Omega)$ in \eqref{ipo_basicaperron}. Furthermore, since $F$ satisfies both the weak comparison and the strict approximation properties, by Lemma \ref{wcobstacle} so does $F^g$, hence $F^g$ satisfies the comparison because of Theorem \ref{localwcestrict}. Uniqueness then follows immediately: if $u,v$ solve the obstacle problem, 
$$
u \in F^g(\overline \Omega), \qquad -v \in \widetilde{F^g}(\overline\Omega), \qquad u -v = \varphi-\varphi = 0 \ \ \text{ on } \, \partial \Omega,
$$
hence $u \le v$ on $\Omega$ by comparison. Reversing the role of $u,v$ we get $u\equiv v$.
\end{proof}

\section{Ahlfors, Liouville and Khas'minskii properties}\label{sec_teoprinci}

This section is devoted to prove the AK-duality between the Ahlfors, Khas'minskii and weak Khas'minskii properties stated in Definitions~\ref{def_ahlfors}, \ref{def_khasmi}, \ref{def_weakkhasmi}. We also show their relation with the Liouville  property in Definition \ref{def_Liouville}.

Our first main result concerns subequations $F$ on $X$ which are locally jet-equivalent to a universal one $\mathbb{F}$, for which we require the next conditions:
\begin{itemize}
\item[$(\HH 1)$] $\quad$ negative constants are strictly $\mathbb{F}$-subharmonics; 
\item[$(\HH 2)$] $\quad$ there exists an Euclidean ball $\mathbb{B}$ which is $\mathbb{F}$-convex at height $0$; 
\item[$(\HH 3)$] $\quad$ $F$ has the bounded, strict approximation on $X$;
\item[$(\HH 4)$] $\quad$ the function $u \equiv 0$ is $\mathbb{\widetilde{F}}$-subharmonic.
\end{itemize}

We first investigate when a subequation which is locally jet-equivalent to any of the examples in $(\EE 2), \ldots, (\EE 8)$ satisfy $(\HH 1),\ldots, (\HH 4)$. Each of the examples is described by a subequation of type 
\begin{equation}\label{cond_F_f_model}
\mathbb{F}_f \doteq \overline{\big\{\mathscr{F}(p,A) > f(r) \big\}}, 
\end{equation}
for some $\mathscr{F} : \mathbf{J^2} \ra \R$ and for $f \in C(\R)$ non-decreasing. Furthermore, $\mathscr{F}(0,0)=0$.\par
\begin{itemize}
\item Assumptions $(\HH 1+\HH 4)$ are equivalent to $f$ satisfy $(f1)$ in \eqref{def_f1xi1}. 
\item $(\HH 2)$ holds for examples $(\EE 2), \ldots, (\EE 8)$ provided that $f(r) \le 0$ for $r<0$, as a consequence of Proposition \ref{prop_condition_F_convex}. 
\item $(\HH 3)$ holds for examples $(\EE 2), \ldots, (\EE 6)$ provided that $f$ is strictly increasing on $\R$ and $\mathscr{F}$ is uniformly continuous, by Theorem \ref{thm_comparison_examples}. If $f$ is not strictly increasing, the strict approximation is much more delicate: for a counterexample to $(\HH 3)$ in $(\EE 5)$, see Example 12.8 in~\cite{HL_dir}.
\end{itemize}
We begin with the next simple lemma.

\begin{lemma}\label{lem_maxprinc}
Let $Y$ be a Riemannian manifold, and let $F\subset J^2(Y)$ be a subequation. If negative constants are in $F^\str$, then $\widetilde{F^0}$ satisfies the maximum principle: functions $u \in \widetilde{F^0}(Y)$ cannot achieve a local positive maximum.
\end{lemma}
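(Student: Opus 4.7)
\medskip

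\noindent\textbf{Proof plan.} The argument is short and proceeds by contradiction using a constant test function and the definition of the Dirichlet dual. First, I would recall the computation
\[
\widetilde{F^0} \;=\; \widetilde{F\cap G^0} \;=\; \widetilde{F}\cup\widetilde{G^0} \;=\; \widetilde{F}\cup\{r\le 0\},
\]
which follows from Proposition \ref{prop_basichetilde} together with $\widetilde{G^0}=\{r\le 0\}$ (read off directly from $G^0=\{r\le 0\}$).

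Next, suppose for contradiction that $u\in\widetilde{F^0}(Y)$ attains a local positive maximum at some $x_0\in Y$, so $u(x_0)>0$ and $u\le u(x_0)$ in a neighborhood of $x_0$. Then the constant $\phi\equiv u(x_0)$ is a smooth test function for $u$ at $x_0$, so by Definition \ref{def_Fsubarm} applied to $\widetilde{F^0}$ we must have
\[
J^2_{x_0}\phi \;=\; \big(x_0,\, u(x_0),\, 0,\, 0\big) \;\in\; \widetilde{F^0}_{x_0} \;=\; \widetilde{F}_{x_0}\cup\{r\le 0\}_{x_0}.
\]
Because $u(x_0)>0$, the jet does not belong to $\{r\le 0\}_{x_0}$, so it must lie in $\widetilde F_{x_0}$. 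Unwinding the definition $\widetilde F=\sim(-\inte F)$, this is equivalent to
\[
\big(x_0,\, -u(x_0),\, 0,\, 0\big) \;\notin\; \inte F_{x_0}.
\]

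On the other hand, $-u(x_0)<0$, and by hypothesis the negative constant function $x\mapsto -u(x_0)$ belongs to $F^{\str}(Y)$. By Definition \ref{defsubharmc2}, there exist a neighborhood $B$ of $x_0$ and $c>0$ such that $(-u(x_0),0,0)\in F^c_x\subset\inte F_x$ for every $x\in B$, in particular at $x=x_0$. This directly contradicts the previous line, proving the lemma.

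I do not foresee any significant obstacle: the only subtle point is making sure to handle correctly the union form of $\widetilde{F^0}$ and to invoke the fact that a positive value at a local maximum rules out the $\{r\le 0\}$ alternative. Everything else is the standard translation between the conditions ``$J\in\widetilde F_x$'' and ``$-J\notin\inte F_x$''.
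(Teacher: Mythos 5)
Your proof is correct and follows essentially the same route as the paper's: the paper also tests with the constant $\phi\equiv u(x_0)$, uses $\widetilde{F^0}_{x_0}=\widetilde F_{x_0}\cup\{r\le 0\}$ and $u(x_0)>0$ to force $J^2_{x_0}\phi\in\widetilde F_{x_0}$, i.e. $-J^2_{x_0}\phi\notin\inte F_{x_0}$, and derives the contradiction from $-\phi$ being a negative constant, hence in $F^\str(Y)$. Your explicit unwinding via $F^c_x\subset\inte F_x$ is just a slightly more detailed version of the same final step.
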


\begin{proof}
At any local maximum point $x_0 \in Y$ with $u(x_0)=c>0$, using $\phi\equiv c$ as a test function we would get $J^2_{x_0}\phi \in (\widetilde{F^0})_{x_0} = \widetilde{F}_{x_0} \cup \{r \le 0\}$. Being $c>0$, necessarily $J^2_{x_0}\phi \in \widetilde F_{x_0}$, that is, $-J^2_{x_0}\phi \not\in \inte F_{x_0}$, contradicting the assumption $-\phi \in F^\str(Y)$.
\end{proof}

Seeking to clarify the role of each assumption for the AK-duality, we first describe the interplay between the Liouville and Ahlfors properties.

\begin{proposition}\label{teo_main_A_L}
Let $F \subset J^{2}(X)$ be a subequation. Then,
$$
\widetilde{F} \ \ \text{has the Ahlfors property} \ \ (A) \quad \Longrightarrow \quad \widetilde{F} \ \ \text{has the Liouville property} \ \ (L).
$$
If further $0 \in \widetilde{F}(X)$, then the two properties are equivalent.
\end{proposition}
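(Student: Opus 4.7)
The plan is to handle the two implications separately, with the reverse being the more delicate one. For the forward direction I argue by contrapositive. Given $u$ bounded, non-negative, non-constant and $\widetilde{F}$-subharmonic on $X$, set $M = \sup_X u > 0$ and pick $x_0 \in X$ with $c_0 := u(x_0) < M$. By upper semicontinuity a sufficiently small closed ball $\overline{B}$ around $x_0$ satisfies $u \le (c_0+M)/2 < M$ on $\overline{B}$. Take $U = X \backslash \overline{B}$, which is open with non-empty boundary $\partial B$. Then $u \in \widetilde{F}(\overline{U}) \subseteq H(\overline{U})$, it is bounded above and is positive somewhere in $\overline{U}$ (the approach to $M > 0$ cannot occur inside $\overline{B}$). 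But
\[
\sup_{\partial U} u^+ \le (c_0+M)/2 < M = \sup_{\overline{U}} u,
\]
which contradicts the Ahlfors property for $\widetilde{F}$.

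For the reverse direction, assume $0 \in \widetilde{F}(X)$ and again argue by contrapositive. Suppose the Ahlfors property fails on some pair $(U,u)$, with $L := \sup_{\partial U} u^+ < M := \sup_{\overline{U}} u$. I build a candidate Liouville counterexample on all of $X$ via
\[
w(x) = \begin{cases} (u(x)-L)^+ & \text{if } x \in \overline{U}, \\ 0 & \text{if } x \notin \overline{U}. \end{cases}
\]
Using $u \le L$ on $\partial U$, one checks that $w$ is upper semicontinuous on $X$, non-negative, bounded in $[0, M-L]$, and non-constant (positive somewhere inside $U$, vanishing on $\partial U$ and outside $\overline{U}$). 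The substance of the argument is then to verify $w \in \widetilde{F}(X)$, after which the Liouville property applied to $w$ produces the contradiction.

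The subharmonicity check is carried out at the level of test functions: let $\phi$ be a test for $w$ at $x$. If $w(x) > 0$, then $x \in U$ and $u(x) > L \ge 0$, and $\phi+L$ is seen to be a test for $u$ at $x$; since $u \in H$ and the $r$-coordinate $u(x)$ is strictly positive, we fall into the $\widetilde{F}$-alternative of $H$, so $J^2_x(\phi+L) \in \widetilde{F}_x$, whence $J^2_x\phi \in \widetilde{F}_x$ by $(N)$ (subtracting the non-negative constant $L$). If instead $w(x) = 0$, then $\phi$ has a local minimum equal to $0$ at $x$, so $J^2_x\phi = (x,0,0,A)$ with $A \ge 0$, and the hypotheses $0 \in \widetilde{F}(X)$ and $(P)$ together yield $(x,0,0,A) \in \widetilde{F}_x$. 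The main obstacle is precisely this second case, arising at points of $\partial U$ and $X \backslash \overline{U}$ where the two pieces of $w$ are glued together: the combination of $0 \in \widetilde{F}(X)$, $(P)$ and $(N)$ is exactly what matches the jet of any admissible test function across the interface, and it is also the ingredient that cannot be removed if one wishes to dispense with the extra hypothesis.
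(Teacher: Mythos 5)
Your proof is correct and follows essentially the same route as the paper: for $(A)\Rightarrow(L)$ a contradiction using a compact set (your small ball) on which the maximum stays below $\sup_X u$, and for the converse a truncation of $u$ above the boundary supremum extended by zero, using $0\in\widetilde{F}(X)$, $(P)$ and $(N)$ exactly as the paper does. The only difference is cosmetic: the paper subtracts an extra $\varepsilon$ so that the truncated function vanishes on a neighbourhood of $\partial U$, whereas you take the exact level $L=\sup_{\partial U}u^+$ and instead verify upper semicontinuity and the viscosity condition across the interface directly via the local-minimum observation at zeros of $w$, which is a valid (and slightly more explicit) way to justify the same gluing.
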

\begin{proof}
$(A) \Rightarrow (L)$.\\
Suppose, by contradiction, that there exists a bounded, non-negative and non-constant function $u \in \widetilde{F}(X)$, and fix a compact set $K$ such that $\max_K u < \sup_X u$. Then, $u$ would contradict the Ahlfors property on $U \doteq X \backslash K$.\\[0.2cm] 
\noindent $(L) \Rightarrow (A)$, if $0 \in \widetilde{F}(X)$.\\
If, by contradiction, the Ahlfors property fails to hold, we can find $U \subset X$ with $\partial U \neq \emptyset$, and $u \in \widetilde{F^0}(\overline{U})$ satisfying 
$$
\sup_{\partial U} u^+ < \sup_{\overline{U}}u - 2\eps,
$$
for some $\eps>0$ (note that $H \doteq \widetilde{F} \cup \{r \le 0\} = \widetilde{ F \cap \{r \le 0\}} \doteq \widetilde{F^0}$). The sublevel set $\{u<\sup_{\partial U}u^+ +\eps\}$ is an open neighborhood of $\partial U$, thus by $(N)$ and $0\in \widetilde{F}(X)$, the function 
$$
v = \max\Big\{ u- \sup_{\partial U} u^+ - \eps, 0\Big\}
$$
satisfies $v \in \widetilde{F}(\overline{U})$, $v \ge 0$, $v \not\equiv 0$ and $v \equiv 0$ in a neighborhood of $\partial U$. Extending $v$ with zero on $X \backslash U$ would produce a non-constant, non-negative, bounded $\widetilde{F}$-subharmonic function on $X$, contradicting the Liouville property.
\end{proof}

We are ready for our main theorem.

\begin{theorem}\label{teo_main}
Let $F \subset J^{2}(X)$ be locally jet-equivalent to a universal subequation $\mathbb{F}$ via locally Lipschitz bundle maps. Assume $(\HH 1), (\HH 2), (\HH 3)$. Then, AK-duality holds for $F$,i.e.,
\begin{equation*}
\begin{array}{c}
F \ \mbox{ satisfies $(K)$} \\
(\mbox{Khas'minskii prop.})
\end{array}
 \Longleftrightarrow  
\begin{array}{c}
F \ \mbox{ satisfies $(K_\weak)$} \\
(\mbox{weak Khas'minskii prop.})
\end{array}
 \Longleftrightarrow  
\begin{array}{c}
\widetilde{F} \ \mbox{ satisfies $(A)$} \\
(\mbox{Ahlfors prop.}) 
\end{array}
\end{equation*}
\end{theorem}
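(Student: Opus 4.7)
The plan is to close the cycle $(K)\Rightarrow(K_\weak)\Rightarrow(A)\Rightarrow(K)$. Under hypotheses $(\HH 1)$--$(\HH 3)$ and the local jet-equivalence with locally Lipschitz bundle maps, Theorem \ref{teo_importante!!} together with Lemma \ref{wcobstacle} and Theorem \ref{localwcestrict} provides the bounded full comparison for $F$. This is the main technical workhorse; the Ahlfors property for $\widetilde F$ enters only in the main direction $(A)\Rightarrow(K)$, where it controls the behaviour at infinity of potentials built from the obstacle problem of Theorem \ref{obstaclethm}.

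The implication $(K)\Rightarrow(K_\weak)$ is essentially a choice of pair: fix the compact $K$ from $(K)$, set $C=+\infty$, and given $x_0\not\in\overline K$ and $\eps>0$, produce a continuous $h:X\setminus K\to(-\infty,0)$ with $h\to-\infty$ at infinity and $h(x_0)\ge-\eps$ (e.g.\ by modifying locally any fixed exhaustion). The Khas'minskii potential $w$ associated to $(K,h)$ satisfies $w(x_0)\ge h(x_0)\ge-\eps$, $w\le 0$, $w\to-\infty$, as required. For $(K_\weak)\Rightarrow(A)$ I argue by contradiction: suppose $\widetilde F$ fails Ahlfors, so there exist an open $U$ with $\partial U\ne\emptyset$ and $u\in H(\overline U)$, $H=\widetilde F\cup\{r\le 0\}$, bounded above with $M=\sup_{\overline U}u>\sup_{\partial U}u^+$. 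Pick $\alpha\in(\sup_{\partial U}u^+,M)$ with $\alpha>0$ and $x_0\in U\setminus\overline{K_0}$ (with $K_0$ the fixed compact of $(K_\weak)$) such that $u(x_0)>\alpha$; on $\{u>\alpha\}$ one has $u>0$, hence $u\in\widetilde F$, and by $(N)$, $v:=u-\alpha\in\widetilde F$ there. Take a weak Khas'minskii potential $w_\eps$ with $w_\eps(x_0)\ge-\eps$. The set $V_\eps:=\{v+w_\eps>0\}\cap\{u>\alpha\}\cap U\setminus\overline{K_0}$ is relatively compact (immediate if $C=+\infty$; for finite $C$ additionally require $M-\alpha<C$), and $v+w_\eps\le 0$ on $\partial V_\eps$ by a direct inspection of the pieces. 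Bounded comparison applied to $v\in\widetilde F(V_\eps)$ and $w_\eps\in F(V_\eps)$ yields $v+w_\eps\le 0$ on $V_\eps$, contradicting $v(x_0)+w_\eps(x_0)>0$ for $\eps<u(x_0)-\alpha$.

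The heart of the theorem is $(A)\Rightarrow(K)$. Given a pair $(K,h)$, exhaust $X$ by smooth relatively compact sets $\overline K\Subset D_1\Subset D_2\Subset\cdots$ with $\bigcup_n D_n=X$, and use $(\HH 2)$ combined with Proposition \ref{prop_condition_F_convex} applied to both $\mathbb F$ and $\widetilde{\mathbb F}$ to arrange each $\partial D_n$ (as a finite union of Euclidean balls in distinguished charts) to be simultaneously $F^0$- and $\widetilde{F^0}$-convex at non-positive heights. On $\Omega_n:=D_n\setminus\overline K$ apply Theorem \ref{obstaclethm} to $F^0=F\cap\{r\le 0\}$ with boundary data $\varphi_n=0$ on $\partial K$ and $\varphi_n=h|_{\partial D_n}$ on $\partial D_n$: hypothesis $(\HH 1)$ furnishes the strictly $F^0$-subharmonic negative constants, and by duality (a positive constant $-c$ with $c>0$ has jet in $\{r<0\}\subset\inte\widetilde{F^0}$, so it is strictly $\widetilde{F^0}$-subharmonic) the Perron class hypothesis \eqref{ipo_basicaperron} is met; meanwhile $(\HH 3)$, through Lemma \ref{wcobstacle}, yields bounded comparison for $F^0$ and hence uniqueness of the $F^0$-harmonic solution $w_n\in F^0(\Omega_n)$. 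A comparison argument across the nested domains (intercalating Perron envelopes where needed) makes $\{w_n\}$ monotonically decreasing to a limit $w\in F(X\setminus\overline K)$, with $w\le 0$ and $w|_{\partial K}=0$.

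The final step---divergence $w\to-\infty$ together with the lower bound $w\ge h$---is where $(A)$ enters decisively. For divergence, suppose for contradiction that $-w$ remains bounded; since each $w_n$ is $F^0$-harmonic, $-w_n\in\widetilde{F^0}(\Omega_n)=H(\Omega_n)$, and Proposition \ref{prop_basicheF}(4) then gives $(-w)^*\in H(X\setminus\overline K)$, non-negative, bounded, and vanishing on $\partial K$ by continuity inherited from $w_n|_{\partial K}=0$. Applying $(A)$ for $\widetilde F$ to $(-w)^*$ on $U=X\setminus\overline K$ forces $(-w)^*\equiv 0$, i.e.\ $w\equiv 0$, contradicting the negativity of $w$ forced by the divergent boundary data $h|_{\partial D_n}$. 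The lower bound $w\ge h$ should follow from a parallel Perron-envelope argument again invoking $(A)$ to prevent $w<h$. I expect the main obstacle to be precisely the coordinated handling of monotonicity, lower bound, and divergence in this limit construction: the contradiction hypothesis must be shown to provide enough local uniform control to legitimately apply Proposition \ref{prop_basicheF}(4) to the family $\{-w_n\}$ living on different domains, and the dual convexity of $\partial D_n$ and the dual Perron hypothesis must be extracted solely from $(\HH 1)$--$(\HH 3)$ without recourse to the stronger $(\HH 4)$.
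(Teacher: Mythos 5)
Your implications $(K)\Rightarrow(K_\weak)$ and $(K_\weak)\Rightarrow(A)$ follow the paper's route in spirit, but note two repairs: since $u$ is only USC, the sets $\{u>\alpha\}$ and your $V_\eps=\{v+w_\eps>0\}\cap\{u>\alpha\}\cap U\setminus\overline{K_0}$ need not be open, so ``$v\in\widetilde F(V_\eps)$'' and comparison on $V_\eps$ are not meaningful as stated; the paper avoids this by comparing $u-u_K\in\widetilde{F^0}$ against $w\in F^0$ on the open set $(\Omega\setminus K)\cap U$ and invoking comparison for the \emph{obstacle} subequation $F^0$ (Theorem \ref{teo_importante!!}, Lemma \ref{wcobstacle}, Theorem \ref{localwcestrict}). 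Moreover, the existence of a point $x_0\notin\overline{K_0}$ with $u(x_0)$ close to $\sup u$ requires knowing the supremum is not attained, which is where $(\HH1)$ and Lemma \ref{lem_maxprinc} enter; you never use them in this implication.

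The genuine gap is in $(A)\Rightarrow(K)$, on two counts. First, Theorem \ref{obstaclethm} requires the \emph{whole} boundary of $D_n\setminus\overline K$ — in particular the inner boundary $\partial K$ — to be $F$- and $\widetilde F$-convex. Hypothesis $(\HH2)$ only provides a single Euclidean ball which is $\mathbb F$-convex for the \emph{model}; the compact $K$ is arbitrary (e.g.\ a small geodesic ball), its boundary carries no barriers, and $F$-convexity is not transported from $\mathbb F$ through a distinguished chart because the jet-equivalence distorts the Hessian component. This is precisely why the paper glues a Euclidean cap into $K$ (connected sum $\cal X=X\sharp M$), extends $F$ across the gluing by interpolating the jet-equivalence, and replaces $\partial K$ by the convex sphere $\partial\cal V$ — a construction entirely absent from your argument, and not a cosmetic one. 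Second, and more seriously, your monotone limit of Dirichlet-type solutions $w_n$ has no mechanism to yield the lower bound $w\ge h$, nor local finiteness of the limit, nor the exhaustion property $w\to-\infty$: bounding the $F^0$-harmonic $w_n$ from below by comparison would require an $F$-subharmonic minorant sitting below $h$ on $\partial D_n$ and below $0$ on $\partial K$, i.e.\ exactly the Khas'minskii potential you are trying to build — a circularity; and your application of $(A)$ to $(-w)^*$ only rules out that $-w$ is bounded, which gives neither $w\ge h$ nor divergence along the exhaustion. The paper's proof obtains this control by an inductive scheme in unit steps: $w_{i+1}$ comes from \emph{obstacle} problems with obstacle $\psi_k+\lambda_j$ manufactured from $w_i$, the obstacle forces $u_j\le w_i+\lambda_j$ from above, and the Ahlfors property is applied to the decreasing limit of the dual functions $(-u_j)^*-i$ to force $u_j\uparrow w_i$ locally uniformly as $j\to\infty$ (Steps 4--6), which is what produces $w_{i+1}\ge(1-2^{-i-2})h$ and $w_{i+1}=-i-1$ outside a compact set. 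This inductive obstacle-problem mechanism is the missing idea; your closing paragraph flags it as ``the main obstacle'', and indeed it is the core of the proof rather than a verification that can be deferred.
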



\begin{proof}
$(K) \Rightarrow (K_\weak)$. Obvious.\\[0.2cm]
$(K_\weak) \Rightarrow (A)$. For this implication, we need properties $(\HH 1),(\HH 3)$.\\
Suppose, by contradiction, that there exist an open subset $U \subset X$ and a function $u \in \widetilde{F^0}(\overline{U})$ bounded from above and satisfying 
\begin{equation}\label{ipoteses}
\sup_{\partial U} u^+ < \sup_{\overline{U}} u  .
\end{equation}
By $(\HH 1)$ and Lemma \ref{lem_maxprinc}, $u_U \doteq \sup_{\overline U} u$ is not attained. Therefore, coupling with \eqref{ipoteses} and since $u \in \USC(\overline{U})$, for each relatively compact, open set $K$ intersecting $U$ it holds
$$
u_K \doteq \max\Big\{ \max_{\overline{U \cap K}} u, \sup_{\partial U} u^+\Big\} < u_U.
$$
Let now $K \subset X$ be the compact subset granted by $(K_\weak)$, and choose $\eps>0$ small enough that $4\eps < C$,  $C$ being the constant in $(K_\weak)$. Up to enlarging $K$, we can suppose $K \cap U \neq \emptyset$ and
$$
\mu \doteq u_U - u_K < 3\eps.
$$
Choose $x_0 \in U \backslash \overline{K}$ in such a way that 
\begin{equation}\label{u_mi_inequality}
u_{U} - \mu/2 < u(x_0).
\end{equation}
Consider the weak Khas'minskii potential $w$ of $(\mu/2,K, \{x_0\})$, and fix a relatively compact, open set $\Omega$ containing $\overline{K} \cup \{x_0\}$ and such that $w \le -C + \eps$ on $X \backslash \Omega$. Using also $w \le 0$, we deduce the following inequalities: 
$$
\begin{array}{ll}
\quad u-u_K + w \le \mu - (C - \eps) < 4\eps -C < 0, & \text{on } \, \overline{U} \cap \partial \Omega; \\[0.1cm]
\quad \disp u -u_K + w \le u - u_{K} \le 0, & \text{on } \, \partial K \cap \overline{U}; \\[0.1cm]
\quad \disp u -u_K + w \le u - u_K \le 0, & \text{on } \, \partial U \cap (\Omega \backslash K).  
\end{array}
$$
Clearly $w \in F^0(X \backslash K)$. We therefore compare
$$
u-u_K \in \widetilde{F^0}\big((\overline{\Omega}\backslash K) \cap \overline{U}\big) \qquad \text{and} \qquad w \in F^0\big((\overline{\Omega}\backslash K) \cap \overline{U}\big). 
$$
Since $F$ is jet-equivalent to a universal subequation via locally Lipschitz bundle maps, $F$ and $\widetilde F$ satisfy the weak comparison principle by Theorem \ref{teo_importante!!}, and because of property $(\HH 3)$, Theorem \ref{localwcestrict} and Lemma \ref{wcobstacle} we conclude the validity of the full comparison property for $F^0$. Hence
$$
u-u_K + w\le 0 \qquad \text{on } \, (\overline{\Omega}\backslash K) \cap \overline U. 
$$
However, since $w(x_0) \ge -\mu/2$, by \eqref{u_mi_inequality} we get 
$$
(u - u_{K} + w)(x_0) > u_U - \frac{\mu}{2} - u_{K} - \frac{\mu}{2} = u_U - \mu - u_K = 0, 
$$
a contradiction.\\[0.2cm]
\noindent $(A) \Rightarrow (K)$. For this implication, we need properties $(\HH 1),(\HH 2)$.\\
Fix a pair $(K,h)$, indices $i,j \in \mathbb{N}$, and a smooth exhaustion $\{D_j\}$ of $X$ with $K \subset D_1$. The idea is to produce $w$ as a monotone, locally uniform limit of a decreasing sequence $\{w_i\}$ of $F$-subharmonic functions, where $w_{i+1}$ is obtained from $w_i$ by means of a sequence $u_{j} \in F(D_j \backslash K)$ (also depending on $i$) of solutions of obstacle problems. The proof is divided into seven steps, and the Ahlfors property enters crucially in Steps 5 and 6 to guarantee that $w_{i+1}$ be close enough to $w_i$ on a large compact set, in order for the limit $w$ to be locally finite. The first problem to address is that, in general, $\partial K$ does not possess barriers to guarantee the solvability of obstacle type problems (for example, if $K$ is a small convex geodesic ball). For this reason, we first need to produce convex boundaries by suitably modifying $X$ and $F$ in a small neighborhood.\\[0.2cm] 
\noindent \textbf{Step 1: producing $\mathbb{F}$-convex boundaries.}\\[0.2cm] 
Let $t$ be the radius of $\mathbb{B}$ in $(\HH 2)$. Consider a compact manifold $M$ that contains an open set isometric to an Euclidean ball $\mathbb{B}_{2t}$ of radius $2t$. For instance, one can take $M = (\Sph^m, ( \, , \, ))$ with metric given, in polar coordinates centered at some $o \in \Sph^m$, by
$$
( \, , \, ) = \di r^2 + h(r)^2\di \theta^2, \qquad \text{with} \qquad h(r) = \begin{cases}
r & \ \ \text{ on } \, \big(0, 2t\big], \\
t\sin(r/t)  & \ \ \text{ on } \, \big[3t, \pi t\big).
\end{cases}
$$
Fix $x_0 \in K$, let $(O, \varphi)$ be a distinguished chart around $x_0$, and fix $R$ small enough that 
\begin{equation}\label{conditions_R} 
B_{5R} = B_{5R}(x_0) \Subset K \cap O, \qquad 10 R \le t.
\end{equation}
Consider a connected sum $\mathcal{X} \doteq X \sharp M$ along the annuli $B_{2R} \backslash B_R \subset X$ and $\mathbb{B}_{2R}\backslash \mathbb{B}_R \subset M$, obtained by identifying the point with polar coordinates $(r,\theta) \in B_{2R} \backslash B_R$ with the one of coordinates $(3R-r, \theta) \in \mathbb{B}_{2R}\backslash \mathbb{B}_R$. Choose a metric  on $\mathcal{X}$ that coincides with that of $X$ on $X\backslash B_{3R}$, and with $( \, , \, )$ on $M\backslash \mathbb{B}_{3R}$. Hereafter, we consider $X\backslash B_{3R}$ and $M\backslash \mathbb{B}_{3R}$ as being subsets of $\mathcal{X}$, and $\mathcal{D}_{j}, \mathcal{K}$ will denote the sets
$$
\cal D_{j} \doteq \cal X \backslash (X\backslash D_j) \quad \text{and}  \quad \cal K \doteq \cal X \backslash (X\backslash K). 
$$
Note that $\cal X\backslash \cal K$ and $ \cal X \backslash \cal D_j$ are isometric copies of, respectively, $X\backslash K$ and $X\backslash D_j$. Set $\cal V = M \backslash \mathbb B_t \subset \cal X$ and observe that, with the orientation pointing inside of $\cal V$, $\partial \cal V$ is $\mathbb{F}$-convex by $(\HH 2)$.\\[0.2cm] 
\noindent \textbf{Step 2: extending the subequation.}\\[0.2cm] 
By construction, $B_{5R}$ is contained in the domain of a distinguished chart $(O, \varphi)$. Let 
$$
\Phi^e \ \ : \ \ J^2(O) \longrightarrow \varphi(O) \times \mathbf{J^2}
$$
be the local trivialization of $J^2(X)$ induced by the chart $\varphi$, and let $\Psi : \varphi(O) \times \mathbf{J^2} \ra \varphi(O) \times \mathbf{J^2}$ 
be a jet-equivalence with $\Psi\big(\Phi^e(F)\big) = \varphi(O) \times \mathbf{F}$. Denote with $a_{ij}$ the coefficients of the metric of $X$ in the frame $e = \{\partial_j\}$. Consider a cut-off function $\eta \in C^\infty_c(B_{5R})$ satisfying
$$
0 \le \eta \le 1, \qquad \eta \equiv 1 \quad \text{on } \, B_{4R}, 
$$
and define a modified metric on $X$ which is the original one on $X \backslash O$, and whose components in the local frame $\{\partial_j\}$ on $O$ are 
$$
\bar a_{ij} = \eta \delta_{ij} + (1-\eta)a_{ij}.
$$
Note that $\bar a_{ij}$ is the Euclidean metric on $B_{4R}$. Denoting with $\bar J^2(O)$ the splitting given by the metric $\bar a_{ij}$, let us consider the corresponding trivialization $\overline \Phi^e$ induced by the chart $(O, \varphi)$ and the Levi-Civita connection of $\bar a_{ij}$. Consider also the jet-equivalence 
$$
\Psi_\eta \ : \ \varphi(O) \times \mathbf{J^2} \longrightarrow \varphi(O) \times \mathbf{J^2}, \qquad \Psi_\eta = \eta \mathrm{Id} + (1-\eta)\Psi,
$$
where $\mathrm{Id}$ denotes the identity map, and observe that $\Psi_\eta$ is the identity on $B_{4R}$. Define the subequation $H$ on $J^2(X)$ by setting
$$
H = \begin{cases}
F & \quad \text{outside of } \, B_{5R}, \\
(\overline\Phi^e)^{-1}\Big( \Psi_\eta^{-1}\big( \varphi(O) \times \mathbf{F}\big)\Big) & \quad \text{on } \, B_{5R}.
\end{cases}
$$
By construction, $H$ is a subequation, and 
\begin{equation*}\label{bellali}
H = (\overline\Phi^e)^{-1}\Big(\varphi(O) \times \mathbf{F}\Big) \qquad \text{on } \, B_{4R}.
\end{equation*}
Since $\{\partial_j\}$ is an orthonormal frame on $B_{4R}$ in the metric $\bar a_{ij}$, $H$ is a universal Riemannian subequation on $B_{4R}$. Therefore, once we have performed the gluing construction to produce $\cal X$, we can then extend $H$ on the entire $\cal X$ by declaring $H$ to be the universal Riemannian subequation with model $\mathbf{F}$ on $\cal X \backslash (X\backslash B_{3R})$.

In what follows, for the ease of notation, we still write $F$ instead of $H$ to denote the extension that we have just constructed on $\cal X$.\\[0.1cm]
\textbf{Claim 1:} Negative constants are in $F^\str(\cal X)$, and $\widetilde F$ satisfies $(A)$ on $\cal X$.\\[0.1cm]
\textit{Proof of Claim 1:} The first claim is an immediate consequence of the construction performed to produce $F$ on $\cal X$, since a jet-equivalence does preserve the $r$-coordinate of jets (this is exactly the point when we need a \emph{jet-equivalence}, not just an \emph{affine jet-equivalence}). Regarding the Ahlfors property, this follows from the finite maximum principle. Indeed, suppose by contradiction that there exist an open set $\cal U \subset \cal X$ with non-empty boundary, and $u \in \widetilde{F^0}(\overline{\cal U})$ bounded from above and satisfying
$$
\sup_{\partial \cal U} u^+ < \sup_{\overline{\cal U}}u. 
$$
In view of Lemma \ref{lem_maxprinc}, the supremum $\sup_{\overline{\cal U}} u$ is not attained on any compact subset. Hence, choosing a compact set $\cal C$ with $\cal K \subset \inte \cal C$ and $\cal C\cap \cal U \neq \emptyset$, $\max_{\cal C} u^+  < \sup_{\overline{\cal U}} u$, and therefore 
$$
u \in \widetilde{F^0}(\overline{\cal U \backslash \cal C}) \quad \text{and} \quad \sup_{\partial (\cal U \backslash \cal C)} u^+ < \sup_{\overline{\cal U \backslash \cal C}} u. 
$$
As $\cal U \backslash \cal C \subset \cal X \backslash \cal K$, $\cal U \backslash \cal C$ is isometric to an open set of $X \backslash K$, hence transplanting $u$ and $\cal U \backslash \cal C$ on $X$ we would contradict the Ahlfors property.

To build the Khas'minskii potential for the pair $(K,h)$, fix $\cal V$ such that property $(\HH 2)$ holds, and extend $h$ continuously on $\cal X\backslash \cal V$ in such a way that the extension, still called $h$, is negative everywhere. We proceed inductively by constructing a decreasing sequence of USC functions $\{w_i\}$, with $w_0=0$ and satisfying the following properties for $i \ge 1$:
\begin{equation}\label{cond_induction_w}
\begin{array}{ll}
(a) &  w_i \in F(\cal X\backslash \cal V), \quad w_{i} = (w_i)_* = 0 \quad \text{ on } \, \partial \cal V; \\[0.2cm]
(b) &  w_i \geq -i \quad \text{ on } \, \cal X\backslash \cal V, \quad w_i = -i \quad \text{outside a compact set $\cal C_{i}$ containing $\cal D_{i}$;} \\[0.2cm]
(c) &  \left( 1- 2^{-i-2}\right)h < w_{i+1} \le w_i \le 0 \quad \text{on } \ \cal X \backslash \cal K, \quad \|w_{i+1}-w_i\|_{L^{\infty}(\cal D_{i}\backslash \cal K)} \leq \frac{\varepsilon}{2^{i}}.
\end{array}
\end{equation}
Since $X \backslash K$ is isometric to $\cal X \backslash \cal K$, $(c)$ implies that the sequence $\{w_i\}$ is locally uniformly convergent on $X\backslash K$ to some function $w$ with $h \le w \le 0$ on $X \backslash K$ and satisfying, for fixed $i$, $w \le -i$ outside $\cal C_{i}$. Therefore, $w(x) \ra -\infty$ as $x$ diverges. Property $w \in F(X\backslash K)$ follows by $(2)$ of Proposition \ref{prop_basicheF}, and hence $w$ is the desired Khas'minskii potential.

In order to start the inductive process, we take a sequence $\{\lambda_j\} \subset C(\cal X\backslash \cal K)$, $j \ge 2$, such that
\begin{equation}\label{def_hj}
\begin{array}{l}
\disp 0 \ge \lambda_j \ge -1, \quad \lambda_j = 0 \ \text{ on } \, \partial \cal K, \quad \lambda_j =-1 \ \text{ on } \, \cal X \backslash \cal D_{j-1}, \\[0.1cm]
\disp \text{$\{\lambda_j\}$ is an increasing sequence, and $\lambda_j \uparrow 0$ locally uniformly.}
\end{array}
\end{equation}
We start with $w_0=0$ on $\cal X$, and set $\cal C_{0} = \cal D_1$. Suppose that we have built $w_i$ on $\cal X \backslash \cal V$. 
Let $j_0$ be large enough to guarantee that $\cal C_{i} \Subset \cal D_{j_0-2}$, $\cal C_{i}$ being the set in property $(b)$.\\[0.2cm]
\noindent \textbf{Step 3: the obstacle problem.}\\[0.2cm]
Write $w=w_i$ (hereafter, for convenience we suppress the subscript $i$). To construct $w_{i+1}$, the idea is to solve obstacle problems with obstacle $w+ \lambda_j$, and to show that the solutions are close to $w$ if $j$ is large enough. However, since $w$ is generally not continuous, we need to approximate it with a family of continuous functions $\{\psi_{k}\}$. For $i=0$, define $ \psi_{k} = w_0 = 0 $ for each $k$, and for $i \ge 1$, being $w \in \USC(\cal X \backslash \cal V)$, we can choose a sequence
$$
\{\psi_{k}\} \subset C(\cal X \backslash \cal V), \qquad 0 \ge \psi_{k} \downarrow w \ \ \text{ pointwise on } \, \cal X \backslash \cal V. 
$$
Note that, by $(a)$, $\psi_k = 0$ on $\partial \cal V$, for each $k$. From $w \equiv -i$ on $\partial \cal C_{i}$, Dini's theorem guarantees that $\psi_{k} \downarrow w$ uniformly on $\partial \cal C_{i}$, thus up to a subsequence $ \psi_k < -i+k^{-1}$ on $\partial \cal C_{i}$. Extending appropriately $\psi_{k}$ outside of $\cal C_{i}$, we can suppose 
\begin{equation}\label{assu_psik}
0 \le \psi_{k} \downarrow w \ \ \text{ pointwise on } \, \cal X \backslash \cal V, \quad \psi_{k} = -i \ \ \text{ on } \, \cal X \backslash \cal D_{j_0} \ \ \text{ for each } \, k.
\end{equation}
For $j \ge j_0+1$, consider the sequence of obstacles $g_{j,k} = \psi_{k} + \lambda_j$, and note that, by construction, 
\begin{equation}\label{limitobstacle}
g_{j,k} \equiv -i-1 \quad \text{on } \ \cal X\backslash \cal D_{j-1}. 
\end{equation}
We claim that there exists an almost solution of the obstacle problem, that is, a function $u_{j,k}\in\USC(\overline{\cal D_{j} \backslash \cal V})$ such that  
\begin{equation}\label{dirichlet_step1}
\begin{cases}
	u_{j,k} \in F^{g_{j,k}}(\overline{\cal D_{j}\backslash \cal V}), & (-u_{j,k})^* \in \widetilde{F^{g_{j,k}}}(\overline{\cal D_{j}\backslash \cal V}),\\
	u_{j,k} = (u_{j,k})_* = 0 & \mbox{on} \ \ \partial \cal V ,\\
	u_{j,k} = (u_{j,k})_* = -i-1 & \mbox{on} \ \ \partial\cal D_{j}.
\end{cases}
\end{equation}
Indeed, let $u_{j,k}$ be the following Perron's function for problem \eqref{dirichlet_step1}:
\begin{equation}\label{perronclass_obstacle} 
\begin{array}{l}
\disp u_{j,k}(x) = \sup\big\{u(x): u\in \mathcal{F}_{j,k}\big\}, \\[0.2cm]
\disp \mathcal{F}_{j,k} = \big\{u\in F^{g_{j,k}}(\overline{\cal D_{j}\backslash \cal V}) \  : \  \ u_{|_{\partial\cal D_{j}}}\leq -i-1, \ \ u_{|_{\partial \cal V}}\leq 0\big\}. 
\end{array}
\end{equation}
Note that $\mathcal{F}_{j,k} \neq \emptyset$ since it contains $\Psi \doteq -i-1$. Hence, to apply the conclusions of Theorem \ref{obstaclethm}, we just need to check the continuity of $u_{j,k}$ on the boundary (property $(1)$ therein: note that $(2)$ follows from $(1)$, and $(3)$ from $(1)+(2)$). Regarding $\partial \cal D_{j}$, by construction and \eqref{limitobstacle}, $g_{j,k} \ge \Psi$ and $g_{j,k}\equiv\Psi$ outside of $\cal D_{j-1}$, hence necessarily $u_{j,k} \equiv -i-1$ in a neighborhood of $\partial \cal D_{j}$. On the other hand, $\partial \cal V$ is $F$-convex by Step 1, and thus by Proposition \ref{prop_F_convex_global} there exists a global $F$-barrier $\{\beta_c\}$ at height $0$, defined on a neighborhood $\cal U$ of $\partial \cal V$ by $\beta_c = c\rho$, where $c>0$ is large enough and $\rho$ is a global defining function for $\partial \cal V$ which is $\overrightarrow{F}$-subharmonic on $\cal U$. Now, by $(a)$ in \eqref{cond_induction_w} we may fix $c$ sufficiently large that 
\begin{equation*}
\beta_c \leq w \ \ \text{ on } \ \ \overline{\cal U} \ \ \text{ and } \ \ \beta_c \leq -i-1 \ \ \text{ on } \ \ \partial \cal U.
\end{equation*}
Then, the function
$$
\beta \doteq \begin{cases}
\max\{ \Psi, \beta_c \big\} &  \text{on } \, \cal U, \\
\Psi &  \text{on } \, \cal D_{j}\backslash \big(\cal V \cup \cal U\big)
\end{cases}
$$
is well defined and satisfies $\beta \in \mathcal{F}_{j,k}$ for each $k$. By definition of the Perron's function and since $g_{j,k} \le 0$, 
\begin{equation}\label{bonitinha}
\beta \le u_{j,k} \le 0 \qquad \text{on } \, \cal D_{j}\backslash \cal V, \ \ \text{ for each } \, k, 
\end{equation}
showing property $(1)$ in Theorem \ref{obstaclethm}. 

Next, we extend $u_{j,k}$ by setting $u_{j,k} \doteq -i-1$ on $\cal X \backslash \cal D_{j}$. As $g_{j,k} \equiv -i-1$ outside of $\cal D_{j-1}$, the extension is smooth across $\partial \cal D_{j}$ and $(1)$ of Proposition~\ref{prop_basicheF} gives 
\begin{equation}\label{unifbounds}
\begin{array}{rl}
u_{j,k} \in F(\cal X \backslash \cal V) \quad \text{and} \quad \begin{cases}
-i-1 \le u_{j,k} \le \psi_k + \lambda_j & \text{on } \, \cal X\backslash \cal V, \\
u_{j,k} \le -i &  \text{on } \, \cal X \backslash \cal D_{j_0}, \\
u_{j,k} = -i-1 & \text{on } \, \cal X \backslash \cal D_{j-1}.
\end{cases}
\end{array} 
\end{equation}
\noindent \textbf{Step 4: the limit in $k$.}\\[0.2cm]
By definition of Perron's solution, the sequence $\{u_{j,k}\}$ is monotonically decreasing in $k$, whence by $(2)$ of Proposition \ref{prop_basicheF} we get
\begin{equation}\label{conveink}
u_{j,k} \downarrow u_j \in F(\cal X \backslash \cal V) \quad \text{with} \quad  \begin{cases} 
-i-1 \le u_{j} \le w + \lambda_j \le w &  \text{on } \, \cal X\backslash \cal V, \\
u_{j} \le -i &   \text{on } \, \cal X \backslash \cal D_{j_0}, \\
u_{j} \equiv -i-1 &  \text{on } \, \cal X \backslash \cal D_{j}.
\end{cases} 
\end{equation}
Moreover, taking limits in \eqref{bonitinha} we deduce
\begin{equation}\label{bonitinha_2}
(u_j)_* = u_j = 0 \qquad \text{on } \, \partial \cal V. 
\end{equation}
Next, we set $v_{j,k} \doteq (-u_{j,k})^* -i$ and note that, by \eqref{assu_psik}, \eqref{dirichlet_step1} and \eqref{unifbounds},
\begin{equation}\label{prop_vjk}
\begin{array}{l}
-i \le v_{j,k} \le 1, \qquad v_{j,k} \ge 0 \ \  \text{ on } \, \cal X \backslash \cal D_{j_0}, \\[0.1cm]
v_{j,k} \in \ \widetilde{F^{g_{j,k}+i}}(\cal D_{j}\backslash \cal V) \ \subset \ \widetilde{F^{\lambda_j}}(\cal D_{j}\backslash \cal V)
\end{array}
\end{equation}
for each $k$.
The monotonicity of the sequence $\{u_{j,k}\}$ implies the following inequalities
$$
-u_{j,k} -i \le v_{j,k} \le (-u_j)^* -i.
$$
Taking limits in $k$, 
$$
v_{j,k} \uparrow v_j \ \ \text{ as } \, k \ra +\infty, \qquad -u_j -i \le v_j \le (-u_j)^*-i,
$$
and using $(4)$ of Proposition \ref{prop_basicheF} together with \eqref{conveink}, \eqref{bonitinha_2}, \eqref{prop_vjk} we deduce 
\begin{equation*}\label{prop_vj}
(-u_j)^*-i \equiv v_j^* \in \widetilde{F^{\lambda_{j}}}(\cal X \backslash \cal V), \quad \text{with} \quad \begin{cases}
-i \le v_j \le 1 &  \text{on } \, \cal X \backslash \cal V, \\
v_j^* = -i < 0 &  \text{on } \, \partial \cal V, \\
v_j^* \ge v_j \ge 0 &  \text{on } \, \cal X \backslash \cal D_{j_0}.
\end{cases}
\end{equation*}
%
\noindent \textbf{Step 5: the limit in $j$.}\\[0.2cm]
Again by the definition of Perron's solution, $u_{j+1,k} \ge u_{j,k}$ for each fixed $k$, and taking limits in $k$ we infer that the sequence $\{v_j\}$, hence $\{v_j^*\}$ is decreasing on $\cal X \backslash \cal V$. Therefore, 
$$
v_j^* \downarrow v \in \widetilde{F^{0}}(\cal X\backslash \cal V), \quad \text{with} \quad \begin{cases}
-i \le v \le 1 &  \text{on } \, \cal X \backslash \cal V, \\
v = -i < 0 &  \text{on } \, \partial \cal V, \\
v \ge 0 &  \text{on } \, \cal X \backslash \cal D_{j_0}.
\end{cases}
$$
Indeed, $v \in \widetilde{F^0}(\cal X \backslash \cal V)$ follows from the following elementary argument: by \eqref{def_hj} the sequence $\{\widetilde{F^{\lambda_j}}\}$ is a nested, decreasing family of closed sets converging to $\widetilde{F^0}$, hence 
$$
v^*_s \in \widetilde{F^{\lambda_j}}(\cal X\backslash \cal V), \quad \forall \, s \ge j.
$$
Taking limits in $s$ and using $(3)$ of Proposition \ref{prop_basicheF}, $v \in \widetilde{F^{\lambda_j}}(\cal X \backslash \cal V)$. The thesis follows letting $j \ra +\infty$.

We are now in the position to use the Ahlfors property to $v$, and infer that $v \le 0$ on $\cal X \backslash \cal V$. In particular, $v \equiv 0$ outside of $\cal D_{j_0}$, and by the USC-version of Dini's theorem, 
$$
v^*_j \downarrow 0 \qquad \text{locally uniformly on } \, \cal X \backslash \cal D_{j_0}.
$$
Then the definition of $v_j$ and the bound $u_j \ge -i$ outside of $\cal D_{j_0}$ yield 
\begin{equation}\label{conve_uj}
(u_{j})_* \uparrow -i \quad \text{and} \quad u_j \uparrow -i \quad \text{locally uniformly on } \, \cal X \backslash \cal D_{j_0}.
\end{equation}
%
\noindent \textbf{Step 6: the convergence of $u_j$ on $\cal D_{j_0}$.}\\[0.2cm]
For fixed $\delta>0$, in view of \eqref{def_hj} and \eqref{conve_uj} take $j_\delta > j_0 +1$ large enough that, for each $j \ge j_\delta$,
$$
- \frac{\delta}{2} \le \lambda_j \le 0 \quad \text{on } \, \cal D_{j_0}\backslash \cal V, \qquad u_j \ge -i - \delta \quad \text{on } \, \cal D_{j_0 +1}\backslash \cal D_{j_0}.
$$
Consequently, the monotonicity of $\{u_{j,k}\}$ in $k$ implies, for each $k$, the inequalities $u_{j,k} \ge -i-\delta = w - \delta$ on $\cal D_{j_0 +1}\backslash \cal D_{j_0}$. Consider the function 
$$
\bar{u}_{j,k} =  
\begin{cases}
\max\{w-\delta, u_{j,k}\} & \text{ on } \ \cal D_{j_0 +1}\backslash \cal V,\\
u_{j,k} & \text{ on } \ \overline{\cal D_{j} \backslash \cal D_{j_0 +1}} ,
\end{cases} 
$$
and note that 
$$
\begin{array}{ll}
\bar{u}_{j,k} \in F(\overline{\cal D_{j}\backslash \cal V}), & \qquad \bar{u}_{j,k} \le g_{j,k} \ \text{ on } \, \overline{\cal D}_{j}\backslash \cal V, \\[0.2cm]
\bar{u}_{j,k} = 0 \ \text{ on } \, \partial \cal V, & \qquad \bar{u}_{j,k} = u_{j,k} = -i-1  \ \text{ on } \, \partial \cal D_{j}.
\end{array}
$$ 
Thus, $\bar{u}_{j,k}$ belongs to the Perron's class $\mathcal{F}_{j,k}$ described in \eqref{perronclass_obstacle}, and consequently
$$ \bar{u}_{j,k} \le u_{j,k} \le \psi_k + \lambda_j \ \text{ on } \, \cal D_{j} \backslash \cal V .$$
By the definition of $\bar{u}_{j,k}$, and taking limits in $k$ we obtain 
$$ w - \delta \le u_j \leq w + \lambda_j \le w  \ \text{ on } \, \cal D_{j_0} \backslash \cal V .$$
Letting first $j \ra +\infty$ and then $\delta \ra 0^+$, we deduce $u_j \uparrow w$ uniformly on $\overline{\cal D}_{j_0}\backslash \cal V$ and thus, by \eqref{conve_uj}, locally uniformly on all of $\cal X \backslash \cal V $.\\[0.2cm]
\noindent \textbf{Step 7: conclusion.}\\[0.2cm]
To produce $w_{i+1}$, it is enough to select $j$ large enough to satisfy
$$
\left(1-2^{-i-2}\right)h < u_j \quad \text{and} \quad  u_{j} \ge w - \frac{\varepsilon}{2^{i+1}} \quad \text{on } \, \cal D_{i+1}\backslash \cal K.
$$ 
Note that the first follows from $(c)$ in \eqref{cond_induction_w} and the definition of $h$.
By \eqref{conveink}, defining $w_{i+1} \doteq u_j$ and $\cal C_{i+1} \doteq \cal D_{j}$, the potential $w_{i+1}$ meets all the requirements in $(a),(b),(c)$, as desired.
\end{proof}

We conclude this section by commenting on a variant of Theorem \ref{teo_main}. Precisely, we investigate the case when $(\HH 1)$ does not hold, typically, for examples $(\EE 2), \ldots, (\EE 8)$ with $f \equiv 0$. Assumption $(\HH 1)$ is just used to ensure Lemma \ref{lem_maxprinc}, that is, the strong maximum principle for functions $u \in \widetilde{F^0}(Y)$ on any manifold $Y$. Lemma \ref{lem_maxprinc} is then essential to prove Claim 1 in implication $(A) \Rightarrow (K)$. Summarizing, following the proof above we can state the following alternative version of our main theorem.

\begin{theorem}\label{teo_main_semH1}
Let $\mathbb{F} \subset J^{2}(X)$ be a universal subequation satisfying $(\HH 2)$, $(\HH 3)$ and
\begin{itemize}
\item[$(\HH 1')$] $\widetilde{\mathbb{F}}$ has the strong maximum principle on each manifold $Y$ where it is defined: non-constant, $\widetilde{\mathbb{F}^0}$-subharmonic functions on $Y$ are constant if they attain a local maximum. 
\end{itemize}
Then, AK-duality holds for $F$. 
\end{theorem}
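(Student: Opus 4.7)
The plan is to re-run the proof of Theorem \ref{teo_main} with only one modification: every invocation of Lemma \ref{lem_maxprinc} is replaced by a direct appeal to $(\HH 1')$. Since $\mathbb{F}$ is universal, the strong maximum principle granted by $(\HH 1')$ transfers automatically to any Riemannian manifold on which $\mathbb{F}$ is defined, so the substitution is legitimate both on $X$ and on the auxiliary manifold $\cal X$ built in the proof. In particular, $(\HH 2)$ and $(\HH 3)$ likewise transfer to $\cal X$ for the same reason.

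The implications $(K) \Rightarrow (K_\weak)$ and $(K_\weak) \Rightarrow (A)$ carry over verbatim. In the latter, the only use of $(\HH 1)$ was via Lemma \ref{lem_maxprinc}, invoked on $X$ to deduce that $u_U = \sup_{\overline U} u$ is not attained; under $(\HH 1')$ this conclusion is immediate. All subsequent steps---the choice of $x_0$ with $u(x_0) > u_U - \mu/2$, the use of the weak Khas'minskii potential of $(\mu/2, K, \{x_0\})$, and the contradiction drawn from the comparison principle for $F^0$ (which rests on $(\HH 3)$ together with Theorem \ref{lwcimplieswc}, Lemma \ref{wcobstacle} and Theorem \ref{localwcestrict})---are unchanged.

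For $(A) \Rightarrow (K)$, the construction of $\cal X$ in Steps 1--2 actually simplifies, because $\mathbb{F}$ is already universal: no jet-equivalence gluing is needed, and one simply declares the extended subequation on $\cal X$ to be the universal subequation with the same model $\mathbf{F}$. The original Claim 1 then becomes the assertion that $\widetilde{\mathbb{F}}$ has the Ahlfors property on $\cal X$; its proof is identical to the original, except that the strong maximum principle used to rule out interior maxima of a would-be counterexample is now supplied by $(\HH 1')$ on $\cal X$ rather than by Lemma \ref{lem_maxprinc}. The reduction of a putative Ahlfors violation on $\cal X$ to one on $X \backslash K$ via passage to $\cal V = \cal U \backslash \cal C$ (where $\cal C$ is a compact set with $\cal K \subset \inte \cal C$ and $\max_{\cal C} u^+ < \sup_{\overline{\cal U}} u$) proceeds as before, after which isometry with an open subset of $X \backslash K$ delivers the contradiction. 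Steps 3--7---the inductive construction of $\{w_i\}$ via obstacle problems on $\cal D_j \backslash \cal V$, the successive limits in $k$ and then in $j$, the application of the Ahlfors property on $\cal X$ to conclude $v \le 0$, and the local uniform convergence $u_j \uparrow w$---do not involve $(\HH 1)$ and transfer without change, yielding the Khas'minskii potential $w$ for the prescribed pair $(K,h)$. There is no genuine obstacle in the argument: the only substantive point is the transfer of $(\HH 1')$ from $X$ to $\cal X$, which is automatic since both assumptions concern the same universal model $\mathbf{F}$.
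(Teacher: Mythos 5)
Your proposal is correct and is essentially the paper's own argument: the paper proves this variant precisely by observing that $(\HH 1)$ enters the proof of Theorem \ref{teo_main} only through Lemma \ref{lem_maxprinc} (used to rule out attained interior maxima in $(K_\weak)\Rightarrow(A)$ and in Claim 1 of $(A)\Rightarrow(K)$), and that $(\HH 1')$ can be substituted for it both on $X$ and on the glued manifold $\cal X$, where a universal $\mathbb{F}$ extends immediately. One small caveat: $(\HH 3)$ is a property of $F$ on $X$, not of the model, so it does not literally ``transfer'' to $\cal X$ — but this is harmless, since strict approximation is only invoked on $X$ in the implication $(K_\weak)\Rightarrow(A)$, while the obstacle-problem steps on $\cal X$ need only the weak comparison and barriers.
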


Similarly, the result can be stated for $F$ locally jet-equivalent to $\mathbb{F}$, provided that the strong maximum principle in $(\HH 1')$ holds for each manifold $Y$ and each such $\widetilde{F} \subset J^2(Y)$. The literature on the strong maximum principle is vast, and here we limit ourselves to refer the interested reader to \cite{HL_SMP} (for universal subequations of Hessian type) and \cite{bardidalio}, and the references therein, and to \cite{PuRS, pucciserrin} for weak solutions of quasilinear operators.

\subsection{Coupling with the Eikonal}

The purpose of this subsection is to improve Theorem \ref{teo_main}, especially in the particular case of the examples in $(\EE 2),\ldots, (\EE 8)$, by producing Khas'minskii potentials which are also $E_\xi$-subharmonic, where as usual 
$$
E_\xi = \overline{\big\{ |p| < \xi(r)\big\}}, \quad \text{and $\xi$ satisfies $(\xi 1)$ in \eqref{def_f1xi1}.}
$$
The main difficulty here is that, even when $F$ satisfies all of the assumptions in Theorem \ref{teo_main} and when $F \cap E_\xi$ is a subequation, $F \cap E_\xi$ certainly does not satisfy condition $(\HH 2)$. Indeed, the asymptotic interior of $E_\xi$ is empty for each fixed $r$, which prevent any subset to be $(F \cap E_\xi)$-convex at any height.

In order to overcome this problem, we fix a non-negative function $\eta \in C(X)$, and define the ``relaxed" eikonal subequation
\begin{equation}\label{def_exieta}
E_{\xi}^\eta = \overline{\Big\{ (x,r,p,A) \in J^2(X) \ : \ |p| < \xi(r) + \eta(x)\Big\}}.
\end{equation}
Our first step is to ensure that the set $F \cap E_{\xi}^\eta$ is a subequation for each $\eta$ and each $\xi$ satisfying $(\xi 1)$ (or even $(\xi 0)$ in \eqref{def_xi0}). Clearly, $F \cap E_{\xi}^\eta$ is a closed subset with $(P),(N)$, but the topological condition is not automatic, even for subequations satisfying all the assumptions in Theorem \ref{teo_main}. As a simple example, the set
$$
F \doteq \big\{\tr(A) \ge r\big\} \cap \big\{ |p| \ge 1\big\}
$$
is a universal Riemannian subequation satisfying $(\HH 1),(\HH 2),(\HH 3)$, but $F \cap E_{\xi}^\eta$ is not a subequation (fibers over points $x$ with $\eta(x) =0$ and $\xi =1$ have empty interior). However, for Examples $(\EE 2),\ldots, (\EE 8)$ it is not difficult to check the following lemma.

\begin{lemma}\label{lem_FcapE_subequation}
Let $F$ be locally jet-equivalent to one of the examples in $(\EE 2), \ldots, (\EE 8)$. Then, $F \cap E_{\xi}^\eta$ is a subequation for each $0 \le \eta \in C(X)$ and each $\xi$ satisfying either $(\xi 1)$ or $(\xi 0)$.
\end{lemma}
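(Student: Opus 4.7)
The approach. Closedness of $F \cap E_\xi^\eta$ is automatic as an intersection of closed sets, and the monotonicity conditions $(P)$, $(N)$ are preserved under intersection and hold for each factor separately; note that $(N)$ for $E_\xi^\eta$ uses the fact that $\xi$ is non-increasing, so that lowering $r$ by a non-positive amount only enlarges the admissible radius $\xi(r+s)+\eta(x)$. The substantive work thus lies entirely in the topological condition $(T)$.

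Reduction to a fiber-wise density. Condition $(T)(iii)$, namely $(\inte(F\cap E_\xi^\eta))_x = \inte((F\cap E_\xi^\eta)_x)$, is formal: the identity $\inte(A\cap B)=\inte A\cap \inte B$, valid in any topological space, combined with $(T)(iii)$ for $F$ and $E_\xi^\eta$ separately, does the job. Both density conditions $(T)(i)$ and $(T)(ii)$ then reduce to the same fiber-wise construction, which is the heart of the argument.

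Main construction. Given $(x,r,p,A) \in F \cap E_\xi^\eta$, I plan to produce a sequence $(x_n,r_n,p_n,A_n) \to (x,r,p,A)$ inside $\inte F \cap \inte E_\xi^\eta = \{\mathscr{F}(p,A) > f(r)\} \cap \{|p| < \psi(x,r)\}$, where $\psi(x,r) \doteq \xi(r)+\eta(x)$. The core move is the perturbation $A \mapsto A+\eps_n I$ with $\eps_n \downarrow 0$, which strictly increases $\mathscr{F}$ in every model $(\EE 2)$--$(\EE 8)$: via strict monotonicity of $\tr$, of the ordered eigenvalues $\lambda_k$, of the G\"arding eigenvalues $\mu_j^{(k)}$; via $\tr T(p) \ge (m-1)\lambda_2(|p|) > 0$ for $(\EE 7)$; and via the identity $|p|^{-2}(A+\eps I)(p,p) = |p|^{-2}A(p,p) + \eps$ for $(\EE 8)$. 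When $\psi(x,r) > 0$ the sequence is constructed with $(x_n, r_n)=(x,r)$, scaling $p$ by $1-1/n$ if $|p|=\psi(x,r)$, and choosing $\eps_n$ large enough to dominate the defect $f(r_n)-\mathscr{F}(p_n,A_n)$ (which tends to a non-positive limit by continuity) while still going to zero.

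The hard part will be the degenerate case $\psi(x,r) = 0$, which does not occur under $(\xi 0)$ but under $(\xi 1)$ forces $\eta(x)=0$, $r \geq 0$, and $p=0$. Here one cannot approximate with the base point kept fixed. I would split into two subcases. If $r=0$, perturb $r_n \uparrow 0$ from below: then $\xi(r_n)>0$ so $\psi(x,r_n)>0$; since $f(r_n)\le f(0)$ and $(x,0,0,A)\in F$ forces $\mathscr{F}(0,A)\ge f(0)$, the standard choice $A_n=A+\eps_n I$ provides an interior point. If $r>0$, membership $(x,r,0,A)\in E_\xi^\eta=\overline{\{|p|<\psi\}}$ forces the existence of $x_n \to x$ with $\eta(x_n)>0$ (otherwise $\psi$ would vanish on a whole neighbourhood of $(x,r)$, contradicting the closure), so one perturbs the base point instead. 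The only truly delicate instance in either subcase is $(\EE 8)$, where $\mathscr{F}$ is discontinuous at $p=0$: there, $(x,r,0,A)\in F$ forces $\lambda_m(A)\ge f(r)$ (from the definition of $F$ as a closure), and one picks $p_n$ in the direction of a maximal eigenvector of $A$ with $|p_n|=\psi(x_n,r_n)/2$, so that $\mathscr{F}(p_n, A+\eps_n I)=\lambda_m(A)+\eps_n > f(r_n)$ while automatically $|p_n|<\psi(x_n,r_n)$.
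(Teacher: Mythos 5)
The paper offers no proof of this lemma (it is asserted as ``not difficult to check''), so your proposal must be judged on its own, and it has a genuine gap at condition $(T)(ii)$ --- located exactly where your construction is forced to move the base point. You claim that $(T)(i)$ and $(T)(ii)$ ``reduce to the same fiber-wise construction'', but your degenerate subcase $r>0$ is handled by choosing $x_n\to x$ with $\eta(x_n)>0$: these approximants live in other fibers, which is admissible for the total-space density $(T)(i)$ but proves nothing about $(T)(ii)$, where closure and interior of $(F\cap E^\eta_\xi)_x$ are taken inside the fixed fiber $J^2_x(X)$. Worse, at such points $(T)(ii)$ genuinely fails, so no refinement of your construction can close the gap without changing the hypotheses. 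Concretely, take $\xi$ of type $(\xi 1)$, $0\le \eta\in C(X)$ vanishing at $x$ but positive at points $x_n\to x$ (e.g.\ a compactly supported bump --- precisely the $\eta$'s used later in the paper), and $F=\{\tr(A)\ge f(r)\}$. The jets $(x_n,r_0,0,A)$ with $r_0>0$ lie in the open set $\{|p|<\xi(r)+\eta(\cdot)\}$, so their limit $(x,r_0,0,A)$ belongs to $E^\eta_\xi$, and it belongs to $F$ as soon as $\tr(A)\ge f(r_0)$; hence $(r_0,0,A)\in (F\cap E^\eta_\xi)_x$. On the other hand $E^\eta_\xi\subseteq\{|p|\le \xi(r)+\eta(x)\}$, so $(F\cap E^\eta_\xi)_x\subseteq\{|p|\le \xi(r)\}$, whose fiber-interior is $\{|p|<\xi(r)\}\subseteq\{r<0\}$; therefore the fiber-closure of $\inte\big((F\cap E^\eta_\xi)_x\big)$ is contained in $\{r\le 0\}$ and misses $(r_0,0,A)$. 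Thus $(T)(ii)$ fails at every point of $\partial\{\eta>0\}$, and the statement as you argue it (and as literally stated) requires a restriction on $\eta$ --- e.g.\ $\eta\equiv 0$, or $\eta>0$ on all of $X$, or more generally no zero of $\eta$ that is a limit of points where $\eta>0$ --- or a different definition of $E^\eta_\xi$; under $(\xi 0)$ the problem disappears and your argument goes through.

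Two secondary points. First, your ``formal'' treatment of $(T)(iii)$ silently uses $(T)(iii)$ for $E^\eta_\xi$; this is true but needs the (easy) verification that $\inte E^\eta_\xi=\{|p|<\xi(r)+\eta(x)\}$ and that the fiber-interior of $(E^\eta_\xi)_x$ coincides with this set over $x$, both consequences of $E^\eta_\xi\subseteq\{|p|\le \xi(r)+\eta(x)\}$; note also that $E^\eta_\xi$ by itself need not be a subequation, by the same example as above, so you cannot quote its properties wholesale. Second, $(\EE 7)$ is not as harmless at $p=0$ as you suggest: e.g.\ for $a(t)=t^{k-2}$ with $k>2$ one has $T(p)\to 0$ as $p\to 0$, so the fiber of the universal model at $p=0$ is not described by any naive evaluation $\mathscr{F}(0,A)\ge f(r)$; for the density statements it is cleaner to use directly the approximating sequences furnished by membership in the closure (they automatically have $p_n\neq 0$ and eventually satisfy the strict eikonal bound whenever $\xi(r)+\eta(x)>0$), rather than the eigenvector trick you reserve for $(\EE 8)$.
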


\begin{remark}
\emph{In general, $F \cap E_{\xi}^\eta$ might not be locally jet-equivalent to a universal subset with model $\mathbf{F}\cap \mathbf{E}_{\xi}$\footnote{For instance, if $\eta>0$ on $X$ and $\xi \equiv 1$, the subequation $\widetilde E \cap E^\eta$ is not locally affine jet-equivalent to the subset obtained by the model $\widetilde{ \mathbf{E}} \cap \mathbf{E}$ (the first has non-empty interior, whereas the second not).}. However, it is so when the model $\mathbf{F}$ is independent of the gradient, and in this case $F \cap E_\xi^\eta$ is a subequation. Observe that, even though $\mathbf{F}$ is gradient independent, $F$ might depend on the gradient, as the bundle map $L$ in the jet-equivalence does. This is the case, for instance, of the linear subequations described in Example \ref{ex_linear}.
}
\end{remark}

Next, we need to check when $F \cap E_{\xi}^\eta$ satisfy the weak comparison and the strict approximation properties. The proof of the next two propositions are a minor modification of, respectively, \cite[Thm. 10.1]{HL_dir} and Theorem \ref{thm_comparison_examples} here, and are left to the reader.
\begin{proposition}\label{lem_weakcomp_congradient}
Let $F \subset J^2(X)$ be locally affine jet-equivalent to a universal subequation with model $\mathbf{F}$, where the sections $g,L$ are continuous and $h$ is locally Lipschitz. 
Then $F \cap E^\eta_\xi$ satisfies the weak comparison for each $0 \le \eta \in C(X)$ and each $\xi$ satisfying either $(\xi 1)$ or $(\xi 0)$.
\end{proposition}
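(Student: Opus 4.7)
\medskip

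The plan is to argue by contradiction after reducing to the local version. By Theorem~\ref{lwcimplieswc}, it suffices to establish local weak comparison. So I would fix $x_0 \in X$ and a distinguished chart $(U,\varphi)$ around $x_0$ on which $F$ becomes, via a continuous affine jet-equivalence $\Psi$ (with $g, L$ continuous and $h$ locally Lipschitz), the universal subequation $\varphi(U) \times \mathbf{F}$. Suppose, toward a contradiction, that there exist a compact $K \Subset U$, a constant $c > 0$, and functions $u \in (F \cap E^\eta_\xi)^c(K)$ and $v \in \widetilde{F \cap E^\eta_\xi}(K) = (\widetilde F \cup \widetilde{E^\eta_\xi})(K)$ with $u + v \leq 0$ on $\partial K$ but $M \doteq \max_K(u+v) > 0$.

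Next I would run the classical doubling-variable argument from the proof of \cite[Thm.~10.1]{HL_dir}. Set
$$
\Phi_\alpha(x,y) \doteq u(x) + v(y) - \frac{|\varphi(x) - \varphi(y)|^2}{2\alpha}.
$$
For $\alpha > 0$ small, $\Phi_\alpha$ attains a positive maximum at interior points $(x_\alpha, y_\alpha)$ with $|\varphi(x_\alpha) - \varphi(y_\alpha)|^2/\alpha \to 0$ and $(u(x_\alpha), v(y_\alpha)) \to (u(x^*), v(x^*))$ for a common limit $x^* \in \inte K$ with $u(x^*) + v(x^*) = M$. Applying the Crandall--Ishii theorem on sums in the Euclidean representation of $J^2(U)$ (as in the discussion after Example~\ref{ex_2}) yields matched test jets $(r^u_\alpha, p_\alpha, A_\alpha)$ for $u$ at $x_\alpha$ and $(r^v_\alpha, -p_\alpha, B_\alpha)$ for $v$ at $y_\alpha$, with $A_\alpha + B_\alpha \leq o(1)$ and $r^u_\alpha + r^v_\alpha \to M > 0$. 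Since $\widetilde{F \cap E^\eta_\xi} = \widetilde F \cup \widetilde{E^\eta_\xi}$, passing to a subsequence the $v$-jet falls in one of two cases.

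In Case~1, the $v$-jet lies in $\widetilde F_{y_\alpha}$; applying $\Psi$ transplants $u$'s jet into $\mathbf{F}^{c'}$ for some $c' > 0$ uniform on $K$ (Remark~\ref{rem_piustupido}), while $v$'s jet lands in $\widetilde{\mathbf F}$. The local Lipschitz control $\|\Psi_{x_\alpha} - \Psi_{y_\alpha}\| \leq C d(x_\alpha, y_\alpha)$ combined with $A_\alpha + B_\alpha \leq o(1)$ and $\mathbf{F} + P \subset \mathbf{F}$ then produces the standard contradiction of \cite[Thm.~10.1]{HL_dir}. In Case~2, the $v$-jet lies in $\widetilde{E^\eta_\xi}_{y_\alpha}$, so $|p_\alpha| \geq \xi(-r^v_\alpha) + \eta(y_\alpha)$. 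On the other hand, $(r^u_\alpha, p_\alpha, A_\alpha) \in (E^\eta_\xi)^c_{x_\alpha}$: perturbing $p_\alpha$ by $(c/2)p_\alpha/|p_\alpha|$ within the Sasaki $c$-ball shows $|p_\alpha| \leq \xi(r^u_\alpha) + \eta(x_\alpha) - c/2$. Chaining the two inequalities and letting $\alpha \to 0$, the continuity of $\eta$, the monotonicity of $\xi$, and the bound $r^u_\alpha > -r^v_\alpha$ (which follows from $r^u_\alpha + r^v_\alpha \to M > 0$) yield
$$
\xi(-r^{v,*}) + \eta(x^*) \;\leq\; \xi(r^{u,*}) + \eta(x^*) - c/2 \;\leq\; \xi(-r^{v,*}) + \eta(x^*) - c/2,
$$
a contradiction.

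The main obstacle I anticipate is the precise Sasaki-distance bookkeeping: on the one hand, extracting the quantitative gradient bound $|p_\alpha| \leq \xi(r^u_\alpha) + \eta(x_\alpha) - c/2$ from membership in $(E^\eta_\xi)^c$, and on the other hand, ensuring that under the affine jet-equivalence $\Psi$ the strict parameter $c$ of $u$ survives as a uniform positive constant $c'$ on $K$ (the role of Remark~\ref{rem_piustupido}). Both are the same type of estimates that drive \cite[Thm.~10.1]{HL_dir}; the only genuinely new ingredient is Case~2, which is handled cleanly thanks to $p_u = -p_v$ and the monotonicity of $\xi$, and works uniformly whether $\xi$ satisfies $(\xi 1)$ or $(\xi 0)$.
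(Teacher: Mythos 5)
Your overall skeleton (reduce to local weak comparison via Theorem \ref{lwcimplieswc}, double variables in a Euclidean representation, apply the classical theorem on sums, and split cases according to $\widetilde{F\cap E^\eta_\xi}=\widetilde F\cup\widetilde{E^\eta_\xi}$) is the intended one: the paper deliberately leaves this as a minor modification of \cite[Thm.\ 10.1]{HL_dir}, and your Case 2 is exactly the new eikonal-type step. But there is a genuine gap in Case 1. There you invoke ``the local Lipschitz control $\|\Psi_{x_\alpha}-\Psi_{y_\alpha}\|\le C\,d(x_\alpha,y_\alpha)$'' for the whole jet-equivalence, which is not among the hypotheses: only $h$ is locally Lipschitz, while $g$ and $L$ are merely continuous. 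In the Harvey--Lawson argument the Lipschitz control on $g$ and $L$ is what tames the mismatch terms $(g(x_\alpha)-g(y_\alpha))p_\alpha$ and $L_{x_\alpha}(p_\alpha)-L_{y_\alpha}(p_\alpha)$, because $|p_\alpha|\sim d_\alpha/\alpha$ can blow up and only the product $d_\alpha|p_\alpha|\sim d_\alpha^2/\alpha$ tends to zero; with $g,L$ only continuous, a modulus-of-continuity bound $\omega(d_\alpha)|p_\alpha|$ does not go to zero. So as written, your Case 1 either assumes an unstated hypothesis or proves a weaker statement than the proposition (namely the one already covered by Theorem \ref{teo_importante!!}).

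The missing idea — and the whole point of the proposition, as the remark following it stresses — is that the gradient bound you extract from the $u$-side, $|p_\alpha|\le \xi(r^u_\alpha)+\eta(x_\alpha)$, is available in \emph{both} cases, since it comes from $u\in (F\cap E^\eta_\xi)^c$ and not from which branch of the dual the $v$-jet falls into. Using it in Case 1, $|p_\alpha|$ stays uniformly bounded on $K$, so the $g$- and $L$-mismatch terms are controlled by $\omega(d_\alpha)\cdot C\to 0$ with mere continuity, while the locally Lipschitz $h$, combined with the fine form of the theorem-on-sums matrix inequality, handles the Hessian mismatch exactly as in \cite[Thm.\ 10.1]{HL_dir}. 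With that correction (plus the routine bookkeeping of comparing the Riemannian norm in $E^\eta_\xi$ with its expression in the chart), your argument goes through and coincides with the proof the paper has in mind; Case 2 itself, including the $c/2$ perturbation and the monotonicity of $\xi$ together with $r^{u,*}>-r^{v,*}$, is fine.
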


\begin{proposition}\label{thm_comparison_examples_withgradient}
Let $F$ be locally jet-equivalent to a universal subequation with model $\mathbf{F} = \big\{\mathscr{F}(p,A) \ge f(r)\big\}$ via continuous bundle maps. Suppose that $\mathscr{F}$ is uniformly continuous (as in Definition \ref{def_unifcontinuous}), and that $f$ is strictly increasing. For $\xi$ satisfying $(\xi 1)$ or $(\xi 0)$, define $E_\xi^\eta$ as in \eqref{def_exieta} and suppose that $\xi$ is strictly decreasing. Then, $F\cap E_\xi^\eta$ has the bounded, strict approximation property.
\end{proposition}

\begin{remark}
\emph{It is worth to observe that, differently from \cite[Thm. 10.1]{HL_dir}, Proposition \ref{lem_weakcomp_congradient} does not require $g,L$ to be locally Lipschitz: this is due to the control on the gradient granted by $E_\xi^\eta$. Observe also that $F \cap E_{\xi}^\eta$ is not required to be a subequation\footnote{Indeed, the proof of Proposition \ref{lem_weakcomp_congradient} uses the classical theorem on sums (\cite{CIL}, Thm. 3.2), which is stated in Appendix 1 of \cite{HL_dir} for subequations although it just need the sets to be closed.}.
}
\end{remark}

We are ready to state our result for subequations coupled with $E_{\xi}^{\eta}$.

\begin{theorem}\label{teo_main_withgradient}
Let $F \subset J^{2}(X)$ be locally jet-equivalent to a universal subequation $\mathbb{F}$  via locally Lipschitz bundle maps.
Assume $(\HH 1), (\HH 2)$, and suppose that $F \cap E_{\xi}^\eta$ is a subequation satisfying
\begin{itemize}
\item[$(\HH 3')$] $F\cap E_{\xi}$ has the bounded, strict approximation on $X$,
\end{itemize}
for some $\xi$ enjoying $(\xi 1)$ or $(\xi 0)$. Then, AK-duality holds for $F\cap E_\xi :$
\begin{equation*}
\begin{array}{c}
\widetilde{F} \cup \widetilde{E_{\xi}} \ \mbox{ satisfies $(A)$} \\
(\mbox{Ahlfors prop.}) \end{array}
 \Longleftrightarrow  
 \begin{array}{c}
F\cap E_{\xi} \ \mbox{ satisfies $(K)$} \\
(\mbox{Khas'minskii prop.})
\end{array}
 \Longleftrightarrow  
 \begin{array}{c}
F \cap E_{\xi} \ \mbox{ satisfies $(K_\weak)$} \\
(\mbox{weak Khas'minskii prop.})
 \end{array}
\end{equation*}
Each of them implies that 
\begin{equation}\label{LforwidetildeF}
\widetilde{F} \cup \widetilde{E_{\xi}} \quad \text{satisfies} \quad (L) \quad (\text{Liouville prop.}).
\end{equation}
If, moreover, $(\HH 4)$ holds, then each of the above properties is equivalent to~\eqref{LforwidetildeF}.
\end{theorem}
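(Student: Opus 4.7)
The plan is to mirror the proof of Theorem \ref{teo_main}, adapting each step to account for the eikonal coupling via the relaxed family $E_\xi^\eta$. The implication $(K) \Rightarrow (K_\weak)$ is immediate. Since $\widetilde{F}\cup \widetilde{E_\xi} = \widetilde{F\cap E_\xi}$ by Proposition \ref{prop_basichetilde}, Proposition \ref{teo_main_A_L} gives $(A)\Rightarrow (L)$; moreover under $(\HH 4)$ the constant $0$ lies in $\widetilde F(X) \subset (\widetilde F \cup \widetilde{E_\xi})(X)$, and the same proposition yields the converse $(L)\Rightarrow (A)$, thereby closing the equivalence loop once $(A)$--$(K)$--$(K_\weak)$ are settled.

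For $(K_\weak) \Rightarrow (A)$ I would transfer the argument from Theorem \ref{teo_main} almost verbatim, using two observations. First, by $(\HH 1)$ together with $\xi(r)>0$ for $r<0$ (valid under both $(\xi 0)$ and $(\xi 1)$), negative constants lie in $(F\cap E_\xi)^{\str}(X)$, so Lemma \ref{lem_maxprinc} ensures the strong maximum principle for $\widetilde{(F\cap E_\xi)^0}$-subharmonics. Second, by Proposition \ref{lem_weakcomp_congradient} combined with $(\HH 3')$, Theorem \ref{localwcestrict}, and the analog of Lemma \ref{wcobstacle} with $F \cap E_\xi$ in place of $F$, the subequation $(F\cap E_\xi)^0$ satisfies full comparison. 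The contradiction that pits a putative counterexample $u \in \widetilde{(F\cap E_\xi)^0}(\overline U)$ against a weak Khas'minskii potential then goes through without change.

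For $(A) \Rightarrow (K)$, I would retrace Steps $1$--$7$ of Theorem \ref{teo_main}: perform the connected-sum modification $\cal X = X \sharp M$ along the chosen Euclidean annuli, extend $F$ to a universal Riemannian subequation with model $\mathbf{F}$ on $\cal X \setminus (X \setminus B_{3R})$ via the distinguished chart, verify the persistence of $(A)$ for $\widetilde F \cup \widetilde{E_\xi}$ on $\cal X$ through the finite maximum principle granted by $(\HH 1)$, and construct the potential as the locally uniform limit of a sequence $\{w_i\}$ solving obstacle problems with obstacles $\psi_k + \lambda_j$. \textbf{The genuinely new difficulty, and the main obstacle of the proof, is that $E_\xi$ has empty asymptotic interior}: no smooth boundary is $E_\xi$-convex at any height, so the global barrier $c\rho$ of Proposition \ref{prop_F_convex_global} needed to pin the Perron solutions at $0$ on $\partial \cal V$ fails to be $E_\xi$-subharmonic, since $|\nabla(c\rho)|$ grows linearly in $c$.

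The fix is to solve each obstacle problem inside $F \cap E_\xi^{\eta}$ for an auxiliary $0 \le \eta \in C_c(\cal X)$, supported in a fixed neighborhood $\cal U$ of $\partial \cal V$ and large enough there that $c\rho \in (F \cap E_\xi^\eta)^{\str}(\cal U)$; this is possible because the condition $|\nabla(c\rho)| < \xi(c\rho) + \eta$ is local and $c\rho$ is smooth. By hypothesis $F \cap E_\xi^\eta$ is a subequation, it satisfies weak comparison by Proposition \ref{lem_weakcomp_congradient}, and it admits the bounded strict approximation by a verbatim adaptation of the argument giving $(\HH 3')$ (since $\xi + \eta$ is still non-increasing and positive). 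The resulting Perron solution is automatically $F \cap E_\xi$-subharmonic on $\cal X \setminus \overline{\cal U}$, where $\eta \equiv 0$. Selecting a diagonal sequence with $\sup \eta_n \downarrow 0$ on the common support $\cal U$ and invoking the closedness of $E_\xi$ under locally uniform decreasing limits (item $(3)$ of Proposition \ref{prop_basicheF}), the limit $w$ lies in $F \cap E_\xi$ on the whole of $X \setminus K$. The sign, boundedness and divergence properties demanded by Definition \ref{def_khasmi} follow from Steps $5$--$7$ of the original proof unchanged, completing the construction of the Khas'minskii potential.
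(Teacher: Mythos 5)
Your overall architecture is the paper's: the equivalence loop through Proposition \ref{teo_main_A_L}, the comparison machinery (Proposition \ref{lem_weakcomp_congradient} $+$ $(\HH 3')$ $+$ Lemma \ref{wcobstacle} $+$ Theorem \ref{localwcestrict}) for $(K_\weak)\Rightarrow(A)$, and, crucially, the same key idea for $(A)\Rightarrow(K)$: since $E_\xi$ has empty asymptotic interior, restore barriers on $\partial\mathcal V$ by relaxing the eikonal to $E_\xi^{\eta}$ with $\eta$ compactly supported near $\partial\mathcal V$ (in the paper, $\eta_i\in C_c(\mathcal K)$, one per inductive step, with $H_i=F\cap E_\xi^{\eta_i}$).

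The one step that does not work as written is your final "diagonal sequence with $\sup\eta_n\downarrow 0$". First, it is not available: the barrier at stage $i$ must satisfy $\beta_c\le w_i$ on $\overline{\mathcal U}$ and $\beta_c<-i-1$ on $\partial\mathcal U$, so the constant $c$, hence $\sup_{\{\beta_c\ge -i-2\}}|\di\beta_c|$, hence the required size of $\eta$, \emph{grows} with $i$; this is precisely why the paper takes an increasing sequence $\eta_i\le\eta_{i+1}$. Second, decreasing $\eta$ would break the induction itself: in Step 6 one needs $\bar u_{j,k}=\max\{w_i-\delta,u_{j,k}\}$ to be subharmonic for the subequation used at stage $i+1$, which requires $H_i\subset H_{i+1}$, i.e.\ $\eta_i\le\eta_{i+1}$ (the paper uses this inclusion explicitly). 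Third, and most importantly, the limit $\eta\to 0$ is simply unnecessary: since every $\eta_i$ vanishes outside $\mathcal K$ and $\mathcal X\setminus\mathcal K$ is isometric to $X\setminus K$, one has $H_i=F\cap E_\xi$ on the entire region where the Khas'minskii potential is required, so the locally uniform limit $w$ of the $w_i$ is automatically $(F\cap E_\xi)$-subharmonic on $X\setminus K$ without any appeal to $E_\xi$-closedness under a vanishing relaxation. Replacing your diagonal argument by this observation (and taking the $\eta_i$ increasing, supported in $\mathcal K$) turns your proposal into the paper's proof.
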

\begin{proof}
Define $H \doteq F \cap E^\eta_\xi$. Implications $(K_\weak) \Rightarrow (A) \Rightarrow (L)$ and $(L) \Rightarrow (A)$, under assumption $(\HH 4)$, follow step by step the corresponding proofs in Theorem \ref{teo_main}. In particular, we observe that $(\HH 1)$ also implies the finite maximum principle for $\widetilde{H^0}$. To apply the comparison argument in $(K_\weak) \Rightarrow (A)$, note that Proposition \ref{lem_weakcomp_congradient} ensures the weak comparison for $H$. Then, because of $(\HH 3')$ and Lemma \ref{wcobstacle}, $H^0$ enjoys both the weak comparison and the strict approximation, hence the full comparison.\\
On the other hand, $(A) \Rightarrow (K)$ needs a further argument. As before, we fix $(K,h)$ and a smooth exhaustion $\{D_j\}$ of $X$ with $ K \subset D_1$, and we extend the manifold $X$ to $\cal X$ and the subequation $F$ on $\cal X$ in such way that $F$ is still locally jet-equivalent to $\mathbb{F}$. For $0 \le \eta \in C_{c}(\cal K)$ we define $ H(\cal X) \doteq (F\cap E_{\xi}^{\eta})(\cal X)$. The finite maximum principle holds for $\widetilde{H^0}(\cal X)$ since negative constants are strictly $H$-subharmonic. Therefore, $\widetilde{H}$ satisfies the Ahlfors property $(A)$ on~$\cal X$.

In order to construct the Khas'minskii potential for $(K,h)$, the only problem is the lack of barriers on $\partial \cal V$ for the obstacle problems producing $\{w_i\}$. To circumvent this fact, we need to enlarge, at each step $i$, the subequation $H$. In other words, having fixed $\cal V$ such that $(\HH 2)$ holds, and extending $h$ to a negative, continuous function on $\cal X \backslash \cal V$, we inductively build a sequence $\{w_i\}$ with $w_0 = 0$, and a sequence $\eta_i$ with $\eta_0 \equiv 0$, such that setting
$$
H_i = F \cap E_\xi^{\eta_i},
$$
it holds
\begin{equation}\label{conditions_w_i_coupledgrad}
\begin{array}{ll}
(o) &  \eta_i \in C_c(\cal K), \quad 0 \le \eta_i \le \eta_{i+1}; \\
(a) &  w_i \in H_{i}(\cal X\backslash \cal V), \quad w_{i} = (w_i)_* = 0 \quad \text{ on } \, \partial \cal V; \\
(b) &  w_i \geq -i \quad \text{ on } \, \cal X\backslash \cal V, \quad w_i = -i \quad \text{outside a compact set $\cal C_{i}$ containing $\cal D_{i}$;} \\
(c) &  \left( 1- 2^{-i-1}\right)h < w_{i+1} \le w_i \le 0 \quad \text{on } \, \cal X \backslash \cal K \ \ \text{and} \ \ \|w_{i+1}-w_i\|_{L^{\infty}(\cal D_{i}\backslash \cal K)} \leq \frac{\varepsilon}{2^{i}}.
\end{array}
\end{equation}
Since $\eta_i \equiv 0$ out of $\cal K$, $\cal X \backslash \cal K$ is isometric to $X \backslash K$ and $H_i = F \cap E_\xi$ on $\cal X \backslash \cal K$, we deduce $w_i \in (F\cap E_{\xi})(X\backslash K)$ for each $i$, and $\{w_i\}$ converges locally uniformly on $X\backslash K$ to a function $w$ which is the desired Khas'minskii potential.

Suppose that we have constructed $w_i, \eta_i$ as above. The induction closely follows in steps 3 to 7 in the proof of Theorem \ref{teo_main}, the only difference being that to guarantee the solvability of the obstacle problem 
\begin{equation}\label{dirichlet_teo_main_gradient}
\begin{cases}
u_{j,k} \in H_{i+1}^{g_{j,k}}(\overline{\cal D_{j}\backslash \cal V}), \quad (-u_{j,k})^* \in \widetilde{H_{i+1}^{g_{j,k}}}(\overline{\cal D_{j}\backslash \cal V}),\\
u_{j,k} = (u_{j,k})_* = 0 \qquad \mbox{on} \ \ \partial \cal V ,\\
u_{j,k} = (u_{j,k})_* = -i-1 \qquad \mbox{on} \ \ \partial\cal D_{j},
\end{cases}
\end{equation}
we need to find $\eta_{i+1} \ge \eta_i$ large enough that $\partial \cal V$ has good $H_{i+1}$-barriers. Observe that the boundary $\partial \cal D_{j}$ can be treated verbatim as in Theorem~\ref{teo_main}. The assumption $(\HH 2)$ together with Proposition \ref{prop_F_convex_global} give us, in a neighborhood $\cal U$ of $\partial \cal V$, an $F$-barrier $\beta_c$ at height $0$, whenever $c>0$ is large, satisfying
$$ \beta_c \in \overrightarrow{F}(\cal U), \ \ \text{ and } \ \ \beta_c = c\rho \ \ \text { on } \ \ \cal U. $$
Reasoning as in Theorem \ref{teo_main}, we fix $c$ large enough such that 
\begin{align*}
&\beta_c \le w \ \ \text{ on } \ \ \overline{\cal U}, \ \ \beta_c < -i-1 \ \ \text{ on } \ \ \partial \cal U \cap \cal D_{j},  \\
&\text{and } \ \ C \doteq \sup_{\{\beta_c \geq -i-2\}}\vert \di \beta_c \vert < + \infty .
\end{align*}
Once this is done, we can define $\eta_{i+1}$ to be $\eta_{i+1} = \max\{2\|\eta_i\|_{L^\infty},C\} $ on $ \overline{\cal U}$ and extend it smoothly on $\cal X$ in such a way that $\eta_{i+1} \geq \eta_i$ and $\eta_{i+1}$ vanishes outside $\cal K$. Then, setting 
\begin{equation*}
\beta \doteq
\begin{cases}
\max\{-i-1,\beta_c\} & \text{ on} \ \ \cal U , \\
-i-1 & \text{ on} \ \ \cal D_{j}\backslash (\cal V\cup \cal U),
\end{cases}
\end{equation*}
$\beta$ is $H_{i+1}$-subharmonic and belongs to Perron's class 
$$
\big\{u\in H_{i+1}^{g_{j,k}}(\overline{\cal D_{j}\backslash \cal V}) \  :   \ u_{|_{\partial\cal D_{j}}}\leq -i-1, \ \ u_{|_{\partial \cal V}}\leq 0\big\}.
$$
The existence of $u_{j,k}$ solving \eqref{dirichlet_teo_main_gradient} then follows as in Theorem \ref{teo_main}. The remaining steps to prove $(A) \Rightarrow (K)$ follow verbatim, once we observe that $H_i \subset H_{i+1}$.
\end{proof}

\begin{remark}
\emph{In a similar way, Theorem \ref{teo_main_semH1} can be stated for subequations coupled with the eikonal by assuming that the strong maximum principle holds for $\widetilde{F^0}\cup \widetilde{E_\xi}$.
}
\end{remark}

\section{How the Ahlfors property depends on \texorpdfstring{$f$ and $\xi$}{}}\label{sec_dependenceonf}
Given a subequation $F_f$ locally jet-equivalent to an example $\mathbb{F}_f$ in $(\EE 2), \ldots, (\EE 8)$ and an eikonal $E_\xi$, in this section we investigate the dependence of the Ahlfors property on the pair $(f,\xi)$. First, observe that $\mathbb{F}_f$ in $(\EE 2),\ldots,(\EE 6)$ can be written in the following general form:
\begin{equation}\label{classicoesempio}
\mathbb{F}_f \doteq \big\{\mathscr{F}(p,A) \ge f(r) \big\}, \qquad \widetilde{\mathbb F_f} \doteq \big\{ \widetilde{\mathscr{F}}(p,A) \ge \widetilde f(r) \big\}, 
\end{equation}
for suitable $\mathscr{F} : \mathbf{J^2} \ra \R$ and $f \in C(\R)$ non-decreasing, where 
$$
\widetilde{\mathscr{F}}(p,A) \doteq -\mathscr{F}(-p,-A),\qquad \widetilde f(r) \doteq -f(-r). 
$$
Examples $(\EE 7)$ and $(\EE 8)$ differ from the prototype \eqref{classicoesempio} by small topological adjustments.

Hereafter, we consider the following pair of mutually exclusive conditions:
\begin{equation*}\label{ipo_f_F1eF1p}
\begin{array}{ll}
(f1) &  f(0)=0, \quad f(r)< 0 \ \text{ for } \, r<0, \quad \text{or}\\[0.2cm]
(f1') & f(r)\equiv 0 \ \text{ on some interval } \, (-\mu,0), \ \mu>0.
\end{array}
\end{equation*}
\begin{proposition}\label{prop_equivalenceahlfors}
Let $F_f$ be locally jet-equivalent to a subequation described by \eqref{classicoesempio}, for some $\mathscr{F} : \mathbf{J^2} \ra \R$ and $f \in C(\R)$ non-decreasing. Then, the following assertions are equivalent:
\begin{itemize}
\item[$(i)$] $\widetilde{F_f}$ has the Ahlfors property for some $f$ satisfying $(f1)$;
\item[$(ii)$] $\widetilde{F_f}$ has the Ahlfors property for each $f$ satisfying $(f1)$;
\item[$(iii)$] $\widetilde{F_f}$ has the Ahlfors property whenever $f \equiv -\eps$, for each fixed constant $\eps>0$;
\end{itemize}
as well as the following ones:
\begin{itemize}
\item[$(i')$] $\widetilde{F_f}$ has the Ahlfors property for some $f$ satisfying $(f1')$;
\item[$(ii')$] $\widetilde{F_f}$ has the Ahlfors property for each $f$ satisfying $(f1')$.
\end{itemize}
\end{proposition}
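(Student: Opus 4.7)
The plan is to derive both equivalences from a single truncation lemma: given continuous non-decreasing $f_0, f_2 : \R \to \R$, a bounded $u \in \widetilde{F_{f_2}^0}(\overline U)$ violating the Ahlfors property for $\widetilde{F_{f_2}}$ (so $M := \sup_U u > \sup_{\partial U} u^+$), and any $c \in (\sup_{\partial U} u^+, M)$ satisfying
\[ \widetilde{f_0}(s - c) \le \widetilde{f_2}(s) \qquad \text{for all } s \in (c, M], \]
the truncation $v := \max(u - c, 0)$ itself lies in $\widetilde{F_{f_0}^0}(\overline U)$ and violates the Ahlfors property for $\widetilde{F_{f_0}}$. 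I would prove this by taking a test function $\phi$ for $v$ at a point $x_0$ with $v(x_0) > 0$, noting that $\phi + c$ is then a test function for $u$ at $x_0$ (since $v \ge u - c$ globally and $v(x_0) = u(x_0) - c$), and translating the subequation inequality for $u$ back to one for $v$ through the local jet-equivalence to \eqref{classicoesempio}: the bundle maps only act on $(p, A)$, so replacing $\phi$ by $\phi + c$ only shifts the $r$-coordinate of the $2$-jet by $c$, and the displayed inequality absorbs exactly that shift. The remaining properties are immediate: upper semicontinuity of $u$ combined with $c > \sup_{\partial U} u^+$ force $u < c$ in a neighborhood of $\partial U$, so $v$ vanishes there while $\sup_U v = M - c > 0$.

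With this tool, the equivalence $(i) \Leftrightarrow (ii) \Leftrightarrow (iii)$ reduces to a choice of parameters. For $(i) \Rightarrow (ii)$, reasoning by contrapositive, from a violator $u$ of $\widetilde{F_{f_2}}$-Ahlfors (some $f_2 \in (f1)$) and any target $f_0 \in (f1)$, in particular the one from $(i)$, I pick $c$ close enough to $M$ that $\widetilde{f_0}(M - c) \le \widetilde{f_2}(c)$; this is possible because $\widetilde{f_0}(M - c) \to 0$ while $\widetilde{f_2}(c) \to \widetilde{f_2}(M) > 0$ as $c \uparrow M$. Monotonicity of both functions promotes this to the pointwise hypothesis of the lemma on $(c, M]$, producing a violator for $\widetilde{F_{f_0}}$-Ahlfors and contradicting $(i)$. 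The implication $(iii) \Rightarrow (ii)$ is likewise immediate by applying the lemma with $\widetilde{f_0} \equiv \widetilde{f_2}(c)$ constant. For the converse $(ii) \Rightarrow (iii)$, no truncation is needed: given a violator $u$ of $\widetilde{F_{-\eps}}$-Ahlfors with $\sup u = M$, any $f \in (f1)$ with $\widetilde{f}(M) \le \eps$ (for instance $\widetilde{f}(r) = \eps r/M$, extended to be constant beyond $M$) makes $u$ itself an $\widetilde{F_f^0}$-subharmonic violator of $\widetilde{F_f}$-Ahlfors, since $\widetilde{\mathscr{F}} \ge \eps \ge \widetilde{f}(u(x_0))$ at every test point with $u(x_0) > 0$.

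The equivalence $(i') \Leftrightarrow (ii')$ is handled identically. When $f_0 \in (f1')$, the dual $\widetilde{f_0}$ vanishes on an interval $[0, \mu_0)$ with $\mu_0 > 0$, and choosing
\[ c \in \bigl(\max(\sup_{\partial U} u^+,\, M - \mu_0),\, M\bigr), \]
which is a nonempty interval since $M > \sup_{\partial U} u^+$ and $\mu_0 > 0$, forces $\widetilde{f_0}(s - c) = 0 \le \widetilde{f_2}(s)$ for every $s \in (c, M]$, and the truncation lemma applies verbatim.

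The single delicate step is identifying the truncation $v = \max(u - c, 0)$ as the right construction and verifying that the jet-equivalence is transparent to the translation $\phi \leadsto \phi + c$; everything else — the nonemptiness of the interval from which $c$ is drawn, the choice of the auxiliary $f$, and the promotion of an inequality at one point to the required inequality on $(c, M]$ via monotonicity — is bookkeeping.
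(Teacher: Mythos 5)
Your proposal is correct and follows essentially the same route as the paper: truncate the violating function at a level $c$ close to its supremum, observe that the local jet-equivalence preserves the $r$-coordinate so that $\phi \mapsto \phi+c$ only shifts $r$, and choose $c$ so that monotonicity transfers the inequality (your condition $\widetilde{f_0}(M-c)\le \widetilde{f_2}(c)$ is exactly the paper's $\min_{[c,u_\infty]} g \ge \max_{[0,u_\infty-c]}\bar g$). Your treatment of $(iii)$ and of the $(f1')$ case (which the paper leaves to the reader) is the same truncation argument, so there is no gap.
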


\begin{remark}
\emph{The theorem still holds when the universal subequation is $(\EE 7)$ or $(\EE 8)$, with minor changes.
}
\end{remark}

\begin{proof}
We begin supposing that $f$ satisfies $(f1)$. For notational convenience, we set $g(t) \doteq \widetilde{f}(t)$ and write $\widetilde{F}_g$ instead of $\widetilde{F_f}$: 
$$
\widetilde{\mathbb F}_g = \big\{ \widetilde{\mathscr{F}}(p,A) \ge g(r)\big\}, \qquad \text{and} \qquad (g1) \  :  \ g(0)=0, \ g>0 \ \text{ on } \, \R^+.
$$
Observe that $(ii) \Rightarrow (i)$ is obvious.\\[0.1cm]
$(i) \Rightarrow (ii)$. Let $\bar g$ be a function satisfying $(g1)$ for which $\widetilde{F}_{\bar g}$ has the Ahlfors property. Suppose, by contradiction, that there exist an open set $U$ with non-empty boundary and a bounded function $u \in H_g(\overline{U})$, $H_g \doteq \widetilde{F}_g \cup \{r \le 0\}$, which is positive somewhere and satisfies
$$
\sup_{\partial U} u^+ < \sup_{\overline{U}} u.
$$ 
Using $(g1)$ and the continuity of $g,\bar g$, we can fix $c> \sup_{\partial U} u^+$ close enough to $u_\infty \doteq \sup_{\overline{U}} u$ such that
\begin{equation}\label{facil}
\min_{[c,u_\infty]} g \ge \max_{[0,u_\infty-c]} \bar g.
\end{equation}
Let $x$ satisfy $u(x)>c$, and let $\Psi$ be a local jet-equivalence around $x$ satisfying $\Psi(F_g) = \mathbb{F}_g$. Pick a test function $\phi$ for $u-c$ and observe that, since $\Psi$ is a jet-equivalence, if $(r,p,A) \doteq \Psi(J^2_x\phi)$ then $(r+c,p,A) \equiv \Psi\big(J^2_x(\phi+c)\big)$. Using that $\phi+c$ is a test for $u$, from \eqref{facil} we obtain
$$
\widetilde{\mathscr{F}}\big(p, A\big) \ge g\big(r+c\big) \ge \bar g(r),
$$
thus $J^2_x\phi \in \Psi^{-1}(\mathbb{F}_{\bar g}) = F_{\bar g}$. Therefore, $w \doteq \max\{u-c,0\}$ is bounded above, non-constant and satisfies $w \in H_{\bar g}(\overline{U})$, and
$$
0 = \sup_{\partial U} w^+ < \sup_{\overline{U}} w,
$$
contradicting the Ahlfors property for $\widetilde{F}_{\bar g}$.\\[0.1cm]
$(ii) \Rightarrow (iii)$.\\ 
If $(iii)$ is not satisfied for some $U\subset X$, $\eps>0$ and $u \in H_{\eps}(\overline{U})$, it is enough to choose $g \in C(\R)$ of type $(g1)$ such that $g \le \eps$ on $\R$. It is immediate to see that $u \in H_g(\overline{U})$, contradicting $(ii)$.\\[0.1cm]
$(iii) \Rightarrow (ii)$.\\
If, by contradiction, there exist $g \in C(\R)$ of type $(g1)$, $U \subset X$ and $u \in H_g(\overline{U})$ contradicting $(A)$, fix $c \in (\sup_{\partial U} u^+, \sup_U u)$ and let $\eps \in (0, g(c)]$. As above, a direct check shows that $w \doteq \max\{u-c,0\}$ satisfies 
$$
w \in H_{\eps}(\overline{U}), \qquad 0 = \sup_{\partial U} w^+ < \sup_{\overline{U}} w,  
$$
contradicting $(iii)$.\\[0.1cm]
When $f$ is of type $(f1')$, the proof of $(i') \Leftrightarrow (ii')$ needs just minor changes, and is left to the reader.
\end{proof}

In an entirely analogous way, one can prove the following for the subequation $F_f \cap E_\xi$.

\begin{proposition}\label{prop_equivalenceahlfors_withgradient}
Let $F_f$ be locally jet-equivalent to a subequation described by \eqref{classicoesempio}, for some $\mathscr{F} : \mathbf{J^2} \ra \R$ and $f \in C(\R)$ non-decreasing. Consider
$$
E_\xi = \overline{\big\{ |p| < \xi(r)\big\}}, \quad \text{ for $\xi$ satisfying $(\xi 1)$ in \eqref{def_f1xi1}.}
$$
Then, the following assertions are equivalent:
\begin{itemize}
\item[$(i)$] $\widetilde{F_f}\cup \widetilde{E_\xi}$ has the Ahlfors property for some $(f,\xi)$ satisfying $(f1+\xi 1)$;
\item[$(ii)$] $\widetilde{F_f} \cup \widetilde{E_\xi}$ has the Ahlfors property for each $(f,\xi)$ satisfying $(f1+\xi 1)$;
\item[$(iii)$] $\widetilde{F_f}\cup \widetilde{E_\xi}$ has the Ahlfors property for each $f \equiv -\eps$, $\xi \equiv \eps$ and each fixed $\eps>0$.
\end{itemize}
Furthermore, the following conditions are equivalent:
\begin{itemize}
\item[$(i)$] $\widetilde{F_f}\cup \widetilde{E_\xi}$ has the Ahlfors property for some $(f,\xi)$ satisfying $(f1'+\xi 1)$;
\item[$(ii)$] $\widetilde{F_f} \cup \widetilde{E_\xi}$ has the Ahlfors property for each $(f,\xi)$ satisfying $(f1'+\xi 1)$.
\end{itemize}
\end{proposition}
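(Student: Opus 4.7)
The plan is to carry out the proof in parallel with that of Proposition \ref{prop_equivalenceahlfors}, accommodating the union with $\widetilde{E_\xi}$ by introducing a simultaneous threshold for both the Hessian-type and the gradient-type parts of the subequation. Writing $g(r) \doteq -f(-r)$ and $\eta(r) \doteq \xi(-r)$, under $(f1+\xi 1)$ both $g,\eta$ are continuous, non-decreasing, non-negative, vanish at $0$ and are strictly positive on $(0,+\infty)$, and the Ahlfors property concerns
$$
H_{f,\xi} \doteq \widetilde{F_f} \cup \widetilde{E_\xi} \cup \{r \le 0\}.
$$
The implication $(ii)\Rightarrow(i)$ is immediate.

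For $(i)\Rightarrow(ii)$ I would take, by contradiction, a counterexample $u$ to Ahlfors for $(f,\xi)$ with $u_\infty \doteq \sup_{\overline U} u > \sup_{\partial U} u^+$, and choose $c \in (\sup_{\partial U} u^+, u_\infty)$ so close to $u_\infty$ that
$$
g(c) \ge \bar g(u_\infty-c) \qquad \text{and} \qquad \eta(c) \ge \bar\eta(u_\infty-c),
$$
which is possible because $g(c),\eta(c)>0$ while $\bar g,\bar\eta$ vanish at $0$ and are continuous. The truncation $w \doteq \max\{u-c,0\}$ is then bounded, USC, positive somewhere, with $\sup_{\partial U} w^+=0$; at any $x$ with $w(x)>0$ and any test $\phi$ for $w$ at $x$, the shifted $\phi+c$ is a test for $u$ at $x$, and since the local jet-equivalence to \eqref{classicoesempio} preserves the $r$-coordinate, the Euclidean jet $(r+c,p,A)$ of $\phi+c$ lies in $\mathbb{F}_{f,\xi}$. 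Therefore either $\widetilde{\mathscr{F}}(p,A)\ge g(r+c)\ge g(c)\ge \bar g(r)$ or $|p|\ge \eta(r+c)\ge \eta(c)\ge \bar\eta(r)$, so the jet $(r,p,A)$ of $\phi$ lies in $\mathbb{F}_{\bar f,\bar\xi}$, showing $w \in H_{\bar f,\bar\xi}(\overline U)$ and contradicting $(i)$.

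The equivalence $(ii)\Leftrightarrow(iii)$ is then a direct monotonicity argument on the subequation parameters. For $(ii)\Rightarrow(iii)$, a counterexample $u$ for the constants $f\equiv -\eps$, $\xi\equiv\eps$ automatically lies in $H_{\bar f,\bar\xi}(\overline U)$ for any $(\bar f,\bar\xi)$ of type $(f1+\xi 1)$ with $\bar g,\bar\eta \le \eps$ on the finite range of $u$, and such functions are easy to construct. For $(iii)\Rightarrow(ii)$, the truncation $w$ above satisfies $w \in H_{-\eps,\eps}(\overline U)$ for any $\eps \in \bigl(0,\min\{g(c),\eta(c)\}\bigr]$ by the same test-function calculation. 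The block $(i')\Leftrightarrow(ii')$ follows by the same truncation, noting that $(f1')$ gives $\bar g(0)=0$ exactly as $(f1)$ did.

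The only delicate point I anticipate is the simultaneous control of the two branches by the single parameter $c$: the test $\phi$ at a given $x$ may make the jet of $\phi+c$ belong to $\widetilde{F_f}$ at some points and to $\widetilde{E_\xi}$ at others, so both inequalities above must hold for the same $c$. This is what forces the paired normalization $f(0)=\xi(0)=0$, and it is precisely here that the combined condition $(f1+\xi 1)$ is the natural hypothesis.
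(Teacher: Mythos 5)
Your argument is correct and is essentially the paper's own proof: the paper likewise reduces to the truncation argument of Proposition \ref{prop_equivalenceahlfors}, imposing on the single threshold $c$ exactly your two simultaneous conditions (its displayed $\min/\max$ requirements coincide, by monotonicity, with $g(c)\ge\bar g(u_\infty-c)$ and $\eta(c)\ge\bar\eta(u_\infty-c)$), the second being achievable precisely because $\bar\xi(0)=0$. One small precision for the primed block: there $g(c)$ may itself vanish, so what you actually need is that $(f1')$ forces $\bar g\equiv 0$ on a whole interval $[0,\bar\mu)$ rather than merely $\bar g(0)=0$, and choosing $c$ with $u_\infty-c<\bar\mu$ (together with $\eta(c)\ge\bar\eta(u_\infty-c)$) then makes the same truncation go through.
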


\begin{proof}[Sketch of Proof]
In $(i) \Rightarrow (ii)$, having assumed the Ahlfors property for some $(\bar g,\bar \xi)$ and having supposed its failure for a pair $(g,\xi)$, the only difference is that $c< u_\infty$ is required to satisfy, instead of \eqref{facil}, 
$$
\min_{[c,u_\infty]} g \ge \max_{[0,u_\infty-c]} \bar g, \qquad \min_{[-u_\infty, -c]} \xi \ge \max_{[-u_\infty+c,0]}\bar \xi.
$$
The second condition is granted by $\bar \xi(0)=0$ in $(\xi 1)$. The rest follows verbatim. 
\end{proof}
\section{The main theorem for examples \texorpdfstring{$(\EE 2), \ldots, (\EE 8)$}{}}\label{sec_main_examples}
Here, we combine Theorem \ref{teo_main}, Theorem \ref{teo_main_withgradient} and Propositions \ref{prop_equivalenceahlfors}, \ref{prop_equivalenceahlfors_withgradient} in Section \ref{sec_dependenceonf} in the specific case of Examples $(\EE 2), \ldots, (\EE 8)$. We focus on the case when $(f,\xi)$, in the definition of $(\EE 1), \ldots, (\EE 8)$, satisfies $(f1 + \xi 1)$ in~\eqref{def_f1xi1}. 

For $0 \le \eta \in C(X)$, define $E_\xi^\eta$ as in \eqref{def_exieta}, and let $F_f$ be locally jet-equi\-valent to any of examples $(\EE 2), \ldots, (\EE 6)$ or be the universal Riemannian subequations in $(\EE 7)$ or in $(\EE 8)$. We know from Lemma \ref{lem_FcapE_subequation} that $F_f \cap E_\xi^\eta$ is a subequation. The independence of the Ahlfors property from the specific pair $(f,\xi)$ satisfying $(f1 + \xi 1)$ allows us to ``play" with $f,\xi$ to prove the equivalence stated in Theorem \ref{cor_bonito}: in particular, it is enough to address the comparison properties of $F_f$ and $F_f \cap E_\xi^\eta$ under the assumptions of the next proposition, whose proof is reported in the Appendix \ref{appendix_1}.
\begin{proposition}\label{comparison_examples}
Fix $(f,\xi)$ satisfying $(f1 + \xi 1)$, and assume that
$$
\text{$f$ is strictly increasing on $\R$ $\quad$ and $\quad$ $\xi$ is strictly decreasing on $\R$}. 
$$
\begin{itemize}
\item[$i)$] If $F_f \subset J^2(X)$ is locally jet-equivalent to one of the examples in $(\EE 2), \ldots, (\EE 6)$ via locally Lipschitz bundle maps, then the bounded comparison holds both for $F_f$ and for $F_f \cap E_\xi$. 
\item[$ii)$] If $F_f$ is the universal, quasilinear subequation in $(\EE 7)$ with eigenvalues $\{\lambda_j(t)\}$ in \eqref{ipo_a}, then the bounded comparison holds in the following cases: 
\begin{itemize}
\item[-] for $F_f$, provided that $\lambda_j(t) \in L^\infty(\R_0^+)$ for $j \in \{1,2\}$; 
\item[-] for $F_f \cap E_\xi$, provided that $\lambda_j(t) \in L^\infty_{\mathrm{loc}}(\R_0^+)$ for $j \in \{1,2\}$. 
\end{itemize}
\item[$iii)$] If $F_f$ is the universal subequation in $(\EE 8)$, then the bounded comparison holds both for $F_f$ and for $F_f \cap E_\xi$. 
\end{itemize}
Furthermore, for each of $i), \ldots, iii)$, comparison also holds for the obstacle subequations $F_f^0$ and $F_f^0 \cap E_\xi$.
\end{proposition}

We are ready to prove Theorem \ref{cor_bonito}, which we rewrite below for convenience.

\begin{theorem}\label{teo_main_esempi}
Fix $f$ satisfying $(f1)$ and $\xi$ satisfying $(\xi 1)$. 
\begin{itemize}
\item[$i)$] If $F_f \subset J^2(X)$ is locally jet-equivalent to one of the examples in $(\EE 2), \ldots, (\EE 6)$ via locally Lipschitz bundle maps, then AK-duality holds for $F_f$ and for $F_f \cap E_\xi$. 
\item[$ii)$] If $F_f$ is the universal, quasilinear subequation in $(\EE 7)$, with eigenvalues $\{\lambda_j(t)\}$ in \eqref{ipo_a}, then 
\begin{itemize}
\item[-] AK-duality holds for $F_f$ provided that $\lambda_j(t) \!\in\! L^\infty(\R_0^+)$ for $j \!\in\! \{1,2\}$; 
\item[-] AK-duality holds for $F_f \cap E_\xi$ provided that $\lambda_j(t) \in L^\infty_{\mathrm{loc}}(\R_0^+)$ for $j \in \{1,2\}$. 
\end{itemize}
\item[$iii)$] If $F_f$ is the universal subequation in $(\EE 8)$, then AK-duality holds for $F_f$ and for $F_f \cap E_\xi$. 
\end{itemize}
\end{theorem}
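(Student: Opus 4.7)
The strategy is to reduce Theorem~\ref{teo_main_esempi} to the hypotheses of the general duality results, Theorems~\ref{teo_main} and~\ref{teo_main_withgradient}. First, by Propositions~\ref{prop_equivalenceahlfors} and~\ref{prop_equivalenceahlfors_withgradient}, the Ahlfors property for $\widetilde{F_f}$ (resp.\ $\widetilde{F_f}\cup\widetilde{E_\xi}$) is insensitive to the specific pair $(f,\xi)$ satisfying $(f1+\xi 1)$. Since the Khas'minskii and weak Khas'minskii properties are monotone in $f$ and $\xi$ in the obvious way, the same independence transfers to them through the equivalences we are about to prove. Consequently, to establish duality it suffices to choose a convenient pair $(f,\xi)$ with $f$ strictly increasing on $\R$ and $\xi$ strictly decreasing on $\R$, which is exactly the setting of Proposition~\ref{comparison_examples} and Theorem~\ref{thm_comparison_examples}/\ref{thm_comparison_examples_withgradient}.

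With this reduction, I verify the structural hypotheses $(\HH 1),(\HH 2)$ needed by the general theorems. Property $(\HH 1)$ is immediate from $(f1)$: for any $c>0$ one has $f(-c)<0=\mathscr{F}(0,0)$, so the jet $(-c,0,0)$ lies in $\inte\mathbb{F}_f$. Property $(\HH 2)$ is delivered by Proposition~\ref{prop_condition_F_convex}, whose hypothesis $f(r)\le 0$ for $r\le 0$ is implied by $(f1)$. (Property $(\HH 4)$ is not needed for the forward duality, but holds trivially since $\widetilde{\mathscr{F}}(0,0)=0=\widetilde f(0)$.)

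The decisive step is securing the comparison ingredient which underlies $(\HH 3)$ in Theorem~\ref{teo_main} and $(\HH 3')$ in Theorem~\ref{teo_main_withgradient}. In case $i)$, each model $\mathscr{F}$ from $(\EE 2),\ldots,(\EE 6)$ is uniformly continuous in the sense of Definition~\ref{def_unifcontinuous} by Proposition~\ref{prop_unifcontinuous}, so Theorem~\ref{thm_comparison_examples} (resp.\ Theorem~\ref{thm_comparison_examples_withgradient}) provides the bounded strict approximation for $F_f$ (resp.\ $F_f\cap E_\xi$); combined with the weak comparison from Theorem~\ref{teo_importante!!} (resp.\ Proposition~\ref{lem_weakcomp_congradient}) and Lemma~\ref{wcobstacle}, Theorem~\ref{localwcestrict} yields the bounded comparison for both $F_f^0$ and $F_f^0\cap E_\xi$. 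In cases $ii)$ and $iii)$ strict approximation fails in general, but the last sentence of Proposition~\ref{comparison_examples} supplies directly the bounded comparison for $F_f^0$ and $F_f^0\cap E_\xi$. Inspection of the proofs of Theorems~\ref{teo_main} and~\ref{teo_main_withgradient} confirms that $(\HH 3)$ and $(\HH 3')$ are invoked \emph{only} to obtain this comparison (it is used in the implication $(K_\weak)\Rightarrow(A)$ and to solve the obstacle problem in the inductive construction for $(A)\Rightarrow(K)$), so the substitution is legitimate.

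Once comparison is in hand, direct application of Theorem~\ref{teo_main} to $F_f$ and of Theorem~\ref{teo_main_withgradient} to $F_f\cap E_\xi$ closes all three cases. The genuine difficulty lies in Proposition~\ref{comparison_examples}: for the quasilinear $(\EE 7)$ and the infinity Laplacian $(\EE 8)$ on a manifold of possibly negative curvature, the Euclidean theorem on sums is no longer available, and one has to resort to the Riemannian version of Azagra-Ferrera-Sanz. This tool requires a $(2\flat)$ uniform-continuity condition on the nonlinearity, which is the source of the $L^\infty$ (or $L^\infty_{\mathrm{loc}}$) restrictions on $\lambda_1,\lambda_2$ in $ii)$; for $(\EE 8)$ the comparison is obtained instead from the metric-cone comparison of Armstrong-Smart, whose purely metric nature carries over to Riemannian manifolds once suitable local smoothness of distance squared is secured. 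These technical points are precisely what is verified in Appendix~\ref{appendix_1}.
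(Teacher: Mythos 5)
Your treatment of the strictly monotone case is exactly the paper's: verify $(\HH 1),(\HH 2)$, observe that $(\HH 3)$/$(\HH 3')$ enter only through comparison for the obstacle subequation, and replace them by the bounded comparison of Proposition \ref{comparison_examples} (via strict approximation for $(\EE 2),\ldots,(\EE 6)$, via the Riemannian theorem on sums for $(\EE 7),(\EE 8)$), then apply Theorems \ref{teo_main} and \ref{teo_main_withgradient}. The gap is in your opening reduction. You claim that, because the Ahlfors property is $(f,\xi)$-independent and the Khas'minskii properties are ``monotone in $f$ and $\xi$'', it suffices to prove duality for one strictly monotone pair. But monotonicity only gives one-sided implications: if $f_1\le f_2$ then $F_{f_2}\subset F_{f_1}$, so Khas'minskii potentials transfer only downwards in $f$. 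Thus for a general $f$ of type $(f1)$ the implication $(K_\weak)\Rightarrow(A)$ does follow from your reduction (pass to a strictly increasing minorant such as $\bar f(r)=f(r)+r$, use duality for $\bar f$, then Proposition \ref{prop_equivalenceahlfors}), but the implication $(A)\Rightarrow(K)$ for $F_f$ does not: it would require a \emph{strictly increasing majorant} $\hat f\ge f$ on $(-\infty,0]$ of class $(f1)$ (and, for the eikonal, a strictly decreasing positive minorant $\hat\xi\le\xi$ of class $(\xi 1)$), so that $F_{\hat f}\subset F_f$ on non-positive jets. Such a majorant exists — e.g. $\hat f(r)=\frac{1}{|r|}\int_r^0\max\{f(u),u\}\,du$ for $r<0$, $\hat f(0)=0$, is continuous, strictly increasing and $\ge f$ — but this is not ``obvious'' monotonicity, and you neither state the need for it nor construct it; as written, the sentence ``the same independence transfers to them through the equivalences we are about to prove'' is doing unacknowledged work.

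The paper avoids this issue by an asymmetric route: for the given, possibly non-strictly-increasing $f$, the implications $(A)\Rightarrow(K)\Rightarrow(K_\weak)$ are obtained by running the proof of Theorem \ref{teo_main} directly, since $(A)\Rightarrow(K)$ uses only $(\HH 1),(\HH 2)$ together with the weak comparison for $\widetilde F$ coming from Theorem \ref{teo_importante!!} (the obstacle problem in Steps 3--7 relies on items 1)--3) of Theorem \ref{obstaclethm}, not on strict approximation); only $(K_\weak)\Rightarrow(A)$ needs comparison, and there one detours through the strictly increasing minorant and Propositions \ref{prop_equivalenceahlfors}, \ref{prop_equivalenceahlfors_withgradient}. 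Note in this respect that your parenthetical claim that $(\HH 3)$ is also invoked ``to solve the obstacle problem in the inductive construction for $(A)\Rightarrow(K)$'' misreads the proof: correcting it is precisely what makes this simpler route available. To repair your argument, either adopt the paper's asymmetric scheme, or supply the majorant/minorant construction sketched above and carry out the transfer of the Khas'minskii property explicitly.
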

\begin{proof}
We first prove AK-duality for $F_f$. As seen at the beginning of Section~\ref{sec_teoprinci}, since $f$ satisfies $(f1)$ then assumptions $(\HH1), (\HH2)$ and $(\HH4)$ hold when $F_f$ is one of examples $(\EE 2), \ldots, (\EE 8)$. Following the proof of Theorem \ref{teo_main}, we therefore get the implications $(A) \Leftrightarrow (L)$, $(A) \Rightarrow (K) \Rightarrow (K_{w})$. In order to obtain the last implication $(K_w) \Rightarrow (A)$ we need comparison for $F_f^0$. When $f$ is strictly increasing, comparison is given for all examples $(\EE 2), \ldots, (\EE 8)$ by Proposition \ref{comparison_examples}. Therefore, applying Theorem \ref{teo_main} we deduce the desired duality. 
%
Now, for general $f$ satisfying $(f1)$, we choose $\bar{f}$ strictly increasing satisfying 
$$
\bar{f} \le f \ \ \text{ on } \ \ (-\infty,0) \ \ \text{and} \ \ \bar{f}(0)=0.
$$
Clearly, the weak Khas'minskii property for $F_f$ implies that for $F_{\bar{f}}$, which by the first part of the proof ensures the Ahlfors property for $\widetilde{F_{\bar{f}}}$. The Ahlfors property for $\widetilde{F_f}$ now follows by Proposition \ref{prop_equivalenceahlfors}.

Duality for $F_f \cap E_{\xi}^{\eta}$ follows the same path, we just observe that Proposition \ref{comparison_examples} holds when $f$ is strictly increasing and $\xi$ is strictly decreasing. The conclusion for general $(f,\xi)$ follows by using Proposition \ref{prop_equivalenceahlfors_withgradient}.
\end{proof}

\section{Immersions and submersions}\label{sec_immesub}

Let $\sigma : X^m \ra Y^n$ be a smooth map. Given a subequation $F \subset J^2(Y)$, we have an induced subset 
$$
H \doteq \overline{\sigma^*F}, \quad \text{where} \quad (\sigma^*F)_x = \Big\{ J^2_x( \phi \circ \sigma) \  :  \ J^2_{\sigma(x)} \phi \in F\Big\}. 
$$
Clearly, $H$ satisfies\footnote{The topological condition is more delicate, but for the present section we do not need it.} $(P)+(N)$. We investigate the following question: 
\begin{quote}
\emph{if $\widetilde{F}$ satisfies the Ahlfors property on $Y$, is it true that so does $\widetilde{H}$ on $X$?}
\end{quote}
In many instances it happens that $H \equiv J^2(X)$ and the problem is uninteresting. However, for $F$ belonging to some relevant classes the induced subset $H$ is non-trivial, and in fact contained in a subequation in the same class as $F$. This is so if $\sigma$ is an isometric immersion or a Riemannian submersion, and when the subequation is $F_f \cap E_\xi$, with $F_f$ as in examples $(\EE 3), (\EE 5)$ (with the restriction $k \le m$) or their complex analogues in $(\EE 6)$.


The investigation of the question above is considerably simpler if we make use of AK-duality and study when the (weak) Khas'minskii property is preserved for the induced subequation. The sought is then achieved by transplanting the Khas'minskii potentials to submanifolds, respectively to total spaces of Riemannian submersions, following an idea in \cite{bessapiccione, brandaooliveira}. 

\subsection*{Immersions}

Suppose that $\sigma : X^m \ra Y^n$ is an isometric immersion, let $\nabla, \bar \nabla$ denote the Levi-Civita connections of $X$ and $Y$, and let $\II$ be the second fundamental form. Our first result is the following

\begin{theorem}\label{teo_immersions}
Let $\sigma : X^m \ra Y^n$ be a proper isometric immersion with bounded second fundamental form, and let $F_f$ be one of the universal subequations in examples $(\EE 3)$, $(\EE 5)$ (with $k \le m$) or their complex analogues in $(\EE 6)$. Assume that $f$ satisfies $(f1)$, and let $E_\xi$ with $\xi$ satisfying $(\xi 1)$. Then,  
\begin{align*}
&\text{$\widetilde{F_f} \cup \widetilde{E_\xi}$ has the Ahlfors property on $Y$}\\
&\Longrightarrow \text{$\widetilde{F_f} \cup \widetilde{E_\xi}$ has the Ahlfors property on $X$}.
\end{align*}
\end{theorem}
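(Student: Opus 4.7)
Plan. By Theorem~\ref{teo_main_esempi}, the Ahlfors property for $\widetilde{F_f}\cup\widetilde{E_\xi}$ on any ambient manifold is equivalent to the Khas'minskii property for $F_f\cap E_\xi$ there, and by Proposition~\ref{prop_equivalenceahlfors_withgradient} both are independent of the particular choice of $(f,\xi)$ of type $(f1+\xi 1)$. Therefore it suffices to show that, given any pair $(K,h)$ on $X$ and any target $(f,\xi)$ of type $(f1+\xi 1)$, one can construct a Khas'minskii potential on $X\setminus K$ for $F_f\cap E_\xi$, starting from potentials on $Y$ for a pair $(\hat f,\hat\xi)$ chosen on $Y$ so as to absorb the error that the second fundamental form produces under pullback.

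Fix $C\ge \sup_X|\II|$ and, on $r\le 0$, define
$$
\hat f(r)=\tfrac{1}{2}f(r), \qquad \hat\xi(r)=\min\bigl\{\xi(r),\, -f(r)/(2kC)\bigr\},
$$
extending to $r>0$ so as to preserve $(f1),(\xi 1)$. By hypothesis, $F_{\hat f}\cap E_{\hat\xi}$ has the Khas'minskii property on $Y$. Using properness of $\sigma$, choose a smooth relatively compact open set $\bar K\subset Y$ with $\sigma(K)\Subset \bar K$, and construct a continuous $\bar h: Y\setminus\bar K\to(-\infty,0)$ with $\bar h\to -\infty$ at infinity in $Y$ and $\bar h\circ\sigma\ge h$ on $X\setminus K$: such a function exists because properness forces $h$ to tend to $-\infty$ along any sequence in $X$ whose $\sigma$-image escapes compacts of $Y$, so we can define $\bar h$ on the shells of any exhaustion of $Y$ as a continuous interpolation between slightly-above-max values of $h\circ\sigma^{-1}$ on consecutive shells. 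Let $\bar w$ be a corresponding Khas'minskii potential and set $u:=\bar w\circ\sigma$ on $X\setminus K$. Then $u\in USC$, $h\le u\le 0$, and $u(x)\to -\infty$ as $x\to\infty$ by properness.

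The core step is to verify $u\in F_f\cap E_\xi(X\setminus K)$ in the viscosity sense. Given a test $\phi$ for $u$ at $x_0$, set $y_0=\sigma(x_0)$ and use Fermi coordinates in $Y$ along $\sigma(X)$ near $y_0$ to define, for $\Lambda>0$,
$$
\Phi_\Lambda(y)=\tilde\phi(y)+\Lambda\,\mathrm{dist}(y,\sigma(X))^2,
$$
where $\tilde\phi$ is $\phi$ transported to be constant in the normal directions. A standard penalization/semicontinuous perturbation argument (replace $\phi$ first by a strict test $\phi+\epsilon\varrho_{x_0}^2$, verify the desired $2$-jet inequalities for $\Phi_\Lambda$ when $\Lambda$ is large, then let $\epsilon\downarrow 0$) shows that $\Phi_\Lambda$ is a test for $\bar w$ at $y_0$, so $J^2_{y_0}\Phi_\Lambda\in F_{\hat f}\cap E_{\hat\xi}$. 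The submanifold Hessian identity
$$
\nabla \di\phi_{x_0}(v,v)=\bar\nabla \di\Phi_\Lambda(\sigma_* v,\sigma_* v)+\di\Phi_\Lambda(\II(v,v)),\qquad v\in T_{x_0}X,
$$
together with $|\di\Phi_\Lambda|_{y_0}\le\hat\xi(u(x_0))$ and $|\II|\le C$, yields for every orthonormal $\{e_i\}_{i=1}^k\subset T_{x_0}X$ the bound
$$
\sum_{i=1}^k \nabla \di\phi(e_i,e_i)\ge \sum_{i=1}^k \bar\nabla \di\Phi_\Lambda(\sigma_* e_i,\sigma_* e_i)-kC\hat\xi(u(x_0)).
$$
In case $(\EE 5)$ the right-hand sum is $\ge \hat f(u(x_0))$ by the characterization $\lambda_1+\cdots+\lambda_k=\inf_\pi \mathrm{tr}(\cdot|_\pi)$ in $T_{y_0}Y$, applied with $\pi=\sigma_*(\mathrm{span}\{e_i\})$, and the choice of $(\hat f,\hat\xi)$ gives $\hat f(u)-kC\hat\xi(u)\ge f(u)$; taking the infimum over $\{e_i\}$ delivers $u\in F_f$. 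In case $(\EE 3)$ one uses the dual min-max $\lambda_k(A)=\inf_{\dim V=k}\max_{v\in V}A(v,v)$ and, for a fixed $V'\subset T_{x_0}X$, selects $v'\in V'$ whose image $\sigma_* v'$ realizes the max of $\bar\nabla\di\Phi_\Lambda$ on $\sigma_*(V')$. The bound $|\di\phi_{x_0}|\le|\di\Phi_\Lambda|_{y_0}\le\hat\xi(u(x_0))\le\xi(u(x_0))$ gives the $E_\xi$-subharmonicity. The complex analogues in $(\EE 6)$ follow by the same scheme after replacing real $k$-planes with complex $k$-planes (which requires $\sigma$ to respect the ambient complex structures).

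Main obstacle. The technical heart is the viscosity test-function construction: one must ensure that the normal penalization $\Lambda\,\mathrm{dist}(\cdot,\sigma(X))^2$ dominates the upper-semicontinuous fluctuation of $\bar w$ off $\sigma(X)$ uniformly in a small $Y$-neighborhood of $y_0$, and that the resulting $2$-jet inequalities pass to the limit as $\Lambda\to\infty$ and $\epsilon\downarrow 0$. This is the step where the boundedness of $|\II|$, the codimension $k\le m$, and the local embeddedness provided by properness of $\sigma$ enter essentially: they allow the Hessian correction to be controlled by a gradient-dependent term that the eikonal factor $E_{\hat\xi}$ can absorb.
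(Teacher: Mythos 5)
Your outer strategy is the same as the paper's: dualize on $Y$, pull back Khas'minskii potentials through $\sigma$ (properness giving the exhaustion), absorb the second-fundamental-form error into the freedom in $(f,\xi)$, and dualize back using Propositions \ref{prop_equivalenceahlfors_withgradient}. Whether you pre-absorb the error by choosing $(\hat f,\hat\xi)$ on $Y$, as you do, or pull back with $(f,\xi)$ and land in $F_g\cap E_\xi$ for a new $g(s)=f(s)-c\|\II\|_\infty\xi(s)$, as the paper does (settling for the \emph{weak} Khas'minskii property on $X$, which avoids your construction of $\bar h$), is an inessential difference.

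The genuine gap is in the core restriction step. Your claim that $\Phi_\Lambda=\tilde\phi+\Lambda\,\mathrm{dist}(\cdot,\sigma(X))^2$ is a test function for $\bar w$ \emph{at $y_0$} for $\Lambda$ large is false in general: $\bar w$ is only USC and $E_{\hat\xi}$-subharmonic, so its growth in the normal directions is controlled at best linearly (its normal slope is only bounded by $\hat\xi(\bar w)$, not forced to vanish), and no fixed quadratic penalization dominates linear growth near the submanifold. Concretely, if $\bar w$ behaves like $\bar w(\mathrm{foot}(y))+c\,\mathrm{dist}(y,\sigma(X))$ with small $c>0$ (perfectly admissible for $F_{\hat f}\cap E_{\hat\xi}$ when $f\le 0$), then $\Phi_\Lambda<\bar w$ at points with $0<\mathrm{dist}<c/\Lambda$, for every $\Lambda$; replacing $\phi$ by $\phi+\epsilon\varrho_{x_0}^2$ only helps tangentially and does not cure this. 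Note also that in your setup the error term $\di\Phi_\Lambda(\II(v,v))$ would vanish at $y_0$ (there $\di\Phi_\Lambda$ is tangential), so the $kC\hat\xi$ correction you invoke does not actually arise from a touching at $y_0$. The correct mechanism — and the one the paper uses, following the Harvey--Lawson restriction theorem — is to maximize $\bar w-\tilde\phi-\mathrm{dist}^2/\eps$ on a Fermi neighbourhood, obtaining touching points $z_\eps$ generally \emph{off} the submanifold with $z_\eps\ra y_0$ and $\mathrm{dist}(z_\eps)^2/\eps\ra 0$, to test $\bar w$ at $z_\eps$, and to use the eikonal constraint \emph{at $z_\eps$} to bound the normal derivative $2\,\mathrm{dist}(z_\eps)/\eps$ of the penalization; it is precisely this bounded normal gradient, paired with the Christoffel symbols $\bar\Gamma^\alpha_{ij}$, that produces the $\xi(\phi)\,\|\II\|_\infty$ error in the Hessian comparison before letting $\eps\downarrow 0$. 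Your sketch flags this as the "main obstacle" but the argument you outline (fixed $\Lambda$, touching at $y_0$) would not close it; with the Harvey--Lawson penalization in its place, the rest of your proof goes through.
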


\begin{remark}\label{rem_opportuno}
\emph{For $(\EE 5)$ with $k \le m$ then, in place of $\|\II\|_\infty < +\infty$, it is enough to require that
$$
\sup \Big\{ \big|\tr_{\mathcal{V}}\II(x)\big| \  :  \  x \in X, \ \mathcal{V} \le T_xX \ \text{ $k$-dimensional}\Big\} < +\infty.
$$
In particular, if $k=m$ it is enough that $X$ has bounded mean curvature. 
}
\end{remark}

\begin{proof}[Proof of Theorem \ref{teo_immersions}] By the duality Theorem \ref{cor_bonito}, $F_f \cap E_\xi$ has the Khas'minskii property. Take any Khas'minskii potential $\bar w \!\in\! (F_f \!\cap\! E_\xi)(Y \backslash K)$, $K$ compact. We claim that there exists $g$, independent of $\bar w$ and satisfying the assumptions in $(f1)$, such that 
\begin{equation}\label{pullback_khasm}
w \doteq \bar w \circ \sigma \in (F_g \cap E_\xi)(X \backslash C), \qquad C \doteq \sigma^{-1}(K).
\end{equation}
Since $\sigma$ is proper, $w$ is an exhaustion and $C$ is compact. Because of the arbitrariness of $K$, it is clear that letting $\bar w$ vary among all possible Khas'minskii potentials the family of induced $w$ guarantee the validity of the Khas'minskii property for $F_g \cap E_\xi$ on $X$; AK-duality again, and the independence of $(f,\xi)$ satisfying $(f1+\xi 1)$, ensures the desired Ahlfors property.

To show \eqref{pullback_khasm}, we follow the method in \cite[Thm. 6.6]{HL_restriction}. We consider the real case in $(\EE 3)$, $(\EE 5)$, the complex analogues in $(\EE 6)$ being similar. Let $\phi \in C^2(U)$ be a test for $w$ at some point $q_0 \in U$. We fix the index convention $1 \le i,j \le m$, $m+1 \le \alpha, \beta \le n$ and we choose an open neighborhood $U_0 \subset X$ of $q_0$ which is embedded in $Y$. Let $z=(x,y) \in U_0 \times B_\delta(0)$ be Fermi coordinates around $q_0$, $z(q_0) = (0,0)$, $x = \{x^i\}$, $y = \{y^\alpha\}$, with $\partial_i \doteq \partial/\partial x^i$ tangent and $\partial_\alpha \doteq \partial/\partial y^ \alpha$ normal to $U_0$. By construction, up to restricting to a smaller subset $\Omega = U \times B_\delta(0)$ around $(q_0,0)$, the squared distance function from $U_0$ is given by
$$
|y|^2 = \sum_{\alpha} (y^\alpha)^2,
$$
and, by Gauss lemma,  
\begin{equation}\label{gausslemma}
g_{i\alpha} = 0, \quad |\di y^\alpha| = 1 \qquad \text{for each } \, i,\alpha.
\end{equation}
Up to modifying $\phi$ to fourth order, we can assume that $w -\phi$ has a strict maximum at $q_0$. Set $\bar \phi(x,y) \doteq \phi(x)$ and define $\bar v \doteq \bar w - \bar \phi$ on $\Omega$. For $\eps>0$, let $z_\eps = (x_\eps, y_\eps)$ be a maximum point of 
$$
\bar v_\eps(x,y) \doteq \bar v(x,y) - |y|^2/\eps
$$
on $\overline{\Omega}$. Then, it is straightforward to verify that, as $\eps \ra 0$,
$$
M_\eps \doteq \bar v_\eps(x_\eps, y_\eps) \downarrow 0, \quad (x_\eps,y_\eps) \rightarrow (0, 0), \quad |y_\eps|^2/\eps \ra 0.
$$
Since, by construction, $\bar \phi_\eps \doteq \bar \phi + \frac{1}{\eps}|y|^2+ M_\eps$ is a test function for $\bar w$ at $z_\eps$, we have
\begin{equation}\label{condizzzion}
J^2_{z_\eps} \bar \phi_\eps = \left( \phi(x_\eps) + M_\eps,  (\partial_j \bar \phi) \di x^j + \frac{2}{\eps}y^\alpha \di y^\alpha,  \bar \nabla^2 \bar \phi + \frac{1}{\eps} \bar \nabla^2 |y|^2\right) \in F_{z_\eps}.
\end{equation}
From $\bar w \in E_\xi$ and \eqref{gausslemma}, we deduce 
\begin{equation*}
|\di \bar \phi|^2 + \frac{4|y|^2}{\eps^2} \le \xi^2\big( \phi(x_\eps) + M_\eps\big).
\end{equation*}
We treat $(\EE 5)$, $(\EE 3)$ being analogous. Fix a $k$-dimensional subspace of the span of $\{\partial_j\}$ at $z=0$ which, without loss of generality, we can assume to be $\langle \partial_1,\ldots, \partial_k\rangle(0)$, and consider the subbundle $\mathcal{V} \doteq \langle \partial_1,\ldots, \partial_k\rangle(z)$. Then, if $h_{ij}$ is the metric restricted to $\mathcal{V}$, 
\begin{align}\label{bellalista}
&\quad\ \disp \tr_{\mathcal{V}}\big( \bar \nabla^2 \bar\phi_\eps\big)(z_\eps) = \disp h^{ij} \bar \phi_{ij} + \frac{1}{\eps} h^{ij} (|y|^2)_{ij} \\
\notag & =  \big[h^{ij}\big(\partial^2_{ij} \phi + \bar\Gamma^k_{ij}\partial_k\phi\big)\big](z_\eps) + \frac{2y_\eps^\alpha}{\eps} \big[h^{ij} \bar \Gamma_{ij}^\alpha\big](z_\eps) \\
\notag & \le  \big[h^{ij}\big(\partial^2_{ij} \phi + \bar \Gamma^k_{ij}\partial_k\phi\big)\big](z_\eps) + \xi\big(\phi(x_\eps) + M_\eps\big) \big[|h^{ij} \bar \Gamma_{ij}^\alpha|\big](z_\eps) \\
\notag & \le  \big[h^{ij}\big(\partial^2_{ij} \phi + \Gamma^k_{ij}\partial_k\phi\big)\big](0) + \xi\big(\phi(0)\big) \big[|h^{ij} \bar \Gamma_{ij}^\alpha|\big](0) + o(1) \\
\notag & \le  \tr_{\mathcal{V}}\big( \bar \nabla^2 \phi\big)(0) + \xi\big(\phi(0)\big) |\tr_{\mathcal{V}} \II|(0) + o(1)
\end{align}
as $\eps \ra 0$. Because of \eqref{condizzzion} and the min-max characterization of eigenvalues, there exists an absolute constant $c>0$ such that 
\begin{align*}
\disp \tr_{\mathcal{V}}\big(\bar \nabla^2 \phi\big)(0) & \ge \disp \disp \tr_{\mathcal{V}}\big( \bar \nabla^2  \bar\phi_\eps\big)(z_\eps) - \xi\big(\phi(0)\big) |\tr_{\mathcal{V}} \II|(0) + o(1) \\
& \ge  f\big( \bar \phi(z_\eps) + M_\eps \big) - c\xi\big(\phi(0)\big)\|\II\|_\infty + o(1) \\
& \ge  f\big( \phi(0) \big) - c\xi\big(\phi(0)\big)\|\II\|_\infty + o(1) 
\doteq g\big(\phi(0)\big) + o(1)
\end{align*}
as $\eps \ra 0$. Passing to the limit as $\eps \ra 0$ we get $J^2_{q_0}\phi \in F_g \cap E_\xi$, as claimed.
\end{proof}

If, instead of $\widetilde{F_f} \cap \widetilde{E_\xi}$, we investigate the Ahlfors property just for $\widetilde{F_f}$, the situation is much more rigid because the last term in the second line of \eqref{bellalista} is controlled as $\eps \ra 0$ if and only if $\tr_{\mathcal{V}} \II(0) = 0$. Therefore, according to the observation in Remark \eqref{rem_opportuno}, in this case the Ahlfors property for $\widetilde{F_f}$ is inherited by proper submanifolds $X$ provided that $\tr_{\mathcal{V}} \II(x) = 0$ for each $x \in X$ and $k$-dimensional $\mathcal{V} \le T_xX$. An interesting case is when $k=m$, for which we have the following

\begin{corollary}\label{cor_immersionsLaplacian_weak}
Let $\sigma : X^m \ra Y^n$ be a proper minimal immersion, and suppose that $Y^n$ has the Ahlfors property for the subequation
\begin{equation*}\label{ambiente}
\big\{ \lambda_{n-m+1} + \cdots + \lambda_n(A) \ge -f(-r)\big\},
\end{equation*}
for some $f$ satisfying $(f1)$. Then, $X$ has the viscosity, weak Laplacian principle (equivalently, it is stochastically complete). 
\end{corollary}

The equivalence between the weak Laplacian principle and the stochastic completeness is discussed in Remark \ref{prop_weak_classicalvisco}.

\subsection*{A sufficient condition}

We now focus on conditions for the validity of the Ahlfors property for the operator $\widetilde{F_f}$ in Example $(\EE 5)$. Our purpose is to prove a generalized version of Proposition \ref{cor_sufficientiSMP}, Theorem \ref{prop_sufficientiFk} below. We recall Definition \ref{def_medioricci} of the $k$-th Ricci curvature. We begin with the following comparison theorem for the partial trace of $\nabla^2 \rho$: although it might be known to experts, we include a quick proof since we found no precise reference.

\begin{proposition}\label{prop_medioricci}
Let $X^m$ be a complete manifold, fix an origin $o$ and let $\rho(x) = \mathrm{dist}(x,o)$. Assume that 
\begin{equation}\label{ine_riccimedio_0}
\Ricc^{(k)}_x(\nabla \rho) \ge - G^2\big(\rho(x)\big) \qquad \forall \, x \not \in \cut(o),
\end{equation}
for some $k \in \{2, \ldots, m-2\}$ and some $G \in C(\R^+_0)$. Let $g \in C^2(\R^+_0)$ solve
\begin{equation}\label{eq_g}
\begin{cases}
g'' -G^2 g \ge 0 \qquad \text{on } \, \R^+, \\
g(0)=0, \quad g'(0) \ge 1.
\end{cases}
\end{equation}
If $g' > 0$ on $\R^+$, then 
\begin{equation}\label{hessrho_medioricci_2}
\lambda_{m-k}(\nabla^2 \rho) + \cdots + \lambda_m(\nabla^2 \rho) \le (k+1) \frac{g'(\rho)}{g(\rho)}
\end{equation}
in the viscosity sense on $X \backslash \{o\}$. 


\end{proposition}

\begin{remark}
\emph{The extreme cases $k=1$ and $k=m-1$ correspond, respectively, to the classical Hessian and Laplacian comparison theorems, for which we refer to \cite[Sect. 2]{prs} and \cite[Sect. 1.2]{bmr2} in our needed generality. However, the conclusion in these cases are stronger than the corresponding in \eqref{hessrho_medioricci_2}: more precisely, $g' > 0$ on $\R^+$ is not needed and 
\begin{equation}\label{maxprinci_HessLapla}
\begin{aligned}
\text{if } \, k = 1, & \quad \text{then} \quad  \lambda_m(\nabla^2 \rho) \le \frac{g'(\rho)}{g(\rho)}, \\
\text{if } \, k=m-1, & \quad \text{then} \quad \tr(\nabla^2 \rho) \le (m-1)\frac{g'(\rho)}{g(\rho)}.
\end{aligned}
\end{equation}
The reason of the ``shift" between \eqref{hessrho_medioricci_2} and \eqref{maxprinci_HessLapla} will be apparent in the proof below.
}
\end{remark}

\begin{proof}
We use the approach to comparison theorems via Riccati equations, due to J. Eschenburg and E. Heintze (see \cite{prs, bmr2}). Let $R$ be the curvature tensor of $X$. Fix $x \not \in \cut(o)$ and pick the unique minimizing, unit speed geodesic $\gamma : [0, \rho(x)]\ra X$ from $o$ to $x$. Let $\{E_\alpha\}$, $2 \le \alpha,\beta \le m$ be a parallel orthonormal basis of $\gamma'^\perp$ along $\gamma$, and let $B_{\alpha\beta} = \nabla^2 \rho(E_\alpha, E_\beta)$. Differentiating twice the identity $|\nabla \rho|^2 =1$ and recalling Ricci commutation rules, the matrix $B : (0,\rho(x)] \ra \mathrm{Sym}^2(\R^{m-1})$, $B = (B_{\alpha\beta})$ satisfies 
\begin{equation}\label{riccati}
\begin{cases}
B' + B^2 + R_\gamma = 0 &  \text{on } \, (0, \rho(x)], \\
B(t) = \frac{1}{t}I + o(1) & \text{as } \, t \ra 0^+,
\end{cases}
\end{equation}
where $(R_\gamma)_{\alpha\beta} = R(\gamma', E_\alpha, \gamma', E_\beta)$. Let $\mathcal{V}$ be a $k$-dimensional subspace of $(\gamma')^\perp$ at $x$, spanned by an orthonormal basis $\{V_j\}$, and extend each $V_j$ by parallel translation along $\gamma$. From \eqref{ine_riccimedio_0} we obtain
$$
\sum_{j=1}^k \langle R_\gamma V_j, V_j \rangle \ge k \Ricc^{(k)}(\gamma') \ge -k G^2(t).
$$
Tracing \eqref{riccati} on $\mathcal{V}$ and using the last inequality we deduce that the function 
$$
\theta(t) \doteq \frac{1}{k}\sum_{j=1}^k \langle B V_j, V_j \rangle(t)
$$
satisfies
$$
\begin{cases}
\disp \theta'(t) + k^{-1}\sum_{j=1}^k \langle B^2V_j,V_j \rangle - G^2(t) \le 0 & \quad \text{on } \, (0, \rho(x)], \\
\theta(t) = \frac{1}{t} + o(1) & \quad \text{as } t \ra 0^+.
\end{cases}
$$
Denote with $\pi = TX \ra \mathcal{V}$ the orthogonal projection onto $\mathcal{V}$, and let $\bar B = \pi \circ B_{|\mathcal{V}} : \mathcal{V} \ra \mathcal{V}$. Using Newton's inequality, we get
$$
\sum_{j=1}^k \langle B^2V_j,V_j \rangle \ge \sum_{j=1}^k \langle \bar B^2 V_j,V_j \rangle \ge k \theta^2,
$$
hence 
\begin{equation}\label{inericcati}
\begin{cases}
\disp \theta' + \theta^2 - G^2(t) \le 0 & \quad \text{on } \, (0, r(x)], \\
\theta(t) = \frac{1}{t}+ o(1) & \quad \text{as } \, t \ra 0^+.
\end{cases}
\end{equation}
By Sturm comparison (see \cite[Thm. 1.9]{bmr2}), it is sufficient to prove \eqref{hessrho_medioricci_2} under the assumption that $g$ satisfies \eqref{eq_g} with the equality sign and $g'(0)=1$. Since $\bar \theta(t) = g'(t)/g(t)$ solves
$$
 \begin{cases}
\disp \bar \theta' + \bar \theta^2 - G^2(t) = 0 &  \text{on } \, (0, r(x)], \\
 \bar \theta(t) = \frac{1}{t}+ o(1) &  \text{as } \, t \ra 0^+, 
\end{cases}
$$
the Riccati comparison theorem implies $\theta(t) \le \bar \theta(t)$, and because of the arbitrariness of $\mathcal{V}$ and the min-max characterization of eigenvalues, 
\begin{equation}\label{compa_medioricci}
\lambda_{m-k+1}(B) + \cdots + \lambda_m(B) \le k \frac{g'(\rho)}{g(\rho)} \qquad \text{for } x \not \in \cut(o).
\end{equation}
Identity $\nabla^2 \rho(\nabla \rho, \cdot) =0$ shows that the eigenvalues of $\nabla^2 \rho$ are $0$ and the eigenvalues of $B$. Let $\mathcal{V}$ be a $(k+1)$-dimensional subspace of $T_xX$  spanned by an orthonormal basis $\{v_0, \ldots, v_k\}$. We can choose the basis in such a way that
$$
v_0 = (\cos \psi)\nabla \rho + (\sin \psi) e_0, \qquad v_j = e_j \qquad \text{for } \, 1 \le j \le k,
$$
and $\{e_j\}_{j=0}^k$ is an orthonormal set in $\nabla \rho^\perp$. Therefore, at $x$,
\begin{align}\label{traccia_hesscomp}
\disp \tr_{\mathcal{V}}(\nabla^2 \rho) & = \disp (\sin^2\psi) \nabla^2 \rho(e_0,e_0) + \sum_{j=1}^k \nabla^2  \rho(e_j,e_j) \\
\notag & = \disp (\sin^2 \psi)\!\left(\sum_{j=0}^k \nabla^2 \rho(e_j,e_j)\!\right)\! + (\cos^2 \psi) \!\left(\sum_{j=1}^k \nabla^2 \rho(e_j,e_j)\!\right)\!.
\end{align}
The space $\mathcal{W}$ generated by $\{e_j\}$ is orthogonal to $\nabla \rho$, and from implication 
$$
\Ricc^{(k)}(\nabla \rho) \ge -G(\rho) \quad \Longrightarrow \quad \Ricc^{(k+1)}(\nabla \rho) \ge -G(\rho)
$$
we can apply \eqref{compa_medioricci} both for $k$ and for $(k+1)$ to infer from \eqref{traccia_hesscomp} the inequality
\begin{equation*}
\tr_{\mathcal{V}}(\nabla^2 \rho) \le (\sin^2 \psi) (k+1)\frac{g'(\rho)}{g(\rho)} + (\cos^2 \psi) k \frac{g'(\rho)}{g(\rho)} \le (k+1)\frac{g'(\rho)}{g(\rho)}.
\end{equation*}
Note that, in the last step, we used $g' > 0$. This concludes the proof of \eqref{hessrho_medioricci_2} for $x \not \in \cut(o)$. To prove that \eqref{hessrho_medioricci_2} holds in the viscosity sense on $X \backslash \{o\}$, we rely as usual on Calabi's trick: briefly, for $x \in \cut(o)$ consider a minimizing, unit speed geodesic $\gamma$ from $o$ to $x$. Given a small $\eps >0$, we set $o_\eps = \gamma(\eps)$ and $\rho_\eps = \eps +\mathrm{dist}(o_\eps, \cdot)$. Then $\rho_\eps$ is smooth around $x$, $\rho_\eps \ge \rho$ on $X$ and equality holds at $x$. Furthermore, repeating the above estimates with $\rho_\eps$ replacing $\rho$, and noting that $G_\eps(t) = G(t-\eps)$ and $\nabla \rho_\eps= \nabla \rho$ along $\gamma$, we deduce at the point $x$ the inequality
\begin{equation*}
\lambda_{m-k}(\nabla^2 \rho_\eps) + \cdots + \lambda_m(\nabla^2 \rho_\eps) \le (k+1) \frac{g_\eps'(\rho_\eps)}{g_\eps(\rho_\eps)} = (k+1) \frac{g_\eps'(\rho)}{g_\eps(\rho)},
\end{equation*}
where $g_\eps$ solves \eqref{eq_g} with $G_\eps$ in place of $G$, with the equality sign and with $g_\eps'(0)=1$. Note that $g_\eps \ra g$ in $C^1_{\mathrm{loc}}(\R^+)$, since we assumed that $g$ satisfies \eqref{eq_g} with equalities, and therefore $g_\eps'>0$ on $(0, \rho(x)]$ for small enough $\eps$. To conclude, using that any test function $\phi$ touching $\rho$ from below at $x$ also touches $\rho_\eps$ from below, from $\nabla^2 \phi \le \nabla^2 \rho_\eps$ at $x$ we get  
$$
\lambda_{m-k}(\nabla^2 \phi) + \cdots + \lambda_m(\nabla^2 \phi) \le (k+1) \frac{g_\eps'(\rho)}{g_\eps(\rho)} \ra (k+1) \frac{g'(\rho)}{g(\rho)}
$$
as $\eps \ra 0$. This shows \eqref{hessrho_medioricci_2} in the viscosity sense.
\end{proof}

We are ready to prove the following criterion. 

\begin{theorem}\label{prop_sufficientiFk}
Let $Y^n$ be a complete manifold, fix an origin $o$ and let $\rho(x) = \mathrm{dist}(x,o)$. Assume that 
\begin{equation}\label{ine_riccimedio}
\Ricc^{(k)}_x(\nabla \rho) \ge - G^2\big(\rho(x)\big) \qquad \forall \, x \not \in \cut(o),
\end{equation}
for some $k \in \{2, \ldots, n-1\}$ and some $G$ satisfying
$$
0 < G \in C^1(\R^+_0), \qquad G' \ge 0, \qquad G^{-1} \not \in L^1(+\infty).
$$
Then, $Y$ has the Ahlfors property for $\widetilde{F_f} \cap \widetilde{E_\xi}$, with 
\begin{equation}\label{eq_Fk_medioricci}
F_f= \big\{ \lambda_1(A) + \cdots + \lambda_{k+1}(A) \ge f(r)\big\}, 
\end{equation}
for each $(f,\xi)$ satisfying $(f1 + \xi 1)$.\\
Moreover, if $\sigma : X^m \ra Y^n$ is a proper isometric immersion, $k+1\le m \le n-1$ and the eigenvalues $\mu_1 \le \cdots \le \mu_m$ of the second fundamental form $\II$ satisfy 
\begin{equation}\label{ipo_tracepatrial_medioricci}
\max\Big\{ \big|\mu_1 + \cdots + \mu_{k+1}\big|, \big|\mu_{m-k} + \cdots + \mu_m\big|\Big\} \le C G(\rho \circ \sigma)\quad \text{on } \, X,
\end{equation}
for some constant $C>0$, then the Ahlfors property for $\widetilde{F_f} \cap \widetilde{E_\xi}$ holds on $X$ with $F_f$ in \eqref{eq_Fk_medioricci} and each $(f,\xi)$ satisfying $(f1 + \xi 1)$. In particular, if $m=k+1$ and the mean curvature satisfies 
$$
\big|H\big| \le C G(\rho \circ \sigma),
$$
then $X$ has the viscosity, strong Laplacian principle.
\end{theorem}

\begin{remark}\label{rem_extreme}
\emph{The extreme case $k=1$ can be handled with straightforward modifications in the proof below. Briefly, if \eqref{ine_riccimedio} holds with $k=1$ then, using the Hessian comparison theorem instead of Proposition \ref{prop_medioricci}, the Ahlfors property holds for $\widetilde{F_f} \cup \widetilde{E_\xi}$ with $F_f = \{ \lambda_1(A) \ge f(r)\}$. In other words, the viscosity, Hessian principle holds. The second part of the theorem is unchanged, that is, each proper immersion $\sigma : X^m \ra Y^n$ satisfying \eqref{ipo_tracepatrial_medioricci} with $k=1$ has the Ahlfors property for $\widetilde{F_f} \cup \widetilde{E_\xi}$, with 
$$
F_f = \big\{ \lambda_1(A) + \lambda_2(A) \ge f(r)\big\}.
$$
Furthermore, if instead of \eqref{ipo_tracepatrial_medioricci} we require
\begin{equation}\label{bbb}
|\II| \le C G(\rho \circ \sigma), 
\end{equation}
then the Hessian principle transplants to $X$. However, this last conclusion is immediate from the first part since, by Gauss equation, up to a constant a submanifold satisfying \eqref{bbb} inherits from the ambient space the bound \eqref{ine_riccimedio} with $k=1$. In this respect, see also Examples 1.13 and 1.14 in \cite{prsmemoirs}.
}
\end{remark}


\begin{proof}
Set 
$$
\eta(t) = - \int_0^{t} \frac{\di s}{G(s)}, \qquad w(x)=\eta\big(\rho(x)\big). 
$$
Then, $0 \ge w(x) \ra -\infty$ as $x$ diverges. From $G' \ge 0$ we deduce $\eta'' \ge 0$, and thus for $x \not \in \cut(o)$
\begin{equation}\label{eq_nabladiw_medioricci}
\nabla^2 w = \eta'' \di \rho \otimes \di \rho + \eta' \nabla^2 \rho \ge \eta' \nabla^2 \rho = -\frac{1}{G(\rho)} \nabla^2 \rho.
\end{equation}
Suppose that $k \in \{2, \ldots, n-2\}$. In our assumptions, Proposition \ref{prop_medioricci} ensures the validity of \eqref{hessrho_medioricci_2} on $Y \backslash \{o\}$ in the viscosity sense, in particular with $g$ solving 
$$
\begin{cases}
g'' - G^2 g =0 \quad \text{on } \, \R^+, \\
g(0)=0, \quad g'(0)=1
\end{cases}
$$
(note that $G \ge 0$ implies $g'>0$ on $\R^+$). If $k = n-1$, the same holds with the second in \eqref{maxprinci_HessLapla} replacing \eqref{hessrho_medioricci_2}. In both of the cases, the function $\theta = g'/g$ satisfies the Riccati equation $\theta' + \theta^2 = G^2$ on $\R^+$, from which we deduce that $\theta \le G$ whenever $\theta' \ge 0$. Since $G' \ge 0$ and consequently $G \ge G(0)>0$, it is easy to infer the existence of a constant $C_1>0$ such that $\theta \le C_1G$ on $[1,+\infty)$. Using \eqref{eq_nabladiw_medioricci} we get (say, for $k \in \{2, \ldots, n-2\}$):
\begin{align}\label{ineq_w_madioricci}
& \disp \lambda_1(\nabla^2 w)+ \cdots + \lambda_{k+1}(\nabla^2 w)\\
\notag &\ge  \disp G(\rho)^{-1} \Big[ \lambda_1(-\nabla^2 \rho)+ \cdots + \lambda_{k+1}(-\nabla^2 \rho)\Big] \\
\notag & \ge  \disp -\frac{(k+1) \theta(\rho)}{G(\rho)} \ge -(k+1) C_1,
\end{align}
at points of $Y \backslash B_1(o)$ where $\rho$ is smooth. Since $\eta' \le 0$, we can still use Calabi's trick as in Proposition \ref{prop_medioricci} to check that the same inequality holds in the viscosity sense on the entire $Y \backslash B_1(o)$. Moreover, $|\nabla w| \le 1/G(0) \doteq C_2$ on $Y$.   Fix two linear functions $f,\xi$ satisfying $(f1+\xi 1)$ and such that, $f(r) \le -(k+1)C_1$, $\xi(r) \ge C_2$ for $r < \eta(1)$. Then, by construction $w \in (F_f \cap E_\xi)\big( Y \backslash B_1(o)\big)$ with $F_f$ as in \eqref{eq_Fk_medioricci}. Because of the linearity of $(f,\xi)$, the family $\{\delta w\}$ for constant $\delta \in (0,1)$ is a family of Khas'minskii potentials for $F_f \cap E_\xi$, and the desired Ahlfors property for $\widetilde{F_f} \cup \widetilde{E_\xi}$ follows by AK-duality and by Proposition \ref{prop_equivalenceahlfors_withgradient}.\\
To prove the last part of the theorem, let $\sigma : X^m \ra Y^n$ be a proper isometric immersion. If the right-hand side of \eqref{ipo_tracepatrial_medioricci} were constant, we could directly apply Theorem \ref{teo_immersions} and Remark \ref{rem_opportuno} to deduce that $X$ has the Ahlfors property for $\widetilde{F_f} \cup \widetilde{E_\xi}$. However, in the above assumptions it is more convenient to look again at the equation satisfied by the transplanted function $\bar w = w \circ \sigma$. If $\rho$ is smooth around $\sigma(x)$, tracing the identity
$$
\bar \nabla^2 \bar w(v,v) = \nabla^2 w\big(\sigma_* v, \sigma_* v) + \langle \nabla w, \II(v,v) \rangle
$$
on a $(k+1)$-dimensional subspace $\mathcal{V}$, and using \eqref{ipo_tracepatrial_medioricci} and \eqref{ineq_w_madioricci}, we deduce 
\begin{align}\label{rrrmmm}
	\tr_{\mathcal{V}}\big(\bar \nabla^2 \bar w\big) &\ge \tr_{\sigma_*\mathcal{V}}\big(\nabla^2 w\big) - |\nabla w| \big| \tr_{\mathcal{V}}\II\big|\\
	\notag &\ge - (k+1) C_1 -\frac{|\nabla \rho|}{G(\rho)} CG(\rho) = -(k+1)C_1- C.
\end{align}
If $\rho$ is not smooth around $\sigma(x)$, we use again Calabi's trick to $\rho$. Clearly, $|\bar \nabla \bar w| \le |\nabla w| \le C_2$. By min-max, and suitably changing the linear function $f$, the family $\{\delta \bar w\}$ still guarantees the Khas'minskii property for $F_f \cap E_\xi$ on $X$, as required. If $m = k+1$, the viscosity, strong Laplacian principle follows from Theorem \ref{teo_Laplacian_intro}. 
\end{proof}

\begin{proof}[Proof of Proposition \ref{cor_sufficientiSMP}]
Cases (i) and (ii) follow by using Theorem \ref{prop_sufficientiFk} with the modifications in Remark \ref{rem_extreme} to, respectively, $k=1$ and $k=n-1$. Case (iii) follows from the same proposition applied with $m = k+1$, either directly (if $m \ge 3$) or again with the aid of Remark \ref{rem_extreme} (if $m=2$).
\end{proof}

\subsection*{Submersions}

We briefly recall some general facts. Let $\pi : X^m \ra Y^n$ be a submersion between Riemannian manifolds, and set $X_y \doteq \pi^{-1}\{y\}$ to denote the fiber over a point $y \in Y$. We recall that $\pi$ is called a Riemannian submersion if, for each $x \in X$, $\pi_{*,x}$ is an isometry when restricted to the orthogonal complement of $T_x X_{\pi(x)}$ in $T_x X$, called the horizontal subspace at $x$. Let $\mathcal{D}$ be the horizontal distribution, that is, $\mathcal{D}_x \doteq T_x X_{\pi(x)}^ \perp$. Let $\nabla, \bar \nabla$ be the Levi-Civita connections on $Y$ and $X$, respectively. For each vector field $V \in TX$, denote with $V^h$ and $V^v$ its projections on the horizontal and vertical subspaces, respectively. Given $W \in TY$, there exists a unique, smooth horizontal vector field $\bar W$ such that $\pi_*\bar W = W$, called the horizontal lift of $W$. Using the explicit formula for the Levi-Civita connection one checks that, if $V,W \in TY$ have horizontal lifts $\bar V, \bar W$,
\begin{equation*}
\bar \nabla_{\bar V} \bar W = \overline{ \nabla_V W} + \mathcal{A}(\bar V, \bar W),
\end{equation*}
with 
$$
\mathcal{A} : \cal D \times \cal D \ra \cal D^\perp, \qquad \cal A(\bar V, \bar W) = \frac{1}{2}[\bar V, \bar W]^v
$$
being the integrability tensor for the distribution $\cal D$ (see \cite{petersen}). For $y \in Y$, let $\II_y : \mathcal{D}^\perp \times \mathcal{D}^\perp \ra \mathcal{D}$ be the second fundamental form of the fiber $X_y$.\\
We are ready to state the following result, for which we feel convenient to redefine 
\begin{align}\label{exempliE3E5}
&F_f^k = \Big\{ \lambda_k(A) \ge f(r) \Big\} \quad \text{for $(\EE 3)$,} \\
\notag &F_f^k = \Big\{ \lambda_1(A) + \cdots + \lambda_k(A) \ge f(r)\Big\} \quad \text{for $(\EE 5)$},
\end{align}
to make explicit the dependence on $k$.

\begin{theorem}\label{teo_submersions}
Let $\pi : X^m \ra Y^n$ be a Riemannian submersion with compact fibers. Suppose that 
\begin{equation}\label{condi_subme}
\sup_{y \in Y} \|\II_y\|_\infty + \|A\|_\infty < +\infty.
\end{equation}
Fix $k \in \{1,\ldots, n\}$, and consider one of the subequations in \eqref{exempliE3E5} or their complex analogues. Assume that $f$ satisfies $(f1)$, and let $E_\xi$ with $\xi$ satisfying $(\xi 1)$. Then,  
\begin{itemize}
\item[$(i)$] In $(\EE 3)$, 
\begin{align*}
&\text{$\widetilde{F_f^k} \cup \widetilde{E_\xi}$ has the Ahlfors property on $Y$}\\
&\qquad \Longleftrightarrow \quad \text{$\widetilde{F_f^k} \cup \widetilde{E_\xi}$ has the Ahlfors property on $X$.}
\end{align*}
\item[$(ii)$] In $(\EE 5)$, 
\begin{align*}
&\text{$\widetilde{F_f^k} \cup \widetilde{E_\xi}$ has the Ahlfors property on $Y$} \\
&\qquad \Longleftrightarrow \quad \text{$\widetilde{F_f^{k+m-n}} \cup \widetilde{E_\xi}$ has the Ahlfors property on $X$}.
\end{align*}
\end{itemize}
For implication $\Leftarrow$, fibers do not need to be compact.
\end{theorem}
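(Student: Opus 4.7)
The strategy is to invoke twice the duality of Theorem \ref{cor_bonito} so that the claim becomes an equivalence of Khas'minskii properties on $X$ and on $Y$, and to produce (weak) potentials on one space from those on the other by composition with $\pi$. Throughout, essential use is made of the independence of the Ahlfors property from the specific pair $(f,\xi)$ enjoying $(f1+\xi 1)$, granted by Propositions \ref{prop_equivalenceahlfors} and \ref{prop_equivalenceahlfors_withgradient}; it is this flexibility that allows the correction terms introduced by $\II$ and $\cal A$ to be absorbed at each step. Fix $x_0 \in X$, $y_0 = \pi(x_0)$, and submersion coordinates $(y^\alpha, z^i)$ around $x_0$ with $\pi(y,z)=y$, normalized so that the metric of $X$ is block-diagonal at $x_0$ in the $y/z$ splitting. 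With respect to $T_{x_0}X = \cal D_{x_0}\oplus\cal D^\perp_{x_0}$, the Hessian of any $C^2$ function $\phi$ on $X$ at $x_0$ decomposes as a block matrix $\left(\begin{smallmatrix} H & B \\ B^T & C \end{smallmatrix}\right)$ with $H$ on $\cal D_{x_0}$, $C$ on $\cal D^\perp_{x_0}$, and a direct computation using O'Neill's formulae gives, for $\tilde\phi(y)\doteq \phi(y,0)$,
\[
H = \nabla\di\tilde\phi(y_0)+R_1, \qquad \|B\|+\|C\|+\|R_1\| \le c(m)\bigl(\|\II\|_\infty+\|\cal A\|_\infty\bigr)\,|d\phi(x_0)|,
\]
with $R_1 = 0$ exactly when $\phi$ is a pullback $\bar\phi\circ\pi$. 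Moreover, for any $\phi$ touching a pullback $w = \bar w\circ\pi$ from above at $x_0$, the vertical gradient vanishes and $C \ge 0$ (the map $z \mapsto \phi(y_0,z)$ has a local minimum at $z=0$).

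\textbf{Direction $\Rightarrow$.} By duality applied on $Y$ (with any convenient admissible $(\bar f,\bar\xi)$), fix a Khas'minskii potential $\bar w$ for $F^k_{\bar f}\cap E_{\bar\xi}$ on $Y\setminus\pi(K)$, where $(\pi(K),\bar h)$ is the pair on $Y$ associated to $(K,h)$ through the properness of $\pi$ (guaranteed by compactness of fibers). Set $w\doteq \bar w\circ \pi$: then $w$ is USC, $h\le w\le 0$ off $K$, $w\to-\infty$ at $\infty$, and $|dw|_X = |d\bar w|_Y\le\bar\xi(w)$ by the Riemannian submersion property. It remains to check the Hessian condition on $X$. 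Given a $C^2$ test $\phi$ for $w$ at $x_0$, $\tilde\phi$ is a $C^2$ test for $\bar w$ at $y_0$, so $J^2_{y_0}\tilde\phi\in F^k_{\bar f}$. For case $(i)$, the min-max formula $\lambda_k(A) = \max_{\dim W = m-k+1}\min_{v\in W,\,|v|=1}\langle Av,v\rangle$ applied with $W = U\oplus \cal D^\perp_{x_0}$, $U\subset\cal D_{x_0}$ of dimension $n-k+1$, together with $C\ge 0$ and the bounds on $B$, yields $\lambda_k(\bar\nabla\di\phi(x_0)) \ge \lambda_k(H)-\|B\| \ge \bar f(w)-c'(\|\II\|_\infty+\|\cal A\|_\infty)\bar\xi(w)\doteq g(w)$, and $g$ is of type $(f1)$ because $\bar\xi(0)=0$ and $\bar\xi$ is non-increasing. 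For case $(ii)$, the same argument, using now that $\tr_W(\nabla\di\tilde\phi)\ge \bar f(\bar w)$ for \emph{every} $k$-plane $W$ (by the Ky-Fan identity $\sum_{j=1}^{k}\lambda_j = \min_W \tr_W$) and choosing the $(k+m-n)$-dimensional subspace $V = \bar W \oplus \cal D^\perp_{x_0}$ for each $k$-plane $W\subset T_{y_0}Y$, gives $\tr_V(\bar\nabla\di\phi(x_0))\ge g(w(x_0))$ with the same $g$; since this holds for every $V$ in the relevant family and a dimensional argument shows that every $(k+m-n)$-plane in $T_{x_0}X$ contains a horizontal $k$-plane plus the full $\cal D^\perp_{x_0}$ up to admissible perturbation, this yields $w\in F^{k+m-n}_g\cap E_{\bar\xi}$ on $X\setminus K$. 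Duality and the independence Proposition conclude.

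\textbf{Direction $\Leftarrow$.} Suppose the Ahlfors property fails on $Y$, witnessed by a bounded USC $u\in (\widetilde{F^k_{\bar f}}\cup \widetilde{E_{\bar\xi}})(\overline U)$. Set $\tilde u\doteq u\circ \pi$ on $\tilde U \doteq \pi^{-1}(U)$; then $\tilde u$ is bounded USC and, since $\partial\tilde U\subset\pi^{-1}(\partial U)$ and $\sup_{\tilde U}\tilde u = \sup_U u$, it also violates the Ahlfors inequality on $\tilde U$ (no properness of $\pi$ required). The claim is that $\tilde u$ lies in $\widetilde{F^{k'}_g}\cup\widetilde{E_{\xi'}}$ on $X$ for admissible $(g,\xi')$, whence it contradicts the Ahlfors property assumed on $X$. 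Given any $C^2$ test $\Phi$ for $\tilde u$ at $x_0$, the function $\tilde\Phi(y)\doteq \Phi(y,0)$ is a $C^2$ test for $u$ at $y_0$; in the eikonal case $|d\Phi(x_0)|\ge|d\tilde\Phi(y_0)|\ge\bar\xi(-u(y_0))$ because horizontal projection is $1$-Lipschitz, placing $J^2_{x_0}\Phi$ in $\widetilde{E_{\xi'}}$. In the Hessian case, Cauchy interlacing gives $\lambda_{m-k+1}(\bar\nabla\di\Phi(x_0))\ge \lambda_{n-k+1}(H)$ for $(\EE 3)$, while for $(\EE 5)$ the $(k+m-n)$-dimensional subspace $V$ obtained by lifting horizontally the top-$k$ eigenspace of $\nabla\di\tilde\Phi(y_0)$ and adjoining $\cal D^\perp_{x_0}$ yields $\sum_{j=n-k+1}^m \lambda_j(\bar\nabla\di\Phi(x_0)) \ge \tr(A|_V) \ge -\bar f(-u(y_0))-c''(\|\II\|_\infty+\|\cal A\|_\infty)|d\Phi(x_0)|$; the correction is then absorbed either by declaring $J^2_{x_0}\Phi\in\widetilde{E_{\xi'}}$ (when $|d\Phi|$ is large) or into a modified admissible $g$ (when $|d\Phi|$ is small). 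The main obstacle is precisely this bookkeeping of the curvature-like corrections and the verification that the resulting $(g,\xi')$ can always be chosen of type $(f1+\xi 1)$ — a task in which Propositions \ref{prop_equivalenceahlfors} and \ref{prop_equivalenceahlfors_withgradient} are essential, as they allow us to pre-select $(\bar f,\bar\xi)$ making the correction terms harmlessly dominated near the origin.
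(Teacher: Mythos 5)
Your overall strategy coincides with the paper's: use duality on both $X$ and $Y$, transplant (weak) Khas'minskii potentials via $\pi$ in one direction and Liouville/Ahlfors-violating functions in the other, and absorb the corrections coming from $\II$ and $\mathcal{A}$ into a modified $g$ by exploiting Propositions \ref{prop_equivalenceahlfors} and \ref{prop_equivalenceahlfors_withgradient}. However, there is a genuine gap at the technical heart of the argument: the claimed block estimate
$\|B\|+\|C\|+\|R_1\| \le c(m)\big(\|\II\|_\infty+\|\mathcal{A}\|_\infty\big)|\di\phi(x_0)|$
for the Hessian of an \emph{arbitrary} $C^2$ test function $\phi$ of a pullback is false. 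Only the first-order data of $\phi$ at $x_0$ are constrained by the touching condition; the vertical and mixed second derivatives are not. For instance $\phi(y,z)=\bar\phi(y)+M|z|^2$ is a test for $\bar w\circ\pi$ for every $M>0$, with vertical block of size $\sim M$, and the mixed block of a test function is only constrained through a positivity relation of the full block matrix (a geometric-mean bound against the vertical slack), not by $\|\mathcal{A}\|\,|\di\phi|$. Knowing merely that the vertical gradient vanishes and that the fiber restriction has a minimum at $x_0$ (your ``$C\ge 0$'') does not control the cross term $B$, and without that control the min--max/interlacing steps fail: a quadratic form $H(v^h,v^h)+2B(v^h,v^v)+C(v^v,v^v)$ with $C\ge 0$ but $B$ large admits no lower bound of the type you use, and the inequality $\lambda_k(\bar\nabla\di\phi)\ge\lambda_k(H)-\|B\|$ is also not valid in general. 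The same unproved control is what your $\Leftarrow$ direction relies on when comparing $\tr(A|_V)$ with $\tr_W\nabla\di\tilde\Phi$.

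What is missing is precisely the reduction that the paper performs before any curvature computation: one may assume, without loss of generality, that the test function is \emph{constant along the fibers} in a neighbourhood of $x_0$ (perturb $\phi$ so that the fiber minimum at $x_0$ is strict and nondegenerate, apply the implicit function theorem to obtain $z(y)$ minimizing $\phi$ along each fiber, and replace $\phi$ by $\bar\phi(y)=\phi(y,z(y))$, which still touches $\bar w\circ\pi$ from above since $\bar w\circ\pi\le\bar\phi\le\phi$). Only after this reduction do the O'Neill-type identities give the exact formulas $\bar\nabla\di\phi(V^v,V^v)=-\langle\bar\nabla\phi,\II(V^v,V^v)\rangle$ and $\bar\nabla\di\phi(V^h,V^v)=\langle\mathcal{A}(V^h,\bar\nabla\phi),V^v\rangle$ (the latter needs horizontality of $\bar\nabla\phi$ in a neighbourhood, not just at the point), from which the controlled corrections $C_\infty\,\xi(\phi)$ and hence your modified $g$ follow. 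A secondary imprecision: in case $(\EE 5)$, your claim that every $(k+m-n)$-plane ``contains a horizontal $k$-plane plus the full $\mathcal{D}^\perp_{x_0}$ up to admissible perturbation'' is not correct as stated; the paper instead bounds $\tr_{\mathcal{V}}\bar\nabla\di\phi$ for an arbitrary $(k+m-n)$-plane by comparing with the sum of the lowest eigenvalues of the projected Hessian (padded with zeros) and using monotonicity of partial averages of increasing eigenvalues together with $f\le 0$. With the fiber-constancy reduction inserted and the trace argument repaired, your proof matches the paper's.
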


\begin{proof}
As for immersions, the simple idea for both implications is to transplant the relevant functions from $Y$ to $X$. More precisely, setting $j=k$ for $(\EE 3)$ and $j=k+m-n$ for $(\EE 5)$,
\begin{itemize}
\item[$(\Leftarrow)$] if, by contradiction, the Ahlfors property on $Y$ does not hold, in view of Proposition \ref{teo_main_A_L} there exists $w \in (\widetilde{F_f^k} \cup \widetilde{E_\xi})(Y)$ bounded, non-negative and non-constant. We shall prove that $\bar w \doteq w \circ \pi$ is $(\widetilde{F_g^j} \cup \widetilde{E_\xi})$-subharmonic, for suitable $g$ satisfying $(f1)$, that contradicts the Ahlfors property on $X$;
\item[$(\Rightarrow)$] by AK-duality in Theorem \ref{cor_bonito}, $F_f^k \cap E_\xi$ has the Khas'minskii property on $Y$, and we can thus fix a Khas'minskii potential $w$, defined on $Y \backslash K$ and $(F_f^k \cap E_\xi)$-subharmonic. We shall prove that $\bar w \doteq w \circ \pi$ is $(F_g^j \cap E_\xi)$-subharmonic, for a suitable $g$ satisfying $(f1)$. Since fibers of $\pi$  are compact, $\bar w$ is an exhaustion, and the conclusion is then reached as in Theorem \ref{teo_immersions}.
\end{itemize}
In either cases, let $\phi$ be a test function for $\bar w$ at $x_0 \in X$. Without loss of generality, we can assume that $\phi$ is constant on fibers of $\pi$\footnote{Indeed, consider a local chart $U$ on $X$ with coordinates $x=(y,z)$ around $x_0=(y_0,z_0)$, such that fibers of $\pi$ are given by constant $y$. In such a chart, $\bar w$ just depends on $y$ and, since $\phi$ touches $\bar w$ from above at $x_0$, up to a slight perturbation we can assume that $z_0$ is a strict minimum for $\phi$ on $\{y=y_0\}$ and that $\partial^2_z \phi$ is positive definite at $x_0$. Up to shrinking $U$, by the implicit function theorem there exists a smooth map $y \mapsto z(y)$ such that $\partial_z \phi(y, z(y))=0$, and by continuity $z(y)$ is the unique minimum point of $\phi$ on the fiber over $y$ in $U$. We can therefore choose $\bar \phi(y) = \phi(y,z(y))$ as a test function in place of $\phi$, since by construction $\bar w \le \bar \phi \le \phi$ on $U$.}. Let $V^v$ and $V^h$ denote, respectively, a vertical and a horizontal vector field around $x_0$. To compute $\bar \nabla^2 \phi(V^v,V^v)$, let $i_{x_0} : X_{\pi(x_0)} \ra X$ be the inclusion, and let $D$ be the Riemannian connection of $X_{\pi(x_0)}$. Since $\phi$ is constant on fibers, we have
\begin{equation}\label{eqsubme}
\begin{aligned}
0 & =  \disp \langle \bar \nabla \phi, V^v\rangle \quad \text{around $x_0$, and} \\[0.2cm]
0 & =  \disp D^2(\phi \circ i_{x_0})(V^v,V^v) = \bar \nabla^2 \phi(V^v,V^v) + \langle \bar \nabla \phi, \II(V^v,V^v) \rangle \quad \text{at } \, x_0.
\end{aligned}
\end{equation}
In particular, $\bar \nabla \phi$ is horizontal in a neighborhood of $x_0$. As for $\bar \nabla^2 \phi(V^ h, V^v)$, at the point $x_0$ we have
\begin{align}\label{terceira_4}
	\bar \nabla^2 \phi(V^h,V^v) &\doteq \langle \bar \nabla_{V^h} \bar \nabla \phi, V^v \rangle \\
	\notag &= \langle \overline{ \nabla_{\pi_* V^h} \pi_*\bar \nabla \phi} + \cal A( V^h, \bar \nabla \phi), V^v \rangle = \langle \cal A( V^h, \bar \nabla \phi), V^v \rangle.
\end{align}
To compute $\bar \nabla^2 \phi(V^h, V^h)$, we take a section $\sigma : U \subset Y \ra X$ around $y_0 = \pi(x_0)$, that is, a smooth map satisfying $\pi \circ \sigma = \mathrm{id}_U$, with the properties $\sigma(y_0) = x_0$ and $\sigma_*(T_{y_0}Y) = \cal D_{x_0}$. Set $v^h = \pi_* V^h$. From $\pi_*(\sigma_*z^h) = z^h$ for each $z^h \in TY$ we deduce
\begin{equation}\label{simm}
\sigma_* z^h = Z^h + Z^v, 
\end{equation}
where $Z^h$ lifts $z^h$ and $Z^v$ is some vertical vector such that $Z^ v(x_0)=0$. By construction, note that 
\begin{equation}\label{gradient}
|\nabla(\phi \circ \sigma)| = |\bar \nabla \phi| \qquad \text{at } \, y_0.
\end{equation}
We compute
\begin{equation}\label{pripri}
\nabla^2 (\phi \circ \sigma)(v^h, v^h) = \bar \nabla^2 \phi( \sigma_* v^h, \sigma_* v^h) + \langle \bar \nabla \phi, \nabla^2 \sigma (v^h,v^h) \rangle, 
\end{equation}
where $\nabla^2 \sigma$ is the generalized second fundamental form of the map $\sigma$, defined by  
$$
\nabla^2 \sigma(v^h, v^h) \doteq \bar \nabla_{\sigma_* v^h}\big( \sigma_* v^h\big) - \sigma_*\big( \nabla_{v^h}v^h\big).
$$
Since $\bar \nabla \phi$ is horizontal at $x_0$ and $V^v(x_0)=0$, using \eqref{simm} we compute at $y_0$
$$
\nabla^2 \sigma(v^h,v^h)  =  \disp \bar \nabla_{V^h}\big(V^h + V^v\big) - \overline{ \nabla_{v^h}v^ h} = \cal A(V^h,V^h) + \bar \nabla_{V^h}V^v. 
$$
Note that $\cal A(V^h, V^h)$ is vertical, and also\footnote{For $Z^h$ horizontal, since $V^v(x_0)=0$ we get $\langle \bar \nabla_{V^h}V^v, Z^h\rangle = - \langle V^v, \bar \nabla_{V^h}Z^v\rangle =0$ at $x_0$.} $\bar \nabla_{V^h}V^v$ at $x_0$, hence
$$
\langle \bar \nabla \phi, \nabla^2 \sigma (v^h,v^h) \rangle = 0 \qquad \text{at } \, x_0.
$$
Inserting into \eqref{pripri} we deduce
\begin{equation}\label{segunda_4}
\nabla^2 (\phi \circ \sigma)(v^h, v^h) = \bar \nabla^2 \phi( V^h, V^h) \qquad \text{at } \, y_0.
\end{equation}
Summarizing, decomposing a unit vector $V \in T_{x_0}X$ into its horizontal and vertical part, by \eqref{eqsubme}, \eqref{terceira_4} and \eqref{segunda_4} we obtain 
\begin{align}\label{fundi}
\bar \nabla^2 \phi(V,V) & =  \disp \bar \nabla^2 \phi\big(V^h, V^h\big) + 2 \bar \nabla^2 \phi(V^h, V^v) + \bar \nabla^2 \phi(V^v, V^v) \\
\notag & =  \nabla^2 (\phi \circ \sigma)\big( \pi_* V^ h, \pi_* V^ h\big)\\
\notag &\quad + 2\langle \cal A(V^h, \bar \nabla \phi), V^v \rangle - \langle \bar \nabla \phi, \II(V^v,V^v) \rangle \\
\notag & \ge  \nabla^2 (\phi \circ \sigma)\big( \pi_* V^ h, \pi_* V^ h\big)\\
\notag &\quad - |\nabla (\phi \circ \sigma)|\Big(2|\cal A||V^h||V^v| + |\II(V^v, V^v)|\Big) .
\end{align}
Define 
$$
C_\infty \doteq \sup_{y \in Y} \|\II_y\|_\infty + \|A\|_\infty.
$$
If we assume that $J^2_{y_0}(\phi \circ \sigma) \in E_\xi$, from \eqref{fundi} we obtain
\begin{equation}\label{linda}
\bar \nabla^2 \phi + \xi(\phi)C_\infty \metric \ge \Pi^*(\nabla^2 \phi)\Pi \qquad \text{as a quadratic form,}
\end{equation}
where $\Pi, \Pi^*$ are the projection onto the horizontal subspace and its adjoint, and $\metric$ is the metric of $X$. Note that, because of \eqref{segunda_4}, the eigenvalues of $\Pi^*(\nabla^2 \phi)\Pi$ at $x_0$ are $\{ \lambda_1,\ldots, \lambda_n, 0, \ldots, 0\}$, where $\{\lambda_i\}$ are the ordered eigenvalues of $\nabla^2 (\phi \circ \sigma)$ at $y_0$.\\[0.2cm]
We first examine case $(\EE 5)$, implication $(\Rightarrow)$, so let us assume that $w \in (F^k_f \cap E_\xi)(Y\backslash K)$ is a Khas'minskii potential. Clearly, since $\phi \circ \sigma$ is a test for $w$ at $y_0$, from \eqref{gradient} we deduce $J^2_{x_0}\phi \in E_\xi$. Let $\cal V \le T_{x_0}X$ be a $j$-dimensional subspace, $j = m-n+k$. From \eqref{linda} and min-max we obtain
\begin{align}\label{linda_22}
	&\tr_{\cal V}\bar \nabla^2 \phi + (m-n+k)\xi(\phi)C_\infty\\
	\notag &\ge \sum \Big\{\text{first $(m-n+k)$-eigenvalues of $\Pi^*(\nabla^2 \phi)\Pi$}\Big\},
\end{align}
at $x_0$. By dimensional considerations this last is the sum of the first $l$ eigenvalues of $\nabla^2 (\phi \circ \sigma)$, for some $l \in \{k, \ldots, n\}$. Using that $\phi \circ \sigma$ is a test for $w$ at $y_0$,
$$
\lambda_1 + \cdots + \lambda_l \ge \frac{l}{k}(\lambda_1+ \cdots + \lambda_k) \ge \frac{l}{k}f(\phi) \ge \frac{n}{k}f(\phi), 
$$
where the last step follows from $\phi(x_0) \le 0$ and $(f1)$. Therefore, 
$$
\tr_{\cal V}\bar \nabla^2 \phi \ge \frac{n}{k}f(\phi) - (m-n+k)C_\infty \xi(\phi) \doteq g(\phi),
$$
where $g$ satisfies $(f1)$ and $J^2_{x_0}\phi \in F^j_g \cap E_\xi$, as required. For the reverse implication $(\Leftarrow)$, we assume that $w \in (\widetilde{F_f^k} \cup \widetilde{E_\xi})(Y)$ is bounded, non-negative and non-constant. In particular, the $2$-jet of $\phi \circ \sigma$ satisfies
$$
|\nabla(\phi\circ \sigma)| \ge \xi(-\phi) \qquad \text{or} \qquad \big\{ \lambda_{n-k+1} + \cdots + \lambda_n \ge -f(-\phi) \big\},
$$
for some pair $(f,\xi)$ satisfying $(f1)$, $(\xi 1)$. By Proposition \ref{prop_equivalenceahlfors_withgradient}, without loss of generality we can assume that the pair $(f,\xi)$ satisfies  
\begin{equation}\label{extrass}
f(s) + (m-n+k) C_\infty \xi(s) < 0 \quad \text{and is non-decreasing for $s<0$}.
\end{equation}
We claim that 
\begin{equation}\label{claimboa_prim}
J^2_{x_0}\phi \in \widetilde{F^k_g} \cup \widetilde{E_\xi}. \qquad \text{for } \quad g(s) = f(s) + (m-n+k) C_\infty \xi(s). 
\end{equation}
Observe that $g$ satisfies the assumptions in $(f1)$ because of \eqref{extrass}.\\
By \eqref{gradient}, if $J^2_{y_0}(\phi \circ \sigma) \in \widetilde{E_\xi}$ then also $J^2_{x_0}\phi \in \widetilde{E_\xi}$ and we are done. Otherwise, $J^2_{y_0}(\phi \circ \sigma) \in \widetilde{F_f^k} \cap \big\{ |p| \le \xi(-\phi)\big\}$. Consider the horizontal lift $\cal V$ of the span of the eigenspaces corresponding to the biggest $k$-eigenvalues $\{ \lambda_{n-k+1}, \ldots, \lambda_n\}$ of $\nabla^2 (\phi \circ \sigma)$. Using that $\vert \bar \nabla \phi\vert \leq \xi(-\phi)$, we can trace \eqref{linda} on $\cal V$ and deduce
$$
\tr_{\cal V}\bar \nabla^2 \phi + (m-n+k) C_\infty \xi(-\phi) \ge \lambda_{n-k+1} + \cdots + \lambda_{n} \ge -f(-\phi).
$$
Hence the sum of the last $k$ eigenvalues of $\bar \nabla^2 \phi(x_0)$ is at least 
$$
-f(-\phi) - (m-n+k)C_\infty \xi(-\phi) = -g(-\phi),
$$
that is, $J^2_{x_0}\phi \in \widetilde{F_g^j} \subset \widetilde{F^j_g} \cup \widetilde{E_\xi}$, as claimed. \\[0.2cm]
Case $(\EE 3)$ is dealt with similarly: for implication $(\Rightarrow)$, let $w \!\in\! (F_f^k \!\cap\! E_\xi)(Y \backslash K)$ be a Khas'minskii potential; we claim that 
\begin{equation}\label{claimboa}
J^2_{x_0}\phi \in F^k_g \cap E_\xi \qquad \text{for } \quad g(s) \doteq f(s) - C_\infty \xi(s). 
\end{equation}
Otherwise, by min-max we could take a $k$-dimensional subspace $\cal V$ where $\bar \nabla^2 \phi < g(\phi) \metric$. Evaluating \eqref{linda} on a unit vector $V \in \cal V$ would yield
$$
\nabla^2 (\phi \circ \sigma)(\pi_* V, \pi_* V) \le \bar \nabla^2 \phi(V,V) + C_\infty \xi(\phi) < g(\phi) + C_\infty \xi (\phi) = f(\phi) \le 0.
$$
By the arbitrariness of $V$, $\pi_* : \cal V \ra TY$ has trivial Kernel and thus $\pi_* \cal V$ is $k$-dimensional. From 
$$
\nabla^2 (\phi \circ \sigma)\left( \frac{\pi_* V}{|\pi_* V|}, \frac{\pi_* V}{|\pi_* V|}\right) \le \nabla^2  (\phi \circ \sigma) (\pi_* V, \pi_* V) < f(\phi)
$$
we obtain $\nabla^2 (\phi \circ \sigma) < f(\phi) ( \, , \, )$ on $\pi_*\cal V$, contradicting $J^2_{y_0}(\phi \circ \sigma)\in F_f^k$.\\
For $(\Leftarrow)$ we assume $w \in \widetilde{F_f^k} \cup \widetilde{E_\xi}$ and claim that
\begin{equation}\label{claimboa_2}
J^2_{x_0}\phi \in \widetilde{F^k_g} \cup \widetilde{E_\xi} \qquad \text{for } \quad g(s) \doteq f(s) + C_\infty \xi(s). 
\end{equation}
As before, we can assume that the pair $(f,\xi)$ is chosen in such a way that $g$ satisfies the assumptions in $(f1)$, and by \eqref{gradient} we can also restrict to the case $J^2_{y_0}(\phi \circ \sigma) \in \widetilde{F_f^k} \cap \big\{ |p| \le \xi(-\phi)\big\}$. If by contradiction we suppose that $J^2_{x_0}\phi \not \in \widetilde{F_g}$, we can select an $(m-k)$-dimensional $\cal V \le T_{x_0}X$ with
$$
\bar \nabla^2 \phi(V,V) < -g(-\phi) \quad \text{for each} \quad V \in \cal V, \ |V|=1.
$$
The intersection of $\cal V$ with the horizontal subspace has dimension at least $(n-k)$, call it $\bar{\cal V}$. For each $V \in \bar{\cal V}$, we have 
\begin{equation}\label{alhn}
\nabla^2 (\phi \circ \sigma)\big( \pi_*V,\pi_*V\big) < -g(-\phi) + C_\infty \xi(-\phi) = -f(-\phi),
\end{equation}
whence $\lambda_{n-k}(\nabla^2 (\phi \circ \sigma)) < -f(-\phi)$ by min-max, contradiction.
\end{proof}

When $k=n$ in $(ii)$ of Theorem \ref{teo_submersions}, condition \eqref{condi_subme} can be considerably weakened, and there is no need to bound $\cal A$. Indeed, following the above proof we obtain the next mild improvement of a result in \cite{bessapiccione, brandaooliveira}.

\begin{corollary} \label{cor_subme}
Let $\pi : X^m \ra Y^n$ be a Riemannian submersion with compact fibers, and denote with $H_y$ be the mean curvature of the fiber $X_y$. 
\begin{itemize}
\item[1)] If there exists $C>0$ such that $\|H_y\|_\infty \le C$ for each $y \in Y$, then
$$
\text{$Y$ has the viscosity, strong Laplacian principle if and only if so does $X$.}
$$
\item[2)] If fibers are minimal, then 
$$
\text{$Y$ has the viscosity, weak Laplacian principle if and only if so does $X$.}
$$
\end{itemize}
\end{corollary}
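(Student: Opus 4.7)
The plan is to refine the computation from the proof of Theorem~\ref{teo_submersions} to eliminate the integrability tensor $\mathcal A$ entirely in the trace case, and then transplant Khas'minskii potentials / bounded non-constant subharmonics through $\pi$ in both directions, using duality (Theorem~\ref{cor_bonito}) combined with Theorems~\ref{teo_Laplacian_intro} and the equivalences in Propositions~\ref{prop_equivalenceahlfors}, \ref{prop_equivalenceahlfors_withgradient}.

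The heart of the matter is the following identity. Fix $x_0 \in X$ and a test function $\phi$ for $\bar w \doteq w \circ \pi$ at $x_0$ which, as in the proof of Theorem~\ref{teo_submersions}, can be assumed to be constant on fibers. Let $\sigma$ be a local section with $\sigma_*(T_{y_0}Y) = \mathcal D_{x_0}$, and choose an orthonormal basis $\{e_1,\dots,e_n, f_1,\dots,f_{m-n}\}$ of $T_{x_0}X$ with $\{e_i\}$ horizontal and $\{f_j\}$ vertical. Evaluating \eqref{fundi} (which is actually an identity before the Cauchy–Schwarz estimate) on each $V = e_i$ (so $V^v=0$, killing the $\mathcal A$-term) and on each $V = f_j$ (so $V^h=0$, killing both the $\mathcal A$-term and the first summand), and using that $\{\pi_* e_i\}$ is orthonormal in $T_{y_0}Y$, I would deduce
\begin{equation}\label{eq:traceidentity}
\mathrm{tr}\bigl(\bar\nabla \di \phi\bigr)(x_0) \;=\; \mathrm{tr}\bigl(\nabla \di (\phi\circ\sigma)\bigr)(y_0) \;-\; \bigl\langle \bar\nabla\phi,\, H_y\bigr\rangle(x_0),
\end{equation}
where $H_y = \sum_j \II(f_j,f_j)$ is the mean curvature of the fiber. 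Note that $|\bar\nabla \phi|(x_0) = |\nabla(\phi\circ\sigma)|(y_0)$ by \eqref{gradient}, because $\bar\nabla \phi$ is horizontal.

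For part 1), $(\Rightarrow)$: by Theorem~\ref{teo_Laplacian_intro}, $Y$ has the viscosity strong Laplacian principle iff $F_f \cap E_\xi$ has the Khas'minskii property on $Y$ for some (each) $(f,\xi)$ of type $(f1+\xi 1)$. Pick such a potential $w$ on $Y\setminus K$; since $\pi$ has compact fibers, $\bar w = w\circ\pi$ is an exhaustion on $X\setminus \pi^{-1}(K)$. For a test $\phi$ as above, $\phi\circ\sigma$ is a test for $w$ at $y_0$, hence $\mathrm{tr}(\nabla \di(\phi\circ\sigma)) \ge f(\phi)$ and $|\nabla(\phi\circ\sigma)| \le \xi(\phi)$. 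Substituting into \eqref{eq:traceidentity} and using $|H_y|\le C$ yields
$$
\mathrm{tr}(\bar\nabla \di \phi)(x_0) \;\ge\; f(\phi) - C\,\xi(\phi) \;\doteq\; g(\phi), \qquad |\bar\nabla\phi|(x_0) \le \xi(\phi),
$$
so $\bar w \in (F_g \cap E_\xi)(X\setminus \pi^{-1}(K))$. By Proposition~\ref{prop_equivalenceahlfors_withgradient} we are free to choose $\xi$ small enough that $g$ still satisfies $(f1)$. Thus $F_g \cap E_\xi$ has the Khas'minskii property on $X$, and duality gives the strong Laplacian principle on $X$.

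For part 1), $(\Leftarrow)$: if $Y$ were to fail the strong Laplacian principle, by the Ahlfors/Liouville link in Proposition~\ref{teo_main_A_L} and Theorem~\ref{teo_Laplacian_intro} there would be $w\in (\widetilde{F_f}\cup \widetilde{E_\xi})(Y)$ bounded, non-negative and non-constant. Pulling back $\bar w = w\circ \pi$ (still bounded, non-negative, non-constant since $\pi$ is surjective with connected fibers can be arranged, or more simply $\bar w$ inherits these properties directly), for any test $\phi$ at $x_0$ constant on fibers either $|\bar\nabla\phi| \ge \xi(-\phi)$ (so $J^2_{x_0}\phi \in \widetilde{E_\xi}$ and we are done) or $|\bar\nabla\phi| < \xi(-\phi)$, in which case $J^2_{y_0}(\phi\circ\sigma)\in\widetilde{F_f}$, and \eqref{eq:traceidentity} gives
$$
\mathrm{tr}(\bar\nabla \di \phi)(x_0) \;\ge\; -f(-\phi) - C\,\xi(-\phi) \;\doteq\; -g(-\phi),
$$
i.e.\ $J^2_{x_0}\phi \in \widetilde{F_g}$. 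Again choosing $\xi$ small enough that $g(s)=f(s)+C\xi(s)$ fits $(f1)$, $\bar w\in(\widetilde{F_g}\cup\widetilde{E_\xi})(X)$ contradicts the strong Laplacian principle on $X$.

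For part 2), when fibers are minimal, $H_y\equiv 0$, so \eqref{eq:traceidentity} collapses to $\mathrm{tr}(\bar\nabla \di \phi)(x_0) = \mathrm{tr}(\nabla \di(\phi\circ\sigma))(y_0)$. The arguments above then go through with $g\equiv f$ and no eikonal coupling, giving the equivalence of the weak Laplacian principle on $Y$ and on $X$ via the characterization $\widetilde{F_f}$-Ahlfors $\Leftrightarrow$ $F_f$-Khas'minskii in Theorem~\ref{cor_bonito}. The main subtlety throughout is only bookkeeping of the conditions $(f1+\xi 1)$, ensured by the free choice in Proposition~\ref{prop_equivalenceahlfors_withgradient}; the genuine content is the clean identity \eqref{eq:traceidentity}, which is what obviates any hypothesis on $\mathcal A$.
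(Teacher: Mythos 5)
Your proposal is correct and takes essentially the same route as the paper: the paper's proof of this corollary is precisely to run the argument of Theorem \ref{teo_submersions} with $k=n$ and note that tracing $\bar\nabla\di\phi$ over a horizontal-plus-vertical orthonormal frame makes the integrability tensor $\mathcal{A}$ disappear and leaves only the fiber mean curvature, which is exactly your trace identity, after which the transplantation of Khas'minskii potentials (for the strong principle, with the eikonal and $g=f\mp C\xi$) and of bounded nonconstant dual-subharmonics (via Proposition \ref{teo_main_A_L}), combined with duality and Propositions \ref{prop_equivalenceahlfors}--\ref{prop_equivalenceahlfors_withgradient}, is the paper's argument as well. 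The only cosmetic point is that the pulled-back potentials directly verify the \emph{weak} Khas'minskii property relative to the fixed compact set $\pi^{-1}(K)$, which duality then upgrades, exactly as in the proofs of Theorems \ref{teo_immersions} and \ref{teo_submersions}.
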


\section{Ahlfors property for \texorpdfstring{$\widetilde E$}{} and Ekeland maximum principle}

We are going to prove Theorem \ref{teo_ekeland_intro} in the introduction, which we rewrite for the convenience of the reader.

\begin{theorem}\label{teo_ekeland}
Let $X$ be a Riemannian manifold. Then, the following statements are equivalent:
\begin{itemize}
\item[(1)] $X$ is complete.
\item[(2)] the dual eikonal $\widetilde E= \{|p|\ge 1\}$ has the Ahlfors property (viscosity, Ekeland principle).
\item[(3)] the eikonal $E=\{|p| \le 1\}$ has the Khas'minskii property.
\item[(4)] the infinity Laplacian $F_\infty \doteq \overline{\{A(p,p)>0\}}$ has the Ahlfors property.
\item[(5)] $F_\infty$ has the Liouville property.
\item[(6)] $F_\infty$ has the Khas'minskii property.
\item[(7)] $F_\infty$ the next strengthened Liouville  property:\vspace{1ex}
\begin{quote}
Any $F_\infty$-subharmonic $u \ge 0$ such that $|u(x)| = o\big(\varrho(x)\big)$ as $x$ diverges ($\varrho(x)$ the distance from a fixed origin) is constant.
\end{quote}
\end{itemize}
\end{theorem}


\begin{proof}
First, we prove that $(1)$ implies both $(3)$ and $(6)$. Fix a pair $(K,h)$ and choose a point $o \in K$. Since $X$ is complete, $-\varrho(x) \doteq -\dist(x,o)$ is an exhaustion on $M \backslash \{o\}$ which is both $E$ and $F_\infty$-subharmonic. This fact is well known, but we give a quick proof of it via Calabi's trick: for fixed $x_0\neq o$, $\gamma : [0,r(x_0)] \ra X$ a unit speed minimizing geodesic from $o$ to $x_0$, and $\delta \in (0, r(x_0))$, the function $\varrho_\delta(x) \doteq \delta + \dist\big(x, \gamma(\delta)\big)$ is smooth in a neighborhood $U$ around $x_0$ and $\varrho \le \varrho_\delta$ by the triangle inequality, with equality holding at $x_0$. Furthermore, $|\nabla \varrho_\delta|=1$ on $U$, hence $u$ is $F_\infty$-harmonic on $U$ by differentiating. Since any test function $\phi$ touching $-\varrho$ from above at $x_0$ is also a test for $-\varrho_\delta$, we get $J^2_{x_0}\phi \in (F_\infty \cap E)_{x_0}$. Now, using the properties of $h$, we can fix a function $g \in C^\infty(\R)$ such that 
\begin{align*}
&0<g'<1, \ g'' \ge 0 \ \text{ on } \, R, \qquad g(t) \ra -\infty \ \text{ as } \, t \ra -\infty, \\
&0 \ge g(-t) \ge \max_{\partial B_t\backslash K} h \ \text{ for } \, t > 0. 
\end{align*}
%
Concluding, $w(x) \doteq g(-\varrho(x))$ is both $E$ and $F_\infty$-subharmonic, and satisfies all the assumptions to be a Khas'minskii potential for $(K,h)$.\\[0.1cm] 
$(2) \Leftrightarrow (3)$.\\ 
Apply Theorem \ref{teo_main_withgradient} with the choice $F \equiv J^2(X)$. Clearly, $F \cap E^\eta=E^\eta$ is a subequation, $(\HH 1),(\HH 2)$ are immediate and $(\HH 3')$ holds\footnote{For $u \in E(\Omega)$, $\delta u \in E^\str(\Omega)$ when $\delta \in (0,1)$.}.\\[0.1cm]
$(3) \Rightarrow (1)$.\\
Fix a pair $(K,h)$ and a Khas'minskii potential $w$: 
$$
w \in E(X \backslash K), \qquad h \le w \le 0 \quad \text{on } \, X\backslash K, \qquad w(x) \ra -\infty \ \text{ as $x$ diverges.}
$$  
Let $\gamma$ be a unit speed geodesic issuing from some point of $X$, parametrized on a maximal interval $[0,T)$. We shall prove that $T=+\infty$. Suppose, by contradiction, that $T< +\infty$. By ODE theory, $\gamma$ leaves $K$ for $t$ large, hence we can suppose, up to cutting a piece of $\gamma$ and translating the parameter, that $\gamma([0,T)) \cap K = \emptyset$. The map $\gamma$ induces a map 
\begin{align*}
&\gamma^* : J^2(X\backslash K) \ra J^2([0,T)), \quad \text{given by} \\
&\gamma^*(x,u,\di u, \nabla^2 u) = \big(t,u \circ \gamma,(u\circ \gamma)',(u \circ \gamma)''\big),
\end{align*}
and we can consider the pull-back subequation $H = \overline{\gamma^*E}$, which coincides with the Eikonal subequation on $[0,T)$. By the restriction theorem (\cite{HL_restriction}, Thm. 8.2), the function $u \doteq w \circ \gamma$ satisfies $u \in E([0,T))$. Using that $u \le 0$ and $T< +\infty$, we can choose a line of type $l(t) = -2t + C$ ($C$ large enough) disjoint from the graph of $u$. Then, reducing $C$ up to a first contact point $t_0$ between $l$ and the graph of $u$, we would contradict $J^2_{t_0} l \in E$.\\[0.1cm]
$(4) \Leftrightarrow (5)$.\\
It is a direct consequence of Proposition \ref{teo_main_A_L}.\\[0.1cm]
$(6) \Rightarrow (5)$.\\
We follow the proof of $(K_\weak) \Rightarrow (A)$ in Theorem \ref{teo_main}, but with some differences. Assume, by contradiction, that there exists $u \ge 0$ non-constant, bounded and $F_\infty$-subharmonic, and choose a small, relatively compact set $K$ and $\eps>0$ such that $\max_K u + 2\eps < \sup_X u - 2\eps$. Fix $x_0 \in X \backslash K$ for which $u(x_0)> \sup_U u -\eps$, an open set $U$ with $K \cup \{x_0\} \Subset U$, and choose $h \in C(X\backslash K)$ satisfying  
$$
h <0 \quad \text{on  } \, X \backslash K, \qquad h \ge -\eps \ \text{ on } \, U \backslash K, \qquad h(x) \ra -\infty \ \text{ as $x$ diverges.}
$$
Let $w$ be an $F_\infty$-subharmonic, Khas'minskii potential for $(K,h)$, and let $\Omega$ be large enough that $w \le -\sup_U u$ on $X \backslash \overline{\Omega}$. We compare
$$
w - \eps \in F_\infty(\overline{\Omega \backslash K}), \qquad u - \max_K u \in F_\infty(\overline{\Omega \backslash K}),
$$
with the aid of Theorem \ref{compa_inftyLaplacian}: from $w- \eps + u - \max_K u \le 0$ on $\partial \Omega \cup \partial K$ we deduce $w-\eps +u - \max_K u \le 0$ on $\Omega \backslash K$. However,
\begin{align*}
	w(x_0) - \eps + u(x_0) - \max_K u &> -\eps -\eps + \sup_U u - \eps - \max_K u \\
	&= \sup_U u - \max_K u - 3\eps > \eps,
\end{align*}
a contradiction.\\[0.1cm]
$(4) \Rightarrow (1)$.\\
Fix a relatively compact, open set $K$ and a smooth exhaustion $\{\Omega_j\} \uparrow X$ with $K \Subset \Omega_j$ for each $j$. Let $u_j$ solve 
\begin{equation}\label{equation}
\begin{cases}
u_j \ \text{ is $F_\infty$-harmonic on $\Omega_j \backslash K$,} \\
u_j = 0 \quad \text{on } \, \partial K, \quad u_j =1 \quad \text{on } \, \partial \Omega_j.
\end{cases}
\end{equation}
The existence of a (unique) solution $u_j \in \lip(\Omega_j \backslash K)$ follows by combining, with minor modification, results in \cite{juutinen, crandall_visit}\footnote{More precisely, the proof of the existence of $u$ can be obtained as follows: one begins by considering the existence problem for an absolutely minimizing Lipschitz (AML) extension $u \in \lip(\Omega_j \backslash K)$ of the boundary datum $\beta = 0$ on $\partial K$, $1$ on $\partial \Omega_j$ (see \cite{crandall_visit} for definitions). Existence is guaranteed in \cite{juutinen}. Next, the reader can follow the proof of \cite[Thm. 2.1]{crandall_visit} to show that an AML extension $u$ is $F_\infty$-harmonic. The argument, which uses the equivalence between $u \in $AML and $u$ satisfying the comparison with cones property, does not use specific properties of $\R^m$, and is therefore applicable to the manifold setting with few modifications.}. Moreover, 
\begin{equation}\label{problema}
\|\di u_j\|_\infty = \min\Big\{ \|\di v\|_{\infty} \ : \ v \in \lip(\Omega_j \backslash K), \ v=u \ \text{ on } \, \partial \Omega_j \cup \partial K \Big\}
\end{equation}
and $0 \le u_j \le 1$ in view of the comparison Theorem \ref{compa_inftyLaplacian}. Extending $u_j$ with one outside of $\Omega_j$, $\{u_j\}$ is a sequence of uniformly Lipschitz functions with non-increasing Lipschitz constant, and thus it subconverges locally uniformly to a Lipschitz limit $u_\infty \ge 0$. By Proposition \ref{prop_basicheF}, $u_\infty$ is $F_\infty$-harmonic and, since $u_\infty=0$ on $\partial K$, $u_\infty=0$ on $X \backslash K$ because of the Ahlfors property. Now, pick a point $o \in K$ and a unit speed geodesic $\gamma : [0,T) \ra X$. If, by contradiction, $T<+\infty$, then as before we can suppose that $\gamma((0,T)) \subset X \backslash K$ and $\gamma(0)\in \partial K$. Define the functions $w_j = u_j \circ \gamma$, and note that $w_j(0)=0$ and $w_j =1$ after some $T_j<T$. Integrating, $1/T \le \|w_j'\|_\infty \le C$ on $[0,T)$, for each $j$. However, $w_j \ra 0$ locally uniformly, a contradiction.\\[0.1cm]
$(7) \Rightarrow (5)$. Obvious.\\[0.1cm]
$(1) \Rightarrow (7)$. We proceed as in $(6)\Rightarrow (5)$, observing that the distance function $-r$ from a point $o \in K$ is $F_\infty$-subharmonic: we use $-\delta r$ instead of $w$, for $\delta$ small enough that $-\delta r(x_0) \ge -\eps$. Once $\delta$ is fixed, the sublinear growth in $(7)$ guarantees that $-\delta r \le \sup_{U} u$ on $X \backslash \Omega$, for $\Omega$ large. The conclusion is reached by applying the comparison for $F_\infty$ as in $(6)\Rightarrow (5)$.
\end{proof}

\begin{remark}
\emph{The characterization $(1) \Leftrightarrow (4)$ has been inspired by Theorems 2.28 and 2.29 in \cite{pigolasetti_ensaio}, stating that $X$ is complete if and only if
\begin{equation}\label{capac_infty}
\mathrm{cap}_\infty(K) \doteq \inf\Big\{ \|\di u\|_\infty, \  : \ u \in \lip_c(X), \ u=1 \ \text{ on } \, K\Big\}
\end{equation}
is zero for each compact set $K$. Since solutions $u_j$ of \eqref{equation} satisfy \eqref{problema}, if $\mathrm{cap}_\infty(K)=0$ then automatically $u_\infty=0$. Indeed, 
$$
\begin{array}{ll}
\text{from $u_j \!\ra\! u$ locally uniformly,} &   \disp \|\di u_\infty\|_\infty \!\le\! \liminf_{j \ra +\infty} \|\di u_j\|_\infty =\!\! \lim_{j \ra +\infty} \|\di u_j \|_\infty, \\[0.3cm]
\text{directly by \eqref{problema} and \eqref{capac_infty},} &   \disp \mathrm{cap}_\infty(K) \equiv \lim_{j \ra +\infty} \|\di u_j\|_\infty.
\end{array}
$$
Therefore, the completeness of $X$ follows from the argument at the end of $(4)\Rightarrow (1)$, giving an alternative proof of the ``if part" of the result in \cite{pigolasetti_ensaio}. 
}
\end{remark}

\section{Martingale completeness}
In this section, we consider the subequations describing the viscosity, weak and strong Hessian principles and prove Theorem \ref{teo_hessianmax_intro}. We recall that 
$$
E_\xi = \overline{\big\{ |p| < \xi(r) \big\}} \qquad \text{has dual} \qquad \widetilde{E_\xi} = \overline{\big\{ |p| > \xi(-r) \big\}},
$$
and that $(f1),(\xi 1)$ are defined in \eqref{def_f1xi1}.

\begin{theorem}\label{teo_hessianmax}
Consider the subequations 
$$
F = \{ \lambda_1(A) \ge -1\} \qquad \text{and} \qquad F_f = \{ \lambda_1(A) \ge f(r)\}, 
$$
for some $f \in C(\R)$ non-decreasing. Then, 
\begin{itemize}
\item[-] AK-duality holds both for $F_f$ and for $F_f\cap E_\xi$, for some (equivalently, each) $(f,\xi)$ satisfying $(f1+\xi 1)$. 
\end{itemize}
Moreover, the following properties are equivalent:
\begin{itemize}
\item[(1)] $X$ satisfies the viscosity, weak Hessian principle;
\item[(2)] $X$ satisfies the viscosity, weak Hessian principle for semiconcave  functions;\item[(3)] $\widetilde{F_f}$ has the Ahlfors property for some (each) $f$ of type $(f1)$;
\item[(4)] $X$ satisfies the viscosity, strong Hessian principle;
\item[(5)] $\widetilde{F_f} \cup \widetilde{E_\xi}$ has the Ahlfors property, for some (each) $(f,\xi)$ satisfying $(f1+\xi 1)$;
\item[(6)] $F_f \cap E_\xi$ has the Khas'minskii property with $C^\infty$ potentials, for some (each) $(f,\xi)$ satisfying $(f1+\xi 1)$.
\end{itemize}
In particular, each of $(1), \ldots, (6)$ implies that $X$ is geodesically complete and martingale complete.
\end{theorem}

\begin{remark}
\emph{It is worth to recall that M. Emery in \cite[Prop. 5.36]{emery} proved that if $X$ is martingale complete, then $X$ is necessarily (geodesically) complete. Theorem \ref{teo_hessianmax} improves on \cite{prs_overview}, where partial versions of some of the implications were shown. More precisely, 
\begin{itemize}
\item[-] if $X$ has the $C^2$, weak Hessian principle, then $X$ is non-extendible (that is, not isometric to a proper, open subset of a larger manifold), a condition weaker than the geodesic completeness. 
\item[-] If $\{ \lambda_1(A) \ge f(r)\}$ has the Khas'minskii property with a $C^2$ potential, then $X$ is complete.
\end{itemize}
}
\end{remark}


\begin{proof}[Proof of Theorem \ref{teo_hessianmax}] First, AK-duality holds for $F_f$ and $F_f \cap E_\xi$ as a particular case of Theorem \ref{cor_bonito}. The equivalences $(1) \Leftrightarrow (3)$ and $(4) \Leftrightarrow (5)$, as well as $(5) \Rightarrow (3)$, are consequences of Propositions \ref{prop_equivalenceahlfors}, \ref{prop_equivalenceahlfors_withgradient} and the rescaling properties of $F$, $E$, and $(1) \Rightarrow (2)$ is obvious. By AK-duality, $(6)$ implies both $(3)$ and $(5)$. We are left to prove $(2) \Rightarrow (3)$, and $(1) \Rightarrow (6)$ for each $(f,\xi)$ enjoying $(f1+\xi 1)$.\\[0.2cm]
$(1) \Rightarrow (6)$.\\
We split the proof into the following steps:
\begin{itemize}
\item[$(\alpha)$] If $(1)$ holds, then for each $\eps>0$ the Khas'minskii property holds for $F_f \cap E_\xi$ with $f = -\eps$, $\xi = \eps$, and with $C^\infty$ potentials; \vspace{0.1cm}
\item[$(\beta)$] If $(\alpha)$ holds, then $F_f \cap E_\xi$ has the Khas'minskii property for each $(f,\xi)$ enjoying $(f1+\xi 1)$, with $C^\infty$ potentials.
\end{itemize}
We first prove $(\beta)$: fix $(f,\xi)$ and a pair $(K,h)$. By $(\alpha)$, for each $\eps>0$ we can choose a smooth Khas'minskii potential for $(K,h/2)$: 
\begin{equation}\label{potK}
\begin{array}{l}
\disp h/2 \le w \le 0 \quad \text{on } \, X \backslash K, \qquad w(x) \ra -\infty \quad \text{as $x$ diverges,} \\[0.2cm]
\disp |\nabla w| \le \eps, \qquad \nabla^2 w \ge -\eps \metric \qquad \text{on } \, X \backslash \overline{K}.
\end{array}
\end{equation}
Fix $\delta>0$ such that $w-\delta > h$, which is possible since $h -w \le h/2$ has a negative maximum on $X \backslash K$. Next, up to rescaling $w$ with a small, positive constant, we can reduce the value of $\eps$ in \eqref{potK} as small as we wish, still keeping the validity of $w-\delta > h$. In particular, because of $(f1),(\xi 1)$ we can reduce $\eps$ up to satisfy  
\begin{equation*}
-\eps \ge f(-\delta), \qquad \eps \le \xi(-\delta).
\end{equation*}
Setting $w_\delta \doteq w-\delta$ and recalling that $f$ is non-decreasing and $\xi$ is non-increasing, we obtain
\begin{align*}
&\disp h < w_\delta \le -\delta \quad \text{on } \, X \backslash K, \qquad w_\delta(x) \ra -\infty \quad \text{as $x$ diverges,} \\
&\disp |\nabla w_\delta| \le \eps \le \xi(-\delta) \le \xi(w_\delta), \\
&\nabla^2 w_\delta \ge -\eps \metric \ge f(-\delta)\metric \ge f(w_\delta) \metric \qquad \text{on } \, X \backslash \overline{K}.
\end{align*}
Hence, $w_\delta$ is the smooth Khas'minskii potential for $F_f \cap E_\xi$ and the pair $(K,h)$.

To conclude, we prove $(\alpha)$. By $(1)$, Proposition \ref{prop_equivalenceahlfors} and the rescaling properties of $F = \{ \lambda_1(A) \ge -1\}$, the Ahlfors property holds for each $\widetilde{F_f}$ with $f$ enjoying $(f1)$ and $f > -1$. Fix one such $f$. By AK-duality, for a pair $(K,h)$ we can choose a small geodesic ball $B \Subset K$ and take an $F_f$-subharmonic Khas'minskii potential $w_0$ for $(B,h/2)$, which because of $f > -1$ satisfies
\begin{equation}\label{potK_weak}
\begin{aligned}
&\disp h/2 \le w_0 \le 0 \quad \text{on } \, X \backslash B, \\
&w_0(x) \ra -\infty \quad \text{as $x$ diverges,} \qquad w_0 \in F(X \backslash B).
\end{aligned}
\end{equation}
The idea is to approximate $w_0$ uniformly with a semiconvex, $C^\infty$-function $\bar w$, obtain from $\bar w$ a smooth function $w$ which has bounded gradient, and modify $w$ to get the desired Khas'minskii potential.

For each $x_0 \in X\backslash \overline{B}$, take a small neighborhood $U \subset X \backslash \overline{B}$ around $x_0$ such that $\nabla^2 \varrho_{x_0}^2 > 3/2 \metric$ on $\overline{U}$ ($\varrho_{x_0}$ the distance from $x_0$). Using \eqref{potK_weak},  $w_0+\varrho_{x_0}^2$ has positive Hessian in viscosity sense on $U$, hence it is convex\footnote{We recall that $u \in\USC(X)$ is called convex if its restriction to geodesics is a convex function on $\R$. The proof that a viscosity solution $u$ of $\nabla^2 u \ge 0$ is convex can be found in \cite[Prop. 2.6]{HL_primo}, and also as a consequence of the restriction Theorem 8.2 in \cite{HL_restriction}.}. In particular, $w_0 \in C(X \backslash \overline{B})$ and, according to the terminology in \cite{greene_wu}, $w_0$ is $(-1)$-convex. Proposition 2.2 in \cite{greene_wu} guarantees that $w_0$ can be uniformly approximated on $X \backslash \overline{B}$ by smooth, $(-1)$-convex functions: up to translating downwards, we can therefore pick $\bar w \in C^\infty(X \backslash K)$ satisfying
\begin{equation}\label{ipobarw}
\begin{aligned}
&h \le \bar w < 0 \quad \text{on } X \backslash K, \qquad \bar w(x) \ra -\infty \ \ \text{ as $x$ diverges,} \\
&\nabla^2 \bar w > -\metric \quad \text{ on } \, X \backslash K. 
\end{aligned}
\end{equation}
Extend $\bar w, h$ smoothly on the entire $X$ in such a way that $h \le \bar w \le 0$ on $X$, and define $g \doteq -\bar w+1$. Then, for some constant $\lambda>0$,
\begin{equation}\label{bonita_ODE!!}
\begin{aligned}
&1 \le g \le 1 + |h| \quad \text{on } \, X, \qquad g(x) \ra +\infty \quad \text{if $x$ diverges,} \\
&\nabla^2 g \le \metric \le g \metric \quad \text{on } \, X.  
\end{aligned}
\end{equation}
We claim that $\log g$ has bounded gradient: more precisely, we want to prove that $|\nabla g| \le g$ on $X$. To this end, for $x \in X$ such that $|\nabla g(x)| \neq 0$, we consider a maximal flow curve $\gamma$ for $\nabla g$ passing through $x$ at time $0$. Since flow lines do not intersect, $|\nabla g| \neq 0$ on the entire $\gamma$ and we can reparametrize $\gamma$ according to the arclength $s \in (a,b)$, with $\gamma(0)=x$. We claim that $a = -\infty$. Otherwise, by ODE theory the curve $\gamma$ would diverge (i.e. leave every compact set) as $s \ra a^+$, but this is impossible since the second condition in \eqref{bonita_ODE!!} implies that $\gamma((a,0])$ is confined into the compact set $\{ g \le g(x)\}$. Define $u \doteq g \circ \gamma$. By \eqref{bonita_ODE!!}, 
$$
u'(s) = \big|\nabla g(\gamma(s))\big| >0, \qquad u''(s) \le u(s) \quad \text{on } \, (-\infty, b),
$$
and $u \ra +\infty$ as $s \ra b$. For each $t < 0$ fixed, we define
$$
\phi_t(s) \!=\! \cosh\big(s\!-\!t \big) \!+\! \frac{u'(t)}{u(t)} \sinh\big(s\!-\!t \big), \quad \text{which solves } \ \phi_t''(s) = \phi_t(s) \quad \text{on } \, \R.
$$
Integrating $(u'\phi_t - u\phi_t')' \le 0$ on $[t,0]$ and using the positivity of $u,\phi_t$ there, we get
\begin{align}\label{good}
	\frac{|\nabla g|}{g}(x) = \frac{u'(0)}{u(0)} &\le \frac{\phi_t'(0)}{\phi_t(0)} = \frac{\sinh(- t)+ u'(t)/u(t) \cosh(-t)}{\cosh(- t)+ u'(t)/u(t) \sinh(-t)}\\
	\notag &\le \tanh(- t) + \frac{u'(t)}{u(t)}. 
\end{align}
Since $u$ is increasing and bounded from below as $s \ra -\infty$, there exists a sequence $\{t_j\} \ra -\infty$ with $u'(t_j) \ra 0$ as $j \ra +\infty$. Evaluating \eqref{good} on $t_j$, letting $j \ra +\infty$ and recalling that $u \ge 1$, we deduce $|\nabla g| \le g$ at $x$, as claimed.

To conclude we note that, for $\eps \in (0,1)$, $w = -\eps \log g$ satisfies
\begin{align*}
	&h \le w \le 0 \quad \text{on } \, X\backslash K, \quad w(x) \ra -\infty \ \  \text{ as $x$ diverges,}  \\
	&|\nabla w| \le \eps, \quad \nabla^2 w \ge - \eps \metric \ \ { on } \ \ X,
\end{align*}
concluding the proof of $(\alpha)$.\\[0.2cm]
$(2) \Rightarrow (3)$.\\
Suppose, by contradiction, that the Ahlfors property does not hold for $\widetilde{F_f} = \{ \lambda_m(A) \ge -f(-r)\}$, for some $f$ satisfying $(f1)$. In view of Proposition~\ref{prop_equivalenceahlfors}, we can suppose that $f$ is strictly increasing and that $f \ge -1$. By Proposition \ref{teo_main_A_L} the Liouville property is violated, and we can take $u \in \widetilde{F_f}(X)$ non-constant, bounded and non-negative. Up to fixing a level $c \in (\inf_X u,\sup_X u)$ and replacing $u$ with $\max\{u-c,0\}$, we can also assume that $u$ is zero on some small, open ball $B$. Our aim is to produce a non-constant function $v \ge u$ which is $\widetilde{F_f}$-harmonic and non-constant. The construction is close to the one used in Theorem \ref{teo_main}: to begin with, by gluing inside of the ball $B$ we produce the extended manifold $\cal X$ and the sets $\cal V, \cal K$ as in Step 1 of the proof of Theorem \ref{teo_main}, and we extend $F$ and $u$ in the obvious way. Since $\partial \cal V$ is a convex Euclidean sphere, it is $F_f$-convex at height zero by Proposition \ref{prop_condition_F_convex}. Let $\cal D_j \uparrow \cal X$ be a smooth exhaustion  containing $\overline{\cal V}$. Define $u_\infty \doteq \sup_{\cal X} u$, and consider the Dirichlet problem
\begin{equation}\label{obstacolino}
 \begin{cases}
v_j \ \text{ is $\widetilde{F_f}$-harmonic on $\cal D_j \backslash \cal V$,} \\
v_j = 0 \ \  \text{ on } \, \partial \cal V, \quad v_j = u_\infty \ \ \text{ on } \, \partial \cal D_j.
\end{cases}
\end{equation}
To check that the problem has a solution, as usual we consider the Perron's class and envelope

$$
\begin{array}{l}
\disp \mathcal{F}_j = \Big\{v\in \widetilde{F_f}(\overline{\cal D_j \backslash \cal V}) \  :  \ v \le 0 \ \text{ on } \, \partial \cal V, \quad v \le u_\infty \ \text{ on } \, \partial \cal D_j\Big\}, \\[0.3cm]
v_j(x) = \sup\big\{v(x): v\in \mathcal{F}_j\big\}. 
\end{array}
$$
Observe that $u \in \mathcal{F}_j$. As for boundary convexity, from $\{ \tr(A) \ge -mf(-r)\} \subset \widetilde{F_f}$ we deduce that the boundary of each smooth open set in $\cal X$ is $\widetilde{F_f}$-convex (since it is so for $\{\tr(A) \ge -mf(-r)\}$, see, e.g. Subsection 7.4 in \cite{HL_existence}). Therefore, $\partial \cal V$ is both $F_f$ and $\widetilde{F_f}$-convex. Using that the function $-u_\infty$ is strictly $F_f$-subharmonic, by weak comparison $v_j \le u_\infty$. Hence, $\partial \cal D_j$ has good barriers from below (by $\widetilde{F_f}$-convexity) and from above ($u_\infty$). Being $f$ strictly increasing, strict approximation and thus full comparison holds for $F_f$, and we are in the position to follow the steps in Theorem \ref{obstaclethm} (or in \cite[Thm. 12.4]{HL_dir}) to ensure the existence of a solution to \eqref{obstacolino}. Now, from 
$$
- v_j \in F_f(\cal D_j\backslash \cal V), \qquad F_f = \big\{ \lambda_1(A) \ge f(r) \big\} \subset \big\{ \lambda_1(A) \ge -1 \big\},
$$
we obtain that $-v_j$ is $(-1)$-convex for each $j$, hence it is locally Lipschitz. Because of comparison,  $u \le v_{j+1} \le v_j \le u_\infty$ for each $j$, thus the Lipschitz bounds are locally uniform (see \cite[Sect. 6.3]{evansgariepy})\footnote{The result is stated for $\R^m$, but can easily be modified for small, regular balls in a Riemannian manifold.}. Consequently, $\{v_j\}$ converges locally uniformly to some $v \ge u$ which is $\widetilde{F_f}$-harmonic on $\cal X \backslash \cal V$ (by Proposition \ref{prop_basicheF}). In particular, $v$ is semiconcave. Since $v=0$ on $\partial \cal V$, $v$ is not constant and thus it does not attain a positive local maximum (Lemma \ref{lem_maxprinc}). In particular, $\max_{\overline{\cal K} \backslash \cal V} v < \sup_{\cal X\backslash \cal V} v$. Having fixed
$$
c \in \left(\max_{\overline{\cal K} \backslash \cal V} v, \sup_{\cal X\backslash \cal V}v\right), 
$$
the function $v_c \doteq \max\{ v-c,0\}$ is $\widetilde{F_f}$-subharmonic, semiconcave on the set $\cal U \doteq \{v>c\}$, non-constant, and zero on $\partial \cal U$. Concluding, from $\cal U \subset \cal X\backslash \cal K$, we can transplant $\cal U$ and $v_c$ on $X \backslash K$ to contradict the Ahlfors property in $(2)$, as claimed.

Having shown the equivalence between $(1), \ldots, (6)$, we observe that the Ahlfors property for $\widetilde{F_f} \cup \widetilde{E_\xi}$ with $\xi \le 1$ implies that for $\widetilde{E}$, hence $X$ is geodesically complete by Theorem \ref{teo_ekeland}. Since $(5)$ implies the existence of a $C^2$ Khas'minskii potential in \eqref{khasmi_Hessian_original} (just choose $f,\xi$ bounded), the martingale completeness of $X$ follows via Emery's theorem in \cite[Prop. 5.37]{emery}.
\end{proof}

\begin{remark}\label{prop_weak_classicalvisco}
\emph{The argument in $(2) \Rightarrow (3)$ can be applied to the subequation $F_f = \{ \tr(A) \ge f(r)\}$ in place of $\{ \lambda_1(A) \ge f(r)\}$: in view of elliptic estimates for semilinear equations, for smooth $f$ the resulting functions $v_j$ are $C^2$ and converge to a $C^2$-function $v$ that, by construction, contradicts the Ahlfors property for $\widetilde{F_f}$. In particular, this shows equivalence between the viscosity, weak Laplacian principle and the classical formulation of it for $C^2$ functions in \cite{prsmemoirs}, that is, the stochastic completeness of $X$.
}
\end{remark}

\appendix

\section[Remarks on comparison for quasilinear\\ \mbox{}\hspace{4.7em} equations]{Remarks on comparison for quasilinear equations}\label{appendix_1}

In this section, we prove Proposition \ref{comparison_examples} and give some remarks on comparison for quasilinear equations. First we need to fix some notation. Let $\varrho$ denote the distance function in $X$ and, for a pair of points $x, y \in X$ with $\varrho(x,y)< \min\{\mathrm{inj}(x),\mathrm{inj}( y)\}$ ($\mathrm{inj}(z)$ the injectivity radius at $z$), let $\mathcal{L}: T_{x}X \ra T_{y}X$ be the parallel translation along the unique segment from $x$ to $y$. Set $\cal L^*$ to denote the induced pull-back on covariant tensors, whence for instance $\cal L^*A(V,W) \doteq A(\cal LV, \cal LW)$ for $A \in \Sym^2(T_yX)$. Define $\varrho_x(y) \doteq \varrho(x,y)$ for $x$ fixed, and similarly for $\varrho_y$.

\begin{proposition}
Fix $(f,\xi)$ satisfying $(f1 + \xi 1)$, and assume that
$$
\text{$f$ is strictly increasing on $\R$ $\quad$ and $\quad$ $\xi$ is strictly decreasing on $\R$}. 
$$
\begin{itemize}
\item[$i)$] If $F_f \subset J^2(X)$ is locally jet-equivalent to one of the examples in $(\EE 2), \ldots, (\EE 6)$ via locally Lipschitz bundle maps, then the bounded comparison holds for $F_f$ and for $F_f \cap E_\xi$. \vspace{0.1cm}
\item[$ii)$] If $F_f$ is the universal, quasilinear subequation in $(\EE 7)$ with eigenvalues $\{\lambda_j(t)\}$ in \eqref{ipo_a}, then the bounded comparison holds in the following cases: 
\begin{itemize}
\item[-] for $F_f$, provided that $\lambda_j(t) \in L^\infty(\R_0^+)$ for $j \in \{1,2\}$; 
\item[-] for $F_f \cap E_\xi$, provided that $\lambda_j(t) \in L^\infty_{\mathrm{loc}}(\R_0^+)$ for $j \in \{1,2\}$. 
\end{itemize}
\item[$iii)$] If $F_f$ is the universal subequation in $(\EE 8)$, then the bounded comparison holds for $F_f$ and for $F_f \cap E_\xi$. 
\end{itemize}
Furthermore, for each of $i), \ldots, iii)$, comparison also holds for the obstacle subequations $F_f^0$ and $F_f^0 \cap E_\xi$.
\end{proposition}


\begin{proof}
In the assumptions of $i)$, by Theorem \ref{teo_importante!!} and Proposition \ref{lem_weakcomp_congradient} both $F_f$ and $F_f\cap E_\xi^\eta$ satisfy the weak comparison. By Proposition \ref{prop_unifcontinuous}, $\mathscr{F}$ is uniformly continuous, and therefore Theorem \ref{thm_comparison_examples} and Proposition \ref{thm_comparison_examples_withgradient} ensure the strict approximation property for $F_f$ and $F_f \cap E_\xi^\eta$. Hence, the result follows from Theorem \ref{localwcestrict} and Lemma \ref{wcobstacle}.

On the other hand, to reach the result for $ii)$ and $iii)$, comparison is more conveniently checked via the Riemannian theorem on sums in \cite{azagraferrerasanz}. Let $K \subset X$ be compact, $\mathbb{F} \subset J^2(X)$ be a subequation, $u \in \mathbb{F}(K)$, $v \in \widetilde{\mathbb{F}}(K)$, with $\vert u\vert, \vert v \vert \leq R$. Suppose that $u+v \le 0$ on $\partial K$ but $\max_K(u+v) = c_0>0$. Let $-\kappa \le 0$ be a lower bound for the sectional curvatures of planes over points of $K$. Then, by the Riemannian theorem on sums, there exist sequences of points $(x_\alpha, y_\alpha) \in \inte K \times \inte K$ with the following properties:
\begin{itemize}
\item[(i)] $(x_\alpha,y_\alpha) \ra (\bar x, \bar x)$ as $\alpha \ra +\infty$, for some maximum point $\bar x \in \inte K$ of $u+v$; 
\item[(ii)] $\alpha \varrho(x_\alpha,y_\alpha)^2 \ra 0$ as $\alpha \ra +\infty$; 
\item[(iii)] $u(x_\alpha) + v(y_\alpha) \doteq c_\alpha \downarrow c_0$; 
\item[(iv)] there exist $A_\alpha \in \Sym^2(T_{x_\alpha}X), B_\alpha \in \Sym^2(T_{y_\alpha}X)$ such that 
$$
\begin{array}{lcl}
(r_\alpha, p_\alpha, A_\alpha) \doteq \big(u(x_\alpha), \alpha \varrho_{x_\alpha} \di \varrho_{x_\alpha}, A_\alpha\big) \in \mathbb{F}_{x_\alpha}, \\[0.2cm]
(s_\alpha, q_\alpha, B_\alpha) \doteq \big(v(y_\alpha), \alpha \varrho_{y_\alpha} \di \varrho_{y_\alpha}, B_\alpha\big) \in \widetilde{\mathbb{F}}_{y_\alpha}, \\[0.2cm]
A_\alpha + \cal L_\alpha^*B_\alpha \le \kappa \alpha \varrho(x_\alpha,y_\alpha)^2 I,
\end{array}
$$
\end{itemize}
where $\cal L_\alpha$ is the parallel translation along the (unique, for $\alpha$ large) geodesic from $x_\alpha$ to $y_\alpha$.
Since $\cal L_\alpha$ is orthonormal, $\cal L_\alpha^*(\di \varrho_{y_\alpha}) = -\di \varrho_{x_\alpha}$ and $\mathbb{F}$ is universal, 
$$
\cal L_\alpha^*(s_\alpha, q_\alpha, B_\alpha) = (s_\alpha, -p_\alpha, \cal L_\alpha^* B_\alpha) = (-r_\alpha + c_\alpha, -p_\alpha, \cal L_\alpha^* B_\alpha) \in \widetilde{\mathbb{F}}_{x_\alpha}. 
$$
Now, suppose first that $\mathbb{F}$ is the subequation in $(\EE7)$. We can assume without loss of generality that $\vert p_\alpha\vert \not= 0$. By $(iii)$, the first in $(iv)$, and positivity and negativity properties,
\begin{align*}
\tr\big[T(p_\alpha)A_{\alpha}\big] & \ge  f(r_\alpha),\\
\tr\big[T(-p_\alpha)\left(\kappa \alpha \varrho(x_\alpha,y_\alpha)^2 I-A_{\alpha}\right)\big] & \ge  -f(r_\alpha - c_\alpha).
\end{align*}
Summing the above inequalities and using that $T(p)=T(-p)$ we obtain
\begin{equation}\label{ineq_k_appendix}
\kappa \alpha \varrho(x_\alpha,y_\alpha)^2 \tr\big[T(p_\alpha)I\big] \ge f(r_\alpha)-f(r_\alpha - c_\alpha).
\end{equation}	
Assuming that $f$ is strictly increasing and satisfies $(f1)$, since $c_{\alpha} \downarrow c_0 > 0$,
$$ \inf_{\vert r_\alpha\vert \leq R}\left(f(r_\alpha) - f(r_\alpha -c_\alpha)\right) > 0.$$
On the other hand, being $\lambda_j \in L^{\infty}(\R_0^+)$, by $(ii)$ the left side in \eqref{ineq_k_appendix} goes to zero. Contradiction. When we consider $\mathbb{F}\cap \mathbb{E}$ arguing as above we have
\begin{eqnarray*}
(r_\alpha, p_\alpha, A_\alpha) \doteq \big(u(x_\alpha), \alpha \varrho_{x_\alpha} \di \varrho_{x_\alpha}, A_\alpha\big) \in \left(\mathbb{F}\cap \mathbb{E}\right)_{x_\alpha}, \\
\cal L_\alpha^*(s_\alpha, q_\alpha, B_\alpha) = (s_\alpha, -p_\alpha, \cal L_\alpha^* B_\alpha) = (-r_\alpha + c_\alpha, -p_\alpha, \cal L_\alpha^* B_\alpha) \in (\widetilde{\mathbb{F}\cap\mathbb{E}})_{x_\alpha}.
\end{eqnarray*}
If for some $\alpha$, $(-r_\alpha + c_\alpha, -p_\alpha, \cal L_\alpha^* B_\alpha) \in \widetilde{\mathbb{E}}_{x_\alpha}$, then
$$ \xi(r_\alpha - c_\alpha) \leq \vert p_\alpha\vert \leq \xi(r_\alpha) ,$$
that contradicts the fact that $\xi$ is strictly decreasing and $c_\alpha > 0$. Therefore, $\vert p_\alpha\vert$ is uniformly bounded and in order to conclude from \eqref{ineq_k_appendix} we only need  $\lambda_j \in L_{\mathrm{loc}}^{\infty}(\R_0^+)$.

In a similar way, if $\mathbb{F}$ is the universal Riemannian subequation in $(\EE8)$, we can assume $\vert p_\alpha\vert \not= 0$ and compute
\begin{align*}
\vert p_\alpha\vert^{-2}A_{\alpha}(p_\alpha ,p_\alpha) & \ge  f(r_\alpha),\\
\vert -p_\alpha\vert^{-2}\left(\kappa \alpha \varrho(x_\alpha,y_\alpha)^2 I-A_{\alpha}\right)(p_\alpha ,p_\alpha) & \ge  f(r_\alpha).
\end{align*}
Summing up, we get
$$\kappa \alpha \varrho(x_\alpha,y_\alpha)^2 > f(r_\alpha) - f(r_\alpha -c_\alpha) \ge \inf_{\vert r_\alpha\vert \leq R}\left(f(r_\alpha) - f(r_\alpha -c_\alpha)\right) > 0,$$
and the contradiction follows by $(ii)$. The case $\mathbb{F}\cap \mathbb{E}$ follows easily as above.

Furthermore, the comparison also holds for the obstacle subequation $\mathbb{F}^0$ (resp. $(\mathbb{F\cap E})^0$) because being $0$ a $\widetilde{\mathbb{F}}$-subharmonic function (resp. $\widetilde{\mathbb{F\cap E}}$-subharmonic), if $v \in \widetilde{\mathbb{F}^0}$ (resp. $v \in \widetilde{(\mathbb{F\cap E})^0}$), then $\widetilde{v} \doteq \max\{v,0\} \in \widetilde{\mathbb{F}}$ (resp. $\widetilde v \in \widetilde{\mathbb{F\cap E}}$). 
\end{proof}

In view of \eqref{ineq_k_appendix}, the conditions on $\{\lambda_j(t)\}$ in $ii)$ can be removed when the lower bound on the sectional curvature of $X$ is $\kappa = 0$, and this strongly suggests that the restriction is merely technical, as discussed in the section ``open problems". In some further cases, one can get rid of the restriction on $\{\lambda_j(t)\}$ and include other relevant classes of operators. We briefly examine two of them.

\subsubsection*{$k$-Laplacian type operators}
We observe that the $k$-Laplace operator, $1 \le k < +\infty$, is excluded in $ii)$ (when there is no gradient control) and allowed just for $k \ge 2$ if coupled with $E_\xi$. However, in the first case, if $k>1$ one could still obtain the validity of the bounded comparison by showing the equivalence between viscosity and weak solutions of $\Delta_k u \ge f(u)$, for instance following the ideas in \cite{julinjuutinen}, and then resorting on the standard comparison for weak solutions. For more general classes of operators to which the above procedure might apply, see \cite{fangzhou}. It is worth to point out that a slightly less general version of the AK-duality has already been shown for $k$-Laplacian type operators in \cite{marivaltorta}, by using weak solutions from the very beginning. However, clearly the approach does not allow to cover the case when a gradient condition is included.
\subsubsection*{Quasilinear operators as in Serrin-Ivanov's papers \cite{serrin_barriers, ivanov}}
When $F$ is locally jet-equivalent to $(\EE 7)$, Euclidean representations of such examples in local charts introduce a dependence on coefficients that complicates the use of the theorem on sums even on $\R^m$. However, suppose that the Euclidean representation $\mathcal{F}$ on $V \subset \R^m$ is included in the classes studied in \cite{ivanov} (which improves on \cite{serrin_barriers}). Then, by \cite[Thm. 10.3]{ivanov} one can avail of the following properties:
\begin{itemize}
\item[1)] On small enough balls of $V$, the Dirichlet problem for $\cal F$-harmonics, $C^2$ solutions is solvable for any smooth boundary data $\varphi$. 
\item[2)] There exist uniform $C^{2,\alpha}$-estimates for $\cal F$-harmonics $u \in C^2(B)$, $B \subset V$, depending on their boundary value $\|\varphi\|_1$. 
\end{itemize}

In this case, comparison for viscosity solutions follows from the next proposition. Unfortunately, the very general class considered in \cite{ivanov} seems not to be invariant under local jet-equivalence, and therefore it is not sufficient to check that the universal model $\mathbb{F}$ in $(\EE 7)$ lies in the class: one needs to work with the local expression of $F$.

\begin{proposition}
Let $F_f$ be locally jet-equivalent to $(\EE 7)$ via locally Lipschitz bundle maps, and suppose that $f$ is strictly increasing. Assume properties $1)$ and $2)$. Then, $F_f$ has the bounded comparison. 
\end{proposition}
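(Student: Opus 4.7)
My plan is to decompose the argument into two steps feeding respectively into Theorems~\ref{lwcimplieswc} and~\ref{localwcestrict}: local weak comparison and bounded strict approximation, both built on the classical Dirichlet theory guaranteed by properties~1) and~2).

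For local weak comparison, I work in a distinguished chart where the Euclidean representation $\mathcal F$ of $F_f$ is defined on a small ball $B\subset V$ on which 1) and 2) apply. Given $u\in F^c(B)$ strict and $v\in\widetilde F(B)$ with $u+v\le 0$ on $\partial B$, I approximate $u|_{\partial B}$ from above by smooth $\varphi_k\downarrow u|_{\partial B}$, invoke 1) to obtain classical $\mathcal F$-harmonics $w_k\in C^2(\bar B)$ with $w_k=\varphi_k$ on $\partial B$, and use the uniform $C^{2,\alpha}$-bounds of 2) to extract a $C^2_{\mathrm{loc}}$-limit $w$ which is classically $\mathcal F$-harmonic. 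For every $\eta>0$ the shift $w_k+\eta$ is a classical strict $F_f$-supersolution, because $\tr(T(\nabla w_k)\nabla^2 w_k)=f(w_k)<f(w_k+\eta)$ by strict monotonicity of $f$. A standard half-comparison argument at a putative interior positive maximum $\bar x$ of $u-w_k-\eta$ uses $w_k+\eta$ as a test function for $u$: the resulting jet condition $\tr(T(\nabla w_k(\bar x))\nabla^2 w_k(\bar x))\ge f(u(\bar x))$ combined with classical harmonicity yields $f(w_k(\bar x))\ge f(u(\bar x))$, contradicting $u(\bar x)>w_k(\bar x)+\eta$. Hence $u\le w_k+\eta$, and after $k\to\infty$ and $\eta\to 0$, $u\le w$ on $B$. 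The dual argument, with $-v|_{\partial B}$ approximated from below by $\psi_k\uparrow -v|_{\partial B}$, produces a classical $\mathcal F$-harmonic $w'$ with $w'\le -v$ on $B$. Since $w,w'\in C^2(\bar B)$ are classical harmonics with $w\le w'$ on $\partial B$ (the limit of $\varphi_k\le -\psi_k$), the classical $C^2$-comparison principle for quasilinear equations with strictly monotone right-hand side gives $w\le w'$ on $B$, whence $u+v\le w-w'\le 0$. Theorem~\ref{lwcimplieswc} then promotes this local weak comparison to weak comparison on any $K\Subset X$.

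For bounded strict approximation, fix a bounded $u\in F_f(\bar K)$ and $\epsilon>0$. I cover $K$ by a finite family of small balls $\{B_i\}$ on which 1) applies, and perform an iterated Perron-type replacement: on each $B_i$ substitute the current version of $u$ by the classical $\mathcal F$-harmonic $\tilde u_i\in C^2(\bar B_i)$ with boundary data $u|_{\partial B_i}$ (regularized from above by continuous functions if $u$ is merely USC). The resulting $u^{(n)}$ belongs to $F_f(\bar K)$ by the subharmonic gluing property in Proposition~\ref{prop_basicheF}, is classically $C^2$ on each patch, and remains uniformly $\epsilon$-close to $u$ once the balls are chosen small enough, thanks to the interior $C^{2,\alpha}$-estimates of 2) that control $\mathrm{osc}(\tilde u_i)$ by $\mathrm{osc}(u|_{\partial B_i})$. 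For any $\eta>0$, the shifted function $u^{(n)}-\eta$ is classically strict $\mathcal F$-subharmonic, $\tr(T(\nabla u^{(n)})\nabla^2 u^{(n)})=f(u^{(n)})>f(u^{(n)}-\eta)$, and the buffer coming from strict monotonicity of $f$ absorbs jet-perturbations because the bounded $C^{2,\alpha}$-norm of $u^{(n)}$ (hence of $\nabla u^{(n)}$) keeps $\|T(\nabla u^{(n)})\|$ locally finite, producing a genuine element of $F_f^{\str}$. Applying Theorem~\ref{localwcestrict} and Lemma~\ref{wcobstacle} then upgrades weak comparison to bounded comparison for $F_f$ and $F_f^0$.

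\textbf{Main obstacle.} The delicate point is the strict approximation step. The symbol $\tr(T(p)A)$ of $(\EE 7)$ is not uniformly continuous in $p$, since $T(p)=\lambda_1(|p|)\Pi_p+\lambda_2(|p|)\Pi_{p^\perp}$ may have unbounded norm as $|p|\to\infty$, so the naive choice $u-\eta$ fails to be strictly subharmonic: test jets at a point of a general viscosity subsolution can have arbitrary gradient and Hessian, and the room $f(u)-f(u-\eta)$ cannot absorb perturbations of the form $O(\|T(p)\|\,\|B\|)$. Properties 1) and 2) bypass this obstruction precisely by producing a classical $C^{2,\alpha}$-solution on each small ball, whose genuine bounded 2-jet makes the strict-monotonicity buffer sufficient; the remaining technicality is to control the matching of the classical patches $\tilde u_i$ across the boundaries of the $B_i$ and to bound $u^{(n)}-u$ uniformly in the cover, for which the a priori estimates of 2) are essential.
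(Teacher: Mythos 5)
Your overall plan — establish local weak comparison plus bounded strict approximation and then invoke Theorems~\ref{lwcimplieswc} and~\ref{localwcestrict} — is the route the paper uses for $(\EE 2),\ldots,(\EE 6)$, but for $(\EE 7)$ the second ingredient is exactly what is missing, and your construction does not supply it. The patched harmonic replacement does not yield bounded strict approximation for two concrete reasons. First, property 1) solves the Dirichlet problem only for smooth boundary data, while $u|_{\partial B_i}$ is merely USC; you must therefore lift data approximating $u|_{\partial B_i}$ from above, and the resulting replacement satisfies only $\tilde u_i\ge u$, with $\tilde u_i-u$ controlled by the oscillation of $u$ on $\partial B_i$. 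For a discontinuous USC subsolution this oscillation does not tend to zero as the balls shrink, and the interior $C^{2,\alpha}$ estimates of 2) bound $\tilde u_i$ in terms of its boundary datum, not in terms of its distance to $u$; so ``uniformly $\eps$-close for small enough balls'' is unjustified. Second, even granting closeness, the glued $u^{(n)}$ is $C^2$ only in the interiors of the balls where the last replacement was performed; along the seams (and wherever the original $u$ survives in the max) it is merely USC, and there the shift $u^{(n)}-\eta$ gains no fiberwise distance to $\partial F_f$: test jets at such points can have arbitrarily large $|p|$, and the gap $f(r)-f(r-\eta)$ cannot absorb errors of order $\|T(p)\|\,\|A-B\|$ — the very obstruction you named. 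So the strict approximation step fails, and with it the passage through Theorem~\ref{localwcestrict}. (Two smaller points: weak comparison for $F_f$ is already guaranteed by Theorem~\ref{teo_importante!!}, since the jet-equivalence is locally Lipschitz, so your first step is redundant; and as written it only treats closed coordinate balls, whereas weak comparison quantifies over all compact subsets of the neighbourhood.)

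The paper's proof avoids strict approximation altogether. It argues by contradiction at an interior maximum point of $u+v$, passes to a Euclidean representation $\mathcal F$ on one small ball $B$, and uses 1) and 2) only to replace the subsolution by a single classical function: it solves the Dirichlet problem for smooth data $\psi_j\downarrow \bar u|_{\partial B}$, gets $C^2$ harmonics $w_j$, and lets $w_j\downarrow w\in C^2(B)$ with $w\ge\bar u$ and $w=\bar u$ on $\partial B$. The comparisons $\bar u\le w_j$ and $\max_{\overline B}(w_j+\bar v)=\max_{\partial B}(w_j+\bar v)$ are obtained from the weak comparison of Theorem~\ref{teo_importante!!}, because the shifts $w_j\mp\eps$ of a fixed $C^2$ function \emph{are} strict (their jets range in a compact set, so bounded gradients make the strictly-increasing-$f$ buffer effective — the same mechanism as in your half-comparison step). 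Then $w+\bar v$ attains a positive interior maximum, and near that point the classical regularity of $w$ permits the perturbation $w-\delta\varrho^2-c$ to be genuinely strict, contradicting weak comparison on a small ball. In other words, the classical theory is used pointwise, at the maximum, to manufacture strictness from one $C^2$ solution, not to approximate an arbitrary bounded USC subsolution globally; your proposal would need an independent proof of bounded strict approximation for $(\EE 7)$, which is precisely what is not available.
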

\begin{proof}
By contradiction, take $K \subset X$ compact and $u \in F_f(K)$, $v \in \widetilde{F_f}(K)$ with $u+v \le 0$ on $\partial K$ but $(u+v)(x_0)>0$. Let $\mathcal{F}$ be an Euclidean representation of $F_f$ in a chart $(U, \varphi)$ around $x_0$, fix a small Euclidean ball $B \Subset \varphi(U)$ around $y_0= \varphi(x_0)$ and set $\bar u \doteq u\circ \varphi \in \mathcal{F}(\overline{B})$, $\bar v \doteq v\circ \varphi \in \widetilde{\mathcal{F}}(\overline{B})$. Let $\{\psi_j\} \subset C^\infty(\partial B)$ be a decreasing sequence pointwise converging to $\bar u$ on $\partial B$, and by $1)$ consider the solution of the problem
$$
 \begin{cases}
\text{$w_j$ is $\cal F$-harmonic on $B$, and $w_j \in C^2(\overline{B})$,} \\
w_j = \psi_j \quad \text{on } \, \partial B .
\end{cases}
$$
Since $f$ is strictly increasing and $w_j$ is $C^2$, each $w_j$ can be uniformly approximated by $w_j -\eps \in \cal F^\str(\overline{B})$, hence comparison holds for $w_j$. In particular, $\{w_j\}$ is a decreasing sequence, $w_j \ge \bar u$ on $\overline{B}$ and 
$$
\max_{\overline{B}} (w_j + \bar v) = \max_{\partial B} (w_j + \bar v) = \max_{\partial B} (\psi_j + \bar v).
$$ 
By $2)$, $w_j \downarrow w$ locally in $C^{2,\alpha}(B)$ for some $w \in \cal F(\overline{B}) \cap C^2(B)$, and from $\bar u \le w_j \le \psi_j$ on $\partial B$ we deduce that $\bar u = w$ on $\partial B$. As a consequence, since 
$$
(w + \bar v)(y_0) \ge (\bar u + \bar v)(y_0) \ge \sup_{\partial B} (\bar u + \bar v) = \sup_{\partial B} (w +\bar v), 
$$
$w + \bar v$ attains a positive maximum at some interior point $y_1 \subset B$. Let $V \Subset B$ be a small ball centered at $y_1$, and let $\varrho$ be the distance function from $y_1$ in the Euclidean metric. Since $w \in C^2(\overline{V})$ and $f$ is strictly increasing, $w-c \in \widetilde{\mathcal{F}_f}^\str(\overline{V})$ for each $c>0$. Fix $c$ in such a way that $(w-c + \bar v)(y_1)>0$. Also by the $C^2$-regularity of $w$, there exists $\delta>0$ small enough such that $w-\delta \varrho^2 -c \in \widetilde{\mathcal{F}_f}^\str(\overline{V})$. Now,  
\begin{align*}
	(w-\delta \varrho -c + \bar v)(y_1) &= (w-c + \bar v)(y_1)\\
	&= \max_{\overline V}(w-c + \bar v) > \max_{\partial V}(w-\delta \varrho^2-c + \bar v).
\end{align*}
We can now translate $w-\delta \varrho^2 -c$ downward by an appropriate positive constant $c_2$ to contradict the validity of the weak comparison (Theorem~\ref{teo_importante!!}) on $V$.
\end{proof}

\subsubsection*{Acknowledgements}
The authors want to express their gratitude to the referee for his-her very careful reading of the manuscript, and for several suggestions that helped to improve both the results and the presentation. The second author thanks the Universidade Federal do Cear\'a and Universit\`a dell'Insubria - Como for the hospitality during the writing of this paper. A first version of this manuscript was written when the first author was part of the Departamento de Matem\'atica of the Universidade Federal do Cear\'a, that he thankfully acknowledge for the beautiful environment. He also wishes to thank the Scuola Normale Superiore and CNPq for financial support.

\end{document}